\theoremstyle{plain}
\newtheorem{thm}{Theorem}[section]
\newtheorem{lem}[thm]{Lemma}
\newtheorem{cor}[thm]{Corollary}
\newtheorem{thmx}{Theorem}
\newtheorem{prop}[thm]{Proposition}
\newtheorem{thmro}{Theorem}
\newcommand{\neutralize}[1]{\expandafter\let\csname c@#1\endcsname\count@}
\theoremstyle{definition}
\newtheorem{ex}[thm]{Example}
\newtheorem{rem}[thm]{Remark}
\newtheorem{que}[thm]{Question}
\newtheorem{conj}[thm]{Conjecture}
\newtheorem{notate}[thm]{Notation}
\newcommand \bb \mathbb
\newcommand \rarr \rightarrow
\newcommand \larr \leftarrow
\newcommand \uarr \uparrow
\newcommand \darr \downarrow
\newcommand \Rarr \Rightarrow
\newcommand \Larr \Leftarrow
\newcommand \emb \hookrightarrow
\newcommand \surj \twoheadrightarrow
\DeclareMathOperator{\id}{id}
\DeclareMathOperator{\Ext}{Ext}
\DeclareMathOperator{\co}{H}
\newcommand{\PProj} {\mathbb{P}}
\newcommand\rsmraise[1]{%
  \ifx#1\displaystyle .8\else
    \ifx#1\textstyle .8\else
      \ifx#1\scriptstyle .6\else
        .45%
      \fi
    \fi
  \fi}
\def\ker{\mathrm{ker}}
\def\pspan{\mathrm{pspan}}
\def\span{\mathrm{span}}
\def\cspan{\mathrm{span}_{\C}}
\def\cpspan{\mathrm{pspan}_{\C}}
\newcommand{\be}{\begin{enumerate}}
\newcommand{\ee}{\end{enumerate}}
\newcommand{\R}{\mathbb{R}}
\newcommand{\Z}{\mathbb{Z}}
\newcommand{\C}{\mathbb{C}}
\DeclareMathOperator{\lobs}{\mathscr{O}}
\tikzset{curve/.style={settings={#1},to path={(\tikztostart)
    .. controls ($(\tikztostart)!\pv{pos}!(\tikztotarget)!\pv{height}!270:(\tikztotarget)$)
    and ($(\tikztostart)!1-\pv{pos}!(\tikztotarget)!\pv{height}!270:(\tikztotarget)$)
    .. (\tikztotarget)\tikztonodes}},
    settings/.code={\tikzset{quiver/.cd,#1}
        \def\pv##1{\pgfkeysvalueof{/tikz/quiver/##1}}},
    quiver/.cd,pos/.initial=0.35,height/.initial=0}
\tikzset{tail reversed/.code={\pgfsetarrowsstart{tikzcd to}}}
\tikzset{2tail/.code={\pgfsetarrowsstart{Implies[reversed]}}}
\tikzset{2tail reversed/.code={\pgfsetarrowsstart{Implies}}}
\tikzset{no body/.style={/tikz/dash pattern=on 0 off 1mm}}
\def\keywords{\xdef\@thefnmark{}\@footnotetext}
\begin{document}

\title{Complex line fields on almost-complex manifolds}

%----------------------------------------------
% ALL INFORMATION ABOUT THE AUTHORS STARTS HERE

\author[Sadovek]{Nikola Sadovek}
\address{Max Planck Institute of Molecular Cell Biology and Genetics, Center for Systems Biology Dresden,\newline
Faculty of Mathematics, Technische Universit\"at Dresden, Germany}
\email{sadovek@mpi-cbg.de}

\author[Schutte]{Baylee Schutte} 
\address{Institute of Mathematics, Freie Universit\"at Berlin, Germany}
\email{bschutte@zedat.fu-berlin.de, bayleeschutte.math@gmail.com}

\thanks{\hspace{-4mm}\textit{2020 Mathematics Subject Classification.} Primary 55S40, 32Q60; Secondary 55S35, 57R20, 55S45. \\
\textit{Key words and phrases.} Complex line fields, complex vector fields, complex span, complex projective span, almost-complex manifolds, Postnikov systems, \(k\)-invariants, spectral sequence, Chern classes.\\
The research of N.\ Sadovek is funded by the Deutsche Forschungsgemeinschaft (DFG, German Research Foundation) under Germany's Excellence Strategy--The Berlin Mathematics Research Center MATH+ (EXC-2046/1, project ID 390685689, BMS Stipend). B.\ Schutte received no funding to assist with the preparation of this manuscript.}

% ALL INFORMATION ABOUT THE AUTHORS ENDS HERE
%--------------------------------------------

\begin{abstract}
    We study linearly independent complex line fields on almost-complex manifolds, which is a topic of long-standing interest in differential topology and complex geometry. A necessary condition for the existence of such fields is the vanishing of appropriate virtual Chern classes. We prove that this condition is also sufficient for the existence of one, two, or three linearly independent complex line fields over certain manifolds. More generally, our results hold for a wider class of complex bundles over CW complexes.
\end{abstract}

\maketitle

%\subjclass{\noindent\textit{2020 Mathematics Subject Classification.} Primary 55S40, 32Q60; Secondary 55S35, 57R20, 55S45.\\}

%\keywords{\noindent\textit{Key words and phrases.} Complex line fields, complex vector fields, complex span, complex projective span, almost-complex manifolds, Postnikov systems, \(k\)-invariants, spectral sequence, Chern classes.}

\setcounter{tocdepth}{1}
\tableofcontents

\section{Introduction}

Let $\xi$ be a complex vector bundle of rank $m \ge 1$ over a $2m$-dimensional CW complex \(X\). For an integer $1 \le r \le m$ and line bundles $\ell_1, \dots, \ell_r$ over $X$, the Whitney sum formula for Chern classes provides a necessary cohomological condition for the existence of an embedding of $\ell_1 \oplus \cdots \oplus \ell_r$ into $\xi$. By way of elaboration, such an embedding is equivalent to the existence of an isomorphism $\xi \cong \ell_1 \oplus \cdots \oplus \ell_r \oplus \eta$, for some rank $m-r$ bundle $\eta$. Hence the total Chern class factors as $c(\xi) = c(\ell_1) \cdots c(\ell_r) c(\eta) \in \co^*(X;\Z)$, and we deduce that
\begin{equation} \label{eq:necessary condition}
    \ell_1 \oplus \dots \oplus \ell_r \subseteq \xi ~ \Longrightarrow ~ c(\xi) = c(\ell_1) {\cdots}  c(\ell_r)   x \in \co^*(X;\Z), ~\text{for some}~x \in \co^{\le 2(m-r)}(X;\Z).
\end{equation}
A priori, it is unclear if the right hand side of \eqref{eq:necessary condition} is also a sufficient condition, which leads to the following fundamental question:

\begin{que} \label{que:fundamental}
    Is there an algebraic condition that ensures the existence of an embedding of $\ell_1 \oplus \dots \oplus \ell_r$ into $\xi$? Specifically, does the inverse implication of \eqref{eq:necessary condition} hold, possibly under certain assumptions on $X$?
\end{que}

As the main result of this paper, we use Moore--Postnikov theory to positively answer Question \ref{que:fundamental}, for $r \le 3$, with the proviso that we make certain assumptions about the base space $X$. Namely, for certain $X$, we prove that the implication in \eqref{eq:necessary condition} is an equivalence. Thus, in some sense, we obtain an equivalence between the geometric world (i.e. the existence of line bundle embeddings) and the algebraic world (i.e. Chern classes). We summarize our main result below, but  we state our results in full generality (that is, under milder conditions on the base space) in Section \ref{sec:main_thms}.

\begin{thm}[Main theorem] \label{thm:main}
Let $M$ be a smooth closed connected $2m$-manifold and $\xi$ a complex $m$-plane bundle over $M$.
\begin{enumerate}[\normalfont(I)]
    \item Suppose that \(m \ge 1\). If $\ell$ is a complex line bundle over $M$, then
    \begin{equation*}
        \ell \subseteq \xi ~ \iff ~ c(\xi) = c(\ell)\cup x \in \co^*(M;\Z), ~\text{for some}~x \in \co^{\le 2(m-1)}(M;\Z).
    \end{equation*}
    \item Suppose that \(m \ge 3.\) If \(m\) is even, assume that \(w_2(M) \neq 0\). Then 
    \begin{equation*}
        \ell_1 \oplus \ell_2 \subseteq \xi ~ \iff ~ c(\xi) = c(\ell_1) \cup c(\ell_2)\cup x \in \co^*(M;\Z), ~\text{for some}~x \in \co^{\le 2(m-2)}(M;\Z).
    \end{equation*}
        \item Suppose that \(m \geq 5\), \(\pi_1(M) = 0\), and \(\co_2(M;\Z)\) has no \(2\)-torsion. If \(m\) is odd, assume that \(\co^{2m-2}(M;\Z/2)= Sq^2 \co^{2m-4}(M;\Z/2)\) and \(\co_3(M;\Z) =0\). Then 
    \begin{equation*}
        \ell_1 \oplus \ell_2 \oplus \ell_3 \subseteq \xi ~\iff~  c(\xi) = c(\ell_1)\cup c(\ell_2)\cup c(\ell_3)\cup x \in \co^*(M;\Z), ~\text{for some}~x \in \co^{\le 2(m-3)}(M;\Z).
    \end{equation*}
\end{enumerate}
\end{thm}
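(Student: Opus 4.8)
The plan is to reformulate each equivalence as a section-lifting problem and attack it via Moore--Postnikov theory. The necessary direction is already recorded in \eqref{eq:necessary condition}, so the entire content lies in the converse. An embedding $\ell_1 \oplus \cdots \oplus \ell_r \subseteq \xi$ is equivalent to a splitting $\xi \cong \ell_1 \oplus \cdots \oplus \ell_r \oplus \eta$, and hence to lifting the classifying map $M \to BU(m)$ of $\xi$ through the fibration $BU(1)^r \times BU(m-r) \to BU(m)$ induced by Whitney sum, along the section prescribed by the given line bundles $\ell_1,\dots,\ell_r$. More precisely, once the $\ell_i$ are fixed, the relevant fibration has fiber the complex Stiefel-type manifold $U(m)/(U(1)^r \times U(m-r))$, and the existence of the desired embedding is exactly the existence of a section of the associated bundle over $M$, i.e. a lift in the Moore--Postnikov tower of this fibration.

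First I would fix the line bundles $\ell_1,\dots,\ell_r$ and, after twisting, identify the obstructions to lifting with the stepwise obstructions in the Moore--Postnikov factorization, whose successive fibers are Eilenberg--MacLane spaces $K(\pi_n F, n)$ with $\pi_n F$ the homotopy groups of the fiber $F = U(m)/(U(1)^r \times U(m-r))$. The primary obstruction is computed in the earlier section on the primary obstruction to splitting off $r$ line bundles, and its vanishing is governed precisely by the factorization $c(\xi) = c(\ell_1)\cdots c(\ell_r)\,x$ appearing on the right-hand side. For $r = 1$ the fiber is $\C P^{m-1}$, which is $1$-connected with $\pi_2 = \Z$, so the only obstruction in the relevant range $2m = \dim M$ is the primary one living in $\co^{2m}(M;\pi_{2m-1})$; here $\pi_{2m-1}(\C P^{m-1}) \cong \Z$, and I would show this obstruction is (up to sign) the top Chern class $c_m(\xi - \ell)$, whose vanishing is equivalent to the stated factorization. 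This handles part (I) cleanly.

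For parts (II) and (III) the main work is to push through the higher stages of the tower. The homotopy groups of $F$ are no longer concentrated in a single low degree, so several secondary (and for $r=3$ tertiary) obstructions appear in degrees up to $2m$, and one must show they all vanish. The strategy is: (a) compute $\pi_*(F)$ in the needed range using the fibrations $U(m-r) \to U(m) \to U(m)/U(m-r)$ together with the further quotient by $U(1)^r$, and elementary rational homotopy theory to pin down the free parts; (b) identify each $k$-invariant of the Moore--Postnikov tower, using the classical secondary-operation machinery of Hermann, Mahowald, and Thomas; and (c) show that once the primary obstruction (the Chern-class factorization) vanishes, the remaining obstruction groups either vanish for dimensional/torsion reasons or are killed by the hypotheses imposed on $M$. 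This is exactly where the technical assumptions enter: the condition $w_2(M) \neq 0$ in the even case of (II), and in (III) the simple-connectivity, the absence of $2$-torsion in $\co_2(M;\Z)$, the relation $\co^{2m-2}(M;\Z/2) \cong Sq^2\,\co^{2m-4}(M;\Z/2)$, and $\co_3(M;\Z) = 0$, are precisely the hypotheses that annihilate the troublesome secondary obstructions (typically $2$-torsion classes detected by Steenrod squares and by the $k$-invariants tied to the odd-torsion in $\pi_*(U)$ versus the fiber).

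The hard part will be step (b)--(c) for $r = 3$: controlling the higher $k$-invariants and verifying that the listed cohomological hypotheses suffice to force the secondary and tertiary obstructions to vanish. In particular, the parity split between $m$ even and $m$ odd reflects a genuine change in the $2$-local homotopy of $F$, and the $Sq^2$-surjectivity hypothesis is what guarantees that the relevant secondary obstruction, which is a coset in $\co^{2m}(M)$ modulo the indeterminacy coming from the image of a Steenrod operation, actually contains zero. I would therefore organize the proof so that the primary obstruction is treated uniformly first (reducing it to the Chern-class factorization), and then dedicate separate case analyses to the even and odd parities, invoking the Moore--Postnikov tower computations of the later sections to dispatch the higher obstructions.
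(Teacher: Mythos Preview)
Your overall strategy is the same as the paper's---Moore--Postnikov obstruction theory applied to an appropriate lifting problem, with the primary obstruction identified as a virtual Chern class and the higher obstructions killed by the stated hypotheses. However, your setup of the lifting problem is not quite the paper's, and the difference matters for the execution.

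You describe the fiber as the flag manifold $U(m)/(U(1)^r \times U(m-r))$ and, for $r=1$, as $\C P^{m-1}$. The paper instead packages the \emph{given} line bundles $\ell_1,\dots,\ell_r$ into the base: the classifying map is
\[
(\xi,\ell_1,\dots,\ell_r)\colon M \longrightarrow BU(m)\times BU(1)^r,
\]
and one asks for a lift through $q_{m,r}\colon BU(m-r)\times BU(1)^r \to BU(m)\times BU(1)^r$, whose fiber is the complex \emph{Stiefel} manifold $W(m,r)=U(m)/U(m-r)$. This is not merely cosmetic. Your flag-manifold fiber has $\pi_2\cong\Z^r$, so the tower starts at degree~$3$ and the first step is spent re-encoding the choice of line bundles; the Stiefel fiber is $(2(m-r))$-connected, so the tower starts directly at degree $2(m-r)+2$ with the primary obstruction equal to $c_{m-r+1}(\xi-\ell_1\oplus\cdots\oplus\ell_r)$ (Proposition~\ref{prop:primaryobs} and Lemma~\ref{lem:diffsofq1}). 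For $r=1$ the paper's fiber is $S^{2m-1}$ and there is literally one obstruction; with $\C P^{m-1}$ you would have to argue separately that the $\pi_2$-stage is already handled by the choice of $\ell$. The two formulations are equivalent (the paper's $q_{m,r}$ is exactly the second stage of the tower for your flag-manifold map, since the first $k$-invariant lives in $\co^3(BU(m);\Z^{\oplus r})=0$), but the Stiefel setup avoids redundant bookkeeping and makes the homotopy-group input (Tables~\ref{table:pi(W(m,2))}--\ref{table:pi2m-2W(m,3)}) cleaner.

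Beyond this, your sketch is accurate in outline: the paper does split into even/odd parity, computes the $k$-invariants via Leray--Serre spectral sequences and Kudo's transgression theorem, and uses the Wu class/Steenrod-square hypotheses exactly as you anticipate to kill the indeterminacy of the secondary and tertiary obstructions (see e.g.\ Lemma~\ref{lem:secondaryob_cpspan2_even}, Theorem~\ref{thm:tertiaryob_cpspan2_even}, and Sections~\ref{sec:thm_pspan3_even}--\ref{sec:thmpspan3_odd}). One point you underplay: for $r=3$ and $m$ odd, the final obstruction is handled not by explicit $k$-invariant identification but by passing to rational homotopy (Theorem~\ref{thm:quinaryobs_cpspan3_odd}), which works because $\co^{2m}(M;\Z)$ is torsion-free under the hypotheses.
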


 It is readily seen that the assumptions on the base space in Theorem \ref{thm:main} are satisfied for e.g., complex projective spaces of appropriate dimension. Therefore, complex projective spaces represent a first family of examples for which our results hold. Moreover, coupled with the Schwarzenberger condition \cite[Theorem A]{thomas1974almost}---which provides a description of the total Chern class of complex $m$-plane bundles over $\C P^m$---our work refines the understanding of the extent to which complex vector bundles over \(\C P^m\) with certain line bundle splitting properties are described by their Chern classes. For a precise statement, see Theorem \ref{thm:cp^m}.

\subsection*{History and related work}

When the $\ell_i$ are all trivial line bundles $\varepsilon^1 \colon~ \C \to \C \times X \to X$, Question \ref{que:fundamental} has been extensively studied. Hopf's theorem for almost-complex manifolds is a first example where purely algebraic data (e.g., cohomological invariants) completely characterizes some geometric property (e.g., non-vanishing vector fields).
Recall that a smooth \(2m\)-manifold is \emph{almost-complex} if the tangent bundle \(TM\) of \(M\) is isomorphic to the underlying real bundle \(\omega^\R\) of some complex \(m\)-plane bundle \(\omega\) over \(M\). Let $X = M$ be an almost-complex $2m$-manifold and $\xi = TM$ the corresponding tangent bundle. Recalling that the top Chern class of an almost-complex manifold is equal to the Euler class, Hopf's theorem states that the Euler characteristic $\chi(M)$ of \(M\) determines the existence of a nonvanishing \emph{complex vector field} on $M$, see \cite{hopf27vector}. For example, notice that the almost-complex spheres \(S^i\), for \(i = 2,6\), and complex projective spaces $\C P^n$, for \(n \geq 0\), have zero complex span by virtue of their non-vanishing Euler characteristics.

More generally, the search for cohomological conditions guaranteeing the existence of linearly independent sections of real vector bundles has a rich history, as surveyed for example in \cite{thomas1969vectorfields,korbas1994survey,sankaran2019survey}. In this case, for \(1 \leq r \leq m\), the implication \eqref{eq:necessary condition} takes the following form:
\begin{equation} \label{eq:lin-indep-sections}
    \varepsilon^r \subseteq \xi ~ \Longrightarrow ~ c_m(\xi) = \dots = c_{m+1-r}(\xi) = 0.
\end{equation}
Hence the Chern classes of \(\xi\) obstruct the existence of $r$ linearly independent sections of $\xi$. The above leads to the study of a numerical invariant of \(\xi\) call the \emph{(complex) span}, defined as
\begin{equation*}
    \cspan(\xi) \coloneqq \max\{r \in \Z_{\geq 0} \mid \varepsilon^{r} \subseteq \xi\}.
\end{equation*}

In the complex case, when \(m \ge 3\) is odd, Thomas \cite[Corollary 3.6]{thomas1967postnikov} proved that \(\cspan(M) \geq 2\) if and only if \(c_m(M) = c_{m-1}(M) = 0\). In the case that \(m \ge 3\) is even, he proved in \cite[Theorem 1.5]{thomas1967real} that $c_{m}(\xi) = c_{m-1}(\xi)=c_{m-2}(\xi) = 0$ is a sufficient condition for \(\cspan(M) \geq 2.\) Additionally, Thomas \cite[Corollary 1.6]{thomas1967complex} obtained a slightly weaker version of a result by Gilmore \cite[Theorem 1]{gilmore1967complex}, who proved that 
\begin{equation*}
    \cspan(M) \ge 3 ~ \iff ~ c_{m-2}(M) = c_{m-1}(M) = c_{m}(M) = 0,
\end{equation*}
where $m \ge 6$ is an even integer, $M$ is simply connected, and $\co_2(M;\Z)$ has no 2-torsion. Furthermore, Gilmore \cite[Theorems 2 \& 3]{gilmore1967complex} identified situations in which there exist necessary and sufficient conditions for \(\cspan(M) \geq 4 \text{ or } 5\). As noted in Section \ref{sec:main_thms}, the main results of this paper not only specialize to recover several of these classically known facts, but also yield a new result about the complex span of non-spin almost-complex manifolds, see Corollary \ref{cor:cpspan2_even}. 

We also mention recent work of Nguyen \cite{nguyen2024cobordism}, who defined certain cobordism obstructions whose vanishing dictates whether or not a given almost-complex manifold is ``complex section cobordant'' to an almost-complex manifold with complex span at least \(r>0\).

Chiefly, this paper furthers the examination of the ``projectivization'' of span initiated by Grant and the second author in \cite{grant2024projective}. Indeed, we study another numerical invariant of \(\xi\) called the \emph{(complex) projective span}, defined as 
\begin{equation*}
    \cpspan(\xi) \coloneqq \max\left\{r \in \Z_{\geq 0} \mid \text{there exists complex line bundles } \{\ell_i\}_{i = 1}^r \text{ such that } \bigoplus_{i=1}^r\ell_i \subseteq \xi\right\}.
\end{equation*}
In the language of sections, since a line subbundle of $\xi \colon \C^m \to E \to X$ can be identified with a section of the projectivization $\PProj(\xi) \colon \C P^{m-1} \to \PProj(E) \to X$ of $\xi$, we surmise that the value of $\cpspan(\xi)$ represents the maximal number of linearly independent sections of $\PProj(\xi)$. Of particular interest in this paper is the case when $X=M$ is a smooth $2m$-dimensional almost-complex manifold and $\xi=TM$ its tangent bundle. In this case, a complex line subbundle \(\ell \subseteq TM\), or equivalently a smooth section of the projectivized tangent bundle \(\mathbb{P}TM\), is called a \emph{complex line field} on \(M\).

For example, it follows from a theorem of  Glover--Homer--Stong \cite[Theorem 1.1(ii)]{glover1982splitting}  that \[\cpspan(T\C P^{2k}) = 0 \text{ and } \cpspan(T\C P^{2k+1}) = 1,\] for \(\C P^n\) equipped with its standard almost-complex structure. Note however that complex projective spaces are stably line element parallelizable. That is, \[\pspan_{\mathbb{C}}(T\mathbb{C} P^n\oplus \varepsilon^1) = n,\] for all \(n\), see \cite[p.\ 170]{milnor2005characteristic}.

\begin{comment}Besides, we comment that Crowley--Grant \cite{crowley2017hopf} have reviewed the work of Hopf \cite{hopf27vector}, Markus \cite{markus1955line}, Koschorke \cite{koschorke1974concordance}, and J\"{a}nich \cite{klaus1984linefields,klaus1984linienfelder} in order to give a careful proof of the Poincar\'{e}--Hopf theorem for line fields on manifolds. A consequence of this theorem is that a smooth closed connected manifold admits a nonsingular real line field if and only if the Euler characteristic vanishes; however, the line field may come from a vector field (e.g., if the manifold is simply connected).  As in the real case, the vanishing of the Euler characteristic does not ensure the existence of a nontrivial complex line field on an almost-complex manifold, see Theorem \ref{thm:main}(I). Let us demonstrate this with a simple example. The manifold \(S^1\times S^{2s+1}\), for \(s \geq 1,\) is parallelizable and thus almost-complex. Despite the fact that \(\chi(S^1 \times S^{2s+1}) = \chi(S^1)\cdot \chi(S^{2s+1}) = 0\), there are no nontrivial complex line bundles over \(S^1 \times S^{2s+1}\) since \(\co^2(S^1 \times S^{2s+1}) = 0.\)
\end{comment}

We briefly discuss key research related to real projective span.  For more information, we suggest the introduction of \cite{grant2024projective}, where Grant and the second author compute the real projective span of all the Wall manifolds. It is a classical result due to Markus \cite{markus1955line} and Samelson \cite{samelson1951diff} that the real projective span of a smooth manifold is strictly greater than zero if and only if the Euler characteristic vanishes. In agreeable situations, necessary and sufficient conditions for the existence of monomorphisms \(\alpha \xhookrightarrow{} \beta\) of vector bundles over a real (non)orientable smooth manifold were identified by Mello \cite{mello1987two}, for \(\mathrm{rank}(\alpha)=2\), and Mello--da Silva \cite{mello2000note}, for \(\mathrm{rank}(\alpha)=3\). The former inquiry utilizes obstruction theory while the latter appeals to Koschorke's \cite{koschorke1981book} normal bordism and singularity theory; principally, however, one may extract information about real projective span from these results. Further work on decompositions of real vector bundles using obstruction theory was carried out by Leslie--Yue \cite{leslie2003vector}. When it comes to complex projective span, we can mention the work of Iberkleid \cite[Theorem 3.4]{iberkleid1974splitting}, who proved that an almost-complex \(2m\)-manifold \(M\) is unoriented cobordant to an almost-complex \(2m\)-manifold \(N\) with \(\cpspan(N) = m\) if and only if the Euler characteristic \(\chi(M)\) is even. 

Notice that if \(\xi\) is a complex \(m\)-plane bundle with \(\cspan(\xi) \geq r\), then \(\span_{\R}(\xi^\R) \geq 2r\) because the underlying real bundle of a trivial complex line bundle is itself trivial. However, if \(\cpspan(\xi)\geq r,\) it is not generally true that \(\pspan_\R(\xi^{\R}) \geq 2r\) since the underlying real bundle of a nontrivial complex line bundle may not split into two real line bundles (take for example the Hopf line bundle over complex projective space). Moreover, Koschorke \cite{koschorke1999complex} studies the relationship between the existence and homotopy classification theories of embeddings \(\zeta \xhookrightarrow{}_{\C} \eta\) of complex bundles and embeddings \(\zeta_\R \xhookrightarrow{}_\R \eta_\R\) of the underlying real bundles---see in particular \cite[\S 4 Complex line fields vs.\ real plane fields]{koschorke1999complex}. Especially of note, he gives examples of complex line bundles \(\alpha\) over \(S^1 \times S^1 \times S^{2n}\), where \(n>2\), for which there exist infinitely many real embeddings \(\alpha_{\R} \xhookrightarrow{}_{\R} \beta_\R\) not homotopic to complex embeddings \(\alpha \xhookrightarrow{}_{\C} \beta\), for certain \(\beta\) \cite[Example 4.10]{koschorke1999complex}.

To conclude, since a one-dimensional complex subbundle of the tangent bundle of a compact connected complex manifold is tangent to a holomorphic one-dimensional foliation, our results may interact with K\"{a}hler geometry. More specifically, suppose that \(M\) is a compact connected K\"{a}hler manifold with \(\cpspan(M) \geq r\), for some \(r >0\). Then \cite{brunella2006kahler} relates the splitting of the tangent bundle of \(M\) with splitting properties of the universal cover of \(M\). 

\subsection*{Organization}

The remainder of the paper is organized as follows. In Section \ref{sec:applications} we provide the first example of Theorem \ref{thm:main} in the case of complex projective spaces. Then we state our main results in complete detail in Section \ref{sec:main_thms}. Next, we recall in Section \ref{sec:prelims} some basic elements of Moore-Postnikov theory as well as various classical results needed throughout the paper. We formulate the appropriate lifting problem in Section \ref{sec:primaryobs}, where we compute the primary obstruction to splitting off \(r\)-line bundles. Finally, Sections \ref{sec:thm_cpspan2_odd}--\ref{sec:thm_cpspan2_even} and Section \ref{sec:thm_pspan3_even}--\ref{sec:thmpspan3_odd} contain the largest parts of our proofs, which make use of Moore-Postnikov theory. Due to considerable technical differences, we treat the cases of splitting off two and three line bundles separately.

%-------------------------------------------------------
%-------------------------------------------------------
\section{Application}
\label{sec:applications}

In this section, we apply our main result, Theorem \ref{thm:main}, to the case of complex $m$-plane bundles $\xi$ over the complex projective space $\C P^m$. For integers $c_1, \dots, c_m$ and a class
\begin{equation} \label{eq:class-c}
    c \coloneqq 1 + c_1u + c_2u^2 + \dots + c_mu^m \in \co^*(\C P^m;\Z) \cong \Z[u]/(u^{m+1}), \hspace{5mm} \text{where}~ \lvert u \rvert= 2,
\end{equation}
the Schwarzenberger condition \cite[Theorem A]{thomas1974almost} detects whether $c$ is the total Chern class of a complex $m$-bundle over $\C P^m$. More specifically, the  class $c$ factors as
\begin{equation*} \label{eq:factorization-of-c}
    c=(1+z_1u)\cdots(1+z_mu) \in \C[u], \hspace{5mm} \text{for some}~ z_1, \cdots, z_m \in \C.
\end{equation*}
The Schwarzenberger condition says that \(c\) is the total Chern class of a complex vector bundle over \(\C P^m\) if and only if
\begin{equation} \label{eq:schwarzenberger}
    \binom{z_1}{k} + \dots + \binom{z_m}{k}  \in \Z, \hspace{5mm} \text{for all}~ k = 2, \dots, m.
\end{equation}
Here $\binom{z}{k} = z(z-1) \dots (z-k+1)/k! \in \C$, for $z \in \C$.
Using this notation, we provide a complete description of the cohomology classes in $\co^*(\C P^m;\Z)$ which are the total Chern classes of complex $m$-bundles over $\C P^m$ admitting a sum of at most three line bundles as a subbundle.

\begin{thm} \label{thm:cp^m}
    Let \(m \geq 1\) and \(r \leq 3\) be integers. Further let $\ell_1, \dots, \ell_r$ be given line bundles over $\C P^m$ with total Chern classes $c(\ell_i) = 1+z_i u \in \co^*(\C P^m;\Z)$, where $z_i \in \Z$. For $m > 2r-1$, the class
    \[
    c \coloneqq (1+z_1u) \cdots (1+z_ru)(1+z_{r+1}u) \cdots (1+z_mu) \in \C[u], \hspace{5mm} \text{for some}~ z_{r+1}, \dots, z_{m} \in \C,
    \]
    is the total Chern class of a complex \(m\)-plane bundle \(\xi\) over \(\C P^m\) for which \(\ell_1 \oplus \cdots \oplus \ell_r \subseteq \xi\) if and only if 
    \begin{equation*}
        \binom{z_{r+1}}{k} + \dots + \binom{z_m}{k}  \in \Z, \hspace{5mm} \text{for all}~ k = 2, \dots, m.
    \end{equation*}
\begin{comment}
    the following condition characterizes the Chern classes of complex $m$-plane bundles $\xi$ over $\C P^m$ for which $\ell_1 \oplus \dots \oplus \ell_r \subseteq \xi$:
    \[
        c(\xi) = (1+z_1u) \cdots (1+z_ru)(1+z_{r+1}u) \cdots (1+z_mu) \in \C[u], \hspace{5mm} \text{for some}~ z_{r+1}, \dots, z_{m} \in \C,
    \]
    for which
    \begin{equation*}
        \binom{z_{r+1}}{k} + \dots + \binom{z_m}{k}  \in \Z, \hspace{5mm} \text{for all}~ k = 2, \dots, m.
    \end{equation*}
\end{comment}
\end{thm}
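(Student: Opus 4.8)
The plan is to obtain Theorem~\ref{thm:cp^m} as a direct application of the main theorem together with the Schwarzenberger realization criterion \eqref{eq:schwarzenberger}. The conceptual heart of the matter is that, over $\C P^m$, the two assertions packaged into the statement---that $c$ be the total Chern class of \emph{some} complex $m$-bundle $\xi$, and that this $\xi$ split off $\ell_1 \oplus \dots \oplus \ell_r$---are controlled by two a priori independent mechanisms, the full Schwarzenberger condition and the cohomological factorization of Theorem~\ref{thm:main}. Because the prescribed roots $z_1, \dots, z_r$ are integers, these two requirements collapse onto the single stated condition on $z_{r+1}, \dots, z_m$.

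First I would record that $M = \C P^m$ meets the hypotheses of part (I), (II), or (III) of Theorem~\ref{thm:main} for $r = 1, 2, 3$ respectively, throughout the range $m > 2r-1$. Since $\co^*(\C P^m;\Z) \cong \Z[u]/(u^{m+1})$, we have $\pi_1(\C P^m) = 0$ and $\co_2(\C P^m;\Z) \cong \Z$ torsion-free. For even $m$ (part (II)) the identity $w_2(\C P^m) = (m+1)u \equiv u \not\equiv 0 \pmod 2$ gives $w_2 \neq 0$. For odd $m$ (part (III)) one computes $Sq^2 u^{m-2} = (m-2)u^{m-1} \equiv u^{m-1} \pmod 2$, so $Sq^2$ surjects onto $\co^{2m-2}(\C P^m;\Z/2)$, while $\co_3(\C P^m;\Z) = 0$ since the odd cohomology vanishes. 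The bound $m > 2r-1$, equivalently $2(m-r) \geq m$, guarantees that the dimension hypotheses of the invoked part hold and that the complementary factor below occupies the expected range of degrees.

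With the hypotheses in place, the next step is the decoupling observation: for each integer $z_i$ with $i \leq r$ one has $\binom{z_i}{k} \in \Z$, so the full Schwarzenberger condition $\sum_{i=1}^m \binom{z_i}{k} \in \Z$ for $k = 2, \dots, m$ is equivalent to $\sum_{i=r+1}^m \binom{z_i}{k} \in \Z$ for $k = 2, \dots, m$, which is exactly the stated condition. The forward direction is then immediate: if $c$ is realized as $c(\xi)$ with $\ell_1 \oplus \dots \oplus \ell_r \subseteq \xi$, then $c$ is in particular a total Chern class over $\C P^m$, so \eqref{eq:schwarzenberger} holds and the equivalence yields the condition. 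For the converse, assuming the stated condition, the equivalence returns the full Schwarzenberger condition, so by \cite[Theorem A]{thomas1974almost} there exists a complex $m$-bundle $\xi$ over $\C P^m$ with $c(\xi) = c$. Setting $x \coloneqq \prod_{i=r+1}^m (1 + z_i u)$, I would then show $x$ is an \emph{integral} class of cohomological degree $\leq 2(m-r)$: although the $z_i$ may be complex, $x$ equals the power-series quotient $c \cdot \prod_{i=1}^r (1 + z_i u)^{-1}$ computed in $\Z[[u]]$, and since $c \in \Z[u]$ while each $1 + z_i u$ ($i \leq r$) is a unit of $\Z[[u]]$ with integral inverse, we get $x \in \Z[[u]]$; as a polynomial of degree $m-r$ in $\C[u]$ it lies in $\co^{\leq 2(m-r)}(\C P^m;\Z)$. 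Thus $c(\xi) = c(\ell_1) \cup \dots \cup c(\ell_r) \cup x$, and the relevant part of Theorem~\ref{thm:main} delivers $\ell_1 \oplus \dots \oplus \ell_r \subseteq \xi$.

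I expect the only genuinely delicate point to be this integrality-and-degree claim for the complementary factor $x$: one must see simultaneously that division by $\prod_{i \leq r}(1 + z_i u)$ stays within $\Z[u]$---crucially using that the prescribed roots are integers, not merely that $c$ is integral---and that the result is concentrated in degrees $\leq 2(m-r)$, so that Theorem~\ref{thm:main} applies verbatim. The remaining work, namely the hypothesis checks and the elementary congruence for $\binom{z_i}{k}$, is routine computation in $\co^*(\C P^m)$.
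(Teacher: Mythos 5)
Your proposal is correct and follows essentially the same route as the paper: verify that $\C P^m$ satisfies the hypotheses of Theorem~\ref{thm:main} (the $w_2$ and $Sq^2$ computations you give are exactly the ones in the paper) and combine that theorem with the Schwarzenberger condition. The paper's own proof is terser---it records only the hypothesis checks---so your explicit treatment of the decoupling of the Schwarzenberger sum over the integer roots $z_1,\dots,z_r$ and of the integrality and degree bound for the complementary factor $x$ supplies details the paper leaves implicit.
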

\begin{proof}
    The claim follows by applying the Schwarzenberger condition \eqref{eq:schwarzenberger} and Theorem \ref{thm:main}. To be able to apply the theorem, we verify the conditions.
    
    If $m$ is even, we have $c(\C P^m) = (1+u)^{m+1} \in \co^*(\C P^m;\Z)$, so $c_1(\C P^m) = m+1$. Thus, the mod 2 reduction of $c_1(\C P^m)$ is non-zero in $\co^2(\C P^m;\Z/2)$, so the condition in Theorem \ref{thm:main} (II) is satisfied.
    
    Next, $\C P^m$ is simply connected and $\text{H}_2(\C P^m;\Z) \cong \Z$ does not have 2-torsion. Moreover, if $m$ is odd, Steenrod square acts on the mod 2 reduction of the generator $u^{m-2} \in \co^{2m-4}(\C P^m;\Z)$ by the rule
        \begin{equation*}
            Sq^{2}(u^{m-2}) = (m-2) u^{m-1} \neq 0 \in \co^{2m-2}(\C P^m; \Z/2).
        \end{equation*}
    We conclude that the conditions in Theorem \ref{thm:main} (III) are also satisfied, which finishes the proof.
\end{proof}

\section{Main Theorems}\label{sec:main_thms}

In this section, we state the most comprehensive form of our main results. Due to technical differences, we split the statements according to the parity of the complex rank.

Let $X$ be a finite $2m$-dimensional CW complex. For a rank $m$ complex bundle $\xi$ and line bundles $\ell_1, \dots, \ell_r$ over $X$, the total Chern class of the stable bundle $\xi-\ell_1 \oplus \dots \oplus \ell_r$ is given by
\begin{equation*}
    c(\xi-\ell_1 \oplus \dots \oplus \ell_r) = c(\xi) \cdot c(\ell_1)^{-1} \cdot {\dots} \cdot c(\ell_r)^{-1} \in \co^*(X;\Z).
\end{equation*}
Notice that $c(\ell_i)=1+c_1(\ell_i)$ is indeed invertible with $c(\ell_i)^{-1} = \sum_{j\ge 0} (-c_1(\ell_i))^j$ since $c_1(\ell_i)$ is nilpotent. In other words, we can write the the $n$-th virtual Chern class as
\[
    c_n(\xi - \ell_1 \oplus \cdots \oplus \ell_r) = \sum_{j=0}^n c_{n-j}(\xi)h_j(-c_1(\ell_1), \dots, -c_1(\ell_r)), \text{ for } n \ge 0,
\]
where $h_j$ denotes the complete homogeneous symmetric polynomial of degree $j$. That is, there is always a factorization $c(\xi) = c(\ell_1) \cdots c(\ell_r)x \in \co^*(X;\Z)$, where $x=c(\xi-\ell_1 \oplus \dots \oplus \ell_r)$. Moreover, in view of \eqref{eq:necessary condition} and Question \ref{que:fundamental}, if we are given such a factorization of \(c(\xi)\), then  \(x \in \co^{\le 2(m-r)}(X;\Z)\) if and only if
\begin{equation*}
    c_{m-r+1}(\xi-\ell_1 \oplus \dots \oplus \ell_r) = \dots = c_{m}(\xi-\ell_1 \oplus \dots \oplus \ell_r) = 0 \in \co^*(X;\Z).
\end{equation*}
In the rest of the section, we will formulate our main results for splitting off up to three line bundles in terms of the vanishing of the aforementioned top virtual Chern classes. 

%------------------------------------------
\subsection{Splitting off a single complex line bundle}
We state a generalization of the complex analogue of Hopf's theorem, which states that the Euler characteristic $\chi(M)$ of a manifold $M$ determines the existence of a non-vanishing vector field on $M$. Our result also provides a necessary and sufficient condition for $\cpspan(\xi) \ge 1$.

\begin{prop}\label{prop:complexhopf}
    Let \(\xi \) be a rank \(m\) complex vector bundle over a \(2m\)-dimensional CW complex \(X\), where \(m \ge 1.\) If $\ell$ is a complex line bundle over $M$, then
    \begin{equation*}
        \ell \text{ embeds in } \xi \text{ if and only if } c_m(\xi-\ell) =0.
    \end{equation*}
    In particular, if \(X\) is an almost-complex manifold \(M\), then \(\cspan(M) \geq 1\) if and only if the Euler characteristic \(\chi(M)\) of \(M\) vanishes.
\end{prop}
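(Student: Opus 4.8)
The plan is to translate the embedding problem into a section problem, resolve it with a single primary obstruction, and then identify that obstruction with the virtual Chern class $c_m(\xi - \ell)$ by the splitting principle. The ``in particular'' clause will then follow by specializing $\ell$ to the trivial line bundle.

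\textbf{Reduction to a nonvanishing section.} First I would note that a bundle monomorphism $\ell \hookrightarrow \xi$ is precisely a fiberwise-injective section of $\Hom(\ell,\xi) \cong \ell^* \otimes \xi$. Since $\ell$ has rank one, fiberwise injectivity is the same as being fiberwise nonzero, so $\ell \hookrightarrow \xi$ exists if and only if the rank-$m$ complex bundle $\ell^* \otimes \xi$ admits a nowhere-vanishing section. The associated sphere bundle has fiber $\C^m \smallsetminus \{0\} \simeq S^{2m-1}$, which is $(2m-2)$-connected, and as $\ell^* \otimes \xi$ is complex it is canonically oriented, so the coefficient system $\pi_{2m-1}(S^{2m-1}) \cong \Z$ is untwisted. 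Because $\dim X = 2m$, obstruction theory yields exactly one obstruction to such a section, the primary obstruction in $\co^{2m}(X;\Z)$, which coincides with the Euler class $e(\ell^* \otimes \xi)$. For a complex bundle of rank $m$ this Euler class is the top Chern class, so $\ell \hookrightarrow \xi$ exists if and only if $c_m(\ell^* \otimes \xi) = 0$.

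\textbf{The Chern class identity.} Next I would verify $c_m(\ell^* \otimes \xi) = c_m(\xi - \ell)$ via the splitting principle. Writing $c(\xi) = \prod_{i=1}^m (1 + x_i)$ and $y = c_1(\ell)$, the bundle $\ell^* \otimes \xi$ has Chern roots $x_i - y$, whence $c_m(\ell^* \otimes \xi) = \prod_{i=1}^m (x_i - y) = \sum_{k=0}^m (-1)^{m-k} y^{m-k} c_k(\xi)$. On the other hand, using $c(\ell)^{-1} = \sum_{j \ge 0} (-y)^j$, the degree-$2m$ component of $c(\xi - \ell) = c(\xi)\, c(\ell)^{-1}$ is $\sum_{k=0}^m (-1)^{m-k} y^{m-k} c_k(\xi)$, which is the same expression. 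Combined with the previous step, this proves the equivalence $\ell \hookrightarrow \xi \iff c_m(\xi - \ell) = 0$.

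\textbf{The almost-complex specialization.} For the final assertion I would take $\ell = \varepsilon^1$ and $\xi = \omega$, where $TM \cong \omega^\R$. Then $c(\varepsilon^1) = 1$ gives $c_m(\omega - \varepsilon^1) = c_m(\omega) = c_m(TM) = e(TM)$, and since $M$ is closed and connected with $\co^{2m}(M;\Z) \cong \Z$, we have $\langle e(TM), [M] \rangle = \chi(M)$. Thus $c_m(TM) = 0$ if and only if $\chi(M) = 0$, and applying the first part with the trivial line bundle yields $\cspan(M) \ge 1 \iff \chi(M) = 0$. The only delicate point I anticipate is justifying that the primary obstruction in the first step is the \emph{sole} obstruction and is a well-defined cohomology class equal to the Euler class; this rests on the high connectivity of $S^{2m-1}$ (so the section is determined up to homotopy over the $(2m-1)$-skeleton, making the obstruction cocycle canonical) together with $\dim X = 2m$ (so no higher cells contribute). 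The Chern-class bookkeeping in the second step is routine.
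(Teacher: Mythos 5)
Your proof is correct, but it takes a genuinely different route from the paper's. The paper treats the embedding $\ell_1\oplus\cdots\oplus\ell_r\subseteq\xi$ uniformly for all $r$ as a lifting problem for the classifying map $(\xi,\ell_1,\dots,\ell_r)\colon X\to B(m,1^r)$ along $q_{m,r}\colon B(m-r,1^r)\to B(m,1^r)$, whose fiber is the Stiefel manifold $W(m,r)$; the primary obstruction is identified with the virtual Chern class $c_{m-r+1}(\xi-\ell_1\oplus\cdots\oplus\ell_r)$ by computing a transgression in the Leray--Serre spectral sequence via naturality from the universal fibration $W(m-r)\to BU(m-r)\to BU$ (Lemma~\ref{lem:diffsofq1} and Proposition~\ref{prop:primaryobs}). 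For $r=1$ the fiber $W(m,1)=S^{2m-1}$ is $(2m-2)$-connected and $\dim X=2m$, so the primary obstruction is the sole obstruction and Proposition~\ref{prop:complexhopf} drops out. You instead exploit the rank-one hypothesis directly: a monomorphism $\ell\hookrightarrow\xi$ is a nowhere-zero section of $\ell^*\otimes\xi$, the unique obstruction is the Euler class $e(\ell^*\otimes\xi)=c_m(\ell^*\otimes\xi)$, and the splitting-principle identity $c_m(\ell^*\otimes\xi)=\sum_{k=0}^m(-1)^{m-k}c_1(\ell)^{m-k}c_k(\xi)=c_m(\xi-\ell)$ closes the argument. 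Of course the two obstruction problems are equivalent (the pullback of $q_{m,1}$ along $(\xi,\ell)$ is the unit-sphere bundle of $\Hom(\ell,\xi)$), but your identification of the obstruction class is elementary and self-contained, whereas the paper's spectral-sequence computation is the one that scales to $r=2,3$ in the later sections. The specialization to $\chi(M)$ is handled the same way in both. All steps check out; the only implicit hypothesis in the final clause is that $M$ is closed, connected and oriented so that $\co^{2m}(M;\Z)\cong\Z$ and $\langle e(TM),[M]\rangle=\chi(M)$, which the paper also assumes tacitly.
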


\subsection{Splitting off two complex line bundles}\label{subsec:twolines}
In this section we state necessary and sufficient conditions for splitting off two complex line bundles from a complex vector bundle, thus providing (under certain restrictions) necessary and sufficient conditions for $\cpspan(\xi) \ge 2$. In what follows, \[\rho_k \colon \co^*(X;\Z)\to \co^*(X;\Z/k)\] will denote the map induced by reduction mod \(k\) (see also Section \ref{sec:prelims}); also, we write \(\delta\) for the Bockstein associated to the coefficient sequence \(0 \to \Z \to \Z \to \Z/2 \to 0\), c.f., Notation \ref{notate:cohomologyops}(ii).

We first consider the case when the complex rank of the bundle is odd. 

\begin{thm}\label{thm:cpspan2_odd}
Let \(X\) be a \emph{CW} complex of dimension \(2m\), where \(m\ge 3\) is an odd integer. Let \(\xi\) be a complex \(m\)-plane bundle over \(X\) and let \(\ell_1,\ell_2\) be complex line bundles over \(X.\) 

\begin{enumerate}[\normalfont(i)]
\item Suppose that 
\(
\co^{2m}(X;\Z) = \delta Sq^2 \rho_2 \co^{2m-3}(X;\Z).
\)
Then 
\begin{equation*}
    \ell_1 \oplus \ell_2 \text{ embeds in } \xi \text{ if and only if } c_{m-1}(\xi-\ell_1\oplus \ell_2)=0.
\end{equation*} 
\item Suppose that \(\co^{2m}(X;\Z)\) has no \(2\)-torsion. Then
\begin{equation*}
    \ell_1 \oplus \ell_2 \text{ embeds in } \xi \text{ if and only if } c_{m+1-i}(\xi-\ell_1\oplus \ell_2)=0, \hspace{3mm} \textrm{for}~i=1,2.
\end{equation*}
\end{enumerate}
\end{thm}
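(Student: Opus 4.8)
The plan is to translate the existence of an embedding $\ell_1\oplus\ell_2\hookrightarrow\xi$ into a sectioning problem and then run the Moore--Postnikov obstruction theory set up in Section~\ref{sec:primaryobs}. Such an embedding is the same as a section of the bundle $\mathrm{Mono}(\ell_1\oplus\ell_2,\xi)\to X$ of fibrewise-injective homomorphisms, whose fibre is the complex Stiefel manifold $V_2(\C^m)=U(m)/U(m-2)$ of $2$-frames in $\C^m$, with the $U(1)\times U(1)$-action of $\ell_1,\ell_2$ built into the structure group. Since $V_2(\C^m)$ is $(2m-4)$-connected with $\pi_{2m-3}(V_2(\C^m))\cong\Z$, and since $X$ is $2m$-dimensional, the only possibly nonzero obstruction groups are $\co^{2m-2}(X;\Z)$, $\co^{2m-1}(X;\pi_{2m-2})$ and $\co^{2m}(X;\pi_{2m-1})$; the coefficient systems are simple because the structure group $U(1)\times U(1)\times U(m)$ is connected. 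The primary obstruction, in $\co^{2m-2}(X;\Z)$, is computed in Section~\ref{sec:primaryobs} to be the virtual Chern class $c_{m-1}(\xi-\ell_1\oplus\ell_2)$, giving the ``only if'' direction and the first condition in both (i) and (ii).

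The first genuinely odd-rank input is the homotopy of the fibre just above the primary degree. Using the sphere bundle $S^{2m-3}\to V_2(\C^m)\to S^{2m-1}$, the attaching map of the $(2m-1)$-cell to the bottom cell $S^{2m-3}$ is an element of $\pi_{2m-2}(S^{2m-3})\cong\Z/2$ detected by $Sq^2$ on the bottom cohomology generator. Since $x_{2m-3}$ and $x_{2m-1}$ are the transgressions of $c_{m-1}$ and $c_m$ in the fibre of $BU(m-2)\to BU(m)$, the Wu formula gives $Sq^2 x_{2m-3}=(m-2)x_{2m-1}$, which is nonzero for $m$ odd; hence the attaching map is $\eta$. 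I would deduce (via the two-cell complex $S^{2m-3}\cup_\eta e^{2m-1}\simeq\Sigma^{2m-5}\C P^2$ and the stable computation $\pi_4^s(\C P^2)\cong\Z$) that, for $m$ odd, $\pi_{2m-2}(V_2(\C^m))=0$ and $\pi_{2m-1}(V_2(\C^m))\cong\Z$. The vanishing of $\pi_{2m-2}$ removes the middle obstruction group entirely, so after $c_{m-1}(\xi-\ell_1\oplus\ell_2)=0$ a single obstruction remains, in $\co^{2m}(X;\Z)$.

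Next I would identify this final obstruction. Its principal part is the top virtual Chern class $c_m(\xi-\ell_1\oplus\ell_2)\in\co^{2m}(X;\Z)$, while its indeterminacy is the image of the connecting $k$-invariant of the tower, a stable integral operation of degree $3$ out of $\co^{2m-3}(X;\Z)$ (the group through which lifts over the $(2m-3)$-stage may be altered). Every such operation is a multiple of $\delta Sq^2\rho_2$, and since its mod-$2$ reduction is the operation $Sq^2$, nonzero by the previous paragraph, the coefficient is odd; as $\delta Sq^2\rho_2$ takes $2$-torsion values the indeterminacy is exactly $\delta Sq^2\rho_2\,\co^{2m-3}(X;\Z)$. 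This yields the master criterion
\begin{equation*}
\ell_1\oplus\ell_2\subseteq\xi \iff c_{m-1}(\xi-\ell_1\oplus\ell_2)=0 \ \text{ and }\ c_m(\xi-\ell_1\oplus\ell_2)\in\delta Sq^2\rho_2\,\co^{2m-3}(X;\Z).
\end{equation*}
Part (i) is then the case $\co^{2m}(X;\Z)=\delta Sq^2\rho_2\,\co^{2m-3}(X;\Z)$, in which the second condition is automatic, leaving only $c_{m-1}(\xi-\ell_1\oplus\ell_2)=0$; part (ii) is the case where $\co^{2m}(X;\Z)$ has no $2$-torsion, which forces $\delta Sq^2\rho_2\,\co^{2m-3}(X;\Z)=0$ and turns the second condition into $c_m(\xi-\ell_1\oplus\ell_2)=0$.

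The main obstacle I expect is the precise identification of this single secondary obstruction: proving that its principal part is exactly $c_m(\xi-\ell_1\oplus\ell_2)$ and that its full indeterminacy is precisely $\delta Sq^2\rho_2\,\co^{2m-3}(X;\Z)$. This requires computing the relevant $k$-invariant of the twisted Stiefel bundle---both pinning it down as an odd multiple of $\delta Sq^2\rho_2$ and tracking how it absorbs $c_1(\ell_1)$ and $c_1(\ell_2)$ so that the virtual rather than the ordinary Chern classes appear. By contrast, the local coefficient systems cause no difficulty, as noted above, because the structure group is connected.
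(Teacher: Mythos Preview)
Your overall approach matches the paper's: same lifting problem (the paper's $q_{m,2}\colon B(m-2,1^2)\to B(m,1^2)$), same two-stage Moore--Postnikov tower, correct homotopy groups of the fibre for $m$ odd, correct primary obstruction $c_{m-1}(\xi-\ell_1\oplus\ell_2)$, and correct indeterminacy $\delta Sq^2\rho_2\,\co^{2m-3}(X;\Z)$ for the secondary one.

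The gap is exactly where you flag it, but your expectation of the answer is off by a factor of $2$. The paper shows (Lemma~\ref{lem:F[1]-odd2} feeding into Lemma~\ref{lem:k_2_even}) that $p_1^*(a_m)=2k_2$ in $\co^{2m}(E[1];\Z)$, so every lift $g$ satisfies
\[
2\,g^*(k_2)=c_m(\xi-\ell_1\oplus\ell_2),
\]
not $g^*(k_2)=c_m$. The factor of $2$ arises precisely from the $\eta$ you identified: the fibre $F[1]$ of $q_1$ has $\co^{2m-1}(F[1];\Z)\cong\Z$ as the \emph{nontrivial} extension of $\Z/2\langle\delta Sq^2\iota^2_{2m-4}\rangle$ by $\Z\langle e_{2m-1}\rangle$, so the generator which transgresses to $k_2$ is $e_{2m-1}/2$ rather than $e_{2m-1}$. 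Consequently your master criterion ``$c_m\in\delta Sq^2\rho_2\,\co^{2m-3}(X;\Z)$'' is not correct in general; the actual condition is $g^*(k_2)\in\delta Sq^2\rho_2\,\co^{2m-3}(X;\Z)$, and these differ when $\co^{2m}(X;\Z)$ carries $2$-torsion outside the image of $\delta Sq^2\rho_2$ (there $c_m=2g^*(k_2)$ can vanish while $g^*(k_2)$ does not).

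Your deductions of~(i) and~(ii) happen to survive: under~(i) the whole group equals the indeterminacy so the secondary obstruction vanishes regardless of its value, and under~(ii) the absence of $2$-torsion gives $g^*(k_2)=0\iff 2\,g^*(k_2)=0\iff c_m=0$. But the intermediate master criterion you wrote is false as stated, and the ``no $2$-torsion'' hypothesis in~(ii) is doing double duty---killing the indeterminacy \emph{and} letting you cancel the $2$---which your write-up obscures.
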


In the case of a manifold, the following is a direct application of Theorem \ref{thm:cpspan2_odd}(ii).

\begin{cor}\label{cor:cpspan2_odd}
Let \(M\) be a smooth closed connected manifold of dimension \(2m\) with an almost-complex structure, where \(m\ge 3\) is an odd integer. Let \(TM\) be the tangent bundle of \(M\) and let \(\ell_1,\ell_2\) be complex line bundles over \(M.\) Then 
\begin{equation*}
    \ell_1 \oplus \ell_2 \text{ embeds in } TM \text{ if and only if } c_{m+1-i}(TM-\ell_1\oplus \ell_2)=0, \hspace{3mm} \textrm{for}~i=1,2.
\end{equation*}
\end{cor}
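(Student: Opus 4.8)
The plan is to derive Corollary~\ref{cor:cpspan2_odd} as a direct specialization of Theorem~\ref{thm:cpspan2_odd}(ii). The only thing that requires argument is that the hypothesis of part~(ii)---namely that $\co^{2m}(M;\Z)$ has no $2$-torsion---is automatically satisfied when $X = M$ is a smooth closed connected $2m$-manifold. Everything else is immediate: with $\xi = TM$, the bundle $TM$ is a rank $m$ complex bundle over the $2m$-dimensional CW complex $M$ (the almost-complex structure provides the complex structure), and $m \ge 3$ odd is exactly the hypothesis of the theorem. So the entire content of the corollary reduces to verifying the torsion-freeness of the top cohomology group.

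First I would observe that $M$ is closed, connected, and orientable. Orientability is free here because an almost-complex manifold carries a canonical orientation (the complex structure on $TM$ orients the underlying real bundle), so $M$ is oriented. By Poincar\'e duality for the closed oriented $2m$-manifold $M$, we have an isomorphism $\co^{2m}(M;\Z) \cong \co_0(M;\Z) = H_0(M;\Z)$. Since $M$ is connected, $H_0(M;\Z) \cong \Z$, which is torsion-free. In particular $\co^{2m}(M;\Z)$ has no $2$-torsion, and indeed no torsion at all.

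With this verified, the proof is a one-line invocation: apply Theorem~\ref{thm:cpspan2_odd}(ii) with $X = M$, $\xi = TM$, and the given line bundles $\ell_1, \ell_2$. The hypothesis ``$\co^{2m}(X;\Z)$ has no $2$-torsion'' holds by the preceding paragraph, and the conclusion of the theorem is precisely the statement of the corollary: $\ell_1 \oplus \ell_2$ embeds in $TM$ if and only if $c_{m+1-i}(TM - \ell_1 \oplus \ell_2) = 0$ for $i = 1, 2$.

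There is essentially no main obstacle here; the corollary is a routine consequence once one recognizes that closed oriented manifolds automatically satisfy the torsion hypothesis via Poincar\'e duality. The only point requiring a moment's care is confirming that an almost-complex manifold is orientable (so that Poincar\'e duality with $\Z$-coefficients applies rather than the twisted version), which follows since the complex structure induces a preferred orientation on $TM$. I would state this observation explicitly, then conclude by citing Theorem~\ref{thm:cpspan2_odd}(ii).
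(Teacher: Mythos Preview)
Your proposal is correct and follows exactly the approach the paper intends: the paper states only that the corollary is ``a direct application of Theorem~\ref{thm:cpspan2_odd}(ii),'' and you have correctly supplied the missing verification that $\co^{2m}(M;\Z)\cong\Z$ is torsion-free via Poincar\'e duality for the closed oriented (almost-complex) manifold $M$.
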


Moreover, taking \(\ell_1\) and \(\ell_2\) to be trivial complex line bundles, Theorem \ref{thm:cpspan2_odd} specializes to a result about complex span of manifolds, recovering \cite[Corollary 3.6, p.\ 192]{thomas1967postnikov}.

\begin{cor}
Let \(M\) be a smooth closed connected manifold of dimension \(2m\) with an almost-complex structure, where \(m \ge 3\) is an odd integer. Then
\begin{equation*}
    \span_{\C}(M) \geq 2 \text{ if and only if } c_{m+1-i}(M)=0, \hspace{3mm} \textrm{for}~i=1,2.
\end{equation*}
\end{cor}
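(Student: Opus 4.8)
The plan is to obtain this statement as a formal specialization of the manifold version of our two–line–bundle result, namely Corollary~\ref{cor:cpspan2_odd}, to the case of trivial line bundles. First I would recall that, by definition of the complex span, \(\span_{\C}(M) \ge 2\) holds precisely when the tangent bundle \(TM\) admits two linearly independent sections, equivalently when the trivial rank–two complex bundle \(\varepsilon^2 = \varepsilon^1 \oplus \varepsilon^1\) embeds in \(TM\). Thus the assertion \(\span_{\C}(M) \ge 2\) is literally the assertion \(\ell_1 \oplus \ell_2 \subseteq TM\) in the special case \(\ell_1 = \ell_2 = \varepsilon^1\), the trivial complex line bundle.

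Next I would apply Corollary~\ref{cor:cpspan2_odd} with these choices. Since \(M\) is a smooth closed connected \(2m\)--manifold equipped with an almost–complex structure, it is in particular oriented, so Poincaré duality gives \(\co^{2m}(M;\Z) \cong \co_0(M;\Z) \cong \Z\), which is torsion–free; hence the no--\(2\)--torsion hypothesis underpinning Theorem~\ref{thm:cpspan2_odd}(ii), and therefore Corollary~\ref{cor:cpspan2_odd}, is automatically in force. We thus conclude that \(\ell_1 \oplus \ell_2 \subseteq TM\) if and only if \(c_{m+1-i}(TM - \ell_1 \oplus \ell_2) = 0\) for \(i = 1,2\). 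It then remains only to identify these virtual Chern classes with the ordinary Chern classes of \(M\): because \(\ell_1\) and \(\ell_2\) are trivial, one has \(c(\ell_1) = c(\ell_2) = 1\), so \(c(TM - \ell_1 \oplus \ell_2) = c(TM)\,c(\ell_1)^{-1}\,c(\ell_2)^{-1} = c(TM)\), and reading off the relevant degrees yields \(c_{m+1-i}(TM - \ell_1 \oplus \ell_2) = c_{m+1-i}(TM) = c_{m+1-i}(M)\). The embedding condition therefore becomes \(c_m(M) = c_{m-1}(M) = 0\), which is exactly the claimed equivalence.

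I do not anticipate a genuine obstacle here, as the corollary is a direct specialization rather than a new argument; the only point meriting care is the verification that the torsion hypothesis of Theorem~\ref{thm:cpspan2_odd}(ii) is vacuous for closed oriented manifolds, which, as noted above, is immediate from Poincaré duality. Finally, I would remark that this recovers \cite[Corollary 3.6, p.\ 192]{thomas1967postnikov}.
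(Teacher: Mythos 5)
Your proposal is correct and follows exactly the route the paper intends: the corollary is stated as the specialization of Theorem~\ref{thm:cpspan2_odd}(ii) (via Corollary~\ref{cor:cpspan2_odd}) to $\ell_1 = \ell_2 = \varepsilon^1$, with the no--$2$--torsion hypothesis discharged by $\co^{2m}(M;\Z) \cong \Z$ for a closed connected oriented manifold and the virtual Chern classes collapsing to $c_{m+1-i}(M)$ since $c(\varepsilon^1)=1$. Nothing further is needed.
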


We now state our results in the case when the complex rank of the bundle is even.

\begin{thm}\label{thm:cpspan2_even}
Let \(X\) be a \emph{CW} complex of dimension \(2m\), where \(m \ge 3\) is an even integer. Let \(\xi\) be a complex \(m\)-plane bundle over \(X\) and let  \(\ell_1,\ell_2\) be complex line bundles over \(X.\) Suppose that 
\begin{itemize}
    \item \(\co^{2m-1}(X;\Z/2) = Sq^2 \rho_2 \co^{2m-3}(X;\Z)\); 
    \item \(\co^{2m}(X;\Z/2) = Sq^2\co^{2m-2}(X;\Z/2)\); and 
    \item \(\co^{2m}(X;\Z)\) has no \(2\)-torsion.
\end{itemize}
Then \begin{equation*}
    \ell_1 \oplus \ell_2 \text{ embeds in } \xi \text{ if and only if } c_{m+1-i}(\xi-\ell_1\oplus \ell_2)=0, \hspace{3mm} \textrm{for}~i=1,2.
\end{equation*}
\end{thm}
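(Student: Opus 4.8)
The plan is to convert the splitting problem into the existence of a section and then run Moore--Postnikov obstruction theory. The necessity direction (``$\Rightarrow$'') is the easy half: an embedding $\ell_1 \oplus \ell_2 \subseteq \xi$ forces a factorization of the total Chern class, whence $c_{m-1}(\xi - \ell_1 \oplus \ell_2) = c_m(\xi - \ell_1 \oplus \ell_2) = 0$, exactly as recorded in \eqref{eq:necessary condition}. So the real content is sufficiency. For this, observe that a monomorphism $\ell_1 \oplus \ell_2 \hookrightarrow \xi$ is precisely a nowhere-vanishing bundle map, i.e.\ a section of the fiber bundle $p \colon E \to X$ whose fiber over $x$ is the space $\mathrm{Mono}\big((\ell_1 \oplus \ell_2)_x, \xi_x\big)$ of complex-linear monomorphisms. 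This fiber is homotopy equivalent to the complex Stiefel manifold $W(m,2) = U(m)/U(m-2)$, which is $(2m-4)$-connected, and the structure group $U(1) \times U(1) \times U(m)$ is connected, so the relevant coefficient systems on $X$ are untwisted. Since $\dim X = 2m$, a section exists over the $(2m-3)$-skeleton automatically, and the only obstructions to a global section live in $\co^{k+1}(X; \pi_k(W(m,2)))$ for $k \in \{2m-3,\, 2m-2,\, 2m-1\}$.

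First I would record, via Proposition \ref{prop:piW(m,r)}, the homotopy groups $\pi_{2m-3}(W(m,2)) \cong \Z$, $\pi_{2m-2}(W(m,2)) \cong \Z/2$, and $\pi_{2m-1}(W(m,2))$ for $m$ \emph{even}, together with the $k$-invariants of the Moore--Postnikov tower of $p$. The parity of $m$ enters precisely here, through the attaching maps of the top cells of $W(m,2)$ (a James-number phenomenon), which is exactly why the even case must be separated from the odd case treated in Theorem \ref{thm:cpspan2_odd}. Next, using the computation of Section \ref{sec:primaryobs}, I would identify the primary obstruction $\mathfrak{o}_1 \in \co^{2m-2}(X;\Z)$ to lifting the section one stage up the tower with the virtual Chern class $c_{m-1}(\xi - \ell_1 \oplus \ell_2)$, which has the matching degree $2(m-1)$. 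Thus the primary obstruction vanishes exactly when $c_{m-1}(\xi - \ell_1 \oplus \ell_2) = 0$, disposing of one of the two Chern-class conditions.

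Assuming $c_{m-1}(\xi - \ell_1 \oplus \ell_2) = 0$, I would then climb the tower. The secondary obstruction lies in $\co^{2m-1}(X;\Z/2)$, and by the structure of the first $k$-invariant (a degree-two operation $\co^{2m-3}(X;\Z) \to \co^{2m-1}(X;\Z/2)$, necessarily of the form $Sq^2 \rho_2$) this obstruction is $Sq^2 \rho_2$ applied to a lift of the primary class. The hypothesis $\co^{2m-1}(X;\Z/2) \cong Sq^2 \rho_2 \co^{2m-3}(X;\Z)$ guarantees that the indeterminacy in the choice of earlier lift surjects onto this group, so the secondary obstruction can always be cancelled by re-choosing the previous lift. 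The tertiary obstruction lies in $\co^{2m}(X; \pi_{2m-1}(W(m,2)))$: its mod-$2$ part is killed identically, now using $\co^{2m}(X;\Z/2) \cong Sq^2 \co^{2m-2}(X;\Z/2)$ and the $Sq^2$-type second $k$-invariant, while its integral part is pinned to $c_m(\xi - \ell_1 \oplus \ell_2)$; the assumption that $\co^{2m}(X;\Z)$ has no $2$-torsion removes the ambiguity between this integral obstruction and its mod-$2$ reduction, so it vanishes precisely when $c_m(\xi - \ell_1 \oplus \ell_2) = 0$. Combining the three stages yields the stated equivalence.

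I expect the principal obstacle to be the second step: determining the $k$-invariants of the Moore--Postnikov tower of $W(m,2)$ for $m$ even, and verifying that the higher obstructions are genuinely expressible through $Sq^2 \rho_2$ and $Sq^2$ applied to the primary data, rather than through some further, uncontrolled cohomology operation that the three hypotheses on $X$ would not reach. A secondary difficulty is the bookkeeping needed to confirm that the two purely Chern-theoretic conditions $c_{m-1} = c_m = 0$ really capture the vanishing of \emph{all} obstructions once the three cohomological hypotheses are imposed; this is where elementary rational homotopy theory (as flagged for Proposition \ref{prop:piW(m,r)}) should enter, to pin down $\pi_{2m-1}(W(m,2))$ and the free part of the top obstruction.
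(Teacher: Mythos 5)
Your proposal is correct and follows essentially the same route as the paper: the splitting is converted to a lifting/section problem with fiber $W(m,2)$, a three-stage Moore--Postnikov tower is built from $\pi_{2m-3},\pi_{2m-2},\pi_{2m-1}$ of $W(m,2)$ for $m$ even, the primary obstruction is identified with $c_{m-1}(\xi-\ell_1\oplus\ell_2)$, the first two hypotheses on $X$ make the indeterminacy fill the secondary obstruction group and the mod-$2$ part of the tertiary one, and the no-$2$-torsion hypothesis converts the relation $2\,g^*(k_3)=c_m(\xi-\ell_1\oplus\ell_2)$ into the stated equivalence. The computational core you flag as the main obstacle (that the $k$-invariants are $Sq^2\iota^2_{2m-3}$ and $Sq^2\kappa_{2m-2}$, with the parity of $m$ entering through $Sq^2 b_{m-1}=b_1b_{m-1}+(m-2)b_m$ in the Leray--Serre spectral sequence rather than through a James-number argument) is exactly what the paper supplies in Section \ref{sec:thm_cpspan2_even}.
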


In the case of a manifold, we have the following simpler statement.

\begin{cor}\label{cor:cpspan2_even}
Let \(M\) be a smooth closed simply-connected manifold of dimension \(2m\) with an almost-complex structure, where \(m \ge 3\) is an even integer. Fix line bundles \(\ell_1,\ell_2\) over \(M\). If \(w_2(M)\neq 0\), then 
\begin{equation*}
    \ell_1 \oplus \ell_2 \text{ embeds in } TM \text{ if and only if } c_{m+1-i}(TM-\ell_1\oplus \ell_2)=0, \hspace{3mm} \textrm{for}~i=1,2.
\end{equation*}
\end{cor}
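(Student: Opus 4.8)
The plan is to derive Corollary~\ref{cor:cpspan2_even} from Theorem~\ref{thm:cpspan2_even} by showing that, for a smooth closed simply-connected almost-complex $2m$-manifold $M$ with $m$ even and $w_2(M)\neq 0$, the three cohomological hypotheses of Theorem~\ref{thm:cpspan2_even} are automatically satisfied. The first step is to identify the top-dimensional cohomology: since $M$ is a closed connected oriented (almost-complex implies oriented) $2m$-manifold, Poincar\'e duality gives $\co^{2m}(M;\Z)\cong\co_0(M;\Z)\cong\Z$, which is torsion-free. This immediately verifies the third bullet, that $\co^{2m}(M;\Z)$ has no $2$-torsion.

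The core of the argument is verifying the two Steenrod-square surjectivity conditions, and this is where the hypothesis $w_2(M)\neq 0$ enters decisively. The plan is to exploit Poincar\'e duality together with the Wu formula to convert statements about $Sq^2$ in top and near-top degrees into statements about the Wu classes of $M$. Concretely, for the second bullet I would use that the pairing $\co^{2m}(M;\Z/2)\cong\Z/2$ and that $Sq^2\colon\co^{2m-2}(M;\Z/2)\to\co^{2m}(M;\Z/2)$ is surjective precisely when it is nonzero; by the definition of the Wu class $v_2$ and the Wu formula, for any $y\in\co^{2m-2}(M;\Z/2)$ one has $Sq^2 y=v_2\cup y$, and $v_2=w_2+w_1^2=w_2(M)$ since $M$ is orientable ($w_1=0$). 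Thus $Sq^2$ on $\co^{2m-2}$ is cup product with $w_2(M)$ into the top group; the nondegeneracy of the Poincar\'e duality pairing combined with $w_2(M)\neq 0$ forces this map to be surjective, establishing the second bullet.

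For the first bullet, $\co^{2m-1}(M;\Z/2)\cong Sq^2\rho_2\co^{2m-3}(M;\Z)$, the strategy is analogous but requires a little more care because the source is an integral group reduced mod $2$. I would again apply the Wu formula to rewrite $Sq^2$ in degree $2m-3$ as cup product with $v_2=w_2(M)$, landing in $\co^{2m-1}(M;\Z/2)$, and then argue via Poincar\'e duality (the pairing $\co^{2m-1}\otimes\co^1\to\co^{2m}$ over $\Z/2$) that cup product with the nonzero class $w_2(M)$ has the required image. The subtlety is that we must produce the classes in $\co^{2m-3}$ from integral classes after mod-$2$ reduction; here I would invoke simple connectivity and the structure of low-degree cohomology, or more robustly appeal to the surjectivity of $\rho_2$ in the relevant degrees, to ensure that reducing integral classes exhausts enough of $\co^{2m-3}(M;\Z/2)$ to hit all of $\co^{2m-1}(M;\Z/2)$ under $\cup\,w_2(M)$.

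The main obstacle I anticipate is precisely this interplay between integral and mod-$2$ coefficients in the first bullet: unlike the purely mod-$2$ second condition, which follows cleanly from the Wu formula and Poincar\'e duality, the first condition demands that the relevant mod-$2$ classes lift to (or arise as reductions of) integral classes, so the Bockstein and the mod-$2$ reduction $\rho_2$ must be controlled in degrees $2m-3$ and $2m-1$. I expect the cleanest route is to show that the mod-$2$ reduction $\rho_2\colon\co^{2m-3}(M;\Z)\to\co^{2m-3}(M;\Z/2)$ is surjective onto the subspace needed to generate $\co^{2m-1}(M;\Z/2)$ via $\cup\,w_2(M)$, using that obstructions to such lifts live in odd-degree integral torsion governed by the Bockstein, and then to reduce everything to the nondegeneracy of the mod-$2$ Poincar\'e pairing exactly as in the second bullet. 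Once all three hypotheses are confirmed, the corollary follows immediately by applying Theorem~\ref{thm:cpspan2_even} to $\xi=TM$.
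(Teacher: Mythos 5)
Your verification of the second and third hypotheses of Theorem \ref{thm:cpspan2_even} is exactly the paper's argument: the Wu formula $Sq^2 y = v_2(M)\cup y$ for $y\in\co^{2m-2}(M;\Z/2)$, the identification $v_2(M)=w_2(M)$ from orientability, the nondegeneracy of the mod $2$ Poincar\'e pairing to convert $w_2(M)\neq 0$ into surjectivity onto $\co^{2m}(M;\Z/2)\cong\Z/2$, and torsion-freeness of $\co^{2m}(M;\Z)\cong\Z$. That part matches the paper and is correct.

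Your treatment of the first hypothesis, $\co^{2m-1}(M;\Z/2)\cong Sq^2\rho_2\co^{2m-3}(M;\Z)$, contains a genuine error. The Wu formula $Sq^2x=v_2(M)\cup x$ holds only for $x$ in the complementary degree $2m-2$, where $Sq^2x$ lands in the top group (that is how $v_2$ is defined); it fails in general for $x\in\co^{2m-3}(M;\Z/2)$, so you cannot ``rewrite $Sq^2$ in degree $2m-3$ as cup product with $v_2$.'' The subsequent discussion about controlling $\rho_2$ and the Bockstein is built on that false step and, as you acknowledge, never closes. What you are missing is that simple connectivity makes the first condition vacuous: by Poincar\'e duality $\co^{2m-1}(M;\Z/2)\cong \co_1(M;\Z/2)=0$ since $\pi_1(M)=0$, so both sides of the required isomorphism vanish and there is nothing to prove. (The paper's own proof does not spell this out either, but this is precisely why the corollary assumes $M$ simply connected.) With that observation substituted for your first-bullet argument, the corollary follows from Theorem \ref{thm:cpspan2_even} applied to $\xi=TM$ exactly as you conclude.
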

\begin{proof}
Poincar\'e duality and the connectivity assumption yield $\co^{2m-1}(M;\Z/2) \cong \mathrm{H}_1(M;\Z/2) \cong 0$, so the first condition in Theorem \ref{thm:cpspan2_even} holds.
We have that \(Sq^2 x = v_2(M) x\) for all \(x \in \co^{2m-2}(M;\Z/2),\) where \(v_2(M) \in \co^{2}(M;\Z/2)\) is the second Wu class of \(M.\) But \(M\) is orientable by assumption, so \(v_2(M) = w_2(M).\) Hence by Theorem \ref{thm:cpspan2_even}, we require that \[Sq^2 \co^{2m-2}(M;\Z/2) = w_2(M) \co^{2m-2}(M;\Z/2)\] is isomorphic to \(\co^{2m}(M;\Z/2) \cong \Z/2\); but this happens precisely when \(w_2(M) \neq 0.\) Lastly, since \(\co^{2m}(M;\Z)\) is torsion free,
the corollary follows.
\end{proof}

Moreover, taking \(\ell_1\) and \(\ell_2\) to be trivial complex line bundles, Theorem \ref{thm:cpspan2_even} specializes to a (seemingly new) result about the complex span of non-spin almost-complex manifolds.

\begin{cor}\label{cor:cpspan2_even}
Let \(M\) be a smooth closed simply-connected manifold of dimension \(2m\) with an almost-complex structure, where \(m \ge 3\) is an even integer. If \(w_2(M) \neq 0\), then 
\begin{equation*}
    \span_{\C}(M) \geq 2 \text{ if and only if } c_{m+1-i}(M)=0, \hspace{3mm} \textrm{for}~i=1,2.
\end{equation*}
\end{cor}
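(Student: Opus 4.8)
The plan is to deduce this corollary from Theorem~\ref{thm:cpspan2_even} by specializing to trivial line bundles. Concretely, I would set $\ell_1 = \ell_2 = \varepsilon^1$, the trivial complex line bundle over $M$. By the very definition of complex span, $\span_{\C}(M) \ge 2$ holds if and only if the rank-two trivial bundle $\varepsilon^2 = \varepsilon^1 \oplus \varepsilon^1$ embeds in $TM$; that is, if and only if $\ell_1 \oplus \ell_2 \subseteq TM$. Moreover, since $c(\varepsilon^1) = 1$, the Whitney sum formula gives $c(TM - \ell_1 \oplus \ell_2) = c(TM)$, so the virtual Chern classes collapse to the genuine Chern classes of $M$: we have $c_{m+1-i}(TM - \ell_1 \oplus \ell_2) = c_{m+1-i}(M)$ for $i = 1, 2$. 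Thus the asserted equivalence is precisely the output of Theorem~\ref{thm:cpspan2_even} for this choice of $\ell_1, \ell_2$, once its three hypotheses are verified.

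The remaining task is to check those hypotheses, which runs verbatim as in the proof of the preceding corollary. Since $M$ is a closed orientable $2m$-manifold, Poincar\'{e} duality gives $\co^{2m}(M;\Z) \cong \Z$, which is torsion-free, so the third hypothesis holds. For the first hypothesis, simple connectivity together with Poincar\'{e} duality yields $\co^{2m-1}(M;\Z/2) \cong H_1(M;\Z/2) = 0$; hence both sides of $\co^{2m-1}(M;\Z/2) \cong Sq^2\rho_2\co^{2m-3}(M;\Z)$ vanish and the condition is automatic. For the second hypothesis, I would invoke the Wu formula, which says that $Sq^2$ acts on $\co^{2m-2}(M;\Z/2)$ as cup product with the Wu class $v_2(M)$; orientability forces $w_1(M) = 0$ and hence $v_2(M) = w_2(M)$, so that $Sq^2\co^{2m-2}(M;\Z/2) = w_2(M) \cup \co^{2m-2}(M;\Z/2)$. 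Because the cup product pairing $\co^{2}(M;\Z/2) \times \co^{2m-2}(M;\Z/2) \to \co^{2m}(M;\Z/2) \cong \Z/2$ is nondegenerate (Poincar\'{e} duality), cup product with $w_2(M)$ is surjective onto $\co^{2m}(M;\Z/2)$ exactly when $w_2(M) \neq 0$; this is our standing hypothesis, so the second condition $\co^{2m}(M;\Z/2) \cong Sq^2\co^{2m-2}(M;\Z/2)$ holds.

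With all three hypotheses confirmed, Theorem~\ref{thm:cpspan2_even} gives $\ell_1 \oplus \ell_2 \subseteq TM$ if and only if $c_{m+1-i}(M) = 0$ for $i = 1,2$, which by the first paragraph is exactly the claim. Since the corollary is a direct specialization, there is no serious obstacle beyond invoking Theorem~\ref{thm:cpspan2_even}. The only step that requires genuine care is the second hypothesis, where the non-spin assumption $w_2(M) \neq 0$ enters essentially: one must verify, via the Wu formula and the nondegeneracy of the mod $2$ cup product pairing, that the nonvanishing of $w_2(M)$ is precisely equivalent to the surjectivity of $Sq^2$ in top degree.
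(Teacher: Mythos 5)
Your proposal is correct and follows essentially the same route as the paper: specialize Theorem~\ref{thm:cpspan2_even} to trivial line bundles and verify its hypotheses via Poincar\'{e} duality and the Wu formula (with $v_2(M)=w_2(M)$ by orientability), so that surjectivity of $Sq^2$ in top degree is equivalent to $w_2(M)\neq 0$. If anything, you are slightly more complete than the paper's own argument, which does not explicitly check the first hypothesis $\co^{2m-1}(M;\Z/2)\cong Sq^2\rho_2\co^{2m-3}(M;\Z)$; your observation that both sides vanish by simple connectivity and Poincar\'{e} duality fills that in.
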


\subsection{Splitting off three complex line bundles}\label{subsec:threelines}
In this section we state necessary and sufficient conditions for splitting off three complex line bundles from a complex vector bundle. We first consider the case of bundles with even complex rank.

In what follows, we will denote by $\pi_{2m-2}$ the $(2m-2)$-th homotopy group of the Stiefel manifold $W(m,3)$. By Table (\ref{table:pi2m-2W(m,3)}), the group $\pi_{2m-2}$ is finite abelian with order dividing 12, so $12/
\lvert \pi_{2m-2} \rvert \in \Z$.

\begin{thm}\label{thm:cpspan3_even}
Let \(X\) be a \emph{CW} complex of dimension \(2m\), where \(m \geq 5\) is an even integer. Let \(\xi\) be a complex \(m\)-plane bundle over \(X\) and let \(\ell_1,\ell_2,\ell_3\) be complex line bundles over \(X.\)
\begin{enumerate}[\normalfont(I)]
\item Suppose that the following conditions \emph{(i)--(iii)} hold.  
\begin{enumerate}[\normalfont(i)]
    \item \(
    \co^{2m-2}(X;\Z) = \delta Sq^2 \rho_2 \co^{2m-5}(X;\Z)
    \);
    \item One of the following conditions \emph{(a)--(d)} holds: 
\begin{enumerate}[\normalfont(a)]
\item \(m \equiv 0~\mathrm{mod}~8\) and \(\co^{2m-1}(X;\Z/4) = 0\); or 
\item \(m \equiv 2~\mathrm{mod}~8\); or 
\item \(m \equiv 4~\mathrm{mod}~8\) and \(\co^{2m-1}(X;\Z/2) = 0\); or 
\item \(m \equiv 6~\mathrm{mod}~8\).
\end{enumerate}
    \item \(\co^{2m}(X;\Z)\) has no \(n\)-torsion for \(n = 12/ \lvert \pi_{2m-2} \rvert \in \Z\).
\end{enumerate}
Then
\vspace{-1mm}
\begin{equation*}
    \ell_1 \oplus \ell_2 \oplus \ell_3 \text{ embeds in } \xi \text{ if and only if } c_{m+1-i}(\xi-\ell_1\oplus \ell_2 \oplus \ell_3)=0, \hspace{3mm} \textrm{for}~i=1,3.
\end{equation*}
\item Suppose that the following conditions \emph{(i)--(iii)} hold.  
\begin{enumerate}[\normalfont(i)]
    \item \(\co^{2m-2}(X;\Z)\) has no \(2\)-torsion;
    \item One of the following conditions \emph{(a)--(d)} holds: 
\begin{enumerate}[\normalfont(a)]
\item \(m \equiv 0~\mathrm{mod}~8\) and \(\co^{2m-1}(X;\Z/4) = 0\); or 
\item \(m \equiv 2~\mathrm{mod}~8\); or 
\item \(m \equiv 4~\mathrm{mod}~8\) and \(\co^{2m-1}(X;\Z/2) = 0\); or 
\item \(m \equiv 6~\mathrm{mod}~8\).
\end{enumerate}
    \item \(\co^{2m}(X;\Z)\) has no \(n\)-torsion for \(n = 12/ \lvert \pi_{2m-2} \rvert \in \Z\). 
\end{enumerate}
Then
\vspace{-1mm}
\begin{equation*}
    \ell_1 \oplus \ell_2 \oplus \ell_3 \text{ embeds in } \xi \text{ if and only if } c_{m+1-i}(\xi-\ell_1\oplus \ell_2 \oplus \ell_3)=0, \hspace{3mm} \textrm{for}~i=1,2,3.
\end{equation*} 
\end{enumerate}
\end{thm}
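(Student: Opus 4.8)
The forward implications are immediate: an embedding $\ell_1 \oplus \ell_2 \oplus \ell_3 \subseteq \xi$ yields the factorization of \eqref{eq:necessary condition}, equivalently the vanishing of $c_{m+1-i}(\xi - \ell_1 \oplus \ell_2 \oplus \ell_3)$ for all $i = 1,2,3$, and in particular for the indices named in each part. All the content lies in the converse, which I would establish by Moore--Postnikov obstruction theory. The first step is to recast the problem as a lift: fixing classifying maps for the $\ell_i$ and for $\xi$, an embedding $\ell_1 \oplus \ell_2 \oplus \ell_3 \hookrightarrow \xi$ realizing the prescribed line bundles is equivalent to lifting the classifying map $X \to BU(m)$ of $\xi$ through the Whitney-sum fibration $BU(1)^3 \times BU(m-3) \to BU(m)$ compatibly with the given map $X \to BU(1)^3$. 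The fiber is $W(m,3) = U(m)/(U(1)^3 \times U(m-3))$; since $\pi_1(W(m,3)) = 0$ and $\pi_2(W(m,3)) \cong \Z^3$ records the first Chern classes of the split line bundles, pinning these to the prescribed $\ell_i$ reduces the task to a relative lifting problem whose fiber is $(2m-6)$-connected, with homotopy in degrees $\ge 3$ given by Proposition \ref{prop:piW(m,r)} (summarized in Table \ref{table:pi2m-2W(m,3)}). I would then climb the associated Moore--Postnikov tower skeleton by skeleton.

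Because $X$ has dimension $2m$ and the fiber is $(2m-6)$-connected, the only obstructions to a section lie in $H^{k+1}(X;\pi_k)$ for $2m-5 \le k \le 2m-1$. The three integral stages are the heart of the match with Chern classes: the primary obstruction in $\co^{2m-4}(X;\Z)$ equals $c_{m-2}(\xi - \ell_1 \oplus \ell_2 \oplus \ell_3)$ by the computation of Section \ref{sec:primaryobs} (the index $i=3$), the obstruction in $\co^{2m-2}(X;\Z)$ corresponds to $c_{m-1}$ (the index $i=2$), and the top obstruction in $\co^{2m}(X;\Z)$ corresponds to $c_m$ (the index $i=1$). Thus the Chern vanishing hypotheses start the lift and carry it through the integral stages. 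Here the two parts diverge only at the $\co^{2m-2}$ stage: in part (II) the hypothesis that all three Chern classes vanish, together with the absence of $2$-torsion in $\co^{2m-2}(X;\Z)$, clears this stage directly, whereas in part (I) the identity $\co^{2m-2}(X;\Z) \cong \delta Sq^2 \rho_2 \co^{2m-5}(X;\Z)$ exhibits this obstruction group as the indeterminacy available from lower-skeleton modifications, so the $c_{m-1}$-obstruction is absorbed and $c_{m-1} = 0$ need not be assumed --- this is why part (I) only requires $i \in \{1,3\}$.

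The main obstacle is the remaining non-integral stage, with values in $\pi_{2m-2}$, a finite group of order dividing $12$ whose isomorphism type depends on $m \bmod 8$; the intermediate stage in $\pi_{2m-4}$ is comparatively harmless, its obstruction being killed once the primary class vanishes. At the $\pi_{2m-2}$ stage one must compute the relevant $k$-invariant and show, case by case in $m \bmod 8$, that the obstruction in $\co^{2m-1}(X;\pi_{2m-2})$ vanishes or is absorbed by indeterminacy. This is exactly the purpose of the case split (a)--(d) in hypothesis (ii): each congruence fixes $\pi_{2m-2}$, and the accompanying vanishing of $\co^{2m-1}(X;\Z/4)$ or $\co^{2m-1}(X;\Z/2)$ forces this obstruction into the image of the pertinent Steenrod operation, so that it can be removed by re-choosing the section over the $(2m-2)$-skeleton.

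Finally, hypothesis (iii) --- that $\co^{2m}(X;\Z)$ has no $n$-torsion for $n = 12/\lvert \pi_{2m-2}\rvert$ --- ensures that, once the lower obstructions are cleared, the residual indeterminacy at the top stage is large enough that the vanishing of $c_m$ alone completes the section, and hence produces the desired embedding. The delicate bookkeeping of these secondary and tertiary obstructions, together with their indeterminacies and the $m \bmod 8$ dependence of $\pi_{2m-2}$, is where essentially all the difficulty resides; by contrast, the reduction to a lifting problem and the three integral stages are comparatively formal.
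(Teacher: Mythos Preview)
Your overall architecture matches the paper's: reformulate as the lifting problem \eqref{eq:lifting-problem-q_m,r} with fiber the Stiefel manifold $W(m,3)$, climb the Moore--Postnikov tower \eqref{eq:mp_diag_cpspan3_even}, and identify the three integral stages with the virtual Chern classes $c_{m-2}$, $c_{m-1}$, $c_m$. Your reading of hypothesis (i) in Part (I) versus (II) is also correct: in (I) the entire group $\co^{2m-2}(X;\Z)$ equals the indeterminacy $\delta Sq^2\rho_2\co^{2m-5}(X;\Z)$, so $c_{m-1}$ need not be assumed zero. Two small slips: first, $W(m,3)=U(m)/U(m-3)$ is the Stiefel manifold, not the flag $U(m)/(U(1)^3\times U(m-3))$; the paper handles the $\pi_2\cong\Z^3$ of the flag by passing directly to the fibration $q_{m,3}\colon B(m-3,1^3)\to B(m,1^3)$ with fiber $W(m,3)$. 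Second, for $m$ even there is no stage in $\pi_{2m-4}$ at all (Table~\ref{table:pi(W(m,3))} gives $\pi_{2m-4}(W(m,3))=0$), so your remark about that stage is moot.

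The substantive gap is your account of the last two stages. You describe hypotheses (ii) and (iii) as enlarging the \emph{indeterminacy} so that the obstructions are absorbed; the paper shows the opposite. At the tertiary stage (in $\co^{2m-1}(X;\pi_{2m-2})$) the indeterminacy is computed to be \emph{zero}, and the work goes into identifying the $k$-invariant itself (Proposition~\ref{prop:2m-1-mpinvariant}, Lemmas~\ref{lem:2m-1-coh-of-E[2]} and~\ref{lem:2m-1-coh-of-E[1]}): the $3$-primary part of $k_3$ vanishes unconditionally, while the $2$-primary part is either absent (cases $m\equiv 2,6\bmod 8$) or lives in a group that the hypothesis $\co^{2m-1}(X;\Z/2)=0$ or $\co^{2m-1}(X;\Z/4)=0$ annihilates. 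At the quaternary stage the indeterminacy is again zero, and the key step is Lemma~\ref{lem:(f1f2f3)^*-is-mono}: the map $s^*\colon\co^{2m-1}(W(m,3);\Z)\to\co^{2m-1}(F[3];\Z)$ is the inclusion $\Z\langle e\rangle\hookrightarrow\Z\langle e/n\rangle$ with $n=12/\lvert\pi_{2m-2}\rvert$, which yields $n\cdot g^*(k_4)=c_m(\xi-\ell_1\oplus\ell_2\oplus\ell_3)$. Hypothesis (iii) is therefore not about indeterminacy at all---it is what lets you divide by $n$ and conclude $g^*(k_4)=0\iff c_m=0$. If you tried to carry out your stated mechanism you would find zero indeterminacy and be stuck; the actual argument requires these explicit $k$-invariant and extension computations.
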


We note that Theorem \ref{thm:cpspan3_even}(II) specializes to recover a result about the complex span of manifolds in \cite[Theorem 1 \& Corollary, p.\ 633]{gilmore1967complex}.\\

\begin{cor}
Let \(M\) be a smooth closed manifold of dimension \(2m\) with an almost-complex structure, where \(m \geq 5\) is even. Further let \(\ell_1,\ell_2,\ell_3\) be complex line bundles over \(M\).
\begin{enumerate}[\normalfont(i)]
    \item Suppose that \(m \equiv 2~ \mathrm{mod}~4\). If \(\co_2(X;\Z)\) has no \(2\)-torsion, then 
\[
    \ell_1 \oplus \ell_2 \oplus \ell_3 \text{ embeds in } TM \text{ if and only if } c_{m+1-i}(TM-\ell_1\oplus \ell_2 \oplus \ell_3)=0, \hspace{3mm} \textrm{for}~i=1,2,3.
\]
    \item Suppose that \(m \equiv  0 ~\mathrm{mod}~4.\) Assume that \(M\) is simply-connected and that \(\co_{2}(X;\Z)\) has no \(2\)-torsion. Then
\[
    \ell_1 \oplus \ell_2 \oplus \ell_3 \text{ embeds in } TM \text{ if and only if } c_{m+1-i}(TM-\ell_1\oplus \ell_2 \oplus \ell_3)=0, \hspace{3mm} \textrm{for}~i=1,2,3.
\]
\end{enumerate}
In particular, if \(M\) is simply connected and \(\co_2(M;\Z)\) has no \(2\)-torsion, then
\[
\cspan(M) \geq 3 \text{ if and only if } c_{m+1-i}(M) = 0, \text{ for } i = 1,2,3.
\]
\end{cor}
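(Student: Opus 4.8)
The plan is to derive the corollary as a direct application of Theorem \ref{thm:cpspan3_even}(II), taken with $X = M$ and $\xi = TM$. Since an almost-complex manifold is orientable, Poincaré duality is available over every coefficient ring, and the whole argument reduces to verifying that hypotheses (i)--(iii) of that theorem follow from the assumptions in each case. Two of these hold in both cases. For (iii), Poincaré duality identifies $\co^{2m}(M;\Z)$ with $\co_0(M;\Z)$, which is free abelian; being torsion-free it has no $n$-torsion whatever the value of $n = 12/\lvert \pi_{2m-2}\rvert$, so (iii) holds. For (i), Poincaré duality gives $\co^{2m-2}(M;\Z) \cong \co_2(M;\Z)$, so the hypothesis that $\co_2(M;\Z)$ has no $2$-torsion is precisely condition (i).

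It then remains to check condition (ii), which is the only place the residue of $m$ modulo $4$ and the hypothesis $\pi_1(M) = 0$ enter. First I would observe that $m \equiv 2 \pmod 4$ is the same as $m \equiv 2$ or $6 \pmod 8$, i.e.\ subcases (b) and (d) of condition (ii); since these impose no further cohomological vanishing, (ii) is automatic and case (i) of the corollary follows. Next, $m \equiv 0 \pmod 4$ is the same as $m \equiv 0$ or $4 \pmod 8$, i.e.\ subcases (a) and (c), which demand $\co^{2m-1}(M;\Z/4) = 0$ and $\co^{2m-1}(M;\Z/2) = 0$ respectively. Here I would invoke Poincaré duality with $\Z/k$ coefficients to get $\co^{2m-1}(M;\Z/k) \cong \co_1(M;\Z/k)$, and then use that $\pi_1(M) = 0$ forces $\co_1(M;\Z) = 0$; the universal coefficient theorem, together with $\co_0(M;\Z) \cong \Z$ free, then gives $\co_1(M;\Z/k) = 0$ for every $k$, in particular for $k \in \{2,4\}$. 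This establishes (ii), and hence case (ii) of the corollary.

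Finally, the concluding ``in particular'' statement is obtained by specializing to trivial line bundles $\ell_1 = \ell_2 = \ell_3 = \varepsilon^1$. Since $c(\varepsilon^3) = 1$, the virtual Chern classes collapse to $c_{m+1-i}(TM - \varepsilon^3) = c_{m+1-i}(M)$, while $\varepsilon^3 \subseteq TM$ is by definition $\cspan(M) \ge 3$; applying case (i) when $m \equiv 2 \pmod 4$ and case (ii) when $m \equiv 0 \pmod 4$ (both covered once $M$ is assumed simply connected) yields the stated equivalence. I expect no genuine obstacle: all the substance is already contained in Theorem \ref{thm:cpspan3_even}, and the corollary is a matter of translating its hypotheses through Poincaré duality and the universal coefficient theorem, together with the elementary bookkeeping that matches the mod-$8$ subcases to the mod-$4$ dichotomy.
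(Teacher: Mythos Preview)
Your proposal is correct and follows exactly the approach the paper intends: the corollary is stated immediately after the remark that Theorem~\ref{thm:cpspan3_even}(II) specializes to it, and no separate proof is given in the paper. Your verification of hypotheses (i)--(iii) via Poincar\'e duality (using that an almost-complex manifold is orientable) together with the mod-$8$/mod-$4$ bookkeeping is precisely what is needed to make that specialization explicit.
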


In the case when the complex rank of the bundle is odd, we prove the following.

\begin{thm}\label{thm:cpspan3_odd}
Let \(X\) be a \emph{CW} complex of dimension \(2m\), where \(m \geq 5\) is an odd integer. Let \(\xi\) be a complex \(m\)-plane bundle over \(X\), and let \(\ell_1,\ell_2,\ell_3\) be complex line bundles over \(X.\) Further suppose that 
\begin{enumerate}[\normalfont(i)]
    \item \(\co^{2m-3}(X;\Z/2) = Sq^2 \rho_2 \co^{2m-5}(X;\Z)\);
    \item \(\co^{2m-2}(X;\Z/2) = Sq^2 \co^{2m-4}(X;\Z/2)\)
    \item \(\co^{2m-2}(X;\Z)\) has no \(2\)-torsion; 
    \item \(\co^{2m-1}(X;\Z)\) is finite abelian with no \(2\)-torsion; and 
    \item \(\co^{2m}(X;\Z)\) is torsion free.
\end{enumerate} Then 
\begin{equation*}
    \ell_1 \oplus \ell_2 \oplus \ell_3 \text{ embeds in } \xi \text{ if and only if } c_{m+1-i}(\xi-\ell_1\oplus \ell_2 \oplus \ell_3)=0, \hspace{3mm} \textrm{for}~i=1,2,3.
\end{equation*}
\end{thm}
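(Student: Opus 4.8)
The forward implication is immediate from \eqref{eq:necessary condition}, so the plan is to establish the reverse implication. The strategy is to recast the existence of an embedding $\ell_1 \oplus \ell_2 \oplus \ell_3 \hookrightarrow \xi$ as a sectioning problem and then run Moore--Postnikov obstruction theory, as set up in Section \ref{sec:primaryobs}. Concretely, such an embedding is the same datum as a section of a fibration over $X$ whose fibre is the space $W(m,3)$ associated to the complex Stiefel manifold $V_3(\C^m) = U(m)/U(m-3)$ and twisted by the prescribed line bundles $\ell_i$. Since $W(m,3)$ is $(2m-6)$-connected with $\pi_{2m-5}(W(m,3)) \cong \Z$, and $\dim X = 2m$, only finitely many stages of the tower are relevant: the obstructions to extending a section over successive skeleta live in $\co^{\,n+1}(X;\pi_n(W(m,3)))$ for $2m-5 \le n \le 2m-1$. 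By the primary obstruction computation of Section \ref{sec:primaryobs}, the lowest obstruction is, up to sign, the virtual Chern class $c_{m-2}(\xi - \ell_1 \oplus \ell_2 \oplus \ell_3)$, which vanishes under our hypothesis, so I can lift over the bottom stage.

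Next I would invoke Proposition \ref{prop:piW(m,r)} and Table \ref{table:pi2m-2W(m,3)} to read off $\pi_n(W(m,3))$ for $n = 2m-4, 2m-3, 2m-2, 2m-1$ in the odd-rank case. The three ``Chern degrees'' $2m-4$, $2m-2$, $2m$ carry free summands whose associated obstructions are, by naturality of the primary obstruction, the virtual Chern classes $c_{m-2}$, $c_{m-1}$, $c_m$; the remaining degrees $2m-3$ and $2m-1$, together with the torsion summands, contribute only spurious obstructions invisible to Chern classes. The purpose of hypotheses (i)--(v) is to force these spurious contributions into the indeterminacy of the tower, so that the existence of a section is governed solely by the vanishing of $c_{m-2}$, $c_{m-1}$, $c_m$.

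Carrying this out stage by stage, after lifting past $c_{m-2}$ the obstruction in $\co^{2m-3}(X;\pi_{2m-4}(W(m,3)))$ is a mod $2$ class whose indeterminacy---arising from the freedom in the primary lift---is the image of $Sq^2\rho_2$ on $\co^{2m-5}(X;\Z)$; hypothesis (i) says precisely that this image exhausts $\co^{2m-3}(X;\Z/2)$, so the obstruction can always be annihilated. At the following stage the free part of $\pi_{2m-3}(W(m,3))$ yields $c_{m-1}$, while its $2$-torsion contributes a class in $\co^{2m-2}(X;\Z/2)$; hypotheses (ii) and (iii) eliminate the latter (its indeterminacy being $Sq^2\co^{2m-4}(X;\Z/2)$) and ensure the integral obstruction is faithfully detected by $c_{m-1}$ with no Bockstein interference. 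The obstruction in $\co^{2m-1}(X;\pi_{2m-2}(W(m,3)))$ I expect to vanish outright: the coefficient group is finite by Table \ref{table:pi2m-2W(m,3)}, and hypotheses (iv) and (v) force the relevant universal-coefficient term to be zero. Finally the top obstruction in $\co^{2m}(X;\pi_{2m-1}(W(m,3)))$ is identified with $c_m$, and hypothesis (v) ensures that no extra torsion-valued obstruction survives. Assembling these, a section exists if and only if $c_{m-2} = c_{m-1} = c_m = 0$, which is the assertion.

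The hard part will be the explicit determination of the $k$-invariants of the Moore--Postnikov tower of $W(m,3)$ and the bookkeeping of the secondary and higher obstructions together with their indeterminacies. Specifically, verifying that the non-Chern obstructions in degrees $2m-3$ and $2m-1$ and the torsion obstructions in the Chern degrees are decomposable through $Sq^2$ and Bocksteins---so that hypotheses (i)--(v) exactly absorb them---requires a careful analysis of how the $k$-invariants interact with these operations and of how the indeterminacy propagates up the tower. This is also where the odd parity of $m$ enters, since it streamlines both the homotopy groups of $W(m,3)$ and the relevant attaching data compared with the even-rank case of Theorem \ref{thm:cpspan3_even}.
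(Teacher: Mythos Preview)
Your proposal follows essentially the same approach as the paper: set up the lifting problem \eqref{eq:lifting-problem-q_m,r} for $r=3$ and climb the Moore--Postnikov tower \eqref{eq:mp_diag_cpspan3_odd}, using hypotheses (i)--(v) to kill the non-Chern obstructions stage by stage. Your matching of hypotheses to stages is correct.

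Two points deserve correction. First, your closing remark that odd $m$ ``streamlines'' the homotopy groups relative to the even case is backwards: for $m$ odd one has $\pi_{2m-4}(W(m,3)) \cong \Z/2$ whereas for $m$ even this group vanishes (Table~\ref{table:pi(W(m,3))}), so the odd tower has \emph{five} stages rather than four and is strictly longer.

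Second, and more substantively, your treatment of the quaternary obstruction in $\co^{2m-1}(X;\pi_{2m-2})$ has a gap. You claim hypotheses (iv) and (v) force this group to vanish via universal coefficients. That is correct for the $2$-primary part of $\pi_{2m-2}$: the Bockstein sequence for $\Z \xrightarrow{\cdot 2^a} \Z \to \Z/2^a$, together with (iv) (finite, no $2$-torsion) and (v) (torsion free), gives $\co^{2m-1}(X;\Z/2^a)=0$. But when $m \equiv 0 \pmod 3$ the group $\pi_{2m-2}$ carries a $\Z/3$ summand (Table~\ref{table:pi2m-2W(m,3)}), and hypothesis (iv) permits $3$-torsion in $\co^{2m-1}(X;\Z)$, so $\co^{2m-1}(X;\Z/3)$ need not vanish. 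The paper resolves this not by killing the cohomology group but by computing (Lemma~\ref{lem:k_4_cpspan3_odd}) that the mod~$3$ component of the $k$-invariant $k_4$ is itself zero; this requires a separate $\Z/3$-coefficient spectral-sequence argument and is not a consequence of the hypotheses alone.

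Finally, for the quinary obstruction the paper does not identify the class with $c_m$ by an integral argument analogous to Lemma~\ref{lem:k_2_even}. Instead it rationalizes the entire tower and uses the minimal model of $W(m,3)$ (Theorem~\ref{thm:quinaryobs_cpspan3_odd}) to show the obstruction is a nonzero rational multiple of $c_m$, then invokes (v) to conclude. Your outline is compatible with this, but you should anticipate the rational-homotopy shortcut rather than a direct integral identification.
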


Together, Theorems \ref{thm:cpspan3_even} and \ref{thm:cpspan3_odd} recover a result of Thomas \cite[Corollary 1.6]{thomas1967real} and Gilmore \cite[Theorem 1]{gilmore1967complex} about complex non-vanishing vector fields on almost-complex manifolds. Namely, orientability of the manifold and the connectivity assumption yield the following result.

\begin{cor}\label{cor:cspan3_manifold}
Let \(M\) be a \(2m\)-dimensional, \(3\)-connected almost-complex manifold, where \(m \geq 5.\) Then \(\span_{\C}(M) \geq 3\) if and only if \(c_{m-2}(M) = \chi(M)= 0\).
\end{cor}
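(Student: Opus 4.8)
The plan is to recognize Corollary~\ref{cor:cspan3_manifold} as the special case of Theorems~\ref{thm:cpspan3_even} and~\ref{thm:cpspan3_odd} in which the three line bundles are trivial, and then to use Poincar\'e duality to eliminate the middle Chern class. Since \(\cspan(M) \ge 3\) is by definition the condition \(\varepsilon^3 \subseteq TM\), I would take \(\ell_1 = \ell_2 = \ell_3 = \varepsilon^1\), so that \(c(\ell_i) = 1\) and the virtual Chern classes \(c_{m+1-i}(TM - \ell_1\oplus \ell_2 \oplus \ell_3)\) collapse to the genuine Chern classes \(c_{m+1-i}(M)\) for \(i = 1,2,3\).

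First I would verify that a closed connected \(3\)-connected almost-complex \(2m\)-manifold \(M\) (which is in particular orientable) satisfies the hypotheses of the appropriate theorem, treating the cases \(m\) even (via \thmref{thm:cpspan3_even}, part (II)) and \(m\) odd (via \thmref{thm:cpspan3_odd}) separately. The key observation is that \(3\)-connectivity forces \(\co_i(M;\Z) = 0\) for \(1 \le i \le 3\), whence \(\co^i(M;\Z) = 0\) for \(1 \le i \le 3\) by the universal coefficient theorem, and (since \(M\) is closed and oriented) Poincar\'e duality gives \(\co^{2m-i}(M;R) = 0\) for \(1 \le i \le 3\) and every coefficient group \(R\), while \(\co^{2m}(M;\Z) \cong \Z\). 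Each hypothesis living in degrees \(2m-3,\, 2m-2,\, 2m-1\)---namely the \(Sq^2\), \(\delta Sq^2\rho_2\), torsion, and \(m \bmod 8\) conditions---is then an assertion about groups that either vanish or are torsion-free, so all of them hold, several vacuously. In particular the congruence cases (a)--(d) are satisfied because the relevant groups \(\co^{2m-1}(M;\Z/4)\) and \(\co^{2m-1}(M;\Z/2)\) vanish, and the \(n\)-torsion condition on \(\co^{2m}(M;\Z) \cong \Z\) holds for any \(n\).

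With the hypotheses in hand, the two theorems yield the equivalence
\[
\cspan(M) \ge 3 \iff c_m(M) = c_{m-1}(M) = c_{m-2}(M) = 0.
\]
Finally I would reduce the three conditions to two. The class \(c_{m-1}(M)\) lies in \(\co^{2m-2}(M;\Z) \cong \co_2(M;\Z) = 0\) by Poincar\'e duality, so it vanishes automatically and the condition \(c_{m-1}(M) = 0\) is vacuous. Since the top Chern class of an almost-complex manifold equals its Euler class, \(c_m(M) = 0\) is equivalent to \(\chi(M) = 0\), giving the stated characterization \(\cspan(M) \ge 3 \iff c_{m-2}(M) = \chi(M) = 0\).

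I expect the only genuine labor to be bookkeeping: confirming that every hypothesis of Theorems~\ref{thm:cpspan3_even} and~\ref{thm:cpspan3_odd}---including the congruences modulo \(8\) and the various mod-\(k\) cohomology and torsion requirements---collapses under \(3\)-connectivity. The conceptual content, and the reason that only two Chern classes survive in the final statement, is the Poincar\'e-duality vanishing of \(\co^{2m-2}(M;\Z)\) that renders \(c_{m-1}(M) = 0\) automatic; this is the step I would emphasize.
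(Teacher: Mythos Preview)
Your proposal is correct and is precisely the argument the paper has in mind: apply Theorems~\ref{thm:cpspan3_even}(II) and~\ref{thm:cpspan3_odd} with $\ell_1=\ell_2=\ell_3=\varepsilon^1$, verify via Poincar\'e duality that $3$-connectedness kills the cohomology groups in degrees $2m-3,\,2m-2,\,2m-1$ (making all the hypotheses hold trivially and forcing $c_{m-1}(M)=0$), and identify $c_m(M)$ with the Euler class. The paper does not spell this out beyond stating that the corollary follows from the two theorems, so your write-up is in fact more detailed than what appears there.
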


\begin{comment}
\begin{rem}\label{rem:formulavirtualchern}
Let \(\xi\) be a complex \(m\)-plane bundle over a finite-dimensional CW complex \(X\) and \(\ell_1,\dots, \ell_r\) be complex line bundles over \(X.\) Then, the total Chern class $c(\ell_i) \in \co^*(X;\Z)$ is invertible with inverse the finite sum $\sum_{j\ge 0}(-c_1(\ell_i))^j$. So we can write the virtual Chern class as
\begin{equation*}
    c(\xi-\ell_1 \oplus \dots \oplus \ell_r) = c(\xi) c(\ell_1)^{-1} \dots c(\ell_r)^{-1} \in \co^*(X;\Z).
\end{equation*}
In other words, the $n$-th virtual Chern class, is given by
\[
c_n(\xi - \ell_1 \oplus \cdots \oplus \ell_r) = \sum_{j=0}^n c_{i-j}(\xi)h_j(-c_1(\ell_1), \dots, -c_1(\ell_r)), \text{ for } n \ge 0,
\]
where \(h_j(x_1,\dots, x_r)\) denotes the degree $j$ homogeneous symmetric polynomial in variables \(x_1, \dots, x_r.\)

\end{rem}

Moreover, notice that the virtual Chern classes 
\(
c_i(\xi - \ell_1 \oplus \cdots \oplus \ell_r)
\)
vanish for \(i > m-r\) if and only if the product of linear factors 
\(
\prod_{j=1}^r(1+ x_j)
\)
divides the total Chern class \(c(\xi) = 1 + c_1 + \dots + c_m\) of \(\xi\), compare with the discussion at the beginning of this section. 
\end{comment}

\section{Preliminaries}\label{sec:prelims}
In this section we introduce some notation and develop notions used throughout the text.

\subsection{Some essentials of Moore--Postnikov theory }\label{subsec:mptheory}
For a classical reference, we refer the reader to \cite{thomas2006seminar}. Consider the lifting problem 
\begin{equation}\label{eq:liftingprob}
\begin{tikzcd}
	& E \\
	X & B
	\arrow["{q_0}", from=1-2, to=2-2]
	\arrow["f"', from=2-1, to=2-2]
\end{tikzcd}\end{equation}
subject to the following assumptions:
\begin{enumerate}[(i)]
    \item The space \(X\) is a CW complex. 
    \item The map \(q_0 \colon E \to B\) is a (Hurewicz) fibration with simply connected fiber \(F\) and simply connected base \(B.\)
\end{enumerate}
Write \(\pi_i \coloneqq \pi_{n_i}(F)\) for the nonzero homotopy groups of \(F\), where \((n_i)_{i \in \mathbb{N}}\) is a sequence of integers such that \(n_1 >1\) and \(n_i < n_{i+1}\) for all \(i \in \mathbb{N}\).

To solve the lifting problem (\ref{eq:liftingprob}), we construct a \emph{Moore--Postnikov tower} for \(q_0 \colon E \to B\), which is a homotopy commutative diagram
\begin{equation}\label{eq:factorwithkinvs}
\begin{tikzcd}
	&& \vdots \\
	&& {E[2]} & {K(\pi_3,n_3+1)} \\
	&& {E[1]} & {K(\pi_2,n_2+1)} \\
	F & E & B & {K(\pi_1,n_1+1)} \\
	&& X
	\arrow[hook, from=4-1, to=4-2]
	\arrow["{q_0}"{description}, from=4-2, to=4-3]
	\arrow["{k_1}", from=4-3, to=4-4]
	\arrow["{k_2}", from=3-3, to=3-4]
	\arrow["{p_1}", from=3-3, to=4-3]
	\arrow["{p_2}", from=2-3, to=3-3]
	\arrow["{q_1}"{description}, from=4-2, to=3-3]
	\arrow["{q_2}"{description}, from=4-2, to=2-3]
	\arrow["{k_3}", from=2-3, to=2-4]
	\arrow[from=1-3, to=2-3]
	\arrow["f", from=5-3, to=4-3]
\end{tikzcd}\end{equation}
where 
\begin{itemize}
    \item each \(k_i\) is the \emph{characteristic class of the fibration} \(q_{i-1}\)---i.e., each \(k_i\) is the transgression of the fundamental class in the fibration \(q_{i-1}\); 
    \item each \(E[i]\) is the homotopy fiber of \(k_i\); and 
    \item each \(q_i \colon E \to E[i]\) is an \(n_{i +1}\)-equivalence. 
\end{itemize}

The following proposition illustrates the upshot of such a factorization (\ref{eq:factorwithkinvs}).

\begin{prop}\label{prop:upshot}
Let \(q_0 \colon E \to B\) and \(f \colon X \to B\) be as in \emph{(\ref{eq:liftingprob})}. Let \emph{(\ref{eq:factorwithkinvs})} be a Moore--Postnikov factorization of \(q_0\), and assume that \(X\) is a CW complex with \({\mathrm{dim}}(X) \leq n_r\). Then \(f \colon X \to B\) lifts to \(E\) in \emph{(\ref{eq:liftingprob})} if and only if \(f\) lifts to \(E[r]\) in \emph{(\ref{eq:factorwithkinvs})}. 
\end{prop}
\begin{proof}
Suppose that \(g \colon X \to E\) is a solution to (\ref{eq:liftingprob}). Then \(q_r \circ g \colon X \to E[r]\) is a lift of \(f\) to \(E[r]\). Conversely suppose that \(g \colon X \to E[r]\) is a map such that \[p_1 \circ p_2 \circ \cdots \circ p_{r-1}\circ p_r \circ g \simeq f.\] Then it follows from \cite[Corollary 7.6.23]{spanier1995algebraic} that the induced map \[q_r \circ - \colon [X,E] \to [X,E[r]]\] is bijective (and in particular surjective) since \(\mathrm{dim}(X) < n_{r +1}\) and \(q_r\) is an \((n_{r + 1})\)-equivalence.
\end{proof}

We now set notation to describe the various obstructions to lifting \(f\) to \(E[r-1].\)
\begin{notate}
Let \(f \colon X \to B\) and \(q_0 \colon E \to B\) be given as in (\ref{eq:liftingprob}). Given a factorization (\ref{eq:factorwithkinvs}) for \(q_0\), we write 
\[
\lobs^{n_1+1}(f,q_0,k_1) = \{f^*k_1\} \subseteq \co^{n_1+1}(X;\pi_1)
\] for the \emph{primary obstruction} to lifting \(f\). 
Assuming that \(\lobs^{n_1+1}(f,q_0,k_1) = \{0\}\), there exists a map \(g \colon X \to E[1]\) such that \(p_1 \circ g \simeq f.\) We write 
\[
\lobs^{n_2+1}(f,q_0,k_2) \coloneqq \bigcup_{g \colon X \to E[1]}g^*(k_2) \subseteq \co^{n_2+1}(X;\pi_2)
\]
for the \emph{generalized secondary obstruction} to lifting \(f.\)
Inductively, we denote by
\[
\lobs^{n_i+1}(f,q_0,k_{i}) \coloneqq \bigcup_{g \colon X \to E[i-1]}g^*(k_i) \subseteq \co^{n_i+1}(X;\pi_i)
\]
the \emph{generalized \(i\)-th obstruction} to lifting \(f\), assuming that \(0 \in \lobs^{n_j+1}(f,q_0,k_j)\) for all \(j=1, \dots, i-1.\)
\end{notate}

Of course, computing higher obstructions is subtle, difficult, and even impossible most of the time. Nonetheless, some hope remains in the form of the proceeding proposition. Essentially, since the vertical fibrations \(p_i\) in (\ref{eq:factorwithkinvs}) are actually principal fibrations, any two (homotopy) lifts \(f_1 \colon X \to E[i]\) and \(g_1\colon X \to E[i]\) of \(f\) differ by an action of \(\Omega K(\pi_i,n_i + 1)\). More precisely, we can understand the difference between lifts of \(f\) to \(E[i]\) using the following result, c.f., \cite[Chapter II]{thomas2006seminar} or \cite[Proposition, p.\ 218]{pollina1982tangent}.

\begin{prop}\label{prop:difflifts}
Let \(f_1, g_1 \colon X \to E[i]\) be two lifts of \(f\) such that \[p_1 \circ p_2\circ \cdots \circ p_i \circ f_1 \simeq f \simeq p_1 \circ p_{2}\circ \cdots \circ p_i \circ g_1.\] Then there exists a map \(\alpha \colon X \to \Omega K(\pi_i,n_i+1) \cong K(\pi_i, n_i)\) such that the composition
\[
X \overset{\Delta} \longrightarrow X \times X \overset{f_1 \times \alpha}\longrightarrow E[i] \times K(\pi_i,n_i) \overset{\mu}\longrightarrow E[i]
\]
is homotopic to \(g_1\). Here, we denote by \(\Delta\) the diagonal map and by \(\mu\) the principal action map associated to the total space of the principal fibration \(K(\pi_i,n_i) \longrightarrow E[i] \overset{p_i}\longrightarrow E[i-1].\)
\end{prop}

Finally, we will freely make use of the following information without further mention, as it is often useful for computing the second \(k\)-invariant.

\begin{rem}\label{rem:mpinvrestrict}
Let \(F \xhookrightarrow{} E \overset{q_0}\to B\) be as in (\ref{eq:factorwithkinvs}). Consider the fibration \(p_1 \colon E[1] \to B\) with fiber \(K = K(\pi_1, n_1)\) in the Moore--Postnikov factorization (\ref{eq:factorwithkinvs}) of \(q_0\). Write \(i_1 \colon K \xhookrightarrow{}E[1]\) for the fiber inclusion. Then the Moore--Postnikov invariant \(k_2\) is characterized by the following properties, see \cite[(3.2), p.\ 190]{thomas1967postnikov} and \cite[Theorem 4.1]{hermann1959secondary}. 
\begin{enumerate}
    \item[(i)] \(q_1^*(k_2) = 0\); and 
    \item[(ii)] \(i_1^*(k_2) \in \co^{n_2+1}(K;\pi_2)\) coincides with the bottom Postnikov invariant in the Postnikov tower of the space \(F\). 
\end{enumerate}
\end{rem}

\subsection{A lemma about homotopy fibers}
We will frequently make use of the following fact, see \cite[Lemma 6.1]{thomas1967fields}.

\begin{lem}[Thomas diagram]\label{lem:thomas_diagram}
Consider the following commutative diagram of pointed spaces. 
\begin{equation}\label{eq:commutative_square}\begin{tikzcd}
	A & B \\
	C & D
	\arrow["{u_1}", from=1-1, to=1-2]
	\arrow["{v_0}"', from=1-1, to=2-1]
	\arrow["{v_1}", from=1-2, to=2-2]
	\arrow["{u_0}", from=2-1, to=2-2]
\end{tikzcd}\end{equation}
Denote by \(\mathrm{hofib}(x)\) the standard homotopy fiber of \(x \in \{u_0,u_1,v_0,v_1\}\). Then there are canonical continuous functions \(u \colon \mathrm{hofib}(v_0) \to \mathrm{hofib}(v_1)\) and \(v \colon \mathrm{hofib}(u_1) \to \mathrm{hofib}(u_0)\) between the homotopy fibers, as well as a natural homeomorphism \(\mathrm{hofib}(u) \approx \mathrm{hofib}(v).\)
Thus there is a homotopy commmutative diagram
\begin{equation}\label{eq:thomas_diagram}
\begin{tikzcd}
	F & {\mathrm{hofib}(v_0)} & {\mathrm{hofib}(v_1)} \\
	{\mathrm{hofib}(u_1)} & A & B \\
	{\mathrm{hofib}(u_0)} & C & D
	\arrow[from=1-1, to=1-2]
	\arrow[from=1-1, to=2-1]
	\arrow["u", from=1-2, to=1-3]
	\arrow[from=1-2, to=2-2]
	\arrow[from=1-3, to=2-3]
	\arrow[from=2-1, to=2-2]
	\arrow["v"', from=2-1, to=3-1]
	\arrow["{u_1}", from=2-2, to=2-3]
	\arrow["{v_0}"', from=2-2, to=3-2]
	\arrow["{v_1}", from=2-3, to=3-3]
	\arrow[from=3-1, to=3-2]
	\arrow["{u_0}", from=3-2, to=3-3]
\end{tikzcd}
\end{equation}
where \(F\) denotes the homotopy fiber of \(u\) and \(v\).
\end{lem}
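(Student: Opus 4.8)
The plan is to work with an explicit point-set model of the homotopy fiber and to exhibit the asserted homeomorphism as a literal transposition of path coordinates. For a pointed map \(f\colon X \to Y\), I would use the standard model \(\mathrm{hofib}(f) = \{(x,\omega)\in X\times Y^I : \omega(0)=\ast,\ \omega(1)=f(x)\}\), pointed by \((\ast,\mathrm{const}_\ast)\). The commutativity \(u_0 v_0 = v_1 u_1\) of (\ref{eq:commutative_square}) then produces the two comparison maps directly: set \(u(a,\omega) = (u_1 a,\ u_0\circ\omega)\) and \(v(a,\sigma) = (v_0 a,\ v_1\circ\sigma)\). A one-line check using commutativity shows that the endpoint conditions are preserved—e.g.\ \((u_0\circ\omega)(1) = u_0 v_0 a = v_1 u_1 a = v_1(u_1 a)\)—so that \(u\colon \mathrm{hofib}(v_0)\to \mathrm{hofib}(v_1)\) and \(v\colon \mathrm{hofib}(u_1)\to\mathrm{hofib}(u_0)\) are well defined and continuous, with the vertical/horizontal projections realizing the homotopy fiber sequences.

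Next I would compute both iterated homotopy fibers explicitly. Unwinding the definition, a point of \(\mathrm{hofib}(u)\) consists of a point \(a\in A\), a path \(\omega\) in \(C\) from \(\ast\) to \(v_0 a\), a path \(\beta\) in \(B\) from \(\ast\) to \(u_1 a\), and a square \(\eta\colon I\times I\to D\) whose four edges are prescribed by the boundary data: two edges constant at \(\ast\), and the remaining two equal to \(u_0\circ\omega\) and \(v_1\circ\beta\). Running the identical computation for \(\mathrm{hofib}(v)\) yields exactly the same data \((a,\omega,\beta)\) together with a square \(\gamma\colon I\times I\to D\) in which the roles of the edges \(u_0\circ\omega\) and \(v_1\circ\beta\) are interchanged. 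Consequently the coordinate transposition \((s,t)\mapsto(t,s)\), that is \(\eta\mapsto \eta\circ\mathrm{swap}\) while fixing \((a,\omega,\beta)\), is a well-defined homeomorphism \(\mathrm{hofib}(u)\approx\mathrm{hofib}(v)\). This common space is the \emph{total homotopy fiber} of the square, and I would denote it \(F\).

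Finally I would assemble the \(3\times 3\) diagram (\ref{eq:thomas_diagram}). Its middle \(2\times 2\) block is the original square; the outer row and column are the defining homotopy fiber sequences of \(u_0,u_1,v_0,v_1\); the maps \(u\) and \(v\) occupy the top row and left column; and \(F\) sits in the top-left corner via the identification \(\mathrm{hofib}(u)=\mathrm{hofib}(v)\). Homotopy commutativity of each small square is built into the model, since every fibration map is a projection \((x,\omega)\mapsto x\) and every induced map is post-composition with some \(u_i\) or \(v_j\); the required homotopies are furnished canonically by the path coordinates. Naturality of the entire construction in the square (\ref{eq:commutative_square}) then yields the claimed homotopy-commutative diagram.

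The only genuinely delicate point is the point-set bookkeeping of basepoints and boundary conditions: one must fix the endpoint conventions in the model of \(\mathrm{hofib}\) consistently enough that the transposition of \(\eta\colon I\times I \to D\) is a homeomorphism \emph{on the nose}, since the lemma asserts a natural homeomorphism \(\mathrm{hofib}(u)\approx\mathrm{hofib}(v)\) rather than a mere homotopy equivalence. Once the conventions are pinned down, the remaining verifications are routine but must be carried out carefully; crucially, no homotopy-theoretic input beyond the definitions is needed, which is what makes the resulting homeomorphism strict and hence usable in later diagram chases.
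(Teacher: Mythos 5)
Your proposal is correct. The paper does not prove this lemma at all---it simply cites \cite[Lemma 6.1]{thomas1967fields}---so there is no in-text argument to compare against; your explicit point-set construction is the standard one and is essentially what the cited reference contains. Unwinding $\mathrm{hofib}(u)$ and $\mathrm{hofib}(v)$ with the path-space model does indeed produce the same data $(a,\omega,\beta)$ together with a square $I\times I\to D$ whose boundary conditions are transposed between the two cases, so $\eta\mapsto\eta\circ\mathrm{swap}$ gives the natural homeomorphism on the nose, and the outer squares of the $3\times 3$ diagram even commute strictly since the fibration maps are coordinate projections. The one point worth making explicit in a written-up version is that $u_0,u_1,v_0,v_1$ must be pointed maps (as the hypothesis states) for the endpoint checks such as $(u_0\circ\omega)(0)=u_0(\ast)=\ast$ to go through.
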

Henceforth, we will refer to the diagram (\ref{eq:thomas_diagram}) as the \emph{Thomas diagram associated to} (\ref{eq:commutative_square}). 

\subsection{Cohomology of certain Eilenberg--MacLane Spaces}
In order to carry out the obstruction theory in subsequent sections, we will require (co)homological information about certain Eilenberg--MacLane spaces with various coefficients. We collect the essential results in this section and set notation. \sloppy

\begin{notate}\label{notate:cohomologyops}
We set the following notation. 
\begin{enumerate}[(i)]
    \item The mapping \(\rho_k \colon \co^*(X;\Z)\to \co^*(X;\Z/k)\) is induced by reduction mod \(k.\) For fixed \(k \geq2\), we will write only \(\rho\) instead of \(\rho_k.\)
    \item Let \(k \geq 2.\) We write \(\delta_k\) for the Bockstein homomorphism associated with the short exact sequence 
    \[
    0 \longrightarrow \Z \overset{\cdot k}\longrightarrow \Z \overset{\rho_k}\longrightarrow \Z/k \longrightarrow 0.
    \]
    For fixed \(k \geq2\), we will write only \(\delta\) instead of \(\delta_k.\)
    
    Additionally, given also \(\ell \geq 2\), we write \(\delta_{k}^{\ell}\) for the composition \(\rho_{2^\ell} \circ \delta_k \colon \co^*(\cdot; \Z/k) \to \co^{*+1}(\cdot;\Z/2^\ell)\).
    \item The mapping \(\theta_2^k \colon \co^*(\cdot;\Z/2) \to \co^*(\cdot;\Z/2^k)\) is induced from the inclusion \(\Z/2 \to \Z/2^k\).
    \item For all \(i \geq 0\), we write 
    \[
    Sq^i \colon \co^*(\cdot;\Z/2) \to \co^{*+i}(\cdot;\Z/2)
    \]
    for the Steenrod squares. 
    \item Let \(p > 2\) be prime. For all \(i\geq 0\), we write 
    \[
    P_p^i \colon \co^{*}(\cdot;\Z/p) \to \co^{*+2i(p-1)}(\cdot;\Z/p)
    \]
    for the reduced \(p\)-th power operations.

    \item Let \(p \geq 2\) be prime. We will write \(\widetilde{\beta}_p\) for the Bockstein homomorphism associated to the short exact sequence 
    \[
    0 \longrightarrow \Z/p \longrightarrow \Z/p^2 \longrightarrow \Z/p \longrightarrow 0.
    \] Moreover, we denote by \(\beta_p\) the operation given by \(\beta_p(x) = (-1)^{\mathrm{dim}(x)}\widetilde{\beta}_p(x)\).
    \item We set the following notation for the fundamental classes (see e.g., \cite{mosher2008cohomology}) of Eilenberg--MacLane spaces to be used throughout the paper.
    \begin{itemize}
        \item Write \(\iota_{q} \in \co^q(K(\Z,q);\Z)\) for the integral fundamental class.
        \item Let \(\iota_q^k \coloneqq \rho_k \iota_q \in \co^q(K(\Z,q);\Z/k)\) be the mod \(k\) reduction of \(\iota_q\). When the context is clear, we omit the subscript (emphasizing the dimension) and/or the superscript (emphasizing the coefficients).
        \item Furthermore, denote by \(\kappa_q \in \co^q(K(\Z/k,q);\Z/k)\) the mod \(k\) fundamental class.
    \end{itemize}
\end{enumerate}
\end{notate}

\begin{prop}\label{prop:cohomK(G,q)}
Let \(G\) be any abelian group. The first six nonzero integral homology and cohomology groups of \(K(G,q)\) satisfy \emph{Table \ref{table:cohomK(G,q)}.} Here, \({_pG}\) denotes the subgroup of \(G\) consisting of all elements of order \(p\). 
\end{prop}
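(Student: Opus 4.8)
The plan is to compute the groups in three stages: first reduce the coefficient group $G$ to a cyclic one, then dispatch the two cyclic cases $\Z$ and $\Z/p^k$, and finally pass between homology and cohomology by the universal coefficient theorem. The bottom of the table is immediate: since $K(G,q)$ is $(q-1)$-connected, the Hurewicz theorem gives $\co_i(K(G,q);\Z)=0$ for $0<i<q$ and $\co_q(K(G,q);\Z)\cong G$, which identifies the first nonzero integral homology group.

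For the reduction to cyclic groups I would exploit two functorial features. Writing $G$ as the filtered colimit $\varinjlim_\alpha G_\alpha$ of its finitely generated subgroups, a functorial model of $K(-,q)$ carries this colimit to a colimit of spaces, and since homology commutes with filtered colimits we get $\co_*(K(G,q);\Z)\cong \varinjlim_\alpha \co_*(K(G_\alpha,q);\Z)$. As the functors appearing in Table \ref{table:cohomK(G,q)} — namely $G$, $G/pG\cong G\otimes\Z/p$ and ${}_pG\cong \Tor(G,\Z/p)$ — all commute with filtered colimits, it suffices to treat finitely generated $G$, hence (after splitting $G$ into a sum of cyclics) a direct sum of cyclic groups. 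For this I would invoke $K(A\oplus B,q)\simeq K(A,q)\times K(B,q)$ together with the Künneth theorem: the mixed contributions $\co_i(K(A,q))\otimes\co_j(K(B,q))$ and the $\Tor$-terms with $i,j>0$ lie in total degree at least $2q$, so they fall outside the range of the first six nonzero groups once $q$ is in the stable range (the regime relevant to the applications in this paper, where one needs $2q$ to exceed $q+5$). In that range the homology is therefore additive in $G$ and assembled from the $\otimes$- and $\Tor$-functors, and is determined by the cyclic summands.

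It then remains to pin down the coefficient functors by computing the low-degree integral homology of $K(\Z,q)$ and $K(\Z/p^k,q)$ for $q$ large. Here I would either cite the classical determination of $\co^*(K(\pi,n))$ (see \cite{mosher2008cohomology}) or run the Serre spectral sequence of the path–loop fibration $K(G,q-1)\to PK(G,q)\to K(G,q)$, inducting on $q$ and transgressing the fundamental class. In low degrees the relevant stable operations are $Sq^2$ and the odd-primary power $P_3^1$ (of degrees $2$ and $4$) together with their Bocksteins: dually, $Sq^2$ produces the summand $G/2G$ in degree $q+2$, its Bockstein produces ${}_2G$ in degree $q+3$, $P_3^1$ produces $G/3G$ in degree $q+4$, and its Bockstein produces ${}_3G$ in degree $q+5$, matching the six entries of the table. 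Finally, the universal coefficient sequence $0\to\Ext(\co_{n-1},\Z)\to\co^n(K(G,q);\Z)\to\Hom(\co_n,\Z)\to 0$ converts these homology groups into the cohomology column; the point to handle with care is that torsion contributes to cohomology one degree higher (via $\Ext$), so the cohomology table is a shifted repackaging rather than a degreewise dual of the homology table.

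The main obstacle is the explicit cyclic computation of the third stage, namely identifying the low-degree (co)homology of $K(\Z,q)$ and $K(\Z/p^k,q)$ and, if one runs the spectral sequence directly, resolving the resulting extension problems — distinguishing, for instance, the $\Z/p^k$ contribution from the $\Z/p$ one — by means of the Bockstein operations. Keeping track of the stable-range hypothesis needed in the second stage, so that the Künneth cross-terms stay out of range, is the secondary technical point.
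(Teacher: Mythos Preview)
The paper does not prove this proposition at all: it records the result as classical, with each homology entry in Table~\ref{table:cohomK(G,q)} carrying a direct citation to the relevant theorem of Eilenberg--MacLane \cite{eilenbergmaclane1954groups}, and the cohomology column then follows from the universal coefficient theorem. So your proposal is not a variant of the paper's argument but an attempt to reconstruct a proof where the paper simply quotes the literature.

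Your reduction scheme (filtered colimits to reach finitely generated $G$, then K\"unneth to split off cyclic summands) is sound, and you correctly observe that the cross-terms in K\"unneth live in degrees $\ge 2q$, which is exactly why the table imposes the constraint $q>k$ on the row computing $\co_{q+k}$. Your third stage, however, is incomplete as written: in describing the operations responsible for the low-degree classes you list $Sq^2$, its Bockstein, $P_3^1$, and its Bockstein, but you omit $Sq^4$ and $\beta Sq^4$. These account for the $G/2G$ summand in $\co_{q+4}$ and the ${}_2G$ summand in $\co_{q+5}$, without which your list does not match the table. Since you also offer the alternative of citing the classical computation, this is not fatal, but if you intend to carry out the inductive spectral-sequence argument yourself you must include those contributions.

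In short: the paper's ``proof'' is a citation, and for the purposes of this paper that is entirely appropriate --- these are foundational computations one does not redo. Your approach would yield a self-contained argument, at the cost of a nontrivial spectral-sequence induction; if you pursue it, fix the omission of the $Sq^4$-tower at the prime $2$.
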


\begin{table}[!h]
\centering
\begin{tabular}{@{}cclcc@{}}
\toprule
\(i\)       & \(\co_i(K(G,q);\Z)\)  &                                                 & \(\co^i(K(G,q);\Z)\)                          & \(q\) \\ \midrule
\(i= q\)    & \(G\)                 & \cite[Theorem 20.4]{eilenbergmaclane1954groups} & \(\mathrm{Hom}_{\Z}(G,\Z)\)                   & \(q >0\)\\
\(i= q+1\)  & \(0\)                 & \cite[Theorem 20.5]{eilenbergmaclane1954groups} & \(\mathrm{Ext}_{\Z}^1(G,\Z)\)                & \(q >1\) \\
\(i= q+2\)  & \(G/2G\)              & \cite[Theorem 23.1]{eilenbergmaclane1954groups} & \(0\)                                        & \(q >2\) \\
\(i= q+3\)  & \({_2G}\)             & \cite[Theorem 24.1]{eilenbergmaclane1954groups} & \(\mathrm{Ext}_{\Z}^1(G/2G,\Z)\)    & \(q >3\)          \\
\(i = q+4\) & \(G/2G\oplus G/3G\)   & \cite[Theorem 25.1]{eilenbergmaclane1954groups} & \(\mathrm{Ext}_{\Z}^1({_2G},\Z)\)    &   \(q >4\)      \\
\(i = q+5\) & \({_2G}\oplus {_3G}\) & \cite[Theorem 25.3]{eilenbergmaclane1954groups} & \(\mathrm{Ext}_{\Z}^1(G/2G \oplus G/3G, \Z)\)& \(q >5\)\\ \bottomrule
\end{tabular}
\vspace{2mm}
\caption{The integral (co)homology of \(K(G,q)\).}
\label{table:cohomK(G,q)}
\end{table}
\begin{cor}\label{cor:cohomEMS}
The first seven nonzero integral homology and cohomology groups of \(K(\Z,q)\) satisfy \emph{Table \ref{table:cohomEMS}.}
\end{cor}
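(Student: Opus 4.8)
The plan is to specialize \propref{prop:cohomK(G,q)} to the case \(G=\Z\) and then to extend the computation by a single degree in order to capture the seventh nonzero group. For the first part, I would substitute \(\Hom_{\Z}(\Z,\Z)=\Z\), \(\Ext^1_{\Z}(\Z,\Z)=0\), \(\Z/2\Z=\Z/2\), \(\Z/3\Z=\Z/3\), and (using that \(\Z\) is torsion free) \({}_2\Z={}_3\Z=0\) directly into the six rows of Table~\ref{table:cohomK(G,q)}. This yields the integral homology groups of \(K(\Z,q)\) in degrees \(q\) through \(q+5\), namely \(\Z,0,\Z/2,0,\Z/2\oplus\Z/3,0\), together with the matching cohomology groups. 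As in the proposition, the cohomology column follows from the homology column by the universal coefficient theorem: since \(\co_{i}(K(\Z,q);\Z)\) is finite for \(i>q\), one has \(\co^{q+k}(K(\Z,q);\Z)\cong\Ext^1_{\Z}(\co_{q+k-1}(K(\Z,q);\Z),\Z)\) for every \(k\ge 1\), while \(\co^{q}(K(\Z,q);\Z)=\Z\).

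It remains to compute the seventh nonzero group, which sits in homological degree \(q+6\) and hence just beyond the range of \propref{prop:cohomK(G,q)}. First I would pass to the stable range \(q>6\), where \(\co_{q+6}(K(\Z,q);\Z)\) is a finite abelian group, and determine it one prime at a time using the stable mod-\(p\) cohomology of \(K(\Z,q)\). Through degree \(q+6\) no products occur, so \(\co^{\ast}(K(\Z,q);\Z/p)\) is spanned in this range by the admissible monomials (in the Steenrod squares for \(p=2\), and in the reduced powers and Bockstein for odd \(p\)) of excess less than \(q\) applied to the fundamental class, subject to not ending in the Bockstein. Counting such monomials of degree \(6\) gives a two-dimensional space \(\langle Sq^{6}\iota,\,Sq^{4}Sq^{2}\iota\rangle\) for \(p=2\); for \(p=3\) the only generators below degree \(7\) are \(P^{1}\iota\) and \(\beta P^{1}\iota\) in degrees \(q+4\) and \(q+5\); and for \(p\ge 5\) the first nontrivial power operation raises degree by at least \(8\). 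Hence \(\co^{q+6}(K(\Z,q);\Z/p)=0\) for every odd \(p\), so \(\co_{q+6}\) is a \(2\)-group, and comparing the mod-\(2\) dimensions in adjacent degrees via universal coefficients pins its \(2\)-rank to be \(2\). To finish I would run the Bockstein spectral sequence: since \(Sq^{1}Sq^{6}=Sq^{7}\) and \(Sq^{1}Sq^{4}Sq^{2}=Sq^{5}Sq^{2}\) are linearly independent admissible monomials, the operation \(Sq^{1}\) is injective on \(\co^{q+6}(K(\Z,q);\Z/2)\), so both classes die on the first page and each contributes a \(\Z/2\) rather than a \(\Z/4\). This gives \(\co_{q+6}(K(\Z,q);\Z)\cong\Z/2\oplus\Z/2\), while \(\co^{q+6}(K(\Z,q);\Z)=0\) because \(\co_{q+5}=0\).

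The formal specialization is routine; the only genuine work is the seventh group. The main obstacle is not the order of the \(2\)-primary part of \(\co_{q+6}\) but its isomorphism type, that is, ruling out a \(\Z/4\) summand. This distinction is invisible to the additive mod-\(2\) dimensions and is detected only by the secondary structure, namely the action of \(Sq^{1}\) on \(Sq^{6}\iota\) and \(Sq^{4}Sq^{2}\iota\); the Bockstein spectral sequence is precisely the device that upgrades this into the statement that there is no higher \(2\)-torsion. Alternatively, one could simply quote the continuation of the Eilenberg--Maclane calculation cited in \propref{prop:cohomK(G,q)}.
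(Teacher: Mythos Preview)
Your argument for the abstract group structure is correct, and for the seventh row it is genuinely different from the paper's. The paper simply cites \cite[p.\ 662]{eilenberg1950cohomology} for $\co_{q+6}(K(\Z,q);\Z)\cong\Z/2\oplus\Z/2$, whereas you give a self-contained computation via the mod-$p$ Steenrod algebra and the Bockstein spectral sequence. Your approach has the advantage of being transparent and of exhibiting the mechanism behind such calculations; the paper's citation has the advantage of brevity.

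One minor omission: Table~\ref{table:cohomEMS} records not only the isomorphism types of the cohomology groups but also the specific generators $\delta_2 Sq^2\iota_q^2$, $\delta_2 Sq^4\iota_q^2$, and $\delta_3 P_3^1\iota_q^3$, and these identifications are used later in the paper. The paper obtains them (together with the slightly sharpened ranges $q\ge 3$ and $q\ge 5$ in rows four and six, which are one better than \propref{prop:cohomK(G,q)} alone gives) by citing \cite[Theorems 27.1 \& 27.5]{eilenbergmaclane1954groups}. Your proposal does not address this explicitly, though your Steenrod-algebra framework would in fact yield these generators with essentially no extra work: the nonzero classes in $\co^{q+3}$ and $\co^{q+5}$ arise as integral Bocksteins of the mod-$2$ and mod-$3$ admissible monomials $Sq^2\iota^2$, $Sq^4\iota^2$, and $P^1_3\iota^3$ that you already enumerate.
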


\begin{proof}
The first, second, third, and fifth rows of Table \ref{table:cohomEMS} follow from immediately from Proposition \ref{prop:cohomK(G,q)}. The fourth (resp.\ sixth) row follows from Proposition \ref{prop:cohomK(G,q)} together with \cite[Theorem 27.1]{eilenbergmaclane1954groups} (resp.\ \cite[Theorem 27.5]{eilenbergmaclane1954groups}. One obtains the last row from \cite[p.\ 662]{eilenberg1950cohomology}.
\end{proof}

\begin{table}[h!]
\centering
\begin{tabular}{@{}cccc@{}}
\toprule
\(i\)       & \(\co_i(K(\Z,q);\Z)\) & \(\co^i(K(\Z,q);\Z)\)                                                                  & \(q\) \\ \midrule
\( q\)    & \(\Z\)                & \(\Z\langle \iota_q\rangle\)   & \(q >0\)                                                           \\
\( q+1\)  & \(0\)                 & \(0\)                                                                                  & \(q >1\) \\
\( q+2\)  & \(\Z/2\)              & \(0\)                                                                                  & \(q >2\) \\
\( q+3\)  & \(0\)                 & \(\Z/2\langle \delta_2 Sq^2\iota_q^2 \rangle\)                                                & \(q \geq3\)\\
\(q+4\) & \(\Z/2\oplus \Z/3\)   & \(0\)                                                                                   & \(q >4\)\\
\(q+5\) & \(0\)                 & \(\Z/2\langle \delta_2 Sq^4\iota_q^2\rangle \oplus \Z/3\langle \delta_3 P_3^1\iota_q^3\rangle\) & \(q \geq5\)\\ 

\(q +6\)&\(\Z/2 \oplus \Z/2\) & \(0\) & \(q>6\)\\ \bottomrule
\end{tabular}
\vspace{2mm}
\caption{The integral (co)homology of \(K(\Z,q)\).}
\label{table:cohomEMS}
\end{table}

We will also need partial information about the mod \(k\) cohomology of \(K(\Z,q)\). By the universal coefficient theorem and Corollary \ref{cor:cohomEMS} we make the following computations.

\begin{prop}\label{prop:modpcohomEMS}
For \(q\) sufficiently large (as in \emph{Proposition \ref{prop:cohomK(G,q)}}), the first six nonzero cohomology groups \(\co^i(K(\Z,q);\Z/k)\) satisfy \emph{Table \ref{table:modpcohomEMS}}, for \(k \in \{2,3,4,8\}\)
\end{prop}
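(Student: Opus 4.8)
The plan is to read off each group $\co^i(K(\Z,q);\Z/k)$ directly from the integral cohomology of Corollary \ref{cor:cohomEMS} by means of the universal coefficient theorem, which for each $i$ supplies a short exact sequence
\[
0 \longrightarrow \co^i(K(\Z,q);\Z)\otimes \Z/k \longrightarrow \co^i(K(\Z,q);\Z/k) \longrightarrow \Tor\!\big(\co^{i+1}(K(\Z,q);\Z),\,\Z/k\big) \longrightarrow 0
\]
that splits (though not naturally). First I would substitute the entries of Table \ref{table:cohomEMS} and perform the elementary arithmetic $\Z\otimes\Z/k=\Z/k$, $\Tor(\Z,\Z/k)=0$, and $\Z/p\otimes\Z/k=\Tor(\Z/p,\Z/k)=\Z/\gcd(p,k)$. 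For $k\in\{2,4,8\}$ this kills the $\Z/3$ summand of $\co^{q+5}$ and leaves only the $2$-primary classes, whereas for $k=3$ only the $\Z/3$ survives; in either case the abstract group in each degree is then immediate, and the intermediate degrees ($q{+}1$ for all $k$, and additionally $q{+}2,q{+}3$ for $k=3$) vanish because the relevant tensor and $\Tor$ terms are zero.

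The substance lies in naming the generators through the operations of Notation \ref{notate:cohomologyops}, so that every entry of the table is explicit. The bottom group is always $\co^q(K(\Z,q);\Z/k)=\Z/k\langle\iota_q^k\rangle$, the image of $\iota_q$ under $\rho_k$. Because no two consecutive degrees of Table \ref{table:cohomEMS} carry torsion, every higher nonzero group is a single cyclic summand arising either from a tensor term or from a $\Tor$ term, and the two cases call for different operations. A group coming from a $\Tor$ term $\Tor(\co^{i+1}(K(\Z,q);\Z),\Z/k)$ is generated by a class whose integral Bockstein is the torsion generator in degree $i+1$: this is $\theta_2^k Sq^{2j}\iota_q^2$ for the $2$-primary entries (degrees $q+2$ and $q+4$, with $j=1,2$) and $P_3^1\iota_q^3$ for the lone $3$-primary entry (degree $q+4$ when $k=3$). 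A group coming from a tensor term $\co^i(K(\Z,q);\Z)\otimes\Z/k$ is generated by the $\rho_k$-reduction of that same integral Bockstein: this is $\delta_2^{\ell}Sq^{2j}\iota_q^2$ for $k=2^\ell$ (degrees $q+3$ and $q+5$), which for $k=2$ becomes the odd square $Sq^{2j+1}\iota_q^2$ via $Sq^1=\rho_2\delta_2$ and the Adem relation $Sq^1Sq^{2j}=Sq^{2j+1}$, and $\beta_3 P_3^1\iota_q^3$ for $k=3$ (degree $q+5$).

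The main obstacle I anticipate is bookkeeping rather than conceptual. Because the universal coefficient sequence splits only unnaturally, identifying each generator as a named operation requires naturality of the Bockstein, and the passage to moduli $k=4,8$ forces one to track the inclusion $\theta_2^k$ and the compositions $\delta_k^{\ell}$ precisely. The two identities that do the work are $\delta_{2^\ell}\circ\theta_2^{2^\ell}=\delta_2$, which certifies that $\theta_2^{2^\ell}Sq^{2j}\iota_q^2$ maps to a generator of the $\Tor$ summand in degree $q+2j$, and $\rho_{2^\ell}\circ\delta_2=\delta_2^{\ell}$, which identifies the tensor-summand generator $\delta_2^{\ell}Sq^{2j}\iota_q^2$ in degree $q+2j+1$; both are verified by comparing the coefficient sequences $0\to\Z\to\Z\to\Z/2\to 0$ and $0\to\Z\to\Z\to\Z/2^\ell\to 0$ through the multiplication-by-$2^{\ell-1}$ map. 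With these identities and the analogous $3$-primary statements for $P_3^1$ and $\beta_3$ in hand, the remaining work is a degreewise comparison against the naturality of the universal coefficient sequence, which is routine.
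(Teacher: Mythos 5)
Your proposal is correct and follows essentially the same route as the paper, which derives Table \ref{table:modpcohomEMS} from Corollary \ref{cor:cohomEMS} via the universal coefficient (Bockstein) sequence; your added discussion of identifying generators through $\delta_{2^\ell}\circ\theta = \delta_2$, $\rho_{2^\ell}\circ\delta_2=\delta_2^\ell$, and $Sq^1Sq^{2j}=Sq^{2j+1}$ simply makes explicit what the paper leaves implicit. The only quibble is notational: in the paper's conventions the inclusion $\Z/2\hookrightarrow\Z/2^\ell$ is written $\theta_2^\ell$, not $\theta_2^{2^\ell}$.
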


\begin{table}[h!]
\centering
\begin{tabular}{@{}cccccc@{}}
\toprule
\(i\)   & \(k=2\)                              & \(k=3\)                                        & \(k=4\)                                          & \(k=8\)                                         & \(q\)       \\ \midrule
\(q\)   & \(\Z/2\langle \iota_q^2\rangle\)     & \(\Z/3\langle \iota_q^3\rangle\)               & \(\Z/4\langle \iota_q^4\rangle\)                 & \(\Z/8\langle \iota_q^8\rangle\)                & \(q>0\)     \\
\(q+1\) & \(0\)                                & \(0\)                                          & \(0\)                                            & \(0\)                                           & \(q>1\)     \\
\(q+2\) & \(\Z/2\langle Sq^2\iota_q^2\rangle\) & \(0\)                                          & \(\Z/2\langle \theta_2^2Sq^2\iota_q^2\rangle\)   & \(\Z/2\langle \theta_2^3Sq^2\iota_q^2\rangle\)  & \(q>2\)     \\
\(q+3\) & \(\Z/2\langle Sq^3\iota_q^2\rangle\) & \(0\)                                          & \(\Z/2\langle \delta_2^2Sq^2\iota_q^2\rangle\)   & \(\Z/2\langle \delta_2^3Sq^2 \iota_q^2\rangle\) & \(q\geq 3\) \\
\(q+4\) & \(\Z/2\langle Sq^4\iota_q^2\rangle\) & \(\Z/3\langle P_3^1\iota_q^3\rangle\)          & \(\Z/2\langle \theta_2^2Sq^4\iota_q^2\rangle\)   & \(\Z/2\langle\theta_2^3Sq^4 \iota_q^2\rangle\)  & \(q>4\)     \\
\(q+5\) & \(\Z/2\langle Sq^5\iota_q^2\rangle\) & \(\Z/3\langle \beta_3 P_1^3 \iota_q^3\rangle\) & \(\Z/2\langle \delta_2^2 Sq^4 \iota_q^2\rangle\) & \(\Z/2\langle\delta_2^3Sq^4 \iota_q^2\rangle\)  & \(q\geq5\)  \\ \bottomrule
\end{tabular}
\caption{The cohomology groups \(\co^i(K(\Z,q);\Z/k)\) for \(k \in \{2,3,4,8\}\).}
\label{table:modpcohomEMS}
\end{table}

We remark that the \(k =2\) column of Table \ref{table:modpcohomEMS} was classically computed by Serre, see e.g., \cite[Theorem 3, p.\ 90]{mosher2008cohomology}; moreover, the \(k=3\) column was computed by Cartan, see e.g., \cite[p.\ 414]{fomenko2016homotopical}. 

Finally, we will need partial information about the integral cohomology of \(K(\Z/k,q)\). We compute the following groups using Proposition \ref{prop:cohomK(G,q)} and the universal coefficient theorem.

\begin{prop}\label{prop:cohomK(Z/p,q)}
Let \(G= \Z/k\) for \(k \in \{2,3,4,8\}.\) For \(q\) sufficiently large (as in \emph{Proposition \ref{prop:cohomK(G,q)}}), the first three integral (co)homology groups of \(K(G,q)\) satisfy \emph{Table \ref{table:cohomK(Z/p,q)}}. 
\end{prop}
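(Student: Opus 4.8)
The plan is to derive every entry of Table~\ref{table:cohomK(Z/p,q)} by specializing the general computation of Proposition~\ref{prop:cohomK(G,q)} to $G = \Z/k$ and then passing from homology to cohomology via the universal coefficient theorem. Since $q$ is assumed sufficiently large, we are in the stable range where Table~\ref{table:cohomK(G,q)} applies, so the integral homology groups $\co_i(K(\Z/k,q);\Z)$ in the relevant degrees are read off directly by substituting $G = \Z/k$ into the homology column of that table.

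First I would compute the elementary group-theoretic invariants appearing in Proposition~\ref{prop:cohomK(G,q)} for $G = \Z/k$, namely $G/2G$, $G/3G$, ${}_2G$, and ${}_3G$; it is convenient to separate the cases according to which prime divides $k$. For $k \in \{2,4,8\}$ the relevant prime is $2$, so that $G/2G \cong {}_2G \cong \Z/2$ while $G/3G = {}_3G = 0$ (as $3$ is invertible modulo $k$). For $k = 3$ the situation is reversed: $G/2G = {}_2G = 0$ while $G/3G \cong {}_3G \cong \Z/3$. Substituting these into the homology column of Table~\ref{table:cohomK(G,q)} immediately yields the integral homology groups of $K(\Z/k,q)$ in the stated range.

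Next I would obtain the integral cohomology groups from the homology groups using the universal coefficient short exact sequence
\[
0 \longrightarrow \Ext^1_\Z(\co_{i-1}(K(\Z/k,q);\Z),\Z) \longrightarrow \co^i(K(\Z/k,q);\Z) \longrightarrow \Hom_\Z(\co_i(K(\Z/k,q);\Z),\Z) \longrightarrow 0.
\]
The key simplification is that all the reduced homology groups in question are \emph{finite} abelian groups; hence $\Hom_\Z(\co_i;\Z) = 0$ in every positive degree, and for a finite abelian group $A$ one has a (non-canonical) isomorphism $\Ext^1_\Z(A,\Z) \cong A$. Consequently the sequence collapses to a dimension shift $\co^i(K(\Z/k,q);\Z) \cong \co_{i-1}(K(\Z/k,q);\Z)$, which reproduces the cohomology column and completes the verification of the table.

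This argument is essentially bookkeeping, so there is no serious conceptual obstacle. The main points requiring care are, first, ensuring that $q$ is large enough that every degree under consideration lies in the stable range of Proposition~\ref{prop:cohomK(G,q)}, so that the listed table entries are the correct ones and no unstable corrections intrude; and second, recording the generators consistently with the conventions of Notation~\ref{notate:cohomologyops}, so that the named classes in Table~\ref{table:cohomK(Z/p,q)} match the abstract isomorphism types produced by the universal coefficient theorem.
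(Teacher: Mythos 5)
Your proposal is correct and follows exactly the route the paper takes: the paper's (one-line) proof also derives the table by specializing Proposition~\ref{prop:cohomK(G,q)} to $G=\Z/k$ and applying the universal coefficient theorem, and your computation of $G/2G$, ${}_2G$, $G/3G$, ${}_3G$ and the collapse of the UCT sequence for finite coefficients fills in precisely the intended bookkeeping.
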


\begin{table}[h!]
\centering
\begin{tabular}{@{}ccccc@{}}
\toprule
\(\co_i(K(G,q);\Z)\) & \(G= \Z/2\)                           & \(G=\Z/3\)                            & \(G= \Z/4\)                           & \(G=\Z/8\)                             \\ \midrule
\(i=q\)              & \(\Z/2\)                              & \(\Z/3\)                              & \(\Z/4\)                              & \(\Z/8\)                               \\
\(i=q+1\)            & \(0\)                                 & \(0\)                                 & \(0\)                                 & \(0\)                                  \\
\(i=q+2\)            & \(\Z/2\)                              & \(0\)                                 & \(\Z/2\)                              & \(\Z/2\)                               \\ \midrule
\(\co^i(K(G,q);\Z)\) & \(G= \Z/2\)                           & \(G=\Z/3\)                            & \(G= \Z/4\)                           & \(G=\Z/8\)                             \\
\(i=q\)              & \(0\)                                 & \(0\)                                 & \(0\)                                 & \(0\)                                  \\
\(i=q+1\)            & \(\Z/2\langle \delta_2 \kappa_q\rangle\) & \(\Z/3\langle \delta_3 \kappa_q\rangle\) & \(\Z/4\langle \delta_4 \kappa_q\rangle\) & \(\Z/8\langle \delta_8 \kappa_q \rangle\) \\
\(i=q+2\)            & \(0\)                                 & \(0\)                                 & \(0\)                                 & \(0\)                                  \\ \bottomrule
\end{tabular}
\vspace{2mm}
\caption{The integral (co)homology groups of \(K(\Z/k,q)\), for \(k \in \{2,3,4,8\}\).}
\vspace{-4mm}
\label{table:cohomK(Z/p,q)}
\end{table}

\subsection{Some homotopy groups of complex Stiefel manifolds}
For \(m \geq r\), let \(W(m,r)\) denote the complex Stiefel manifold of \(r\)-frames in \(\C^m.\) Recalling that \(W(m,r)\) is \(2(m-r)\)-connected, we collect relevant homotopy groups of \(W(m,r)\).

\begin{prop}\label{prop:piW(m,r)}
Set \(\pi_{2m-2} \coloneqq \pi_{2m-2}(W(m,3)).\)
\begin{enumerate}[\normalfont(i)]
\item For \(r = 2,3\), the homotopy groups \(\pi_{2(m-r)+i_r}(W(m,r))\) satisfy \emph{Table \ref{table:pi(W(m,2))}} and \emph{Table \ref{table:pi(W(m,3))}}, where \(i_r =1,2,\dots, 2r-1\).
\item Additionally, the homotopy group \(\pi_{2m-2}\) satisfies \emph{Table \ref{table:pi2m-2W(m,3)}}, for \(m >4.\)
\end{enumerate}
\end{prop}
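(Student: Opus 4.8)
The plan is to combine two inputs. First, rational homotopy theory: from the principal fibration $U(m-r)\to U(m)\to W(m,r)$ and the classical fact that $U(k)\simeq_{\mathbb{Q}}\prod_{j=1}^{k}S^{2j-1}$, one obtains a rational equivalence $W(m,r)\simeq_{\mathbb{Q}}S^{2(m-r)+1}\times S^{2(m-r)+3}\times\cdots\times S^{2m-1}$. Hence $\pi_n(W(m,r))\otimes\mathbb{Q}=\mathbb{Q}$ precisely when $n\in\{2(m-r)+1,\dots,2m-1\}$ is odd, and $0$ otherwise; in particular every homotopy group in even degree is finite, so $\pi_{2m-2}(W(m,3))$ is finite. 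This pins down the free rank of each group in question. Second, the bundle fibration $W(m-1,r-1)\to W(m,r)\to S^{2m-1}$ (with $W(m,1)=S^{2m-1}$) lets us compute inductively via its long exact sequence, reducing everything to the stable stems $\pi^s_0=\Z$, $\pi^s_1=\Z/2\langle\eta\rangle$, $\pi^s_2=\Z/2\langle\eta^2\rangle$, and $\pi^s_3=\Z/24\langle\nu\rangle$, which govern the relevant range since the fibers are highly connected spheres.

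\textbf{Part (i).} For $r=2$ I would use $S^{2m-3}\to W(m,2)\to S^{2m-1}$. In the range $n\le 2m-2$ the base sphere contributes nothing, so $\pi_n(W(m,2))\cong\pi_n(S^{2m-3})=\pi^s_{n-(2m-3)}$, giving $\Z$ and $\Z/2$ for $i_2=1,2$. The only delicate point is $i_2=3$, where the boundary $\partial\colon\pi_{2m-1}(S^{2m-1})=\Z\to\pi_{2m-2}(S^{2m-3})=\Z/2$ enters; this map is the reduction of multiplication by $(m-1)$, detected by $Sq^2 x_{2m-3}=(m-1)\,x_{2m-1}$ on $\co^*(W(m,2);\Z/2)=\Lambda(x_{2m-3},x_{2m-1})$, which produces the dependence on the parity of $m$ and determines $\pi_{2m-1}(W(m,2))$ after resolving a rank-$1$ extension using the rational information. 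For $r=3$ I would then feed these groups into $W(m-1,2)\to W(m,3)\to S^{2m-1}$: for $n\le 2m-3$ one has $\pi_n(W(m,3))\cong\pi_n(W(m-1,2))$, which is exactly the $r=2$ computation reindexed with bottom sphere $S^{2m-5}$, and the remaining extensions are settled by the free ranks coming from rational homotopy.

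\textbf{Part (ii) and the main obstacle.} The group $\pi_{2m-2}(W(m,3))$ is exactly $\coker\big(\partial\colon\pi_{2m-1}(S^{2m-1})=\Z\to\pi_{2m-2}(W(m-1,2))\big)$, the same boundary that also controls $\pi_{2m-1}(W(m,3))$ in Part (i). The first task is to compute $\pi_{2m-2}(W(m-1,2))$, which lies one stem above the range of Part (i); using $S^{2m-5}\to W(m-1,2)\to S^{2m-3}$ this group is assembled from $\pi_{2m-2}(S^{2m-5})=\pi^s_3=\Z/24$ and $\pi_{2m-2}(S^{2m-3})=\pi^s_1=\Z/2$, where the boundary maps are composition with $(m-2)\eta$ and use the relation $\eta^3=12\nu$; this already splits the answer by the parity of $m$ (e.g.\ an order-$48$ extension when $m$ is even versus $\Z/12$ when $m$ is odd). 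The hard part will be to identify the characteristic element $\partial(\iota_{2m-1})$ and its order inside $\pi_{2m-2}(W(m-1,2))$: this element is controlled by the binomial coefficients $\binom{m-1}{1},\binom{m-1}{2}$ reduced modulo powers of $2$ and $3$---equivalently by the action of $Sq^2$, $Sq^4$ and the mod-$3$ power operation $P_3^1$ on $\co^*(W(m,3))$---and the $2$-adic valuations of these coefficients are precisely what produce the stratification by $m\bmod 8$ recorded in Table~\ref{table:pi2m-2W(m,3)}, while the summand $\Z/3\subset\Z/24$ accounts for the factor of $3$ in the quantity $12/\lvert\pi_{2m-2}\rvert$. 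I expect the genuinely technical step to be resolving the resulting group extension (for instance distinguishing $\Z/48$ from $\Z/24\oplus\Z/2$), which I would settle by a secondary-operation or Toda-bracket argument together with the constraints imposed by the rational ranks.
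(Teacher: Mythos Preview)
Your overall strategy is sound and is genuinely different from the paper's. The paper's proof is essentially a citation: Part~(i) and the $p$-primary decomposition of $\pi_{2m-2}$ in Part~(ii) are taken directly from Gilmore's 1967 paper on complex Stiefel manifolds, and the only argument actually supplied is the rational one you also give (via the minimal model $\Lambda_{\mathbb{Q}}(e_{2m-5},e_{2m-3},e_{2m-1})$), which shows that $\pi_{2m-2}(W(m,3))$ has no $\Z$-summand. Your proposal to derive everything from the sphere fibrations and the stable stems would yield an independent computation, which is more informative but also much longer.

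That said, your sketch of Part~(i) contains a real slip. You assert that for $n\le 2m-2$ the base in $S^{2m-3}\to W(m,2)\to S^{2m-1}$ ``contributes nothing'' and hence $\pi_{2m-2}(W(m,2))\cong\pi^s_1=\Z/2$ unconditionally. But the long exact sequence at that degree is
\[
\pi_{2m-1}(S^{2m-1})\xrightarrow{\ \partial\ }\pi_{2m-2}(S^{2m-3})\longrightarrow\pi_{2m-2}(W(m,2))\longrightarrow 0,
\]
and $\partial$ is precisely the map you describe later---composition with the characteristic element---so the parity dependence already appears at $i_2=2$, not only at $i_2=3$. Moreover, using the paper's formula $Sq^{2}(f_{2i+1})=\binom{i}{1}f_{2i+3}$ one gets $Sq^2 f_{2m-3}=(m-2)f_{2m-1}$, not $(m-1)$ as you wrote; with the corrected coefficient, $\partial$ is onto exactly when $m$ is odd, reproducing Table~\ref{table:pi(W(m,2))}. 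This off-by-one propagates: every parity case in your outline rests on which of $m-1$, $m-2$, $m-3$ is being reduced, and shifting by one inverts each conclusion. With that fixed the fibration programme can certainly be pushed through (this is essentially what Gilmore did), though as you anticipate, resolving the extension for Part~(ii) is substantially more work than the one-line rational argument the paper needs once Gilmore is cited.
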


\begin{proof}
Proposition \ref{prop:piW(m,r)}(i) follows from \cite{gilmore1967complex} and we demonstrate Proposition \ref{prop:piW(m,r)}(ii). To this end, the \(p\)-primary part of \(\pi_{2m-2}\) has been computed by \cite{gilmore1967complex} for prime \(p.\) By \cite[Example 2.40, p.\ 71]{oprea2008algmodels} the minimal model of the complex Stiefel manifold \(W(m,3)\) is given by the exterior algebra 
\[
\Lambda_{\mathbb{Q}}(e_{2m-5}, e_{2m-3}, e_{2m-1}),
\]
over the rationals. Hence it follows from \cite[Theorem 2.50, p.\ 75]{oprea2008algmodels} that \(\pi_{2m-2}(W(m,3))\) has no \(\Z\)-summand.
\end{proof}

\begin{table}[h!]
\centering
\begin{tabular}{@{}ccc@{}}
\toprule
\(\pi_{2(m-2)+i_2}(W(m,2))\) & \(m > 2\) odd & \(m >2\) even     \\ \midrule
\(i_2 = 1\)        & \(\Z\)       & \(\Z\)            \\
\(i_2 = 2\)        & \(0\)        & \(\Z/2\)          \\
\(i_2 = 3\)        & \(\Z\)       & \(\Z\oplus \Z/2\) \\ \bottomrule
\end{tabular}
\vspace{2mm}
\caption{Some homotopy groups of \(W(m,2)\), for \(m >2\).}
\label{table:pi(W(m,2))}
\end{table}

\begin{table}[h!]
\centering
\begin{tabular}{ccc}
\toprule
\(\pi_{2(m-3)+i_3}(W(m,3))\) & \(m >2\) odd      & \(m >4\) even  \\ \midrule
\(i_3 = 1\)        & \(\Z\)            & \(\Z\)         \\
\(i_3 = 2\)        & \(\Z/2\)          & \(0\)          \\
\(i_3 = 3\)        & \(\Z\oplus \Z/2\) & \(\Z\)         \\
\(i_3 = 4\)        & \(\pi_{2m-2}\)    & \(\pi_{2m-2}\) \\
\(i_3 = 5\)        & \(\Z\) $\oplus \begin{cases}
    \Z/2 & m=5,\\
    0 & \text{otherwise}
\end{cases}$           & \(\Z\)         \\ \bottomrule
\end{tabular}
\vspace{2mm}
\caption{Some homotopy groups of \(W(m,3)\), for \(m >4\).}
\label{table:pi(W(m,3))}
\end{table}

\begin{table}[h!]
\centering
\begin{tabular}{@{}ccc@{}}
\toprule
\(m ~\mathrm{mod}~ 3\) & \(m ~\mathrm{mod}~ 8\) & \(\pi_{2m-2}\) \\ \midrule
\(\neq 0\)               & \(2 \text{ or } 6\)      & \(0\)                   \\
\(\neq 0\)               & \(1,4 \text{ or } 5\)    & \(\Z/2\)               \\
\(\neq 0\)               & \(0 \text{ or } 7\)      & \(\Z/4\)               \\
\(\neq 0\)               & \(3\)                    & \(\Z/8\)               \\
\(0\)                    & \(2 \text{ or } 6\)      & \(\Z/3\)               \\
\(0\)                    & \(1,4 \text{ or } 5\)    & \(\Z/3 \oplus \Z/2\)   \\
\(0\)                    & \(0 \text{ or } 7\)      & \(\Z/3 \oplus \Z/4\)   \\
\(0\)                    & \(3\)                    & \(\Z/3 \oplus \Z/8\)   \\ \bottomrule
\end{tabular}
\vspace{2mm}
\caption{The homotopy group \(\pi_{2m-2}(W(m,3))\), for \(m>4\).}
\label{table:pi2m-2W(m,3)}
\end{table}

\subsection{Relevant cohomology rings}
In this section, we establish notation for the rest of the paper, and, due to frequent use throughout the paper, we collate various information about relevant cohomology rings.

\begin{notate}\label{notate:1^k}
Let \(m \geq r\). We set the following notation.
\begin{enumerate}[(i)]
    \item As before, we write \(W(m,r)\) for the complex Stiefel manifold of \(r\)-frames in \(\C^m.\) Recall that \(W(m,r)\) may be viewed as the homogeneous space \(U(m)/U(m-r)\).
    \item We denote by 
    \[
        F(m-r,1^r) \coloneqq U(m)/(U(m-r)\times U(1)^r),
    \]
    the flag manifold,
    where $U(m-r) \times U(1)^r \subseteq U(m)$ is the diagonal inclusion.
    \item We write \(B(m,1^r)\) for the product space \(BU(m)\times BU(1)^r.\) 
    \item Let \(\gamma_m\) be the universal bundle over \(BU(m),\) and write \(c_i \coloneqq c_i(\gamma_m)\) for the universal Chern classes.
    \item We will denote by 
    \[
    a_i = c_i(\gamma_m \times 1^{r} - 1 \times \gamma_1^{\times r}) \in \co^{2i}(B(m,1^r);\Z)
    \]
    the Chern classes of the stable bundle 
    \[
        \gamma_m \times 1^{r} - 1 \times \gamma_1^{\times r} \colon B(m,1^r) \longrightarrow BU.
    \] 
    Finally, we will write \(b_i^{(p)} \coloneqq \rho_p a_i\), for the mod \(p\) reductions, where \(p >1\) is an integer. When the value of \(p\) is understood, we will omit the superscript \((p)\) from the notation.
\end{enumerate}
\end{notate}

The content of the following  proposition can be found in \cite{hsiang1984transformation}.

\begin{prop}\label{prop:cohomology_rings} Let \(m \geq r >0\) be positive integers.
\begin{enumerate}[\normalfont(i)]
    \item The integral cohomology of \(BU(m)\) satisfies \[\co^*(BU(m);\Z) \cong \Z[c_1, \dots, c_m],\] where $\lvert c_i\rvert= 2i$ and the colimit $U \coloneqq \bigcup_{m \ge 0} U(m)$ satisfies
    \[
        \co^*(BU;\Z) \cong \Z[c_1, c_2, \dots].
    \]
    \item Let \(p\) be a positive integer. Then \[\co^*(BU(m);\Z/p) \cong \Z/p[\rho_p c_1, \dots, \rho_p c_{m}].\] In particular, if \(p = 2\), then \(\rho_2 c_i = w_{2i}(\gamma_m^{\R})\), where \(\gamma_m^{\R}\) denotes the underlying real bundle of the universal bundle \(\gamma_m.\)
    \item Let \(p\) be a positive integer and \(R\) a finitely generated abelian group. Then \(\co^*(B(m,1^r);R)\) is a polynomial algebra. Additionally, if \(i\) is odd, then \(\co^i(B(m,1^r);R) = 0\).
    \item  The integral cohomology of the complex Stiefel manifold \(W(m,r)\) satisfies
\[
    \co^*(W(m,r);\Z) \cong \Lambda_{\Z}[e_{2(m-r+1)-1},e_{2(m-r+2)-1},\dots, e_{2m-1}];
\]
where $\lvert e_i \rvert = i$. 
    \item Moreover, 
\[
    \co^*(W(m,r);\Z/2) \cong \Lambda_{\Z/2}[f_{2(m-r+1)-1},f_{2(m-r+2)-1},\dots, f_{2m-1}];
\]
where \(f_i \coloneqq \rho_2 e_i\). Furthermore,
\begin{equation}\label{eq:action_steenrod_W(m,r)}
Sq^{2j}(f_{2i+1}) = \binom{i}{j} f_{2i+2j+1}~\mathrm{mod}~2,  \text{ for } j \le i,~i+j \le m-1,
\end{equation}
and is zero otherwise. 
\item Finally, given \(m \geq r \geq 1\), the Wu formula holds for the mod 2 characteristic classes \(b_i\). That is, $Sq^{2i-1}(b_j) = 0$ and
\[
Sq^{2i}(b_j) = \sum_{t=0}^{i}\binom{j + t - i - 1}{t}b_{i-t}b_{j+t},
\]
for \(i \leq j.\)
\end{enumerate}
\end{prop}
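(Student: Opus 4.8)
The statement collects six classical facts, so the plan is to cite standard computations where possible and to reserve genuine argument for the Steenrod-square formulas in parts (v) and (vi). For part (i), I would recall the computation of \(\co^*(BU(m);\Z)\) as a polynomial algebra on the universal Chern classes---obtained, for instance, by induction along the sphere-bundle fibration \(S^{2m-1}\to BU(m-1)\to BU(m)\) and its Gysin sequence, or via the splitting principle---and then pass to the colimit to obtain \(\co^*(BU;\Z)\cong\Z[c_1,c_2,\dots]\). Since this integral cohomology is free and concentrated in even degrees, part (ii) is immediate from the universal coefficient theorem, which gives \(\co^*(BU(m);\Z/p)\cong\co^*(BU(m);\Z)\otimes\Z/p\); the identification \(\rho_2 c_i=w_{2i}(\gamma_m^{\R})\) then follows from the relation \(w(\gamma_m^{\R})=\rho_2 c(\gamma_m)\) between the total Stiefel--Whitney class of the realification and the mod-\(2\) total Chern class, together with the vanishing of the odd Stiefel--Whitney classes.

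For part (iii) I would write \(B(m,1^r)=BU(m)\times BU(1)^r\) and apply the K\"{u}nneth theorem. Each factor has free integral cohomology concentrated in even degrees, so no Tor terms arise either in the universal coefficient theorem for the group \(R\) or in the K\"{u}nneth formula; hence \(\co^*(B(m,1^r);R)\) is a polynomial \(R\)-algebra supported in even degrees, and in particular vanishes in odd degrees. Part (iv) is the classical fact that \(W(m,r)=U(m)/U(m-r)\) has torsion-free cohomology, an exterior algebra on odd generators \(e_{2i-1}\) for \(m-r+1\le i\le m\); I would derive this from the Serre spectral sequence of the fibration \(W(m,r)\to BU(m-r)\to BU(m)\), in which \(e_{2i-1}\) transgresses to the universal Chern class \(c_i\), or simply invoke \cite{hsiang1984transformation}. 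Since the integral cohomology is torsion-free, the mod-\(2\) ring structure in part (v) follows by reduction.

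The substantive point is the Steenrod formula \eqref{eq:action_steenrod_W(m,r)}. The clean route is to use the standard (James-type) map from the suspension of the stunted projective space \(\C P^{m-1}/\C P^{m-r-1}\) into \(W(m,r)\), under which the exterior generator \(f_{2i+1}\) (for \(m-r\le i\le m-1\)) is identified with the cohomology suspension \(\sigma(\bar u^{\,i})\) of the \(i\)-th power of the degree-\(2\) generator. Because the Steenrod squares are stable---commuting with \(\sigma\)---and satisfy \(Sq^{2j}(u^i)=\binom{i}{j}u^{i+j}\) on \(\C P^\infty\), we obtain \(Sq^{2j}(f_{2i+1})=\sigma\bigl(\binom{i}{j}\bar u^{\,i+j}\bigr)=\binom{i}{j}f_{2(i+j)+1}\) whenever \(i+j\le m-1\), and zero otherwise. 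Crucially, since \(\sigma\) lands among the primitive (indecomposable) classes, no decomposable correction terms can appear, which is exactly what makes the single-term formula hold; the binomial \(\binom{i}{j}\) also enforces the vanishing for \(j>i\). A transgression argument, using that transgression commutes with Steenrod operations and transferring the Wu formula on the mod-\(2\) Chern classes, gives an alternative derivation.

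Finally, for part (vi), the classes \(b_i=\rho_2 a_i\) are the mod-\(2\) reductions of the Chern classes of the stable bundle \(\gamma_m\times 1^r-1\times\gamma_1^{\times r}\), hence are the even Stiefel--Whitney classes of its realification while the odd ones vanish. I would therefore apply the Wu formula for mod-\(2\) Chern classes directly: the odd squares annihilate the \(b_j\) (for \(Sq^1 b_j=\rho_2\delta\rho_2 a_j=0\) by exactness of the Bockstein sequence, with higher odd squares reducing to this via Adem relations), while the even squares produce the displayed sum \(\sum_{t=0}^{i}\binom{j+t-i-1}{t}b_{i-t}b_{j+t}\). I expect the main obstacle to be the honest verification of part (v)---in particular, pinning down the suspension identification of the generators \(f_{2i+1}\) and confirming the absence of correction terms---whereas parts (i)--(iv) and (vi) reduce to citing or directly applying standard results.
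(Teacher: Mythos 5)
The paper offers no argument for this proposition beyond the remark that its content ``can be found in \cite{hsiang1984transformation}'', so your write-up is necessarily more detailed than the paper's. Parts (i)--(iv) and (vi) of your proposal are correct and follow the standard routes (Gysin/splitting principle, universal coefficients, K\"unneth, the Borel transgression argument for $W(m,r)=U(m)/U(m-r)$, and naturality of the mod~2 Wu formula for Chern classes pulled back from $BU$, with $Sq^{2i-1}b_j=0$ obtained from $Sq^1\rho_2=\rho_2\delta_2\rho_2=0$ together with the even-degree formula and the Cartan formula). For the Steenrod action in (v), where the paper would simply invoke \cite[Proposition 6]{hsiang1984transformation} (or, alternatively, naturality along the inclusion of $W(m,r)$ into the real Stiefel manifold $V(2m,2r)$ and Borel's formula $Sq^jz_i=\binom{i}{j}z_{i+j}$), you instead use the stunted quasi-projective space $\Sigma(\C P^{m-1}/\C P^{m-r-1})$ and stability of the squares; this is a legitimate and more self-contained alternative.

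There is, however, one soft spot in (v). The comparison map $\co^*(W(m,r);\Z/2)\to \co^*(\Sigma(\C P^{m-1}/\C P^{m-r-1});\Z/2)$ annihilates all cup products (the target is a suspension), so it detects only the coefficient of the exterior generator $f_{2(i+j)+1}$ in $Sq^{2j}f_{2i+1}$; your assertion that ``no decomposable correction terms can appear'' is not justified by the fact that $\sigma$ lands among indecomposables, since a decomposable summand of $Sq^{2j}f_{2i+1}$ in $\co^*(W(m,r);\Z/2)$ would simply be invisible to this map. For $r\le 3$ (the only cases the paper uses) the issue is vacuous: an odd-degree decomposable in $\Lambda_{\Z/2}[f_{2(m-r)+1},\dots,f_{2m-1}]$ must be a product of three distinct generators and hence has degree at least $6(m-r)+9>2m-1$, so no decomposables exist in the degrees covered by $i+j\le m-1$. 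But for general $m\ge r$ (e.g.\ $m=r=5$, where $f_1f_3f_5$ sits in degree $9=2m-1$), and for the ``zero otherwise'' clause (where the target degree exceeds $2m-1$ and is populated only by decomposables), the argument as written is incomplete. The clean repair is to prove the formula first in $\co^*(U(m);\Z/2)$: there the generators $f_{2i+1}$ are primitive for the Hopf-algebra structure, the Steenrod squares commute with the coproduct and hence preserve primitives, and the primitives of $\Lambda_{\Z/2}[f_1,\dots,f_{2m-1}]$ are exactly the span of the generators, so $Sq^{2j}f_{2i+1}$ has no decomposable part and its generator coefficient is computed by your suspension argument. One then transports the formula to $W(m,r)$ via the injection $\co^*(W(m,r);\Z/2)\hookrightarrow \co^*(U(m);\Z/2)$ induced by the projection $U(m)\to W(m,r)$ and naturality of the squares.
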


\begin{comment}
The proof of (i) can be found in e.g., \cite{milnor2005characteristic}; consequently, the proofs of (ii) and (iii) follow from (i) together the universal coefficient and/or K\"unneth theorems for cohomology. 

The proof of (iv) can be found in e.g., \cite[Theorem III.3.2.A, p.\ 218]{fuchs2013topology}, whence (v) follows from the universal coefficient theorem. The action of the Steenrod squares can be found in \cite[Proposition 6]{hsiang1984transformation}. 
%\begin{comment}
    Alternatively, recall Borel's \cite{borel1953cohomologie} computation that \[\co^*(V(2m,2r)) \cong \Z/2[z_{2(m-r)}, z_{2(m-r)+1}, \dots, z_{2m-1}]/
\mathcal{J}_{2m,2r},\] where \(\lvert z_i \rvert = i\), and \(\mathcal{J}_{2m,2r}\) is the ideal generated by the relations 
\begin{itemize}
    \item \(z_i^2 = z_{2i}\), for \(2i \leq n-1\); and 
    \item \(z_i^2 = 0\), for \(2i \geq n.\)
\end{itemize}
He also showed that \(Sq^j(z_i) = \begin{pmatrix}
i \\ j
\end{pmatrix} z_{i+j}~\mathrm{mod}~2\) for \(2(m -r) \leq 2m-j-1\) and \(Sq^j(z_i) = 0\) in all other cases. But we have a natural inclusion \(i \colon V(2m,2r) \xhookrightarrow{} W(m,r)\), under which \(i^*(f_i) = z_i\), for odd \(i\). So \eqref{eq:action_steenrod_W(m,r)} follows by naturality of the Steenrod squaring operations.
\end{comment}

%-------------------------------
%\section{The primary obstruction to splitting off complex line bundles} 
\section{The lifting problem and the primary obstruction}\label{sec:primaryobs}

Towards a proof of the theorems in Section \ref{sec:main_thms}, we now formulate the relevant lifting problem and begin the construction of the associated Moore-Postnikov tower. Chiefly, we compute the primary obstruction to the lifting problem, consequently proving Proposition \ref{prop:complexhopf}.

For integers \(m \geq r \ge 1\), let us denote by 
\begin{equation*}
    \begin{tikzcd}
        d_{m,r} \colon U(m-r) \times U(1)^r \arrow[r, hook] & U(m)
    \end{tikzcd}
\end{equation*}
the diagonal inclusion. Then, the inclusion of topological groups
\begin{equation*}
    \begin{tikzcd}
        (d_{m,r}, \text{id}_{U(1)^r}) \colon U(m-r) \times U(1)^r \arrow[r, hook] & U(m) \times U(1)^r
    \end{tikzcd}
\end{equation*}
induces a map of classifying spaces
\begin{equation} \label{eq:fibq_1}
    q_{m,r} \coloneqq B(d_{m,r}, \text{id}_{U(1)^r}) \colon B(m-r, 1^r) \longrightarrow B(m,1^r).    
\end{equation}
Then $q_{m,r}$ satisfies
\[
    q_{m,r}^*(\gamma_m \times 1^{\times r}) = \gamma_{m-r}\times \gamma_1^{\times r} \text { and } q_{m,r}^*(1 \times \gamma_1^{\times r}) = 1 \times \gamma_1^{\times r}.
\]
Let us identify the fiber of the homotopy fibration $q_{m,r}$.

\begin{prop}\label{prop:fiberqmk}
For integers $m \ge r \ge 1$, the map \[q_{m,r} \colon B(m-r,1^r) \longrightarrow B(m,1^r)\] defined in \eqref{eq:fibq_1} is homotopy equivalent to a fibration with fiber \(W(m,r)\).
\end{prop}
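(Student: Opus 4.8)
The plan is to recognize $q_{m,r}$ as the map of classifying spaces induced by a closed subgroup inclusion and then to compute the resulting homogeneous-space fiber. Concretely, $q_{m,r} = B\iota$ where $\iota = (d_{m,r},\mathrm{id}_{U(1)^r})$ is the inclusion of $G \coloneqq U(m-r)\times U(1)^r$ into $H \coloneqq U(m)\times U(1)^r$ given by $(A,z_1,\dots,z_r)\mapsto(\mathrm{diag}(A,z_1,\dots,z_r),z_1,\dots,z_r)$. For a closed subgroup inclusion of compact Lie groups, a contractible free $H$-space $EH$ is simultaneously a model for $EG$, so that $BG \simeq EH/G$ and the natural projection $EH/G \to EH/H \simeq BH$ is a fiber bundle with fiber the homogeneous space $H/G$. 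This exhibits $q_{m,r}$, up to homotopy, as a fibration whose fiber is $H/G$, and it remains only to identify $H/G$ with $W(m,r)$.

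To carry out the identification I would write $W(m,r)=U(m)/U(m-r)$, with $U(m-r)$ embedded as $\mathrm{diag}(U(m-r),I_r)$ and right cosets, and define
\[
\psi \colon H \longrightarrow W(m,r), \qquad \psi(M,w_1,\dots,w_r)=\big[\,M\cdot\mathrm{diag}(I_{m-r},w_1^{-1},\dots,w_r^{-1})\,\big].
\]
The key computation is that $\psi$ is constant on the left cosets $hG$: feeding in $hg$ for $g=\iota(A,z)$ and using that block-diagonal matrices supported on disjoint index sets commute, the torus coordinates cancel and the surviving factor $\mathrm{diag}(A,I_r)\in U(m-r)$ can be slid to the right, where it is absorbed by the coset. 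Surjectivity is immediate from $\psi(N,1,\dots,1)=[N]$, and injectivity follows by reversing the same manipulation to produce an explicit $g\in G$ with $h=h'g$. Thus $\psi$ descends to a continuous bijection $H/G\to W(m,r)$, which is a homeomorphism since both sides are compact Hausdorff (indeed a diffeomorphism of homogeneous spaces). As a sanity check, $\dim H/G = m^2-(m-r)^2=r(2m-r)=\dim W(m,r)$, consistent with the expected $2(m-r)$-connectivity of $W(m,r)$.

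The main obstacle is precisely the twisting built into $\iota$: because the torus $U(1)^r$ occurs both inside the $U(m)$-block (through $d_{m,r}$) and as a standalone factor of $H$, one must resist concluding that the fiber is the flag manifold $F(m-r,1^r)=U(m)/(U(m-r)\times U(1)^r)$. The standalone diagonal copy of $U(1)^r$ exactly cancels the torus part of the isotropy, promoting the flag manifold back up to the Stiefel manifold; this is the group-level shadow of the bundle $U(1)^r\to W(m,r)\to F(m-r,1^r)$. Making this cancellation rigorous---i.e.\ verifying the well-definedness and injectivity of $\psi$---is where all the content sits, and both steps reduce to the single elementary fact that diagonal blocks on disjoint indices commute, which is what lets one commute the $U(m-r)$-factor past the $U(1)^r$-factor.
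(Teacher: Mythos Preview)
Your proof is correct and follows essentially the same approach as the paper: both identify the homotopy fiber of $B\iota$ as the homogeneous space $H/G$ for the closed subgroup inclusion $\iota$, and then exhibit a continuous bijection between compact Hausdorff spaces to identify $H/G$ with $W(m,r)$. The only cosmetic difference is direction: the paper uses the map $U(m)\to U(m)\times U(1)^r$, $N\mapsto (N,1,\dots,1)$, to induce $W(m,r)\to H/G$, whereas you construct an explicit inverse $\psi\colon H\to W(m,r)$ descending to $H/G$; your observation that $\psi(N,1,\dots,1)=[N]$ shows these are literally inverse to one another.
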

\begin{proof}
    The inclusion $(d_{m,r}, \text{id}_{U(1)^r})$ of topological groups induces a homotopy fibration on classifying spaces
    \begin{equation*}
        (U(m) \times U(1)^r)/(U(m-r)\times U(1)^r) \longrightarrow B(m-r, 1^r) \xrightarrow{~q_{m,r}~} B(m,1^r).
    \end{equation*}
    Next, one can check that the inclusion $(\text{id}, 1) \colon~ U(m) \to U(m) \times U(1)^r$ induces a continuous map on the quotients
    \begin{equation*}
        U(m)/U(m-r) \longrightarrow (U(m) \times U(1)^r)/(U(m-r)\times U(1)^r),
    \end{equation*}
    which is a bijection of compact spaces and hence a homeomorphism.
\end{proof}

To aid calculations in the sequel, we now compute the map in cohomology induced by \(q_{m,r}\), where \(m \geq r \geq 1.\)

\begin{prop}\label{prop:surjective_q_{m,r}}
Let $m \ge r \ge 1$. Write cohomologies with integer coefficients as
\begin{itemize}
    \item \(\co^*(B(m,1^r)) \cong \Z[c_1,\dots, c_m, y_1, \dots, y_r]\), where \(\lvert c_i\rvert = 2i\) and \(\lvert y_i\rvert =2\), for each \(i = 1,\dots, r\); and 
    \item \(\co^*(B(m-r,1^r)) \cong \Z[c_1',\dots, c_{m-r}', x_1, \dots, x_r]\), where \(\lvert c_i\rvert = 2i\) and \(\lvert x_i\rvert=2\), for each \(i = 1,\dots, r\).
\end{itemize}
Let the map
\[q_{m,r} \colon B(m-r,1^r) \longrightarrow B(m,1^r)\] be as defined in \eqref{eq:fibq_1}. 
\begin{enumerate}
    \item[\emph{(i)}] Then the induced map in integral cohomology
\[
q_{m,r}^* \colon \co^*(B(m,1^r)) \longrightarrow \co^*(B(m-r,1^r)),
\]
satisfies the following: 
\begin{equation}\label{eq:q{m,r}*(y_i)}
q_{m,r}^*(y_i) = x_i;\end{equation}
and 
\begin{equation}\label{eq:q{m,r}*(c_i)}
q_{m,r}^*(c_k) = \sum_{i=\mathrm{max}(0,k-(m-r))}^{\mathrm{min}(k,r)}c'_{k-i}\sigma_i(x_1,\dots, x_r),
\end{equation}
where \(c_0' =1\) and \(\sigma_i(x_1,\dots, x_r)\) is the \(i\)-th elementary symmetric polynomial in the variables \(x_1, \dots, x_r,\).
    \item[\emph{(ii)}] Moreover, \(q_{m,r}^*\) is surjective in all degrees. 
\end{enumerate}
\end{prop}

\begin{proof}%[Proof of Proposition \ref{prop:surjective_q_{m,r}}]
The proof of part (i) is a straightforward computation using the properties of Chern classes.

We proceed with the proof of part (ii). Clearly the \(x_i\) are in the image of \(q_{m,r}^*\) by (\ref{eq:q{m,r}*(y_i)}). Let us prove by induction that each \(c_k'\) is in the image of \(q_{m,r}^*\).

For \(k=1\), it follows from (\ref{eq:q{m,r}*(y_i)}) and (\ref{eq:q{m,r}*(c_i)}) that \[q_{m,r}^*(c_1) = c_1' +x_1 + \dots +x_r = c_1'+q^*_{m,r}(y_1 + \dots + y_r) \in \mathrm{im}(q_{m,r}^*).\]
For the induction hypothesis, suppose for some \(1 < k \leq m-r\) that \[c_1', \dots, c_{k-1}' \in \mathrm{im}(q_{m,r}^*).\] Then by (\ref{eq:q{m,r}*(c_i)}), we obtain
\begin{equation}\label{eq:solve_for_c_s'}
q^*_{m,r}(c_k) = \sum_{i=0}^{\mathrm{min}(k,r)}c_{k-i}'\sigma_i(x_1,\dots,x_r) = c_k' + \sum_{i=1}^{\mathrm{min}(k,r)}c_{k-i}'\sigma_i(x_1,\dots,x_r).
\end{equation}
But \(\sum_{i=1}^{\mathrm{min}(k,r)}c_{k-i}'\sigma_i(x_1,\dots,x_r) \in \mathrm{im}(q_{m,r}^*)\) by the induction hypothesis. Hence rearranging (\ref{eq:solve_for_c_s'}) gives a description of \(c_k'\) in terms of elements of \(\mathrm{im}(q_{m,r}^*)\), as desired. This concludes the proof.
\end{proof}

Henceforth fix a complex \(m\)-plane bundle \(\xi \colon X \to BU(m)\) and complex line bundles \(\ell_1,\dots, \ell_r\) over \(X\). This gives rise to a classifying map 
\begin{equation}\label{eq:xilinesclassifyingmap}
f = (\xi, \ell_1,\dots, \ell_r)\colon X \longrightarrow B(m,1^r) 
\end{equation}
defined by 
\[f^*(1 \times \gamma_1^{\times r}) = \ell_1 \oplus \cdots \oplus \ell_r~ \text{ and }~ f^*(\gamma_m \times 1^{\times r}) = \xi.\]
With this notation, we may formulate the next lemma (c.f., \cite{mello1987two}).

\begin{lem}
    For a complex rank $m$ bundle \(\xi\) and line bundles $\ell_1, \dots, \ell_r$ over $X$, we have that $\xi$ admits \(\ell_1 \oplus \cdots \oplus \ell_r\) as a subbundle if and only if there exists a map \(g \colon X \to B(m-r,1^r)\) such that \(q_{m,r}\circ g \simeq f\).
\end{lem}
\begin{proof}
    Given a splitting $\xi \cong \eta \oplus \ell_1 \oplus \dots \oplus \ell_r$, for some $(m-r)$-bundle $\eta$, let $g$ be the the classifying map $(\eta, \ell_1, \dots, \ell_r) \colon X \to B(m-r,1^r)$. Then, \(q_{m,r}\circ g \simeq f\) follows by uniqueness of the classifying map up to homotopy \cite[Thm.~14.6]{milnor2005characteristic}. On the other hand, given such a map $g$, we may define $\eta = g^*\gamma_{m-r}$ and $\ell_i = g^*\gamma_1^{(i)}$, for $i=1, \dots, r$, where $\gamma_1^{(i)}$ denotes the canonical line bundle corresponding to the $i$-th factor of $BU(1)$ in $B(m-r,1^r)$. This induces the bundle splitting
    \[
        \xi \cong f^*(\gamma_m) \cong g^*(\gamma_{m-r}\times \gamma_1^{1} \times \dots \times \gamma_1^{r}) \cong \eta \oplus \ell_1 \oplus \dots \oplus \ell_r,
    \]
    as claimed.
\end{proof}

Subsequently, we seek obstructions to the existence of a map \(g \colon X \to B(m-r,1^r)\) making the following diagram homotopy commutative.
\begin{equation} \label{eq:lifting-problem-q_m,r}
    \begin{tikzcd}
        {} &&& B(m-r,1^r) \arrow[d, "q_{m,r}"]\\
        X \arrow[urrr, dashed] \arrow[rrr, "\,\,\,(\xi{,}\ell_1{,}\dots{,}\ell_r)"] & & & B(m,1^r)
    \end{tikzcd}
\end{equation}
As the main result of this section, we will use Proposition \ref{prop:fiberqmk}  to compute the first Moore-Postnikov invariant in the Moore--Postnikov tower associated to the lifting problem \eqref{eq:lifting-problem-q_m,r}. Diagrammatically, we have
\begin{equation} \label{eq:moore-postnikov-tower-beginning}
    \begin{tikzcd}
        W(m,r) \arrow[r] & B(m-r, 1^r) \arrow[r, "q_{m,r}"] & B(m,1^r) \arrow[r, "k_1"] & K(\Z, 2(m-r+1)) \\
        {} & {} & X \arrow[u, "f=(\xi{,} \ell_1{,} \dots{,} \ell_r)"'] & {}
    \end{tikzcd}
\end{equation}
where we have used the fact that \(\pi_{2(m-r)+1}(W(m,r))\cong \Z\), for \(m>0\), is the first non-zero homotopy group of \(W(m,r)\), see \cite{gilmore1967complex}.

\begin{prop}[Primary obstruction]\label{prop:primaryobs}
Let \(m\geq 1\) be an integer. Let \(f=(\xi,\ell_1,\dots,\ell_r) \colon X \to B(m,1^r)\) be as in \eqref{eq:xilinesclassifyingmap}, and let \[k_1 \in \co^{2(m-r+1)}(B(m,1^r);\Z)\] be the characteristic class in the fibration \(q_{m,r}\). Then the primary obstruction to the lifting problem \eqref{eq:lifting-problem-q_m,r} is the singleton set
\[
    \lobs^{2(m-r+1)}(f, q_{m,r},k_2) = \{c_{m-r+1}(\xi - \ell_1 \oplus \cdots \oplus \ell_r)\} \subseteq \co^{2(m-r+1)}(X;\Z).
\]
\end{prop}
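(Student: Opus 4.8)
The plan is to identify the first Moore--Postnikov invariant $k_1$ with the universal virtual Chern class $a_{m-r+1} \in \co^{2(m-r+1)}(B(m,1^r);\Z)$ of \notateref{notate:1^k}; the proposition then follows immediately by pulling back along $f = (\xi,\ell_1,\dots,\ell_r)$, since $f^*a_{m-r+1} = c_{m-r+1}(\xi - \ell_1 \oplus \cdots \oplus \ell_r)$ by the very definition of $f$, and the primary obstruction is by definition the singleton $\{f^*k_1\}$. To set up, I would first record that by \propref{prop:fiberqmk} the fibre of $q_{m,r}$ is $W(m,r)$, which is $2(m-r)$-connected with $\co^{2(m-r)+1}(W(m,r);\Z)\cong \Z$ generated by the fundamental class $e \coloneqq e_{2(m-r+1)-1}$ (\propref{prop:cohomology_rings}(iv)). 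Hence $k_1$ is the transgression of $e$ and lives in $\co^{2(m-r+1)}(B(m,1^r);\Z)$, as in \eqref{eq:moore-postnikov-tower-beginning}. Since the tower provides a lift of $q_{m,r}$ through $\mathrm{hofib}(k_1)$, the composite $k_1 \circ q_{m,r}$ is nullhomotopic, so $q_{m,r}^* k_1 = 0$; that is, $k_1 \in \ker q_{m,r}^*$.

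The key algebraic step is to pin down $\ker q_{m,r}^*$ in degree $2(m-r+1)$. From the identities $q_{m,r}^*(\gamma_m \times 1^{\times r}) = \gamma_{m-r}\times\gamma_1^{\times r}$ and $q_{m,r}^*(1\times\gamma_1^{\times r}) = 1\times\gamma_1^{\times r}$ recorded after \eqref{eq:fibq_1}, one computes $q_{m,r}^* a_i = c_i(\gamma_{m-r})$, which vanishes for $i > m-r$. Because $a_i = c_i + (\text{decomposables in } c_{<i} \text{ and the line classes})$, the collection $\{a_1,\dots,a_m, t_1,\dots,t_r\}$ is an alternative system of polynomial generators for $\co^*(B(m,1^r);\Z)$ (\propref{prop:cohomology_rings}(iii)), and in these coordinates $q_{m,r}^*$ is simply the projection killing $a_{m-r+1},\dots,a_m$. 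Therefore $\ker q_{m,r}^* = (a_{m-r+1},\dots,a_m)$, whose degree-$2(m-r+1)$ part is the rank-one group $\Z\cdot a_{m-r+1}$, as the remaining generators lie in strictly larger degrees. Consequently $k_1 = \lambda\, a_{m-r+1}$ for some $\lambda \in \Z$.

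To show $\lambda = 1$, I would invoke naturality of the transgression. The classical fibration $W(m,r) \to BU(m-r) \to BU(m)$, in which (by Borel) the fundamental class $e$ transgresses to $c_{m-r+1}$, maps to the fibration $q_{m,r}$ via the maps $j\colon BU(m-r)\to B(m-r,1^r)$ and $J = \mathrm{id}\times\mathrm{const}\colon BU(m)\to B(m,1^r)$ induced by the group inclusions into the trivial $U(1)^r$ factors; this is the identity on fibres. Naturality gives $J^* k_1 = c_{m-r+1}$, while directly $J^* a_{m-r+1} = c_{m-r+1}(\gamma_m - \varepsilon^r) = c_{m-r+1}$. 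Since $c_{m-r+1}$ is indivisible in $\co^*(BU(m);\Z)$, this forces $\lambda = 1$, whence $k_1 = a_{m-r+1}$ and applying $f^*$ completes the argument.

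I expect the main obstacle to be the coefficient-tracking in this last step: one must set up the map of fibrations carefully and invoke the correctly signed transgression in $W(m,r)\to BU(m-r)\to BU(m)$, since the overall sign of $k_1$ is sensitive to the conventions chosen for the fundamental class and for the transgression. By contrast, the kernel computation is routine once the unitriangular change of variables $c_i \rightsquigarrow a_i$ is in place.
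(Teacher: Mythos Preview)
Your proposal is correct, and the core idea---identifying $k_1$ via naturality of the transgression against a comparison fibration with fiber $W(m,r)$---is the same as the paper's. The execution differs in one respect worth noting. You map the classical fibration $W(m,r)\to BU(m-r)\to BU(m)$ \emph{into} $q_{m,r}$ via the basepoint inclusions $J,j$; this forces you to first constrain $k_1$ to the rank-one kernel $\Z\cdot a_{m-r+1}$ and then read off the coefficient. The paper instead maps $q_{m,r}$ \emph{out} to the stable fibration $W(m-r)\to BU(m-r)\to BU$ via the projection and the map $h\colon B(m,1^r)\to BU$ classifying $\gamma_m\times 1^{\times r}-1\times\gamma_1^{\times r}$; since $h^*c_{m-r+1}=a_{m-r+1}$ by construction, naturality gives $k_1=a_{m-r+1}$ in one stroke, with no separate kernel computation needed. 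Your route is a touch longer but perfectly sound, and the unitriangular change of generators $c_i\rightsquigarrow a_i$ you use to compute $\ker q_{m,r}^*$ is a nice observation in its own right.
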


The proof of Proposition \ref{prop:primaryobs} is a straightforward consequence of the following lemma, whose proof is a modification of \cite[pp.\ 269--270]{mello1987two}.

\begin{lem}\label{lem:diffsofq1}
Consider the fibration 
    \[
        W(m,r) \longrightarrow B(m-r, 1^r) \xrightarrow{~~q_{m,r}~~} B(m,1^r).
    \]
defined as in \emph{(\ref{eq:fibq_1})} and let $(E_*^{*,*}, d_*)$ denote the associated Leray--Serre spectral sequence.
 Then, the differential 
 \[
    d \coloneqq d_{2m-2i+2} \colon E_{2m-2i+2}^{0,2m-2i+1} \longrightarrow E_{2m-2i+2}^{2m-2i+2,0}
\]
satisfies 
    \[
        d(e_{2m-2i+1}) = a_{m-i+1}=c_{m-i+1}(\gamma_m \!\times 1^{\times r}\! -\! 1 \times\! \gamma_1^{\times r}), \hspace{5mm} \textrm{for } i= 1,\dots, r.
    \]
\end{lem}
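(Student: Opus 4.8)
The plan is to compute the differential in the statement as a \emph{transgression} in the cohomology Leray--Serre spectral sequence of the fibration $W(m,r) \to B(m-r,1^r) \xrightarrow{q_{m,r}} B(m,1^r)$, and then to identify that transgression by playing the edge homomorphism off against the Whitney sum formula for $q_{m,r}^*$. First I would feed in the ring data from Proposition \ref{prop:cohomology_rings}: the base $B(m,1^r)$ has cohomology a polynomial algebra concentrated in even degrees, while $\co^*(W(m,r);\Z) \cong \Lambda_{\Z}[e_{2(m-r+1)-1},\dots,e_{2m-1}]$ is exterior on odd generators. Hence $E_2^{p,q} \cong \co^p(B(m,1^r);\Z) \otimes \co^q(W(m,r);\Z)$, and because the base row is even while each $e_{2m-2i+1}$ lives in odd fibre degree, the only possibly-nonzero differential on a generator is its transgression $d_{2m-2i+2}\colon E_{2m-2i+2}^{0,2m-2i+1} \to E_{2m-2i+2}^{2m-2i+2,0}$. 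A short Leibniz-rule check shows that products of the $e$'s support only differentials landing in positive fibre degree, so each generator is transgressive and the differential $d$ in the statement really is the transgression $\theta_i \coloneqq \tau(e_{2m-2i+1})$, a well-defined element of the subquotient $E_{2m-2i+2}^{2m-2i+2,0}$ of $\co^{2(m-i+1)}(B(m,1^r);\Z)$.

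Next I would identify the ideal of transgressions with $\ker q_{m,r}^*$. Since the only differentials are the transgressions of the generators and their Leibniz consequences, multiplicativity of the spectral sequence gives $\im(q_{m,r}^*) = E_\infty^{*,0} \cong \co^*(B(m,1^r);\Z)/I$, where $I$ is the ideal generated by $\theta_1,\dots,\theta_r$; thus $\ker q_{m,r}^* = I$. On the other hand I can compute $q_{m,r}^*$ directly from the defining identity $q_{m,r}^*(\gamma_m \times 1^{\times r}) = \gamma_{m-r} \times \gamma_1^{\times r}$ and the Whitney formula, obtaining $q_{m,r}^* c(\gamma_m) = c(\gamma_{m-r}) \prod_j (1 + q_{m,r}^* x_j)$, where $x_j = c_1(1 \times \gamma_1^{(j)})$. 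Switching from the generators $c_1,\dots,c_m$ to the classes $a_1,\dots,a_m$ (the homogeneous components of $c(\gamma_m)\prod_j(1+x_j)^{-1}$) is a legitimate unipotent change of variables, since $a_i = c_i + (\text{terms in the } x_j)$. In these coordinates $q_{m,r}^*$ sends $a_i \mapsto c_i(\gamma_{m-r})$ and $x_j \mapsto q_{m,r}^* x_j$, so that $q_{m,r}^* a_i = 0$ for $i > m-r$. This exhibits $\ker q_{m,r}^* = (a_{m-r+1},\dots,a_m)$, whose generators sit in exactly the degrees $2(m-i+1)$ of the $\theta_i$.

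Finally I would match $\theta_i$ with $a_{m-i+1}$ by descending induction on $i$ (i.e.\ ascending in degree). The base case $i=r$ is the primary obstruction: in the lowest degree $2(m-r+1)$ the kernel is the rank-one group $\Z \cdot a_{m-r+1}$, so $\theta_r = \lambda\, a_{m-r+1}$, and $\lambda = 1$ is forced by naturality of the transgression along the map of fibrations into the classical complex Stiefel fibration $W(m,r) \to BU(m-r) \to BU(m)$ (all line bundles trivial), whose transgression is the universal class $c_{m-i+1}(\gamma_m)$; reducing modulo the ideal $(x_1,\dots,x_r)$ sends $a_{m-i+1}$ to $c_{m-i+1}(\gamma_m)$ and pins down $\lambda$. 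For the inductive step I would note that the earlier differentials hitting $E^{2(m-i+1),0}$ come precisely from the classes $u \cdot e_{2m-2k+1}$ with $k>i$, contributing $u\,\theta_k$, so that $\theta_i$ is well-defined only modulo $(\theta_{i+1},\dots,\theta_r)$; by induction this ideal coincides with $(a_{m-i},\dots,a_{m-r+1})$, and in the resulting quotient the degree-$2(m-i+1)$ part of $\ker q_{m,r}^*$ is again the single $\Z\cdot a_{m-i+1}$, whence $\theta_i = a_{m-i+1}$ after fixing the coefficient by the same mod-$(x_j)$ naturality.

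\textbf{Main obstacle.} The delicate point — and the place where the argument modifies \cite{mello1987two} — is the bookkeeping of the line-bundle twist built into $q_{m,r}$: the transgression target is the \emph{virtual} Chern class $a_{m-i+1} = c_{m-i+1}(\gamma_m \times 1^{\times r} - 1 \times \gamma_1^{\times r})$ rather than the bare $c_{m-i+1}(\gamma_m)$, and a naive comparison with the untwisted Stiefel fibration only determines $\theta_i$ modulo $(x_1,\dots,x_r)$. The resolution is to observe that $\theta_i$ is intrinsically defined only modulo the ideal of lower transgressions $(\theta_{i+1},\dots,\theta_r)$, and that this is exactly the ambiguity absorbing the difference between the twisted and untwisted answers; the Whitney-formula computation of $\ker q_{m,r}^*$ is what certifies that $a_{m-i+1}$ is the correct representative in $E_{2m-2i+2}^{2m-2i+2,0}$.
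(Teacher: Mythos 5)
Your argument is correct, but it takes a genuinely different route from the paper's. The paper maps the fibration $q_{m,r}$ \emph{forward} into the stable Stiefel fibration $W(m-r) \to BU(m-r) \to BU$, using on base spaces the classifying map $h \colon B(m,1^r) \to BU$ of the virtual bundle $\gamma_m \times 1^{\times r} - 1 \times \gamma_1^{\times r}$ and checking via a Thomas diagram that the induced map $j \colon W(m,r) \to W(m-r)$ on fibres is a $(2m-1)$-equivalence; naturality of the transgression then gives $d(e_{2m-2i+1}) = h^*(c_{m-i+1}) = a_{m-i+1}$ in one stroke, the line-bundle twist being packaged into $h$ from the start. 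You instead compare with the \emph{untwisted} unstable fibration $W(m,r) \to BU(m-r) \to BU(m)$, which only pins down the transgression modulo the ideal $(x_1,\dots,x_r)$, and you resolve the ambiguity by computing $\ker q_{m,r}^* = (a_{m-r+1},\dots,a_m)$ from the Whitney formula and a unipotent change of variables, then running a descending induction on $i$ inside the page quotients. Both approaches rest on the same classical input (the transgression in a reference Stiefel fibration plus naturality); yours is longer but makes the role of the Whitney sum formula and of the ideal of transgressions explicit, while the paper's choice of comparison fibration absorbs the twist automatically and avoids both the kernel computation and the induction. Two points you should tighten: first, the comparison map goes \emph{from} the classical fibration \emph{into} $q_{m,r}$, obtained by restricting $q_{m,r}$ over $BU(m) \times \{\ast\} \hookrightarrow B(m,1^r)$ --- there is no projection $B(m-r,1^r) \to BU(m-r)$ compatible with the base maps in the other direction --- so ``reducing mod $(x_j)$'' must be read as pulling back along this inclusion; second, your opening claim that each generator is transgressive by degree reasons requires ruling out targets in positive even fibre degree, which begins at $4(m-r)+4$, so the claim holds only when $m$ is not too close to $r$ (automatic in the range $r \le 3$ actually used in the paper, but worth recording), whereas the paper's naturality argument determines the differentials on the images $j^*(\tilde e_{2m-2i+1})$ without a separate transgressivity check.
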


\begin{proof}
\begin{comment}Let \(U \coloneqq \bigcup_{j=1}^\infty U(j)\) be the infinite unitary group. Consider the following commutative diagram of compact Lie groups 
\[\begin{tikzcd}
	{U(m-r)\times U(1)^r} && {U(m)\times U(1)^r} \\
	{U(m-r)} && U
	\arrow["{(d_{m,r},id_{U(1)^r})}", from=1-1, to=1-3]
	\arrow["{\mathrm{proj}}"', from=1-1, to=2-1]
	\arrow["h", from=1-3, to=2-3]
	\arrow[from=2-1, to=2-3]
\end{tikzcd}\]
where \(h\) is defined by 
\[
(A,\varepsilon_1,\dots, \varepsilon_r) \in U(m)\times U(1)^r\longmapsto
\begin{pmatrix}
A   \cdot B_{\varepsilon_1, \dots, \varepsilon_r}\\
& 1\\
&& \ddots \\
&&& 1\\
&&&& \ddots
\end{pmatrix} \in U,
\]
for
\[
B_{\varepsilon_1, \dots, \varepsilon_r} \coloneqq \begin{pmatrix}
\mathbb{I}_{m-r} \\
& \overline{\varepsilon}_1 \\
&& \ddots \\
&&& \overline{\varepsilon}_r 
\end{pmatrix} \in U(m).
\]
Above, \(\mathbb{I}_{m-r}\) is the \((m-r)\times (m-r)\) identity matrix and the bar denotes complex conjugate.
\end{comment}
Let \(U \coloneqq \bigcup_{j=1}^\infty U(j)\) be the infinite unitary group and consider a commutative diagram of topological groups
\begin{equation*}
    \begin{tikzcd}
        U(m-r) \times U(1)^r \arrow[rrr,"(d_{m,r}{,} \text{id}_{U(1)^r})"] \arrow[d, "\text{proj}"] & & & U(m) \times U(1)^r \arrow[d]\\
        U(m-r) \arrow[rrr, hook] & & & U,
    \end{tikzcd}
\end{equation*}
where the right vertical map is given by
\begin{comment}
\[(A, a_1, \dots, a_r) \longmapsto \begin{pmatrix}
A & 0 & 0  & 0 & 0 & 0\\
0 & a_1 & 0 & 0 & 0 & 0\\
0 & 0 & \ddots & 0 & 0 & 0\\
0 & 0 & 0 & a_r  & 0 & 0\\
0 & 0 & 0 & 0  &1 & 0 \\
0 &0 & 0 & 0 & 0  &\ddots
\end{pmatrix}\cdot 
\begin{pmatrix}
\mathbb{I}_{m-r} & 0 & 0  & 0 & 0 & 0\\
0 & a_1^{-1} & 0 & 0 & 0 & 0\\
0 & 0 & \ddots & 0 & 0 & 0\\
0 & 0 & 0 & a_r^{-1}  & 0 & 0\\
0 & 0 & 0 & 0  &1 & 0 \\
0 &0 & 0 & 0 & 0  &\ddots
\end{pmatrix}.\]    
\end{comment}
\[(A, a_1, \dots, a_r) \longmapsto \begin{pmatrix}
A & 0 & 0 & 0 \\
0 & 1 & 0 & 0 \\
0 & 0 & 1 & 0 \\
0 & 0 & 0 & \ddots
\end{pmatrix}\cdot 
\begin{pmatrix}
\mathbb{I}_{m-r} & 0 & 0  & 0 & 0 & 0\\
0 & a_1^{-1} & 0 & 0 & 0 & 0\\
0 & 0 & \ddots & 0 & 0 & 0\\
0 & 0 & 0 & a_r^{-1}  & 0 & 0\\
0 & 0 & 0 & 0  &1 & 0 \\
0 &0 & 0 & 0 & 0  &\ddots
\end{pmatrix}.\] 
Passing to classifying spaces, the square induces a map of homotopy fibrations (c.f.~\ \cite{mello1987two})
\begin{equation} \label{eq:morphism-q-q_m,r}
    \begin{tikzcd}
	{W(m,r)} & {B(m-r,1^r)} & {B(m,1^r)} \\
	{W(m-r)} & {BU(m-r)} & BU
	\arrow[from=1-1, to=1-2]
	\arrow["{j}",from=1-1, to=2-1]
	\arrow["{q_{m,r}}", from=1-2, to=1-3]
	\arrow["\mathrm{proj}", from=1-2, to=2-2]
	\arrow["h", from=1-3, to=2-3]
	\arrow[from=2-1, to=2-2]
	\arrow["q", from=2-2, to=2-3]
\end{tikzcd}    
\end{equation}
where $W(m-r)\coloneqq U/U(m-r)$.
%\textcolor{red}{Indeed, the right-hand square is induced by a commutative square of underlying topological groups.}
Moreover, \(q\) is the classifying map of the stable bundle \(\gamma_{m-r}\), and \(h\) is the classifying map of the stable bundle \(\gamma_m \times 1^{\times r} - 1 \times \gamma_1^{\times r}\). Finally, the map \(j \colon W(m,r)\to W(m-r)\) is induced by the inclusion of \(U(m) \xhookrightarrow{} U\). We claim that \(j\) is \((2m-1)\)-connected, whence 
\[
j^* \colon \co^*(W(m-r);\Z) \longrightarrow \co^*(W(m,r);\Z)
\]
is an isomorphism for \(* = 1,\dots, 2m-2\) and a surjection for \(* = 2m-1.\)
To see this, consider the following commutative diagram of topological groups:
\begin{equation*}
    \begin{tikzcd}
        U(m-r) \arrow[r, hook] \arrow[d, equal] & U(m) \arrow[d, hook]\\
        U(m-r) \arrow[r, hook] & U.
    \end{tikzcd}
\end{equation*}
Passing to classifying spaces, there is the Thomas diagram associated to \((\blacktriangle)\):
\[\begin{tikzcd}
	{\Omega (U/U(m))} & {*} & {U/U(m)} \\
	{W(m,r)} & {BU(m-r)} & {BU(m)} \\
	{W(m-r)} & {BU(m-r)} & BU
	\arrow[from=1-1, to=1-2]
	\arrow[from=1-1, to=2-1]
	\arrow[from=1-2, to=1-3]
	\arrow[from=1-2, to=2-2]
	\arrow[from=1-3, to=2-3]
	\arrow[from=2-1, to=2-2]
	\arrow["j", from=2-1, to=3-1]
	\arrow[from=2-2, to=2-3]
	\arrow[equals, from=2-2, to=3-2]
	\arrow["{(\blacktriangle)}"{description}, draw=none, from=2-2, to=3-3]
	\arrow[from=2-3, to=3-3]
	\arrow[from=3-1, to=3-2]
	\arrow["q", from=3-2, to=3-3]
\end{tikzcd}\]
We point out that the map $j$ between the fibers is the same as the one in \eqref{eq:morphism-q-q_m,r}, since it is again induced by the inclusion $U(m-r) \hookrightarrow U$.
The homotopy fiber of \(* \to U/U(m)\) is \(\Omega(U/U(m)).\) Furthermore, by Lemma \ref{lem:thomas_diagram}, the homotopy fiber of \(j\) is \(\Omega (U/U(m))\), which is \((2m-1)\)-connected. The claim follows.

Since the map \(j^*\) is surjective, particularly in degrees \(1, 3, \dots, 2m-1\), we can lift the odd dimensional generators of \(\co^*(W(m,r);\Z)\), described in Proposition \ref{prop:cohomology_rings}(iv), to obtain elements \[e_{2m-2i+1} \in \co^{2m-2i+1}(W(m-r);\Z),\] for \(i = 1,2,\dots, 2m-1\). Recall that the transgression of
\[
    e_{2m-2i+1}\in \co^{2m-2i+1}(W(m-r);\Z)
\]
in the Leray--Serre spectral sequence of the fibration $q$ is classically known to be the Chern class
\[
    c_{m-i+1} \in \co^*(BU;\Z) \cong \Z[c_1, c_2, \dots],
\]
where $|c_n| = 2n$. This fact is a straightforward extension of \cite[Theorem 5.5(5), p.\ 343]{mim1991book}. Finally, the claim follows by naturality of the transgression via the morphism induced by \eqref{eq:morphism-q-q_m,r}, as illustrated in Figure \ref{fig:ss_q_m,r_naturality}, and the fact that $h^*(c_{m-i+1}) = a_{m-i+1} \in \co^*(B(m,1^r);\Z)$.
\end{proof}

\begin{figure}[h]
  \centering
  \includegraphics[width=0.95\textwidth]{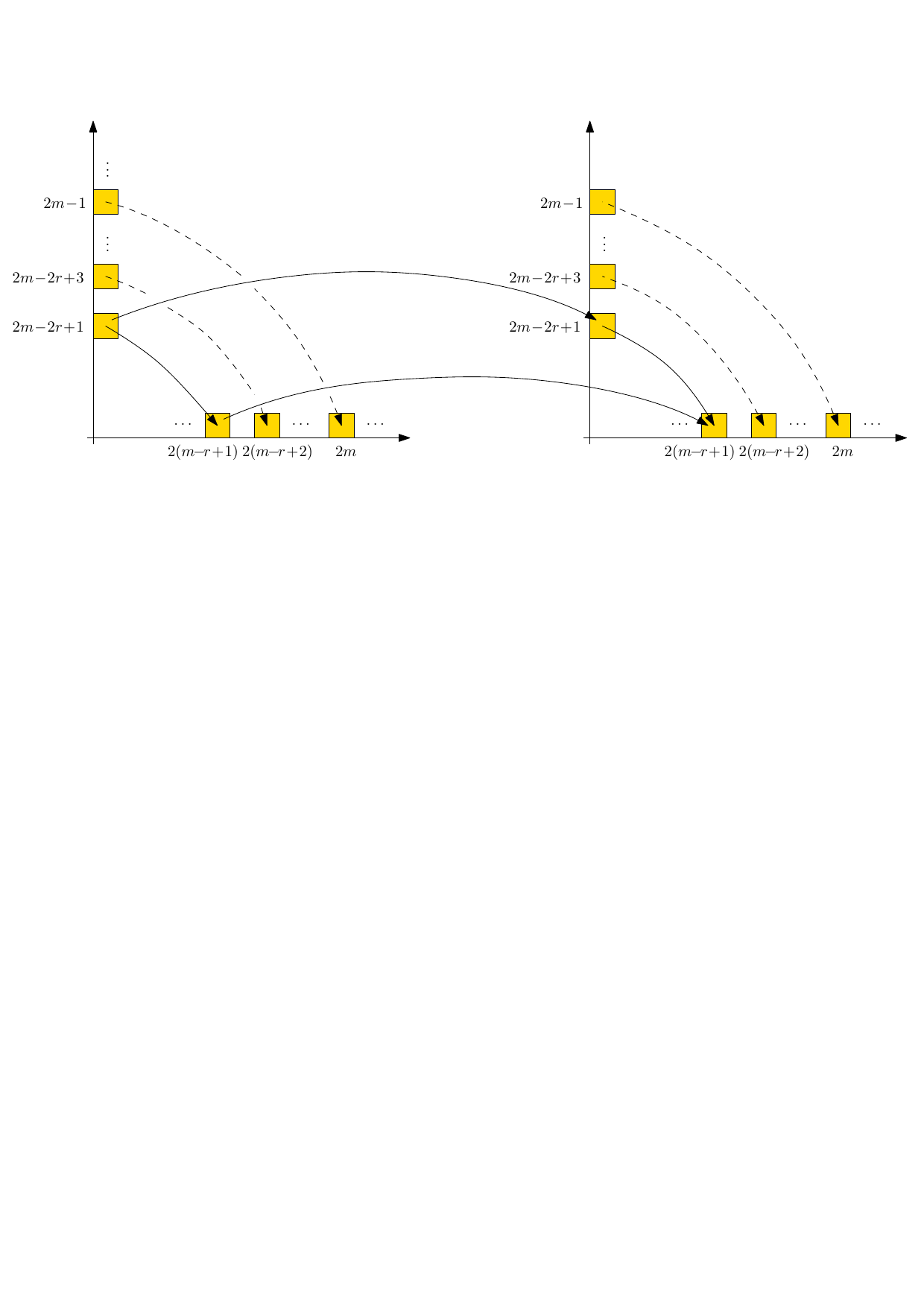}
  \caption{An illustration of the morphism of Leray--Serre spectral sequences between $q$ and $q_{m,r}$ induced by \eqref{eq:morphism-q-q_m,r}. Dashed differentials appear on later pages.}
  \label{fig:ss_q_m,r_naturality}
\end{figure}

We obtain the following result immediately from Lemma \ref{lem:diffsofq1} and the Serre cohomological exact sequence.
\begin{cor}\label{cor:kerq_{m,r}^*}
For \(m \geq r \geq 1\), let the homotopy fibration 
    \[
        W(m,r) \longrightarrow B(m-r, 1^r) \xrightarrow{~~q_{m,r}~~} B(m,1^r)
    \]
be as defined in \emph{(\ref{eq:fibq_1})}. Then 
\[
    \mathrm{ker}(q_{m,r}^*)\cap \co^{\leq 2m}(B(m,1^r);\Z) = (a_{m-i+1}\colon~ i=1, \dots, r) \cap \co^{\leq 2m}(B(m,1^r);\Z),
\]
where the \(a_{m-i+1}\) are defined as in \emph{Lemma \ref{lem:diffsofq1}.} 
\end{cor}
To conclude this section, Proposition \ref{prop:complexhopf} follows immediately from Proposition \ref{prop:primaryobs}.

\section{Two complex line bundles: $m$ odd} \label{sec:thm_cpspan2_odd}

In this section, we prove Theorem \ref{thm:cpspan2_odd}. Namely, let \(X\) be a \(2m\)-dimensional CW complex with \(m \ge 3\) odd. Given complex vector bundles \[\xi \colon X \longrightarrow BU(m) \hspace{3mm} \text{ and } \hspace{3mm} \ell_1, \ell_2 \colon X \longrightarrow BU(1),\] we wish to solve the lifting problem \eqref{eq:lifting-problem-q_m,r} for $r=2$. Using Proposition \ref{prop:piW(m,r)}, we construct the following Moore-Postnikov tower of \(q_{m,2}\) on top of \eqref{eq:moore-postnikov-tower-beginning}, namely
\begin{equation}\label{eq:mp_diag_cpspan2_odd}
\begin{tikzcd}[ampersand replacement=\&]
	\&\& {E[2]} \\
	\&\& {E[1]} \& {K(\Z,2m)} \\
	{W(m,2)} \& {B(m-2,1^2)} \& {B(m,1^2)} \& {K(\Z,2m-2)} \\
	\&\& X
	\arrow["{p_2}", from=1-3, to=2-3]
	\arrow["{k_2}", from=2-3, to=2-4]
	\arrow["{p_1}", from=2-3, to=3-3]
	\arrow[from=3-1, to=3-2]
	\arrow["{q_1}"{description}, from=3-2, to=2-3]
	\arrow["{q_{m,2}}", from=3-2, to=3-3]
	\arrow["{k_1}", from=3-3, to=3-4]
	\arrow["{(\xi, \ell_1,\ell_2)}"', from=4-3, to=3-3]
\end{tikzcd}\end{equation}
where \(k_i\) is the characteristic class in the fibration \(q_{i-1}\) (with \(q_0 \coloneqq q_{m,2}\)), and each \(E[i]\) is constructed as the homotopy fiber of \(k_i\).\\

\subsection{The Primary Obstruction}
Restating Proposition \ref{prop:primaryobs} in this context, we have the following.
\begin{lem}\label{lem:primaryob_cpspan2_odd}
Let all spaces and maps be as in \emph{(\ref{eq:mp_diag_cpspan2_odd})}. Then the primary obstruction 
\[
\lobs^{2m-2}((\xi,\ell_1,\ell_2),q_{m,2},k_1) \subseteq \co^{2m-2}(X;\Z)
\]
to lifting \((\xi,\ell_1,\ell_2)\) along \(q_{m,2}\) is the singleton set \(\{c_{m-1}(\xi - \ell_1 \oplus \ell_2)\}.\)
\end{lem}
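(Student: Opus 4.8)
The plan is to obtain Lemma \ref{lem:primaryob_cpspan2_odd} as the special case $r=2$ of Proposition \ref{prop:primaryobs}, so the only real work is to check that the indexing in the Moore--Postnikov tower \eqref{eq:mp_diag_cpspan2_odd} matches the general statement. First I would recall that $W(m,2)$ is $2(m-2)$-connected, so its first nonzero homotopy group is $\pi_{2(m-2)+1}(W(m,2)) \cong \Z$, in agreement with the target $K(\Z, 2m-2)$ appearing in \eqref{eq:mp_diag_cpspan2_odd}. The characteristic class $k_1$ of the fibration $q_{m,2}$ is, by definition, the transgression of the fundamental class of this fiber, and hence lives in $\co^{2m-2}(B(m,1^2);\Z)$.

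With these identifications in hand, the proof is a direct invocation of Proposition \ref{prop:primaryobs} with $r=2$. Substituting $r=2$ into the formula there gives $2(m-r+1) = 2m-2$ for the cohomological degree and $c_{m-r+1} = c_{m-1}$ for the relevant virtual Chern class, so the primary obstruction set is exactly
\[
\lobs^{2m-2}((\xi,\ell_1,\ell_2), q_{m,2}, k_1) = \{c_{m-1}(\xi - \ell_1 \oplus \ell_2)\} \subseteq \co^{2m-2}(X;\Z),
\]
as claimed.

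I anticipate no genuine obstacle, since the substantive content has already been established upstream: Lemma \ref{lem:diffsofq1} identifies the transgression of $e_{2m-2i+1}$ with the universal virtual Chern class $a_{m-i+1}$, and naturality of the transgression under the classifying map $f = (\xi,\ell_1,\ell_2)$ pulls $a_{m-1}$ back to $c_{m-1}(\xi-\ell_1\oplus\ell_2)$. The primary obstruction is always a singleton (there are no prior lifts whose choices could introduce indeterminacy), so no ambiguity arises at this first stage. The one point worth double-checking is that the odd-$m$ hypothesis of Theorem \ref{thm:cpspan2_odd} plays no role here: the parity of $m$ affects only the higher homotopy groups of $W(m,2)$ (compare Table \ref{table:pi(W(m,2))}), not its first nonzero one, so the computation of the primary obstruction is insensitive to it and the assumption enters only in the secondary and higher obstructions treated in the remainder of the section.
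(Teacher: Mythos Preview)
Your proposal is correct and matches the paper's approach exactly: the paper introduces Lemma \ref{lem:primaryob_cpspan2_odd} with the phrase ``Restating Proposition \ref{prop:primaryobs} in this context, we have the following,'' i.e., it is simply the case $r=2$ of that proposition. Your additional remarks about the connectivity of $W(m,2)$ and the irrelevance of the parity hypothesis at this stage are accurate and harmless elaborations.
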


\subsection{The Generalized Secondary Obstruction}
Finally, Theorem \ref{thm:cpspan2_odd} is an immediate consequence of the following.
\begin{thm}\label{thm:secondaryob_cpspan2_odd}
Let all spaces and maps be as in \emph{(\ref{eq:mp_diag_cpspan2_odd})}. Suppose that \((\xi,\ell_1,\ell_2) \colon X \to B(m,1^2)\) satisfies \(c_{m-1}(\xi - \ell_1\oplus \ell_2)=0\).
\begin{enumerate}[\normalfont(i)]
    \item Suppose that \(\co^{2m}(X;\Z) = \delta Sq^2\rho_2 \co^{2m-3}(X;\Z).\) Then zero is an element of the generalized secondary obstruction
    \[
\lobs^{2m}((\xi,\ell_1,\ell_2), q_{m,2},k_2) \subseteq \co^{2m}(X;\Z)
    \]
to lifting \((\xi,\ell_1,\ell_2)\) along \(q_{m,2}\).
    \item Suppose that \(\co^{2m}(X;\Z)\) has no \(2\)-torsion. Then the generalized secondary obstruction 
\[
\lobs^{2m}((\xi,\ell_1,\ell_2), q_{m,2},k_2) \subseteq \co^{2m}(X;\Z) \]
to lifting \((\xi,\ell_1,\ell_2)\) along \(q_{m,2}\) is the singleton set \(\{c_m(\xi-\ell_1\oplus \ell_2)\}.\)
\end{enumerate}
\end{thm}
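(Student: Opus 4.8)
The plan is to realise the generalized secondary obstruction $\lobs^{2m}((\xi,\ell_1,\ell_2),q_{m,2},k_2)$ as a coset of $\co^{2m}(X;\Z)$, computing separately its indeterminacy (the subgroup swept out by $g^*k_2$ as $g$ ranges over lifts of $f=(\xi,\ell_1,\ell_2)$) and a distinguished representative. Since the primary obstruction $c_{m-1}(\xi-\ell_1\oplus\ell_2)$ vanishes by assumption, Lemma \ref{lem:primaryob_cpspan2_odd} guarantees that $f$ admits a lift $g\colon X\to E[1]$. As $E[1]$ is the homotopy fibre of $k_1$, it is a principal fibration over $B(m,1^2)$ with fibre $\Omega K(\Z,2m-2)\simeq K(\Z,2m-3)$; hence the lifts of $f$ form a torsor over $\co^{2m-3}(X;\Z)$. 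First I would record the standard difference formula: if $g'$ is obtained from $g$ by the action of $d\in\co^{2m-3}(X;\Z)$, then $g'^*k_2 - g^*k_2 = d^*(i^*k_2)$, where $i\colon K(\Z,2m-3)\hookrightarrow E[1]$ is the fibre inclusion. There are no cross terms here because $\co^j(K(\Z,2m-3);\Z)$ vanishes for $2m-3<j<2m$ by Corollary \ref{cor:cohomEMS}. Thus the indeterminacy is exactly the image of $i^*k_2$ applied to $\co^{2m-3}(X;\Z)$.

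The crux is identifying $i^*k_2\in\co^{2m}(K(\Z,2m-3);\Z)$. By Corollary \ref{cor:cohomEMS} this group is $\Z/2\langle\delta Sq^2\iota^2\rangle$, so $i^*k_2$ is either $0$ or the generator $\delta Sq^2\iota^2$. Restricting the Moore--Postnikov tower of $q_{m,2}$ over a point of $B(m,1^2)$ identifies $i^*k_2$ with the first nontrivial Postnikov $k$-invariant $\kappa$ of the fibre $W(m,2)$, and I would prove $\kappa\neq0$ by a mod $2$ dimension count. By Proposition \ref{prop:cohomology_rings}(v), $\co^*(W(m,2);\Z/2)\cong\Lambda_{\Z/2}[f_{2m-3},f_{2m-1}]$ and $Sq^2 f_{2m-3}=\binom{m-2}{1}f_{2m-1}=f_{2m-1}$, the last equality using that $m$ is odd; in particular $\dim_{\Z/2}\co^{2m-1}(W(m,2);\Z/2)=1$. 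If $\kappa$ were zero, the Postnikov stage $W(m,2)[2m-1]$ would split as $K(\Z,2m-3)\times K(\Z,2m-1)$, and the Künneth theorem together with Proposition \ref{prop:modpcohomEMS} would force $\dim_{\Z/2}\co^{2m-1}=2$, spanned by $Sq^2\iota^2$ from the base and the fibre class $\iota^2$. Since $W(m,2)\to W(m,2)[2m-1]$ is a $2m$-equivalence, this contradicts the count above. Hence $i^*k_2=\delta Sq^2\iota^2$, and because $d^*\iota^2=\rho_2 d$, the indeterminacy equals $\delta Sq^2\rho_2\co^{2m-3}(X;\Z)$.

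Next I would exhibit $c_m(\xi-\ell_1\oplus\ell_2)$ as a representative. Here I use the morphism of fibrations \eqref{eq:morphism-q-q_m,r} relating $q_{m,2}$ to $q\colon BU(m-2)\to BU$, whose fibre $W(m-2)$ receives the $(2m-1)$-connected map $j$ from $W(m,2)$ established in the proof of Lemma \ref{lem:diffsofq1}. Consequently the two Moore--Postnikov towers agree through the stage carrying $\pi_{2m-1}$, and by naturality a lift of $f$ along $q_{m,2}$ maps to a lift of $h\circ f$ along $q$. For the classical problem of reducing the stable bundle $h\circ f=\xi-\ell_1\oplus\ell_2$ to rank $m-2$, the relevant transgression is the universal Chern class $c_m$ (cf.\ the proof of Lemma \ref{lem:diffsofq1}); pulling back along $h$ via $h^*c_m=a_m$ and then along $f$ yields $f^*a_m=c_m(\xi-\ell_1\oplus\ell_2)$, which therefore lies in the obstruction set.

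Both conclusions are now formal. For (i), the hypothesis $\co^{2m}(X;\Z)\cong\delta Sq^2\rho_2\co^{2m-3}(X;\Z)$ makes the indeterminacy the full group, so the coset is all of $\co^{2m}(X;\Z)$ and in particular contains $0$. For (ii), the indeterminacy lies in the image of the Bockstein $\delta$, hence consists of $2$-torsion; as $\co^{2m}(X;\Z)$ has no $2$-torsion it vanishes, and the obstruction set collapses to the singleton $\{c_m(\xi-\ell_1\oplus\ell_2)\}$. I expect the main obstacle to be the nonvanishing of $i^*k_2$: one must faithfully transport the $Sq^2$-computation on $W(m,2)$ into a statement about an integral $k$-invariant, which is precisely where the parity of $m$ and the dimension count become indispensable.
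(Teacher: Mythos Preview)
Your indeterminacy computation is correct and takes a pleasant alternative route from the paper: rather than showing that $\nu^*$ surjects onto $\Z/2\langle\delta Sq^2\iota^2\otimes 1\rangle$ via the exact sequence \eqref{eq:kerq)1_exact_seq} (Lemma~\ref{lem:mu*k2}), you identify $i^*k_2$ with the first Postnikov $k$-invariant of $W(m,2)$ and rule out its vanishing by a mod~$2$ dimension count using $Sq^2 f_{2m-3}=f_{2m-1}$. One small point: your justification for the absence of cross terms does not address a potential $\iota_{2m-3}\otimes\beta$ contribution with $\beta\in\co^3$; this vanishes as well, but because $\co^3(E[1];\Z)=0$, not because of the range $2m-3<j<2m$ you cite.

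The genuine gap is in your identification of the representative for~(ii). Lemma~\ref{lem:diffsofq1} computes the transgression of $e_{2m-1}$ in the spectral sequence of the \emph{total} fibration $q_{m,2}$ (respectively $q$) as $a_m$ (respectively $c_m$); but $k_2$ is, by definition, the transgression in the fibration $q_1$ of a generator of $\co^{2m-1}(F[1];\Z)$, and $F[1]$ is not $W(m,2)$. The paper computes (Lemma~\ref{lem:F[1]-odd2}) that $s_1^*\colon\co^{2m-1}(W(m,2);\Z)\to\co^{2m-1}(F[1];\Z)$ is the index-$2$ inclusion $\Z\langle e_{2m-1}\rangle\hookrightarrow\Z\langle e_{2m-1}/2\rangle$, so naturality of the transgression yields $p_1^*a_m=2k_2$, not $k_2$. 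Passing to the universal problem over $BU$ does not circumvent this: since $j\colon W(m,2)\to W(m-2)$ is a $(2m-1)$-equivalence, the same nontrivial extension occurs for $F[1]'$, and one still gets $2k_2'=p_1'^{\,*}c_m$. What one actually obtains is $2g^*k_2=c_m(\xi-\ell_1\oplus\ell_2)$; under the no-$2$-torsion hypothesis this gives $g^*k_2=0\iff c_m=0$ (Lemma~\ref{lem:k_2_even}), which is exactly what feeds into Theorem~\ref{thm:cpspan2_odd}, but your argument as written does not supply the factor of~$2$ and so does not show that $c_m$ itself lies in the obstruction coset.
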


The rest of this section is dedicated to proving Theorem \ref{thm:secondaryob_cpspan2_odd}. We remark that the proof is both more detailed and a generalization of the proof of \cite[Theorem 3.5]{thomas1967postnikov}.\\

Suppose that the classifying map \((\xi,\ell_1,\ell_2) \colon X \longrightarrow B(m,1^2)\) satisfies \(c_{m-1}(\xi - \ell_1 \oplus \ell_2) = 0\). Then by Lemma \ref{lem:primaryob_cpspan2_odd} there is a lift making the following diagram commute
\[\begin{tikzcd}
	& & {B(m-2,1^2)} \\
	X & & {B(m,1^2)}
	\arrow["{q_{m,2}}", from=1-3, to=2-3]
	\arrow[ dashed, from=2-1, to=1-3]
	\arrow["{(\xi,\ell_1,\ell_2)}", from=2-1, to=2-3]
\end{tikzcd}\]
if and only if 
\[
g^*(k_2) = 0 \in \co^{2m}(X;\Z)/\mathrm{Indet},
\]
where \(g\colon X \to E[1]\) is some lift of \((\xi,\ell_1,\ell_2)\) to \(E[1]\), and \(\mathrm{Indet}\) denotes the indeterminancy of such lifts.

We first establish the following auxiliary fact which uses the results of Lemma \ref{lem:F[1]-odd2}.

\begin{lem}\label{lem:q1-in-coh}
	Map $q_1 \colon B(m-2,1^2) \to E[1]$ induces an isomorphism 
	\[
		q_1^* \colon \co^{i}(E[1];\Z) \xrightarrow{~\cong~} \co^i(B(m-2,1^2);\Z), \hspace{3mm} \text{for}~ 0 \le i \le 2m-1,
	\]
	and a split exact sequence
	\begin{equation} \label{eq:ses-q1}
		0 \longrightarrow \Z\langle k_2\rangle \longrightarrow \co^{2m}(E[1];\Z) \xrightarrow{~q_1^*~} \co^{2m}(B(m-2,1^2);\Z) \longrightarrow 0.
	\end{equation}
	Moreover, for $K \coloneqq K(\Z,2m-3)$, the map
	\[
		\id \times q_1 \colon K\times B(m-2,1^2) \longrightarrow K\times E[1]
	\]
	induces the map in degree $2m$ cohomology
	\[
		(\id \times q_1)^* = \id\oplus q_1^* \colon \co^{2m}(K;\Z) \oplus \co^{2m}(E[1];\Z) \to \co^{2m}(K;\Z) \oplus \co^{2m}(B(m-2,1^2);\Z).
	\]
\end{lem}
\begin{proof}
	Throughout the proof, all cohomology is considered with integer coefficients. Let $(E_*^{*.*}, d_*)$ denote the Leray-Serre spectral sequence of 
    \begin{equation*}
        F[1] \longrightarrow B(m-2,1^2) \xrightarrow{~q_1~} E[1].
    \end{equation*}
    By Lemma \ref{lem:F[1]-odd2}, $F[1]$ has trivial cohomology in degrees up to $2m$, except $\co^{2m-1}(F[1])\cong \Z\langle e_{2m-1}/2 \rangle$. Thus, $q_1^*$ is an isomorphism in cohomological degrees up to $2m-1$. Moreover, since $\co^{2m-1}(B(m-2,1^2)) \cong 0$, the only non-trivial differential emanating from any of the first $2m$ diagonals is
	\begin{equation*}
		d_{2m} \colon E^{0,2m-1}_{2m} \cong \Z \langle e_{2m-1}/2 \rangle \longrightarrow \co^{2m}(E[1]) \cong E^{2m,0}_{2m}, ~e_{2m-1}/2 \longmapsto k_2
	\end{equation*}
	and it is a monomorphism. See Figure \ref{fig:ss_q1} for illustration. After passing to the $(2m+1)$-th page, we observe that $E_{2m+1}^{2m,0} \cong \co^{2m}(E[1])/\Z\langle k_2 \rangle$ is the only non-trivial slot on the $2m$-th diagonal, hence we have a short exact sequence \eqref{eq:ses-q1}. Since the cokernel is a free $\Z$-module, we conclude that it splits.
	
	As for the second part of the proof, the claim follows from similar reasoning and K\"unneth's formula for cohomology, which is natural in this case.
\end{proof}

\begin{figure}[h]
  \centering
  \includegraphics[width=0.45\textwidth]{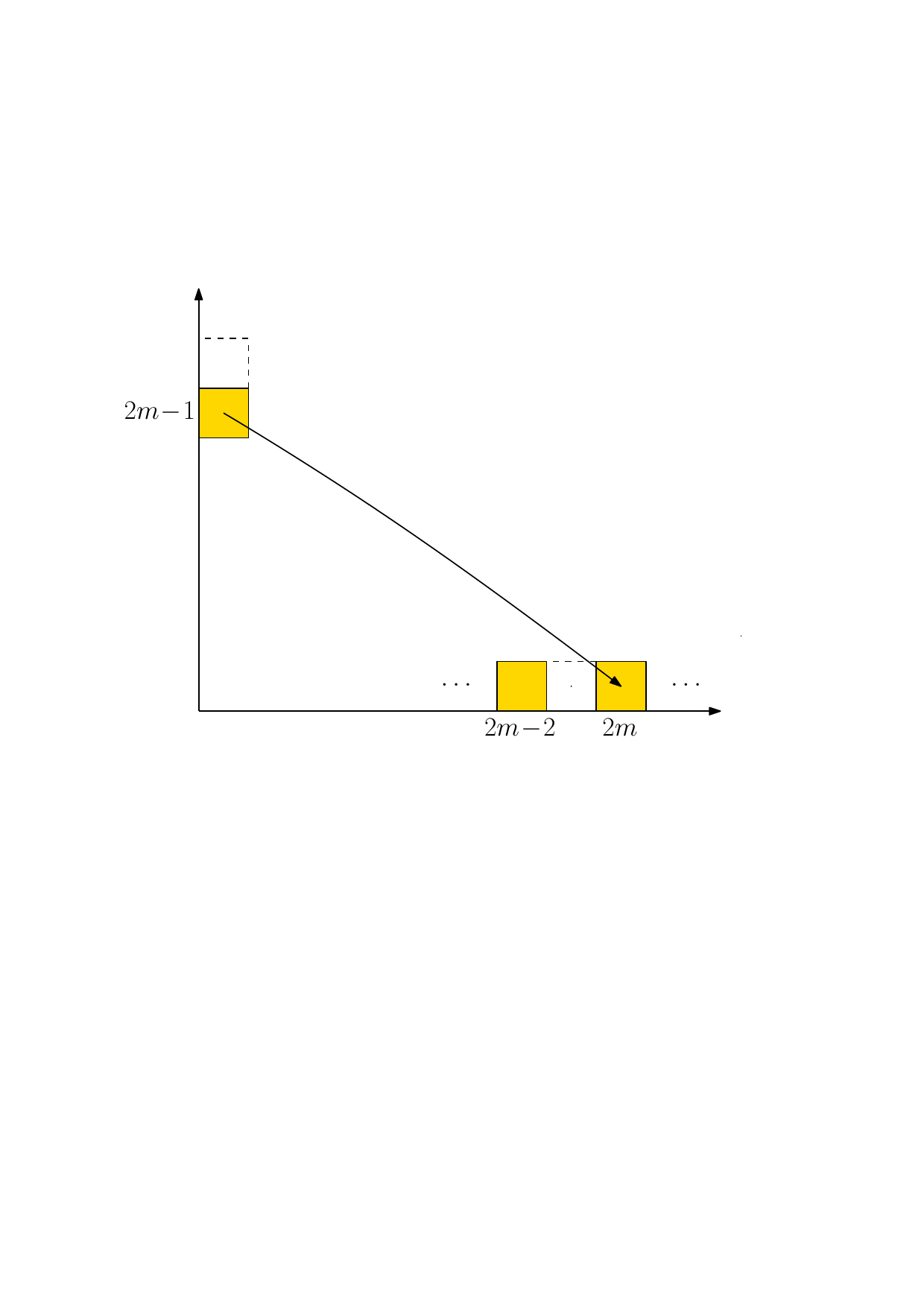}
  \caption[Spectral sequence of $q_1$]{Spectral sequence of $q_1$.}
  \label{fig:ss_q1}
\end{figure}

\begin{lem}\label{lem:mu*k2}
	Let \(k_2\in \co^{2m}(E[1];\Z)\) be as in \eqref{eq:mp_diag_cpspan2_odd}. Further let \[\mu \colon K(\Z,2m-3)\times E[1]\longrightarrow E[1]\] be the principal action map associated to the principal fiber space \(E[1]\). Then
	\begin{equation*}
		\mu^*(k_2) = \delta Sq^2 \rho_2 \iota_{2m-3}^2 \oplus k_2   \in \co^{2m}(K(\Z,2m-3)\times E[1];\Z)
		\cong \co^{2m}(K(\Z, 2m-3); \Z) \oplus \co^{2m}(E[1];\Z).
	\end{equation*}
\end{lem}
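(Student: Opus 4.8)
The plan is to exploit the two defining properties of the principal action map $\mu \colon K(\Z,2m-3) \times E[1] \to E[1]$ together with the Künneth theorem and a dimension count, and then to identify the resulting fibre term with the first Postnikov $k$-invariant of $W(m,2)$. First recall that $E[1] = \mathrm{hofib}(k_1)$ is a principal $K(\Z,2m-3)$-fibration over $B(m,1^2)$, so the fibre of $p_1$ is $K(\Z,2m-3)$, the map $\mu$ restricts to the identity on $\{e_0\} \times E[1]$, and it restricts to the fibre inclusion $j \colon K(\Z,2m-3) \hookrightarrow E[1]$ on $K(\Z,2m-3) \times \{x_0\}$, where $x_0 \in E[1]$ is the basepoint lying in the fibre over the basepoint of $B(m,1^2)$. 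The first property forces the $\co^0(K(\Z,2m-3);\Z) \otimes \co^{2m}(E[1];\Z)$ component of $\mu^*(k_2)$ to be exactly $1 \otimes k_2$, so by the Künneth theorem we may write $\mu^*(k_2) = 1 \otimes k_2 + \sum_j a_j \otimes b_j$ with $|a_j| > 0$, as already recorded in \eqref{eq:nu^*-in-intersection-pspan2-even}.

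Second, I would eliminate all genuine cross terms by a degree count. Consulting Table \ref{table:cohomEMS} with $q = 2m-3 \ge 3$, the group $\co^i(K(\Z,2m-3);\Z)$ vanishes for $2m-3 < i < 2m$ and equals $\Z/2\langle \delta Sq^2 \rho_2 \iota_{2m-3}^2\rangle$ for $i = 2m$. Hence a term $a_j \otimes b_j$ with $|a_j|, |b_j| > 0$ can only occur with $|a_j| = 2m-3$ and $|b_j| = 3$. But $q_1$ is a $(2m-1)$-equivalence, so $\co^3(E[1];\Z) \cong \co^3(B(m-2,1^2);\Z)$, which vanishes because $\co^*(B(m-2,1^2);\Z)$ is concentrated in even degrees by Proposition \ref{prop:cohomology_rings}(iii). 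Therefore the only surviving contribution is $a \otimes 1$ with $a \in \co^{2m}(K(\Z,2m-3);\Z)$, and restricting $\mu^*(k_2)$ along $K(\Z,2m-3) \times \{x_0\}$, where $\mu$ agrees with $j$, identifies $a = j^*(k_2)$. Thus $\mu^*(k_2) = 1 \otimes k_2 + j^*(k_2) \otimes 1$, and it remains to compute $j^*(k_2)$.

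Third, and this is the crux, I would show $j^*(k_2) = \delta Sq^2 \rho_2 \iota_{2m-3}^2$. Restricting the Moore--Postnikov tower of $q_{m,2}$ to the fibres produces the Postnikov tower of $W(m,2)$: the induced map $\phi \colon W(m,2) \to K(\Z,2m-3)$ is the first Postnikov section, and $j^*(k_2)$ is precisely the first nontrivial Postnikov $k$-invariant of $W(m,2)$, a class in $\co^{2m}(K(\Z,2m-3);\Z) \cong \Z/2$. It therefore suffices to show this $k$-invariant is nonzero, which is detected by the Steenrod action on $\co^*(W(m,2);\Z/2) = \Lambda_{\Z/2}[f_{2m-3}, f_{2m-1}]$: by Proposition \ref{prop:cohomology_rings}(v), $Sq^2 f_{2m-3} = \binom{m-2}{1} f_{2m-1} = (m-2)\, f_{2m-1}$, which is nonzero exactly because $m$ is odd. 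Consequently the attaching of the $\pi_{2m-1}$-stage is nontrivial, forcing the $k$-invariant to be the generator $\delta Sq^2 \rho_2 \iota_{2m-3}^2$, in agreement with the classical computation generalized from \cite[Theorem 3.5]{thomas1967postnikov}.

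The main obstacle is this last step: rigorously matching the restricted $k$-invariant $j^*(k_2)$ with the first Postnikov $k$-invariant of $W(m,2)$ and extracting its nonvanishing from the single relation $Sq^2 f_{2m-3} = (m-2)f_{2m-1}$. Care is needed to verify that the integral Bockstein $\delta$ of the non-integral mod-$2$ class $Sq^2 \rho_2 \iota_{2m-3}$ is exactly the generator of $\co^{2m}(K(\Z,2m-3);\Z)$ recorded in Table \ref{table:cohomEMS}, and to track that the parity of $m$ enters only through the binomial coefficient $\binom{m-2}{1}$; for $m$ even this coefficient vanishes, the cross term disappears, and $k_2$ is pulled back from the base, which explains why the even-rank case is treated separately.
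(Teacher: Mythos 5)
Your proposal is correct in substance but establishes the key point---the nonvanishing of the cross term---by a genuinely different route than the paper. Both arguments share the same first half: the K\"unneth decomposition together with the unit property of $\mu$ gives $\mu^*(k_2) = 1 \otimes k_2 + \sum_j a_j \otimes b_j$ with $\lvert a_j \rvert > 0$, and Table \ref{table:cohomEMS} plus $\co^{3}(E[1];\Z) \cong \co^3(B(m-2,1^2);\Z) = 0$ reduce everything to a single possible term $a \otimes 1$ with $a \in \co^{2m}(K(\Z,2m-3);\Z) \cong \Z/2\langle \delta Sq^2 \rho_2\iota_{2m-3}\rangle$ (this is exactly the content of Proposition \ref{prop:kunneth1}). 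Where you diverge is in showing $a \neq 0$: the paper reads this off from the exactness of \eqref{eq:kerq)1_exact_seq}---since $\co^{2m+1}(B(m,1^2);\Z)=0$, the restriction of $\nu^*$ is \emph{surjective} onto $\ker(\bar s^*) \cong \Z/2$, and as its source $\ker(q_1^*)\cap\co^{2m}(E[1];\Z)$ is cyclic on $k_2$, the image $\nu^*(k_2)$ must be the generator. You instead restrict to the fibre, identify $a = j^*(k_2)$ with the first Postnikov $k$-invariant of $W(m,2)$, and detect it by the Steenrod action on $\co^*(W(m,2);\Z/2)$. The paper's route is shorter and needs no information about the fibre beyond its homotopy groups, but leans on the relative-transgression machinery of \cite{thomas2006seminar}; yours is more classical and makes visible exactly where the parity of $m$ enters, which is a genuine explanatory gain.

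One step in your third paragraph should be tightened. The bare implication ``$Sq^2 f_{2m-3} = (m-2)f_{2m-1} \neq 0$, hence the $k$-invariant is nonzero'' is not valid in isolation: if the $k$-invariant vanished, then $P_{2m-1}W(m,2) \simeq K(\Z,2m-3)\times K(\Z,2m-1)$, and there $Sq^2$ of the bottom class is \emph{also} nonzero, so nonvanishing of $Sq^2$ alone does not distinguish the two cases. What does distinguish them is the rank: a split two-stage system would force $\co^{2m-1}(W(m,2);\Z/2)$ to be two-dimensional (spanned by $Sq^2 f_{2m-3}$ and the reduction of a class dual to $\pi_{2m-1}$), whereas $\Lambda_{\Z/2}[f_{2m-3},f_{2m-1}]$ gives a one-dimensional group generated by $Sq^2 f_{2m-3}$. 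With that observation added---which you essentially have on hand, since you quote the full ring $\co^*(W(m,2);\Z/2)$---your detection argument closes and the proof is complete.
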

\begin{proof}
	Throughout the proof, all cohomology coefficients will be assumed to be integer. By the K\"unneth formula for cohomology, Lemma \ref{lem:q1-in-coh}, and properties of $\co^*(K(\Z, 2m-3))$ from Table \ref{table:cohomEMS}, we have
	\[
		\co^{2m}(K(\Z,2m-3)\times E[1])
		\cong \co^{2m}(K(\Z, 2m-3)) \oplus \co^{2m}(E[1]).
	\]
	%Suppose that the classifying map \(f=(\xi,\ell_1,\ell_2) \colon X \to BU(m) \times BU(1)^2\) satisfies \(c_{m-1}(\xi - \ell_1 \oplus \ell_2) = 0\).
    We have by \cite[Lemma 4, p.\ 17]{thomas2006seminar} the following homotopy commutative diagram:
\begin{equation}\label{eq:indetdiag2_even}
    \begin{tikzcd}
	{K(\Z,2m-3)} && {K(\Z,2m-3)} \\
	{K(\Z,2m-3)\times B(m-2,1^2)} & {K(\Z,2m-3)\times E[1]} & {E[1]} \\
	{B(m-2,1^2)} && {B(m,1^2)}
	\arrow[Rightarrow, no head, from=1-1, to=1-3]
	\arrow[from=1-1, to=2-1]
	\arrow[from=1-3, to=2-3]
	\arrow["{1 \times q_1}", from=2-1, to=2-2]
	\arrow["\nu"{description}, curve={height=-30pt}, from=2-1, to=2-3]
	\arrow["{{\mathrm{proj}}_2}", from=2-1, to=3-1]
	\arrow["\mu", from=2-2, to=2-3]
	\arrow["{p_1}", from=2-3, to=3-3]
	\arrow["s", curve={height=-18pt}, from=3-1, to=2-1]
	\arrow["{q_{m,2}}", from=3-1, to=3-3]
\end{tikzcd}
\end{equation}
Here \(\nu = \mu \circ (\mathrm{id}\times q_1)\) and \(\bar{s}\) is the canonical section of the projection $\mathrm{proj}_2$.
By Proposition \ref{prop:surjective_q_{m,r}}(ii), we have that \(q_{m,2}^*\) is surjective in degree \(2m\). Moreover, using Corollary \ref{cor:kerq_{m,r}^*}, it is straightforward to check that 
\[
\mathrm{ker}(q_{m,2}^*)\cap \co^{2m}(B(m,1^2)) \subseteq \mathrm{ker}(p_1^*)\cap \co^{2m}(B(m,1^2)).
\] 
Indeed, \(q_{m,2}^* = q_1^*\circ p_1^*\), and by Lemma \ref{lem:q1-in-coh}, \(q_1^*\) in degree \(2m\) is injective modulo the free \(\Z\)-summand generated by \(k_2\).

Combining the above together with the fact that (\ref{eq:indetdiag2_even}) is a pullback diagram, we are justified in applying \cite[Property 5]{thomas2006seminar} to obtain following exact sequence
\[
\dots \longrightarrow \co^{2m}(E[1]) \xrightarrow{~\nu^*~} \co^{2m}(K(\Z,2m-3)\times B(m-2,1^2)) \xrightarrow{~\tau_1~} \co^{2m+1}(B(m,1^2)) \longrightarrow \dots ,
\]
where \(\tau_1\) is the relative transgression (see \cite[page 16]{thomas2006seminar}).
By K\"{u}nneth's formula, \[\co^{2m+1}(B(m,1^2)) \cong 0,\] so the fact that \(\nu \circ s \simeq q_1\) gives rise to an exact sequence
\begin{equation}\label{eq:kerq)1_exact_seq}
\mathrm{ker}(q_1^*) \cap \co^{2m}(E[1]) \xrightarrow{~{\nu^*}\lvert_{\ker q_1^*}~} \mathrm{ker}(s)^*\cap \co^{2m}(K(\Z,2m-3)\times B(m-2,1^2)) \xrightarrow{~\tau_1~} 0,
\end{equation}
where \(\mathrm{ker}(q_1^*) \cap \co^{2m}(E[1]) \cong \Z\langle k_2 \rangle\) by Lemma \ref{lem:q1-in-coh}.
K\"unneth theorem for cohomology together with Table~\ref{table:cohomEMS} yield
\begin{equation*}\label{eq:kunneth1}
	\co^{2m}(K(\Z,2m-3)\times B(m-2,1^2)) \cong  \co^{2m}(K(\Z,2m-3)) \oplus \co^{2m}(B(m-2,1^2))
\end{equation*}
and the intersection with $\mathrm{ker}(s^*)$ is isomorphic to the first direct summand. Therefore, by \eqref{eq:kerq)1_exact_seq} and Table \ref{table:cohomEMS}, we have
\[
	\nu^*(k_2) = \delta Sq^2 \rho_2 \iota_{2m-3}^2 \oplus 0 \in \co^{2m}(K(\Z,2m-3))\oplus \co^{2m}(B(m-2,1^2)).
\]
Furthermore, by Lemma \ref{lem:q1-in-coh}, we have that
\begin{equation*}
	\mu^*(k_2) = \delta Sq^2 \iota_{2m-3}^2 \oplus nk_2 \in \co^{2m}(K(\Z,2m-3))\oplus \co^{2m}(E[1]),
\end{equation*}
for some integer $n$. The principal action map \(\mu\) has a right homotopy inverse \[T \colon E[1] \to K(\Z,2m-3) \times E[1]\] which is a section of the trivial fiber bundle $K(\Z,2m-3) \times E[1] \to E[1]$  (see \cite{pollina1982tangent,thomas2006seminar}). Using Lemma \ref{lem:q1-in-coh} for the description of $\co^{2m}(K(\Z,2m-3) \times E[1])$, these two facts about $T$ imply the first and the last equality:
\begin{equation*}
	2k_2 = (\mu \circ T)^*(2k_2) = T^*(\mu^*(2k_2))=T^*(0\oplus 2nk_2) = 2nk_2 \in \co^{2m}(E[1]),
\end{equation*}
so we conclude that $n=1$, thus finishing the proof.
\end{proof}

Now, we calculate the indeterminancy of lifts. 

\begin{cor}\label{cor:indet_cpspan2_odd}
Let all maps and spaces be defined as in \emph{(\ref{eq:mp_diag_cpspan2_odd})}. Suppose that the classifying map \[(\xi,\ell_1,\ell_2) \colon X \longrightarrow B(m,1^2)\] satisfies \(c_{m-1}(\xi - \ell_1 \oplus \ell_2) = 0\). Then the elements \(g^*(k_2)\), as \(g\) runs over lifts of \((\xi, \ell_1,\ell_2)\) to \(E[1]\), live in a single coset in \(\co^{2m}(X;\Z)\) of the subgroup
\begin{equation}\label{eq:indet2_even}
\mathrm{Indet} \coloneqq \delta Sq^2 \rho_2 \co^{2m-3}(X;\Z).
\end{equation}
\end{cor}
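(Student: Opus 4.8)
The plan is to identify the entire set of lifts of $f = (\xi, \ell_1, \ell_2)$ to $E[1]$ as the orbit of a single lift under the principal action $\mu$, and then to push the formula for $\mu^*(k_2)$ from Lemma \ref{lem:mu*k2} down to $X$. First I would recall the principal-fibration structure underlying \eqref{eq:indetdiag2_even}: the map $p_1 \colon E[1] \to B(m,1^2)$ is obtained by pulling back the path--loop fibration over $K(\Z,2m-2)$ along $k_1$, so its fiber is $\Omega K(\Z,2m-2) \simeq K(\Z,2m-3)$, and standard obstruction theory (see \cite{thomas2006seminar}) identifies the set of vertical homotopy classes of lifts of $f$ with a torsor under $[X, K(\Z,2m-3)] \cong \co^{2m-3}(X;\Z)$, the action being given precisely by $\mu$. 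Since $c_{m-1}(\xi-\ell_1\oplus\ell_2)=0$, Lemma \ref{lem:primaryob_cpspan2_odd} guarantees at least one lift $g \colon X \to E[1]$; every other lift is then vertically homotopic to $g_\alpha \coloneqq \mu \circ (\alpha, g)$ for some $\alpha \colon X \to K(\Z,2m-3)$, and I set $a \coloneqq \alpha^*(\iota_{2m-3}) \in \co^{2m-3}(X;\Z)$.

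Next I would compute $g_\alpha^*(k_2)$ directly. Using $g_\alpha = \mu \circ (\alpha, g)$ together with Lemma \ref{lem:mu*k2}, which gives $\mu^*(k_2) = 1 \otimes k_2 + \delta Sq^2 \iota_{2m-3}^2 \otimes 1$, and observing that $(\alpha,g)^*(1 \otimes k_2) = g^*(k_2)$ while $(\alpha,g)^*(\delta Sq^2\iota_{2m-3}^2 \otimes 1) = \alpha^*(\delta Sq^2 \iota_{2m-3}^2)$, one obtains
\[
g_\alpha^*(k_2) = (\alpha, g)^* \mu^*(k_2) = g^*(k_2) + \alpha^*\!\left(\delta Sq^2 \iota_{2m-3}^2\right) = g^*(k_2) + \delta Sq^2 \rho_2(a),
\]
where the last equality is naturality of the Steenrod square, the mod $2$ reduction, and the integral Bockstein, applied to $\alpha^*(\iota_{2m-3}^2) = \rho_2(a)$. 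Because $k_2$ is a fixed cohomology class, vertically homotopic lifts yield equal values $g^*(k_2)$, so this computation records the value on every lift.

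Finally I would assemble the conclusion: as $\alpha$ ranges over all maps $X \to K(\Z,2m-3)$, the class $a = \alpha^*(\iota_{2m-3})$ realizes every element of $\co^{2m-3}(X;\Z)$, and by transitivity of the torsor action the values $g_\alpha^*(k_2)$ exhaust all values $g^*(k_2)$ over lifts $g$. Hence the set of these values is exactly the single coset $g^*(k_2) + \delta Sq^2 \rho_2 \co^{2m-3}(X;\Z)$ of the subgroup $\mathrm{Indet} = \delta Sq^2 \rho_2 \co^{2m-3}(X;\Z)$, which is the assertion. The genuine analytic content is already contained in Lemma \ref{lem:mu*k2}; the only point demanding care is the identification of the lift set with a $\co^{2m-3}(X;\Z)$-torsor acted on by $\mu$ — that is, that \emph{every} lift arises, up to vertical homotopy, from the principal action, so that the displayed formula captures the full indeterminacy rather than a proper sub-coset. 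Establishing this transitivity (from the principal-fibration structure of $p_1$) is where I would be most careful.
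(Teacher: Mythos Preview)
Your proof is correct and follows essentially the same approach as the paper: both fix a lift, use the principal action $\mu$ to parametrize all other lifts by $\co^{2m-3}(X;\Z)$, and then apply Lemma~\ref{lem:mu*k2} to compute the difference $g_\alpha^*(k_2) - g^*(k_2) = \delta Sq^2 \rho_2(a)$. The paper's version is terser, simply citing \cite{thomas2006seminar, pollina1982tangent} for the transitivity of the action that you flagged as the point requiring care.
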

\begin{proof}
Let \(g_1, g_2 \colon X \to E[1]\) be arbitrary lifts of \((\xi,\ell_1,\ell_2) \colon X \to B(m,1^2)\). Then there exists a cohomology class \(\alpha \colon X \to K(\Z,2m-3)\) such that the composition 
\[
\begin{tikzcd}
    X \arrow[r,"\Delta"] & X \times X \arrow[r, "\alpha\times g_1"] & K(\Z,2m-3)\times E[1] \arrow[r, "\mu"] & E[1]
\end{tikzcd}
\]
is homotopic to \(g_2\), see \cite{thomas2006seminar, pollina1982tangent}. By Lemma \ref{lem:mu*k2}, we see that 
\(
g_2^*(k_2) - g_1^*(k_2) = \delta Sq^2\rho_2 \alpha,
\)
for this \(\alpha\) in \(\co^{2m-3}(X;\Z)\), which finishes the proof.
\end{proof}

Now part (i) of Theorem \ref{thm:secondaryob_cpspan2_odd} follows directly from the discussion at the beginning of this section and Corollary \ref{cor:indet_cpspan2_odd}.

To conclude, we now finish the proof of Theorem \ref{thm:secondaryob_cpspan2_odd}(ii). 
By assumption, \(\co^{2m}(X;\Z)\) has no \(2\)-torsion, whence \[\mathrm{Indet} = \delta Sq^2 \rho_2 \co^{2m-3}(X;\Z)= 0.\] In this case, there is a unique secondary obstruction to lifting \((\xi,\ell_1,\ell_2)\) along \(q_{m,2}\), which is identified in the proceeding lemma. In the statement and proof of the result, we use Notation \ref{notate:1^k}(v). Moreover, we will denote by $F[1]$ the fiber of $q_1$ in \eqref{eq:mp_diag_cpspan2_odd} and by $s_1 \colon F[1] \to W(m,2)$ the map between the fibers of $q_{m,2}$ and $q_1$ induced by $p_1$.

\begin{lem}\label{lem:k_2_even}
Let \(k_2 \in \co^{2m}(E[1];\Z)\) be the characteristic class in the fibration \[F[1] \longrightarrow B(m-2,1^2) \xrightarrow{~q_1~} E[1]\] as defined in \emph{(\ref{eq:mp_diag_cpspan2_odd})}. If \(\co^{2m}(X;\Z)\) has no \(2\)-torsion, then, given any lift \(g \colon X \to E[1]\) of \((\xi,\ell_1,\ell_2)\) in \emph{(\ref{eq:mp_diag_cpspan2_odd})}, \(g^*(k_2)\) vanishes if and only if \(c_m(\xi-\ell_1 \oplus \ell_2)\) vanishes.
\end{lem}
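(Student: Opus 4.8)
The plan is to identify the second $k$-invariant $k_2$ with \emph{twice} the virtual Chern class $c_m(\xi-\ell_1\oplus\ell_2)$ after pulling back to $X$, and then to exploit the absence of $2$-torsion. First I would pin down the fiber $F[1]$ of $q_1$ and the class $k_2$. By Table~\ref{table:pi(W(m,2))}, for $m$ odd we have $\pi_{2m-3}(W(m,2))\cong\Z$, $\pi_{2m-2}(W(m,2))=0$, and $\pi_{2m-1}(W(m,2))\cong\Z$. Hence $F[1]$, obtained from $W(m,2)$ by killing $\pi_{2m-3}$, is $(2m-2)$-connected with $\pi_{2m-1}(F[1])\cong\Z$, so $F[1]\simeq K(\Z,2m-1)$ through dimension $2m$. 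By construction $k_2\in\co^{2m}(E[1];\Z)$ is the transgression $\tau_{q_1}(\iota_{2m-1})$ of the fundamental class of $F[1]$.

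The crux is to compute the fiber restriction $s_1^*\colon\co^{2m-1}(W(m,2);\Z)\to\co^{2m-1}(F[1];\Z)$. I would run the Serre spectral sequence of the Postnikov fibration $F[1]\xrightarrow{s_1}W(m,2)\to K(\Z,2m-3)$. By Proposition~\ref{prop:cohomology_rings}(iv) we have $\co^{2m-1}(W(m,2);\Z)\cong\Z\langle e_{2m-1}\rangle$ and $\co^{2m}(W(m,2);\Z)=0$, while Table~\ref{table:cohomEMS} gives $\co^{2m}(K(\Z,2m-3);\Z)\cong\Z/2\langle\delta_2 Sq^2\iota_{2m-3}^2\rangle$. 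Since $F[1]$ is $(2m-2)$-connected, the only possibly nonzero differential on $\iota_{2m-1}$ is the transgression $d_{2m}$; and because the total space satisfies $\co^{2m}(W(m,2);\Z)=0$, this transgression must hit the generator $\delta_2 Sq^2\iota_{2m-3}^2$ of the $\Z/2$. Consequently $E_\infty^{0,2m-1}=2\Z$, and the edge homomorphism forces $s_1^*(e_{2m-1})=2\iota_{2m-1}$. This factor of $2$, imposed by the nontrivial Postnikov $k$-invariant $\delta_2 Sq^2$ of $W(m,2)$, is the heart of the matter.

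Next I would invoke naturality of the transgression for the map of fibrations induced by $p_1$, whose fiber map is $s_1\colon F[1]\to W(m,2)$ and whose base map is $p_1\colon E[1]\to B(m,1^2)$ (compare the Thomas diagram, Lemma~\ref{lem:thomas_diagram}). Naturality gives $\tau_{q_1}(s_1^*e_{2m-1})=p_1^*\,\tau_{q_{m,2}}(e_{2m-1})$. By Lemma~\ref{lem:diffsofq1} the right-hand transgression is $a_m$, while the left-hand side equals $\tau_{q_1}(2\iota_{2m-1})=2k_2$; hence $p_1^*(a_m)=2k_2$ in $\co^{2m}(E[1];\Z)$. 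To ensure this is an honest identity in integral cohomology and not merely on the $E_\infty$ page, I would check in the Serre spectral sequence of $p_1$ (fiber $K(\Z,2m-3)$) that in total degree $2m$ the only nonzero subquotients sit at filtrations $0$ and $2m$: the intermediate ones vanish since $\co^{\mathrm{odd}}(B(m,1^2))=0$ by Proposition~\ref{prop:cohomology_rings}(iii), and the filtration-$0$ piece is $E_\infty^{0,2m}\cong\Z/2$. Thus multiplication of $k_2$ by $2$ annihilates the filtration-$0$ component and lands $2k_2$ in the bottom filtration $F^{2m}=\im(p_1^*)$, where it agrees with $p_1^*(a_m)$.

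Finally, I would pull back along the lift $g\colon X\to E[1]$ of $f=(\xi,\ell_1,\ell_2)$. Since $p_1\circ g\simeq f$ and $f^*(a_m)=c_m(\xi-\ell_1\oplus\ell_2)$, applying $g^*$ to $p_1^*(a_m)=2k_2$ yields $2\,g^*(k_2)=c_m(\xi-\ell_1\oplus\ell_2)$ in $\co^{2m}(X;\Z)$; by Corollary~\ref{cor:indet_cpspan2_odd} the indeterminacy $\delta Sq^2\rho_2\co^{2m-3}(X;\Z)$ vanishes under the no-$2$-torsion hypothesis, so $g^*(k_2)$ is in fact independent of the chosen lift. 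Because $\co^{2m}(X;\Z)$ has no $2$-torsion, multiplication by $2$ is injective, and therefore $g^*(k_2)=0$ if and only if $c_m(\xi-\ell_1\oplus\ell_2)=0$, as claimed. The main obstacle is the second and third steps: extracting the exact factor of $2$ from the nontrivial Postnikov invariant and upgrading the spectral-sequence naturality to the genuine integral identity $p_1^*(a_m)=2k_2$. Once that is secured, the no-$2$-torsion hypothesis makes the biconditional immediate.
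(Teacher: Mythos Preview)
Your argument is correct and follows essentially the same strategy as the paper: establish $p_1^*(a_m)=2k_2$ via naturality of the transgression and then invoke the no-$2$-torsion hypothesis. The only difference is in how you extract the factor of $2$: the paper runs the Leray--Serre spectral sequence of the fibration $K(\Z,2m-4)\to F[1]\xrightarrow{s_1}W(m,2)$ and shows $\co^{2m-1}(F[1];\Z)\cong\Z$ arises as the nontrivial extension of $\Z/2$ by $\Z\langle e_{2m-1}\rangle$, whereas you use the dual Postnikov fibration $F[1]\to W(m,2)\to K(\Z,2m-3)$ and argue that $\co^{2m}(W(m,2);\Z)=0$ forces the transgression $d_{2m}$ onto $\Z/2$ to be surjective. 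Both are valid readings of the same Postnikov datum; your version is arguably more direct for this particular conclusion, while the paper's version yields the explicit description of $\co^{2m-1}(F[1];\Z)$ that is reused elsewhere.
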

\begin{proof}
    Throughout the proof, we will consider only cohomology with integral coefficients. Consider the following map of fibrations
    \begin{equation*}
        \begin{tikzcd}
            F[1] \arrow[r] \arrow[d, "s_1"] & B(m-2,1^2) \arrow[r, "q_1"] \arrow[d, equal] & E[1] \arrow[d, "p_1"]\\
            W(m,2) \arrow[r] & B(m-2,1^2) \arrow[r, "q_{m,2}"] & B(m,1^2),
        \end{tikzcd}
    \end{equation*}
    which is induced by \eqref{eq:mp_diag_cpspan2_odd}.
    The diagram above induces a morphism of Leray-Serre spectral sequences of $q_1$ and $q_{m,2}$. Applying the Lemma \ref{lem:F[1]-odd2} from below, the differentials (i.e., transgressions) on the $2m$-th page fit into the following commutative diagram:
    \begin{equation*}
        \begin{tikzcd}
            \co^{2m-1}(W(m,2)) \arrow[r, "s_1^*"] \arrow[d, "\tau_{q_{m,2}}"] & \co^{2m-1}(F[1]) \arrow[d, "\tau_{q_1}"] & e_{2m-1} \arrow[r, mapsto] \arrow[d, mapsto] & e_{2m-1} = 2 \cdot \frac{1}{2} e_{2m-1} \arrow[d, mapsto]\\
            \co^{2m}(B(m,1^2)) \arrow[r, "p_1^*"] & \co^{2m}(E[1]) & a_{m} \arrow[r, mapsto] & p_1^*(a_m)=2k_2.
        \end{tikzcd}
    \end{equation*}
    Here, \(\tau_h\) stands for the cohomological transgression in the fibration \(h.\) Therefore, every homotopy lift
    \begin{equation*}
        \begin{tikzcd}
            {} & & E[1] \arrow[d, "p_1"]\\
            X \arrow[urr, dashed, "g"] \arrow[rr, "f"] & & B(m,1^2),
        \end{tikzcd}
    \end{equation*}
     of the classifying map $f = (\xi, \ell_1, \ell_2)$ over $p_1$ satisfies 
    \[
        2g^*(k_2) = (p_1 \circ g)^*(a_m)= f^*(a_m) = c_m(\xi-\ell_1\oplus\ell_2) \in \co^{2m}(X).
    \]
    Thus, the claim follows since \(\co^{2m}(X)\) has no \(2\)-torsion by assumption. 
\end{proof}

For the above proof, we made use of the following lemma. For an infinite cyclic group $\Z\langle e \rangle$, we will denote by $\Z\langle e/p\rangle$ the non-trivial extension of $\Z/p$ by $\Z\langle e \rangle$, for an integer \(p \geq 2\).

\begin{lem} \label{lem:F[1]-odd2}
    Let $m \ge 3$ be odd and $s_1 \colon~ F[1] \to W(m,2)$ be the map induced by $p_1$ between the fibers of $q_{m,2}$ and $q_1$ in \eqref{eq:mp_diag_cpspan2_odd}. Then $F[1]$ is \((2m-2)\)--connected, has $\co^{2m}(F[1];\Z) \cong 0$, and $s_1$ induces the inclusion
    \begin{equation*}
            s_1^* \colon \Z\langle e_{2m-1}\rangle \cong \co^{2m-1}(W(m,2)) \longrightarrow \co^{2m-1}(F[1]) \cong \Z \langle e_{2m-1}/2\rangle,~~e_{2m-1} \longmapsto e_{2m-1}.
    \end{equation*}
    Here $e_{2m-1}$ is as introduced in \emph{Proposition \ref{prop:cohomology_rings}}.
\end{lem}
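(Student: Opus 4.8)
The plan is to realise $F[1]$ as an explicit homotopy fiber and then run two Serre spectral sequences—one integral, one mod $2$—whose comparison resolves an extension problem. First I would identify $F[1]$. Since $q_{m,2} \simeq p_1 \circ q_1$ in the tower \eqref{eq:mp_diag_cpspan2_odd}, the standard fiber sequence of a composite $B(m-2,1^2) \xrightarrow{q_1} E[1] \xrightarrow{p_1} B(m,1^2)$ yields
\[
F[1] = \mathrm{hofib}(q_1) \xrightarrow{~s_1~} W(m,2) = \mathrm{hofib}(q_{m,2}) \xrightarrow{~r~} \mathrm{hofib}(p_1) = K(\Z,2m-3),
\]
in which the first map is exactly the map $s_1$ induced by $p_1$ on fibers and $r$ is classified by an element of $\co^{2m-3}(W(m,2);\Z) = \Z\langle e_{2m-3}\rangle$. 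Because $q_1$ is a $(2m-1)$-equivalence (Moore--Postnikov property), $F[1]$ is $(2m-2)$-connected, which is the first assertion; this connectivity forces $r$ to induce an isomorphism on $\pi_{2m-3}$, hence $r = \pm e_{2m-3}$ and $F[1] \simeq \mathrm{hofib}(e_{2m-3})$. Looping the base then gives a fibration $K(\Z,2m-4) = \Omega K(\Z,2m-3) \to F[1] \xrightarrow{s_1} W(m,2)$ whose fiber fundamental class $\iota_{2m-4}$ transgresses to $\pm e_{2m-3}$.

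Next I would compute $\co^{2m-1}(F[1];\Z)$ from the integral Serre spectral sequence of this fibration. As $F[1]$ is $(2m-2)$-connected and $\co^*(W(m,2);\Z) = \Lambda_\Z[e_{2m-3},e_{2m-1}]$ is concentrated in degrees $0,2m-3,2m-1$ below $4m-4$, only a few entries survive in the relevant range. Using $\co^{2m}(W(m,2);\Z) = 0$, the transgression $\tau(\iota_{2m-4}) = \pm e_{2m-3}$ (which cancels $E^{0,2m-4}$ against $e_{2m-3}$), and Table \ref{table:cohomEMS} (giving $\co^{2m-1}(K(\Z,2m-4);\Z) = \Z/2\langle \delta Sq^2 \iota_{2m-4}^2\rangle$), I expect the only nonzero terms in total degree $2m-1$ to be $E_\infty^{2m-1,0} = \Z\langle e_{2m-1}\rangle$ on the base edge and $E_\infty^{0,2m-1} = \Z/2$ on the fiber edge. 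This produces a short exact sequence $0 \to \Z \to \co^{2m-1}(F[1];\Z) \to \Z/2 \to 0$ whose subgroup is precisely $\operatorname{im}(s_1^*)$; thus $s_1^*$ is injective with image of index at most $2$.

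The crux is to show this extension is \emph{non-split}, i.e.\ $\co^{2m-1}(F[1];\Z)\cong\Z$ with $s_1^*$ of index exactly $2$, and this is where the oddness of $m$ is decisive. For this I would rerun the same spectral sequence with $\Z/2$ coefficients. Here $\iota_{2m-4}^2$ transgresses to $f_{2m-3} = \rho_2 e_{2m-3}$, so by Kudo transgression together with Proposition \ref{prop:cohomology_rings}(v) the class $Sq^2\iota_{2m-4}^2$ transgresses to $Sq^2 f_{2m-3} = \binom{m-2}{1} f_{2m-1} = (m-2)f_{2m-1}$. Since $m$ is odd, $m-2$ is odd and this transgression equals $f_{2m-1}\neq 0$, killing the base-edge class $f_{2m-1}$, while $Sq^3\iota_{2m-4}^2$ survives on the fiber edge. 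Hence $\co^{2m-1}(F[1];\Z/2)\cong\Z/2$ is one-dimensional. Comparing with the universal coefficient theorem, a split extension would force $\co^{2m-1}(F[1];\Z/2)$ to contain $(\Z\oplus\Z/2)\otimes\Z/2$, of dimension $\ge 2$—a contradiction. Therefore the extension is non-split, $\co^{2m-1}(F[1];\Z)\cong\Z$, and writing $e_{2m-1}/2$ for a generator gives $s_1^*(e_{2m-1}) = e_{2m-1} = 2\cdot(e_{2m-1}/2)$, as claimed.

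I would finally remark that for the smallest case $m=3$ the fiber is $K(\Z,2) = \C P^\infty$, whose cohomology is unstable, so the spectral-sequence bookkeeping differs slightly; however the decisive differentials are still governed by $Sq^2 f_3 = f_5$ (equivalently $\tau(\iota_2^2) = f_5$ via Kudo), so the identical conclusion holds. The main obstacle throughout is the extension problem of the third paragraph: the integral spectral sequence alone cannot distinguish $\Z$ from $\Z\oplus\Z/2$, and it is only the mod-$2$ computation—crucially using that $m-2$ is odd—that resolves it.
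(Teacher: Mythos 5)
Your proposal is correct, and it reaches the fibration $K(\Z,2m-4) \to F[1] \xrightarrow{s_1} W(m,2)$ and the resulting extension $0 \to \Z\langle e_{2m-1}\rangle \to \co^{2m-1}(F[1];\Z) \to \Z/2 \to 0$ in essentially the same way the paper does (the paper gets the fibration from the Thomas diagram of Lemma \ref{lem:thomas_diagram}; your route through the fiber sequence of the composite $p_1 \circ q_1$ and the identification $F[1] \simeq \mathrm{hofib}(e_{2m-3})$ is equivalent). Where you genuinely diverge is in resolving the extension. The paper does it upstream: since $F[1]$ is $(2m-2)$-connected and $\pi_{2m-1}(F[1]) \cong \pi_{2m-1}(W(m,2)) \cong \Z$ for $m$ odd (Table \ref{table:pi(W(m,2))}), Hurewicz and universal coefficients give $\co^{2m-1}(F[1];\Z) \cong \Z$ outright, which forces the extension to be non-split. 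You instead run the mod $2$ spectral sequence and use Kudo plus $Sq^2 f_{2m-3} = \binom{m-2}{1} f_{2m-1} = f_{2m-1}$ (valid since $m$ is odd) to kill $f_{2m-1}$, so that $\co^{2m-1}(F[1];\Z/2)$ is one-dimensional, which rules out the split extension $\Z \oplus \Z/2$ via the Bockstein/coefficient sequence. Both arguments hinge on the parity of $m$, but in different guises: the paper imports it through the homotopy-group table for $W(m,2)$, while you make it visible through the Steenrod action on $\co^*(W(m,2);\Z/2)$. Your version is more self-contained within spectral-sequence machinery and explains \emph{why} oddness matters cohomologically; the paper's is shorter given that the homotopy groups are already tabulated, and it handles $m=3$ by the same one-line homotopy argument, whereas your $m=3$ case (where the fiber is $\C P^\infty$ and $d_3(u^2)=0$ mod $2$, so the decisive differential is $d_5(u^2) = Sq^2 f_3 = f_5$ with the surviving class sitting in $E_\infty^{3,2}$ rather than on the fiber edge) requires the slightly different bookkeeping you acknowledge. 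No gaps.
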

\begin{proof}
By Lemma \ref{lem:thomas_diagram} we have that $s_1$ fits into the following homotopy commutative Thomas diagram associated to (\(\dagger\)). 
    \begin{equation}\label{eq:diage1}
        \begin{tikzcd}
	   {K(\Z,2m-4)} & {*} & {K(\Z,2m-3)} \\
	   {F[1]} & {B(m -2,1^2)} & {E[1]} \\
	   {W(m,2)} & {B(m-2,1^2)} & {B(m,1^2)}
	   \arrow[from=1-1, to=1-2]
	   \arrow[from=1-1, to=2-1]
	   \arrow["{\mathrm{ev}_1}", from=1-2, to=1-3]
	   \arrow[from=1-2, to=2-2]
	   \arrow["{i_1}", from=1-3, to=2-3]
	   \arrow[from=2-1, to=2-2]
	   \arrow["s_1", from=2-1, to=3-1]
	   \arrow["{q_1}", from=2-2, to=2-3]
	   \arrow["{\mathrm{id}~}"', Rightarrow, no head, from=2-2, to=3-2]
	   \arrow["{(\dagger)}"{description}, draw=none, from=2-2, to=3-3]
	   \arrow["{p_1}", from=2-3, to=3-3]
	   \arrow[from=3-1, to=3-2]
	   \arrow["{q_{m,2}}", from=3-2, to=3-3]
    \end{tikzcd}
    \end{equation}
    Here \(\mathrm{ev}_1\) denotes the path-loop fibration over \(K(\Z,2m-3)\) and \(q_{m,2}\) and \(q_1\) are defined as in (\ref{eq:mp_diag_cpspan2_odd}).

    As before, all cohomology groups have integral coefficients. We consider the Leray--Serre spectral sequence \((E_*^{*,*},d_*)\) of the homotopy fibration 
    \begin{equation}\label{eq:s_1}
        K(\Z,2m-4) \longrightarrow F[1]\overset{s_1}\longrightarrow W(m,2),
    \end{equation}
    which is the leftmost column in (\ref{eq:diage1}). Now $F[1]$ is $(2m-2)$-connected and since $m$ is odd (see Table \ref{table:pi(W(m,2))}), we have that
    \[
        \pi_{2m-1}(F[1]) \cong \pi_{2m-1}(W(m,2)) \cong \Z.
    \]
    Hence $\co^{2m-1}(F[1]) \cong \Z$ by the universal coefficient theorem. Therefore, the differential on the $(2m-3)$-th page
    \[
        d_{2m-3} \colon E^{0, 2m-4}_{2m-3} \cong \Z \langle \iota_{2m-4} \rangle \longrightarrow \Z \langle e_{2m-3} \rangle \cong E^{2m-3,0}_{2m-3}
    \]
    is an isomorphism.

    For $m > 3$, according to Table \ref{table:cohomEMS}, the only non-zero terms on the $(2m-1)$-th diagonal are
    \begin{equation*}
        E_2^{0, 2m-1} \cong \Z/2 \langle \delta Sq^2\iota^2_{2m-4} \rangle \hspace{3mm} \text{ and } \hspace{3mm} E_2^{2m-1,0}  \cong \Z \langle e_{2m-1} \rangle,
    \end{equation*}
    and the differentials coming both into and from the diagonal are trivial. See Figure \ref{fig:ss_F1_k=2}(A) for illustration. Therefore, $\co^{2m-1}(F[1]) \cong \Z$ is the non-trivial extension
    \[
        0 \longrightarrow \Z\langle e_{2m-1}\rangle \xrightarrow{~s_1^*~} \co^{2m-1}(F[1]) \longrightarrow  \Z/2\langle \delta Sq^2 \iota_{2m-4}^2\rangle \longrightarrow 0,
    \]
    as claimed. Moreover, since $m>3$ and by Table \ref{table:cohomEMS} we also have that there are no non-zero terms on the $(2m)$-th diagonal, hence $\co^{2m}(F[1]) \cong 0$.

    \begin{figure}[h]
\centering
\begin{subfigure}{.5\textwidth}
  \centering
  \includegraphics[width=.9\linewidth]{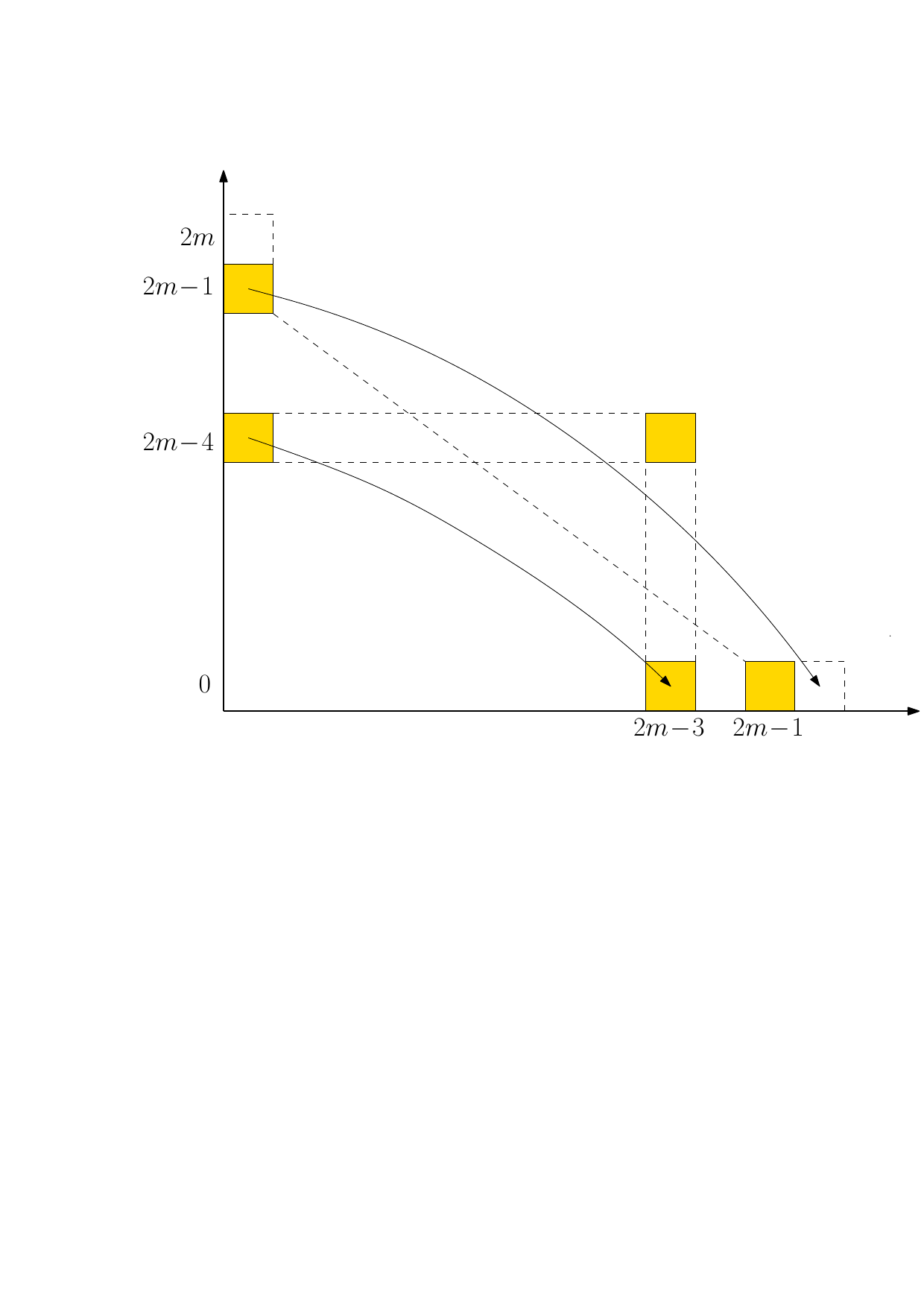}
  \caption{}
  \label{fig:sub1}
\end{subfigure}%
\begin{subfigure}{.5\textwidth}
  \centering
  \includegraphics[width=.83\linewidth]{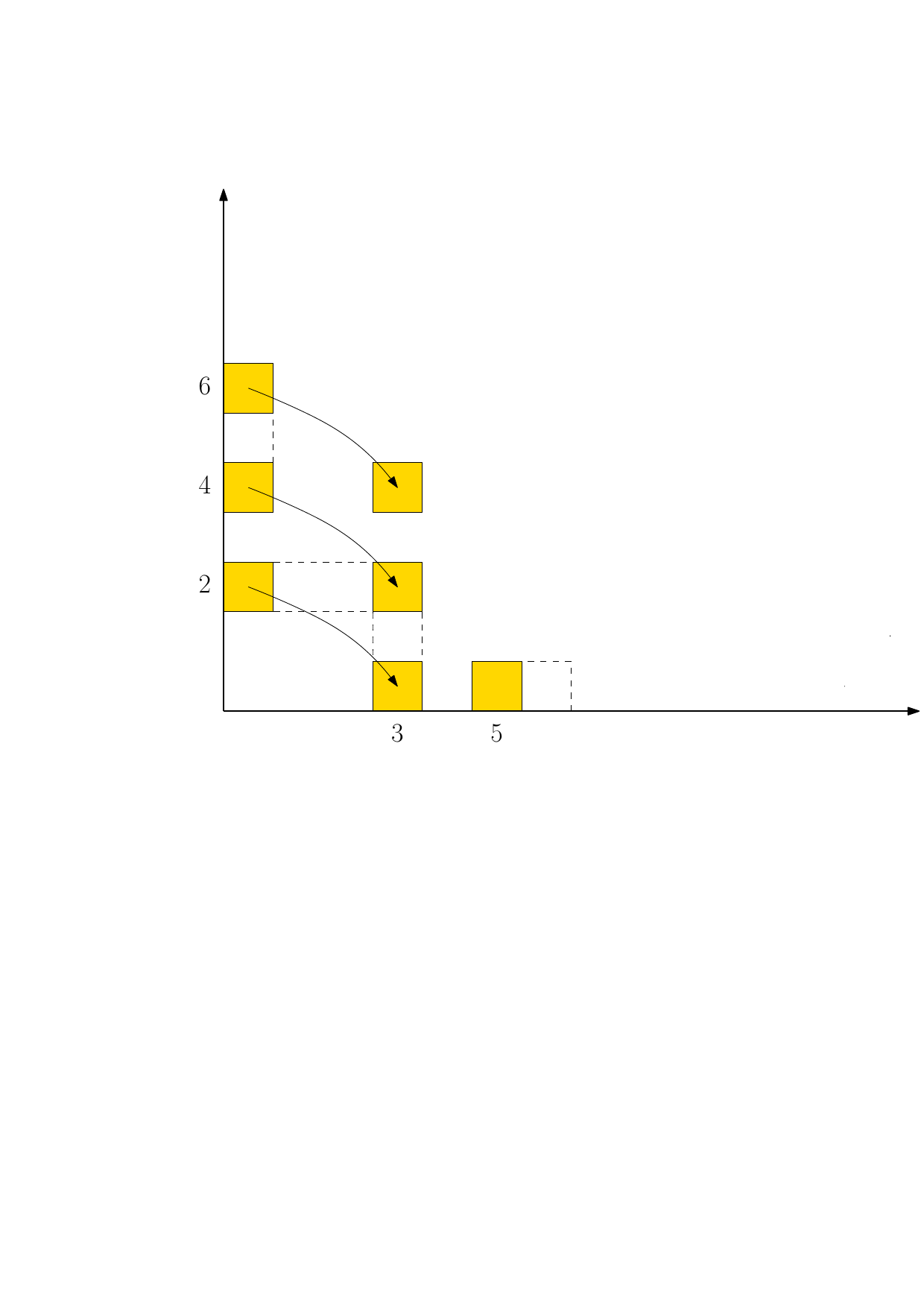}
  \caption{ }
  \label{fig:sub2}
\end{subfigure}
\caption{A portion of the Leray--Serre spectral sequence with $\Z$ coefficients of $s_1$ from \eqref{eq:s_1} for (A) $m>3$, and (B) $m=3$.}
\label{fig:ss_F1_k=2}
\end{figure}

    Finally, for $m=3$, we have that $\C P^{\infty}$ is a model for $K(\Z, 2)$, so the fibration \eqref{eq:s_1} becomes
    \begin{equation*} \label{eq:s_1-m=3}
        \C P^{\infty} \longrightarrow F[1]\overset{s_1}\longrightarrow W(3,2).
    \end{equation*}
    Let $\co^*(\C P^{\infty};\Z) \cong \Z[u]$, where $|u| = 2$ (so that $\iota_{2}$ becomes $u$). As already explained, we have that
    \begin{equation*}
        d_3 \colon~ E_3^{0,2} \cong \Z \langle u \rangle \xrightarrow{~\cong~} \Z \langle e_3 \rangle \cong E_3^{3,0}, \hspace{3mm} u \longmapsto e_3,
    \end{equation*}
    is an isomorphism. Moreover, since the differential $d_3$ is a derivation, we obtain
    \begin{equation*}
        d_3 \colon~ E_3^{0,4} \cong \Z \langle u^2 \rangle \xrightarrow{~\cong~}\Z \langle u \otimes e_3 \rangle \cong  E_3^{3,2}, \hspace{3mm} u^2 \longmapsto 2u \otimes e_3,
    \end{equation*}
    as well as
    \begin{equation*}
        d_3 \colon~ E_3^{0,6} \cong \Z \langle u^3 \rangle \xrightarrow{~\cong~}\Z \langle u^2 \otimes e_3 \rangle \cong  E_3^{4,2}, \hspace{3mm} u^3 \longmapsto 3u^2 \otimes e_3.
    \end{equation*}
    After passing to the $E_4$-page of the spectral sequence, we observe that the fourth and sixth diagonals are trivial, while the non-trivial slots on the fifth diagonal
    \begin{equation*}
        E_4^{3,2} \cong \Z/2 \langle u \otimes e_3 \rangle  \hspace{3mm} \text{ and } \hspace{3mm} E_4^{5,0} \cong \Z \langle e_5 \rangle
    \end{equation*}
    have only zero differentials emanating to or from them at any later page. See Figure \ref{fig:ss_F1_k=2}(B) for illustration. Therefore, we conclude that the fifth row remains unchanged until the $E_{\infty}$-page and that $\co^5(F[1]) \cong \Z$ is the non-trivial extension
    \begin{equation*}
        0 \longrightarrow \Z \langle e_5 \rangle \xrightarrow{~~s_1^*~~} \co^5(F[1]) \longrightarrow \Z/2 \langle u \otimes e_3 \rangle \longrightarrow 0.
    \end{equation*}
    Therefore, the claim follows in this case as well.
\end{proof}

This completes the proof of Theorem \ref{thm:secondaryob_cpspan2_odd}.

\section{Two complex line bundles: $m$ even} \label{sec:thm_cpspan2_even}

In this section we prove Theorem \ref{thm:cpspan2_even}.\\

Similarly as in Section \ref{sec:thm_cpspan2_odd}, let \(X\) be a \(2m\)-dimensional CW complex with \(m > 3\) even. Given complex vector bundles \[\xi \colon X \longrightarrow BU(m) \text{ and }\ell_1, \ell_2 \colon X \longrightarrow BU(1),\] we wish to lift the classifying map \((\xi, \ell_1, \ell_2) \colon X \to B(m,1^2)\) of (\ref{eq:xilinesclassifyingmap}) along the fibration \[W(m,2) \longrightarrow B(m-2,1^2) \overset{q_{m,2}}\longrightarrow B(m,1^2)\] defined in (\ref{eq:fibq_1}). To this end, we use Proposition \ref{prop:piW(m,r)} to construct the following Moore--Postnikov tower of \(q_{m,2}\),
\begin{equation}
\label{eq:mp_diag_cpspan2_even}
\begin{tikzcd}[ampersand replacement=\&]
	\&\& {E[3]} \\
	\&\& {E[2]} \& {K(\Z \oplus \Z/2,2m)} \\
	\&\& {E[1]} \& {K(\Z/2,2m-1)} \\
	{W(m,2)} \& {B(m-2,1^2)} \& {B(m,1^2)} \& {K(\Z,2m-2)} \\
	\&\& X
	\arrow["{p_3}", from=1-3, to=2-3]
	\arrow["{k_3}", from=2-3, to=2-4]
	\arrow["{p_2}", from=2-3, to=3-3]
	\arrow["{k_2}", from=3-3, to=3-4]
	\arrow["{p_1}", from=3-3, to=4-3]
	\arrow[from=4-1, to=4-2]
	\arrow["{q_2}"{description}, from=4-2, to=2-3]
	\arrow["{q_1}"{description}, from=4-2, to=3-3]
	\arrow["{q_{m,2}}", from=4-2, to=4-3]
	\arrow["{k_1}", from=4-3, to=4-4]
	\arrow["{(\xi, \ell_1,\ell_2)}"', from=5-3, to=4-3]
\end{tikzcd}\end{equation}
where each \(k_i\) is the characteristic class in the fibration \(q_{i-1}\) (with \(q_0 = q_{m,2}\)), and each \(E[i]\) is constructed as the homotopy fiber of \(k_i\).

\subsection{The Primary Obstruction}\label{subsec:primaryob_cpspan2_even}

By Proposition \ref{prop:primaryobs}, we obtain the following result. 

\begin{lem}\label{lem:primaryob_cpspan2_even}
Let all spaces and map be as in \emph{(\ref{eq:mp_diag_cpspan2_even})}. Then the primary obstruction 
\[
\lobs^{2m-2}((\xi,\ell_1,\ell_2),q_{m,2},k_1) \subseteq \co^{2m-2}(X;\Z)
\]
to lifting \((\xi,\ell_1,\ell_2)\) along \(q_{m,2}\) is the singleton set \(\{c_{m-1}(\xi- \ell_1\oplus \ell_2)\}.\)
\end{lem}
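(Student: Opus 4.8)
The plan is to deduce the statement directly from Proposition \ref{prop:primaryobs} by specializing to the case $r = 2$. The Moore--Postnikov tower \eqref{eq:mp_diag_cpspan2_even} is constructed on top of the bottom stage \eqref{eq:moore-postnikov-tower-beginning} associated to the fibration $q_{m,2}$, so its first characteristic class $k_1 \in \co^{2m-2}(B(m,1^2);\Z)$ is exactly the class appearing in Proposition \ref{prop:primaryobs}. First I would recall that $W(m,2)$ is $2(m-2)$-connected with first nonzero homotopy group $\pi_{2(m-2)+1}(W(m,2)) \cong \Z$ (see Table \ref{table:pi(W(m,2))}); this is what forces the primary obstruction to live in $\co^{2m-2}(X;\Z)$ and matches the target $K(\Z,2m-2)$ in the tower.

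Next I would substitute $r = 2$ into the conclusion of Proposition \ref{prop:primaryobs}. The dimension $2(m-r+1)$ becomes $2m-2$, and the virtual Chern class $c_{m-r+1}(\xi - \ell_1 \oplus \cdots \oplus \ell_r)$ becomes $c_{m-1}(\xi - \ell_1 \oplus \ell_2)$. This yields precisely
\[
\lobs^{2m-2}((\xi,\ell_1,\ell_2),q_{m,2},k_1) = \{c_{m-1}(\xi - \ell_1 \oplus \ell_2)\} \subseteq \co^{2m-2}(X;\Z),
\]
as claimed. Concretely, the identification of the transgression of the generator $e_{2m-3} \in \co^{2m-3}(W(m,2);\Z)$ with the Chern class $a_{m-1}$, and hence of $f^*k_1$ with $c_{m-1}(\xi - \ell_1 \oplus \ell_2)$, is supplied by Lemma \ref{lem:diffsofq1} together with the naturality argument in the proof of Proposition \ref{prop:primaryobs}.

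There is essentially no obstacle at this stage, and this is why the lemma is phrased as an immediate consequence: the parity of $m$ plays no role in the \emph{primary} obstruction, since the first nonzero homotopy group of $W(m,2)$ is $\Z$ for both parities and the transgression computation of Lemma \ref{lem:diffsofq1} is parity-independent. The only point worth flagging is that the obstruction is a genuine singleton (the indeterminacy vanishes here), which is already built into Proposition \ref{prop:primaryobs}; the parity-dependent subtleties enter only at the generalized secondary obstruction, handled separately in the remainder of the section.
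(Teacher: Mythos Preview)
Your proposal is correct and matches the paper's approach exactly: the paper simply states that the lemma follows by Proposition~\ref{prop:primaryobs}, and your specialization to $r=2$ is precisely that. Your additional remarks on connectivity, the role of Lemma~\ref{lem:diffsofq1}, and the parity-independence of the primary obstruction are accurate elaborations but not required.
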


\subsection{The Generalized Secondary Obstruction}\label{subsec:secondaryob_cpspan2_even}
Throughout this section we will make use of Notation \ref{notate:1^k}(v) and Proposition \ref{prop:cohomology_rings}(vi). 

\begin{lem}\label{lem:secondaryob_cpspan2_even}
Let all spaces and maps be as in \emph{(\ref{eq:mp_diag_cpspan2_even})}. Suppose that the classifying map \[(\xi,\ell_1,\ell_2) \colon X \to B(m,1^2)\] satisfies \(c_{m-1}(\xi-\ell_1 \oplus \ell_2) = 0.\) If \(\co^{2m-1}(X;\Z/2) = Sq^2 \rho_2 \co^{2m-3}(X;\Z)\), then the generalized secondary obstruction 
\[
\lobs^{2m-1}((\xi,\ell_1,\ell_2),q_{m,2},k_2) \subseteq \co^{2m-1}(X;\Z/2)
\]
to lifting \((\xi,\ell_1,\ell_2)\) along \(q_{m,2}\) contains \(0.\)
\end{lem}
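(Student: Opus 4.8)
The plan is to follow the template of the odd-rank case (Theorem~\ref{thm:secondaryob_cpspan2_odd}), adapting it to $\Z/2$ coefficients. Since we assume $c_{m-1}(\xi-\ell_1\oplus\ell_2)=0$, the primary obstruction computed in Proposition~\ref{prop:primaryobs} (i.e.\ Lemma~\ref{lem:primaryob_cpspan2_even}) vanishes, so a lift $g\colon X\to E[1]$ of $(\xi,\ell_1,\ell_2)$ exists. As $g$ ranges over all such lifts the classes $g^*(k_2)$ sweep out a single coset of an indeterminacy subgroup $\mathrm{Indet}\subseteq\co^{2m-1}(X;\Z/2)$, and the generalized secondary obstruction $\lobs^{2m-1}((\xi,\ell_1,\ell_2),q_{m,2},k_2)$ is precisely this coset. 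Hence it suffices to prove that $\mathrm{Indet}=\co^{2m-1}(X;\Z/2)$, for then the coset is the whole group and in particular contains $0$. I would deduce this by showing $\mathrm{Indet}=Sq^2\rho_2\co^{2m-3}(X;\Z)$ and invoking the hypothesis $\co^{2m-1}(X;\Z/2)\cong Sq^2\rho_2\co^{2m-3}(X;\Z)$.

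To compute the indeterminacy, recall that $p_1\colon E[1]\to B(m,1^2)$ is the principal fibration with fiber $K(\Z,2m-3)$ classified by $k_1$, with principal action $\mu\colon K(\Z,2m-3)\times E[1]\to E[1]$. Any two lifts differ by the action of a class $\alpha\in[X,K(\Z,2m-3)]=\co^{2m-3}(X;\Z)$, exactly as in the proof of Corollary~\ref{cor:indet_cpspan2_odd}; consequently $g_2^*(k_2)-g_1^*(k_2)$ is read off from the cross terms of $\mu^*(k_2)$. I would set this up through the homotopy-commutative diagram and the K\"unneth/exact-sequence machinery used in the odd case (the analogues of~\eqref{eq:indetdiag2_even} and~\eqref{eq:kerq)1_exact_seq}), now with $\Z/2$ coefficients and target degree $2m-1$.

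The decisive step is the even-rank analogue of Lemma~\ref{lem:mu*k2}: I expect $\mu^*(k_2)=1\otimes k_2 + Sq^2\rho_2\iota_{2m-3}\otimes 1$. The restriction of $k_2$ to the fiber is the first nontrivial Postnikov $k$-invariant of $W(m,2)$, an element of $\co^{2m-1}(K(\Z,2m-3);\Z/2)\cong\Z/2\langle Sq^2\rho_2\iota_{2m-3}\rangle$ (Table~\ref{table:modpcohomEMS}); it is the nonzero element because $\pi_{2m-2}(W(m,2))\cong\Z/2$ for $m$ even (Table~\ref{table:pi(W(m,2))}), which is compatible with $Sq^2 f_{2m-3}=(m-2)f_{2m-1}\equiv 0\pmod 2$ obtained from Proposition~\ref{prop:cohomology_rings}(v). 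This produces the cross term $Sq^2\rho_2\iota_{2m-3}\otimes 1$, whose contribution to the indeterminacy is $Sq^2\rho_2\alpha$, giving $\mathrm{Indet}\supseteq Sq^2\rho_2\co^{2m-3}(X;\Z)$.

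The main obstacle is exactly where the even case departs from the odd one. In the odd case the relevant K\"unneth kernel was a single copy of $\Z/2$ because $\co^{\mathrm{odd}}(B(m-2,1^2))=0$ annihilated every potential mixed term. Here, in degree $2m-1$ with $\Z/2$ coefficients, K\"unneth also contributes a term $\rho_2\iota_{2m-3}\otimes\co^2(B(m-2,1^2);\Z/2)$, so a priori $\mu^*(k_2)$ could carry an additional cross term $\rho_2\iota_{2m-3}\otimes w$ with $w\in\co^2(E[1];\Z/2)$, adding a summand $\rho_2\alpha\cup w'$ to the indeterminacy that would not obviously lie in $Sq^2\rho_2\co^{2m-3}(X;\Z)$. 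I would rule this out by showing $w=0$: the fiber restriction detects only $Sq^2\rho_2\iota_{2m-3}$, and the mixed coefficient is governed by $Sq^1\rho_2\iota_{2m-3}=0$, since $\rho_2\iota_{2m-3}$ is the reduction of an integral class. Making this precise is the crux, and I would carry it out by a Leray--Serre spectral sequence computation of the fiber $F[1]$ of $q_1$ via the Thomas diagram (Lemma~\ref{lem:thomas_diagram}), in direct analogy with Lemma~\ref{lem:F[1]-odd2}, thereby pinning down $\mu^*(k_2)$ and hence $\mathrm{Indet}=Sq^2\rho_2\co^{2m-3}(X;\Z)$. Combined with the hypothesis this forces $\mathrm{Indet}=\co^{2m-1}(X;\Z/2)$, so the obstruction coset is the whole group and $0$ belongs to it, completing the proof.
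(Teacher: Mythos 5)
Your overall architecture is exactly the paper's: show that any two lifts to $E[1]$ differ by the principal action of a class $\alpha\in\co^{2m-3}(X;\Z)$, compute $\mu^*(k_2)=1\otimes k_2+Sq^2\rho_2\iota_{2m-3}\otimes 1$, conclude $\mathrm{Indet}=Sq^2\rho_2\co^{2m-3}(X;\Z)$, and invoke the hypothesis to make the obstruction coset the whole group. You also correctly isolate the crux, namely the potential K\"unneth cross terms $\rho_2\iota_{2m-3}\otimes u$ with $u\in\co^2$, which have no counterpart in the odd case.

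The one point where your sketch goes astray is the mechanism for killing those cross terms. The identity $Sq^1\rho_2\iota_{2m-3}=0$ is irrelevant here: it would only rule out terms paired with $\co^1(B(m-2,1^2))$, which vanish for free, whereas the dangerous terms are $\iota_{2m-3}\otimes u$ with $\lvert u\rvert=2$ and $\iota_{2m-3}\neq 0$, so no Steenrod identity on the fundamental class removes them. The paper's argument is that $\nu^*(k_2)$ must lie in the kernel of the relative transgression $\tau_1$, and in the mod $2$ Serre spectral sequence of $p_1\colon E[1]\to B(m,1^2)$ one has $d_{2m-2}(\iota_{2m-3}^2\otimes u)=b_{m-1}u\neq 0$ because $b_{m-1}$ is a non-zero-divisor in the polynomial ring $\co^*(B(m,1^2);\Z/2)$; this is what forces the cross terms to vanish. (By contrast, $Sq^2\iota_{2m-3}^2\otimes 1$ survives because $Sq^2 b_{m-1}=b_1b_{m-1}+(m-2)b_m\equiv 0$ modulo $(b_{m-1})$ for $m$ even — this, together with your correct observation that the fiber restriction of $k_2$ is the nonzero first $k$-invariant of $W(m,2)$ since $\pi_{2m-2}(W(m,2))\cong\Z/2$ while $\co_{2m-2}(W(m,2);\Z)=0$, is where the parity of $m$ enters.) Since you defer the details to a spectral sequence computation anyway, this is a correction of emphasis rather than a fatal gap, but the computation you need is the one for $p_1$, not the fiberwise one modeled on Lemma \ref{lem:F[1]-odd2}.
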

\begin{proof}
Throughout the proof, all cohomology will be assumed to be with $\Z/2$ coefficients.
Let the class \(k_2 \in \co^{2m-1}(E[1])\) be as in (\ref{eq:mp_diag_cpspan2_even}).

First, we will compute $\co^{2m-1}(E[1])$ using the mod \(2\) Leray--Serre spectral sequence \((E_*^{*,*},d_*)\) of the fibration
\begin{equation} \label{eq:p_1_even2}
    K(\Z,2m-3) \longrightarrow E[1]\overset{p_1}\longrightarrow B(m,1^2).
\end{equation}
By construction, we get that
\[
    d_{2m-2} \colon~ \Z/2 \langle \iota^2_{2m-3} \rangle \cong E_{2m-2}^{0,2m-3} \longrightarrow E_{2m-2}^{2m-2,0} \cong \co^{2m-2}(B(m,1^2)),~\iota_{2m-3}^2 \longmapsto b_{m-1}
\]
Moreover, since the differentials are derivations, we observe that $d_{2m-2} \colon E_{2m-2}^{2,2m-3} \to E_{2m-2}^{2m-2,0}$ satisfies
\begin{equation} \label{eq:ss_d_(2m-2)_nontriv}
    d_{2m-2}(\iota_{2m-3}^2\otimes u) = d_{2m-2}(\iota_{2m-3}^2)u = b_{m-1}u\neq 0 \in E_{2m-2}^{2m,0} \cong \co^{2m}(B(m,1^2)),
\end{equation}
for all \(u \in \co^{2}(B(m,1^2))\), see Figure \ref{fig:ss_p1-and-p2-_mod_2_even}(A) for an illustration. In particular, we get $E_{\infty}^{2, 2m-3} = 0$.

Next, by Kudo's transgression theorem \cite{kudo1956transgression} and the Wu formula (see also Proposition \ref{prop:cohomology_rings}(vi)), we calculate that $d_{2m} \colon E_{2m}^{0,2m-1} \to E_{2m}^{2m, 0}$ satisfies
\begin{equation} \label{eq:ss_d_(2m)_triv}
    d_{2m}(Sq^2 \iota_{2m-3}^2) = Sq^2 b_{m-1} = b_1b_{m-1}+ (m-2) b_m = 0 \in E_{2m}^{2m, 0} \cong \co^{2m}(B(m,1^2))/(b_{m-1}),
\end{equation}
due to $m$ being even. Thus \[\Z/2 \langle Sq^2 \iota^2_{2m-3}\rangle \cong E_{2m}^{0,2m-1} = E_{\infty}^{0,2m-1}\]
is the only non-zero term on the $(2m-1)$-th diagonal, hence \(\co^{2m-1}(E[1]) \cong \Z/2 \langle Sq^2 \iota_{2m-3}^2\rangle\).
Moreover, from \eqref{eq:ss_d_(2m)_triv} it also follows that $d_{2m} \colon E_{2m}^{2,2m-1} \to E_{2m}^{2m+2, 0}$ satisfies
\begin{equation*}
    d_{2m}(Sq^2 \iota_{2m-3}^2 \otimes u) = d_{2m}(Sq^2 \iota_{2m-3}^2) u = 0 \in E_{2m}^{2m+2, 0},
\end{equation*}
for any $u \in \co^2(B(m,1^2))$. This means that the only non-trivial slots on the $(2m+1)$-th diagonal on the $(2m+1)$-th page are
\[
    E^{0,2m+1}_{2m+1} \cong E^{0,2m+1}_{2} ~ \text{and} ~ E^{2,2m-1}_{2m+1} \cong E^{2,2m-1}_{2}.
\]
Passing to the next page, we can analogously show (since $m$ is even and thus $\tbinom{2m-5}{2}$ is odd) that 
\[
    d_{2m+2} \colon E_{2m+2}^{0,2m+1} \longrightarrow E_{2m+2}^{2m+2, 0} \cong \Big(\co^{*}(B(m,1^2))/(b_{m-1})\Big)^{(2m+2)},
\]
satisfies
\begin{equation*}
    d_{2m+2}(Sq^4 \iota^2_{2m-3}) = Sq^4 b_{m-1} = b_2b_{m-1} + \binom{2m-5}{2}b_1b_m + \binom{2m-3}{4}b_{m+1} \neq 0 \in E_{2m+2}^{2m+2, 0}.
\end{equation*}
Therefore, $E_{2m+3}^{2,2m-1} = E_{\infty}^{2,2m-1}$ remains the only non-trivial slot on the $(2m+1)$-th diagonal (see again Figure \ref{fig:ss_p1-and-p2-_mod_2_even}(A)), so
\begin{equation} \label{eq:H2m+1(E1)}
    \co^{2m+1}(E[1]) \cong \Z/2\langle Sq^2 \iota^2_{2m-3}\rangle \otimes \co^2(B(m,1^2)).
\end{equation}
Now, to show that $k_2$ is the generator, we observe that the transgression homomorphism in the mod 2 Leray--Serre spectral sequence of the homotopy fibration 
\[
F[1] \longrightarrow B(m-2,1^2) \overset{q_1}\longrightarrow E[1]
\]
must be surjective since 
\(\co^{2m-1}(B(m-2,1^2)) \cong 0\). Consequently, 
\begin{equation}\label{eq:k_2_cpspan2_even}
    k_2 = Sq^2 \iota_{2m-3}^2 \in \co^{2m-1}(E[1]),
\end{equation}
as claimed.

It remains to compute the indeterminacy of lifts to \(E[2].\) The proceeding arguments are similar to those in the proof of Theorem \ref{thm:secondaryob_cpspan2_odd}; accordingly, we will be less detailed in our arguments. To begin, consider the following homotopy commutative diagram:
\[\begin{tikzcd}
	{K(\Z,2m-3)} && {K(\Z,2m-3)} \\
	{K(\Z,2m-3)\times B(m-2,1^2)} & {K(\Z,2m-3)\times E[1]} & {E[1]} \\
	{B(m-2,1^2)} && {B(m,1^2)}
	\arrow[Rightarrow, no head, from=1-1, to=1-3]
	\arrow[from=1-1, to=2-1]
	\arrow[from=1-3, to=2-3]
	\arrow["{1\times q_1}", from=2-1, to=2-2]
	\arrow["\nu", curve={height=-30pt}, from=2-1, to=2-3]
	\arrow["{\mathrm{proj}_2}", from=2-1, to=3-1]
	\arrow["\mu", from=2-2, to=2-3]
	\arrow["{p_1}", from=2-3, to=3-3]
	\arrow["{\bar{s}}", curve={height=-6pt}, from=3-1, to=2-1]
	\arrow["{q_{m,2}}", from=3-1, to=3-3]
\end{tikzcd}\]
We also have an exact sequence
\[
\dots \longrightarrow \mathrm{ker}(q_1^*) \cap\co^{2m-1}(E[1]) \overset{\nu^*}\longrightarrow \mathrm{ker}(\bar{s}^*) \cap\co^{2m-1}(K(\Z,2m-3)\times B(m-2,1^2)) \overset{\tau_1}\longrightarrow\co^{2m}(B(m,1^2)) \longrightarrow \dots.
\]
Briefly, we wish to compute \(\mu^*(k_2) = 1 \otimes k_2 + \nu^*(k_2)\), where
\[
\nu^*(k_2) \in \mathrm{ker}(\bar{s}) \cap \mathrm{ker}(\tau_1) \cap \co^{2m-1}(K(\Z,2m-3)\times B(m-2,1^2)).
\]
By direct calculation, we see that
\(\mathrm{ker}(\bar{s}) \cap \co^{2m-1}(K(\Z,2m-3)\times B(m-2,1^2))\) is generated by \(\iota_{2m-3}^2 \otimes u\), for all \(u \in \co^{2}(B(m-1,1^2))\), and \(Sq^2 \iota_{2m-3}\otimes 1.\)
To compute the action of the relative transgression on these elements, we use our recently acquired knowledge of the mod \(2\) Leray--Serre spectral sequence $(E_*^{*,*}, d_*)$ of \(p_1 \colon E[1]\to B(m,1^2)\) together with properties \cite[Property 1 \& 2, p.\ 14]{thomas2006seminar} of the relative transgression \(\tau_1.\) Namely, combining \eqref{eq:ss_d_(2m-2)_nontriv} and \eqref{eq:ss_d_(2m)_triv}, we conclude that
\[
    \mathrm{ker}(\bar{s}) \cap \mathrm{ker}(\tau_1) \cap \co^{2m-1}(K(\Z,2m-3)\times B(m-2,1^2)) \cong \Z/2 \langle Sq^2 \iota_{2m-3}\otimes 1 \rangle.
\]
Finally, as in the proof of Corollary \ref{cor:indet_cpspan2_odd}, we get \(\mathrm{Indet} = Sq^2 \rho_2 \co^{2m-3}(X;\Z).\) The result follows. 
\end{proof}

\begin{figure}
\centering
\begin{subfigure}{.5\textwidth}
  \centering
  \includegraphics[width=.9\linewidth]{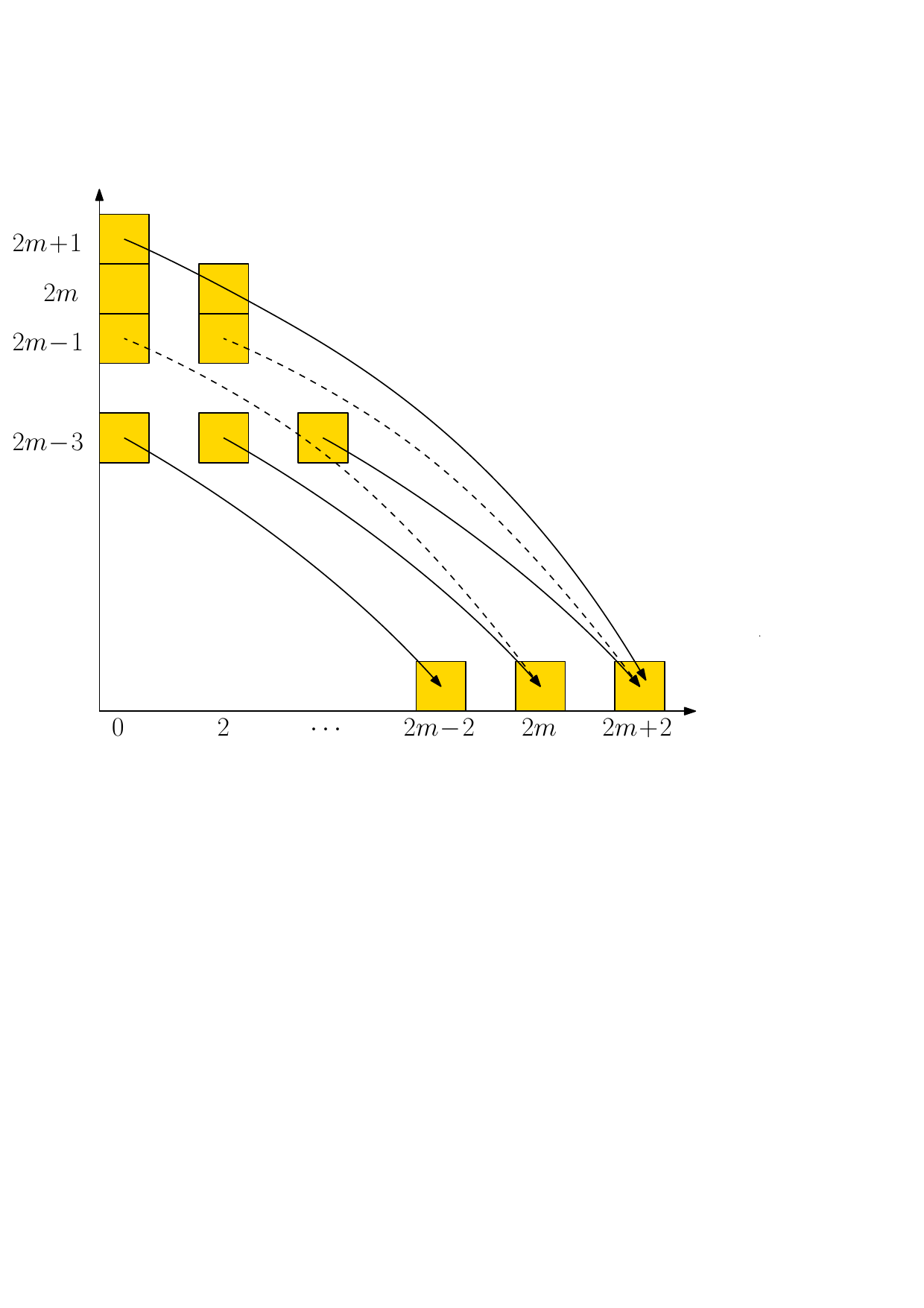}
  \caption{}
  \label{fig:sub1}
\end{subfigure}%
\begin{subfigure}{.5\textwidth}
  \centering
  \includegraphics[width=.9\linewidth]{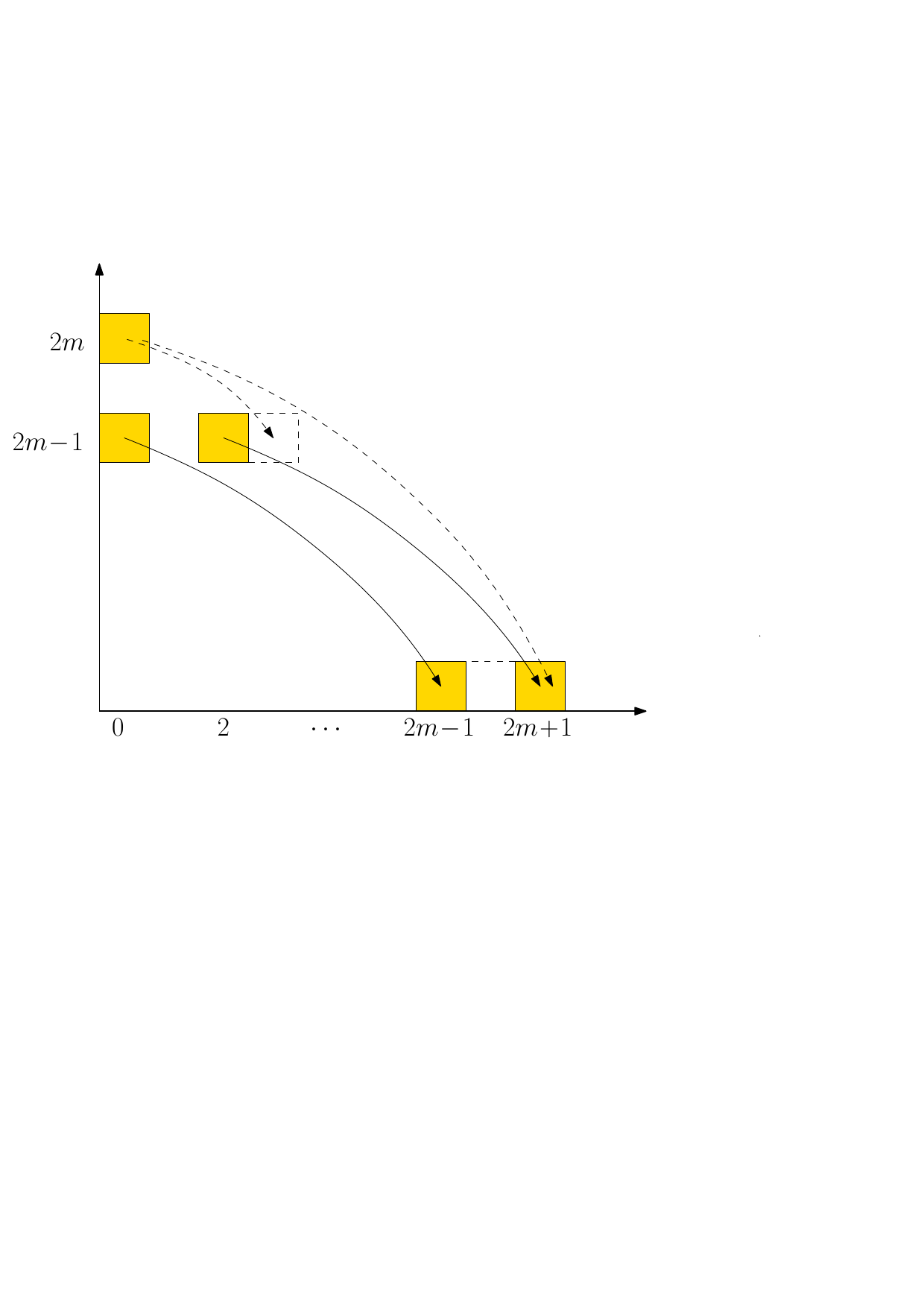}
  \caption{ }
  \label{fig:sub2}
\end{subfigure}
\caption{A portion of the mod 2 Leray--Serre spectral sequence of (A) $p_1$ from \eqref{eq:p_1_even2}, and (B) $p_2$ from \eqref{eq:p_2_even2}, where dashed arrows represent zero differentials.}
\label{fig:ss_p1-and-p2-_mod_2_even}
\end{figure}

\subsection{The Generalized Tertiary Obstruction}\label{subsec:tertiaryob_cpspan2_even}

Now Theorem \ref{thm:cpspan2_even} is an immediate consequence of the following theorem. 

\begin{thm}\label{thm:tertiaryob_cpspan2_even}
Let all spaces and maps be as in \emph{(\ref{eq:mp_diag_cpspan2_even})}. Suppose that the classifying map \[(\xi,\ell_1,\ell_2) \colon X \longrightarrow B(m,1^2)\] satisfies \(c_{m-1}(\xi-\ell_1 \oplus \ell_2) = 0\). Further suppose that all of the following conditions hold.
\begin{itemize}
    \item \(\co^{2m-1}(X;\Z/2) = Sq^2 \rho_2 \co^{2m-3}(X;\Z);\)
    \item \(\co^{2m}(X;\Z/2) = Sq^2\co^{2m-2}(X;\Z/2);\) and 
    \item \(\co^{2m}(X;\Z)\) has no \(2\)-torsion.
\end{itemize}
Then the generalized tertiary obstruction 
\[
\lobs^{2m}((\xi,\ell_1,\ell_2),q_{m,2},k_3) \subseteq \co^{2m}(X;\Z \oplus \Z/2)
\]
to lifting \((\xi,\ell_1,\ell_2)\) along \(q_{m,2}\) is the singleton set \(\{(c_{m}(\xi - \ell_1 \oplus \ell_2),0)\}.\)
\end{thm}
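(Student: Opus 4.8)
The plan is to run the obstruction-theoretic argument one stage higher than in Theorem \ref{thm:secondaryob_cpspan2_odd}, now for the three-stage tower \eqref{eq:mp_diag_cpspan2_even}. Assuming $c_{m-1}(\xi-\ell_1\oplus\ell_2)=0$, Lemma \ref{lem:primaryob_cpspan2_even} provides a lift of $(\xi,\ell_1,\ell_2)$ to $E[1]$, and Lemma \ref{lem:secondaryob_cpspan2_even}---which is where the first hypothesis $\co^{2m-1}(X;\Z/2)\cong Sq^2\rho_2\co^{2m-3}(X;\Z)$ is consumed---places $0$ in the generalized secondary obstruction, so a lift $g\colon X\to E[2]$ exists. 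Hence $\ell_1\oplus\ell_2\subseteq\xi$ if and only if some such $g$ lifts further to $E[3]=\mathrm{hofib}(k_3)$, i.e. if and only if $0\in\bigcup_g g^*(k_3)$. Since $\pi_{2m-1}(W(m,2))\cong\Z\oplus\Z/2$ (Table \ref{table:pi(W(m,2))}), I would split $k_3=(\kappa,\lambda)$ under the decomposition $\co^{2m}(E[2];\Z\oplus\Z/2)\cong\co^{2m}(E[2];\Z)\oplus\co^{2m}(E[2];\Z/2)$ and analyse the two coordinates separately.

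For the integral coordinate $\kappa$ I would repeat the Thomas-diagram strategy of Lemmas \ref{lem:F[1]-odd2} and \ref{lem:k_2_even}. Forming the map of fibrations from $q_2\colon B(m-2,1^2)\to E[2]$ to the base fibration $q_{m,2}\colon B(m-2,1^2)\to B(m,1^2)$, with fibre map $s_2\colon F[2]\to W(m,2)$, Lemma \ref{lem:thomas_diagram} lets me verify that $F[2]$ is $(2m-2)$-connected and to pin down $\co^{2m-1}(F[2];\Z)$ as a nontrivial extension involving $e_{2m-1}$. Naturality of the transgression, together with $\tau_{q_{m,2}}(e_{2m-1})=a_m$ from Proposition \ref{prop:primaryobs}, then identifies $\kappa$ so that $(p_1p_2)^*(a_m)$ is a prescribed multiple of it; evaluating on any lift gives a relation of the form $2\,g^*(\kappa)=c_m(\xi-\ell_1\oplus\ell_2)$. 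The third hypothesis, that $\co^{2m}(X;\Z)$ has no $2$-torsion, is then invoked twice: it kills the integral indeterminacy (which, as in Lemma \ref{lem:mu*k2} and Corollary \ref{cor:indet_cpspan2_odd}, arises as $\delta Sq^2\rho_2$ of classes in $\co^{2m-3}(X;\Z)$), making $g^*(\kappa)$ a single well-defined class, and it forces $g^*(\kappa)=0$ to be equivalent to $c_m(\xi-\ell_1\oplus\ell_2)=0$.

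The $\Z/2$ coordinate $\lambda$ is the crux. I would compute $\co^{2m}(E[2];\Z/2)$ from the mod $2$ Leray--Serre spectral sequence of $p_2\colon E[2]\to E[1]$, whose fibre is $K(\Z/2,2m-2)$, building on the description of $\co^*(E[1];\Z/2)$ obtained in the proof of Lemma \ref{lem:secondaryob_cpspan2_even} (Figure \ref{fig:ss_p1-and-p2-_mod_2_even}(B)); here Kudo's transgression theorem \cite{kudo1956transgression} and the Wu formula of Proposition \ref{prop:cohomology_rings}(vi) govern the differentials exactly as in \eqref{eq:ss_d_(2m-2)_nontriv}--\eqref{eq:ss_d_(2m)_triv}. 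After identifying $\lambda$ and its restriction to the fibre (a multiple of $Sq^2$ of the mod $2$ fundamental class), I would run the principal-action computation for the $K(\Z/2,2m-2)$- and $K(\Z,2m-3)$-actions to determine the $\Z/2$-indeterminacy, which works out to $Sq^2\co^{2m-2}(X;\Z/2)$ (plus a degree-three contribution from the lower level). The second hypothesis $\co^{2m}(X;\Z/2)\cong Sq^2\co^{2m-2}(X;\Z/2)$ is precisely what makes this indeterminacy exhaust $\co^{2m}(X;\Z/2)$, so the $\Z/2$-coordinate of the obstruction is always reducible to $0$ and contributes nothing; combining this with the integral analysis yields $\bigcup_g g^*(k_3)=\{(c_m(\xi-\ell_1\oplus\ell_2),0)\}$ in the canonical sense that $0$ lies in the obstruction exactly when $c_m(\xi-\ell_1\oplus\ell_2)=0$. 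I expect the main difficulty to be exactly this $\Z/2$ bookkeeping: correctly identifying $\lambda$ and its fibre restriction, and disentangling the two levels of indeterminacy so that hypotheses (2) and (3) together collapse the total obstruction rather than merely certifying that $0$ belongs to it.
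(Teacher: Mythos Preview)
Your approach mirrors the paper's, and your identification of the mod~$2$ indeterminacy as $Sq^2\co^{2m-2}(X;\Z/2)$ and of the role of the second hypothesis are exactly right. One correction is worth making: the integral indeterminacy from the $K(\Z/2,2m-2)$-principal action on $E[2]$ is zero \emph{for free}, not via the no-$2$-torsion hypothesis, because $\co^{2m}(K(\Z/2,2m-2);\Z)=\co^{2m-2}(K(\Z/2,2m-2);\Z)=0$ (Proposition~\ref{prop:cohomK(Z/p,q)}). The paper makes this explicit via K\"unneth (Proposition~\ref{prop:kunneth}(i) and Lemma~\ref{lem:kernelsbar}), obtaining $\mu^*(k_3)=1\otimes k_3+(0,Sq^2\kappa_{2m-2}\otimes 1)$, so the integral coordinate is invariant under the principal action. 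Your appeal to ``$\delta Sq^2\rho_2$ of classes in $\co^{2m-3}(X;\Z)$'' imports the formula from the odd case (Corollary~\ref{cor:indet_cpspan2_odd}), where the fibre of the relevant principal fibration was $K(\Z,2m-3)$ rather than $K(\Z/2,2m-2)$.

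The no-$2$-torsion hypothesis is therefore used only once, in the final step (Lemma~\ref{lem:k_3_odd}): from $(p_1p_2)^*(a_m)=2\kappa$ one gets $2g^*(\kappa)=c_m(\xi-\ell_1\oplus\ell_2)$ for \emph{every} lift $g\colon X\to E[2]$, and the absence of $2$-torsion then forces $g^*(\kappa)=0\iff c_m(\xi-\ell_1\oplus\ell_2)=0$. This same relation also absorbs your ``two levels of indeterminacy'' concern: since it holds for all lifts regardless of the underlying $E[1]$-lift, varying the latter cannot change $g^*(\kappa)$ once $\co^{2m}(X;\Z)$ has no $2$-torsion, and for the $\Z/2$-coordinate the indeterminacy already exhausts $\co^{2m}(X;\Z/2)$, so lower-level variation is irrelevant there as well.
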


The rest of the section is dedicated to proving Theorem \ref{thm:tertiaryob_cpspan2_even}.\\

By Lemmata \ref{lem:primaryob_cpspan2_even} and \ref{lem:secondaryob_cpspan2_even}, there is a lift making the following diagram commute
\[\begin{tikzcd}
	& & {B(m-2,1^2)} \\
	X & & {B(m,1^2)}
	\arrow["{q_{m,2}}", from=1-3, to=2-3]
	\arrow[dashed, from=2-1, to=1-3]
	\arrow["{(\xi,\ell_1,\ell_2)}", from=2-1, to=2-3]
\end{tikzcd}\]
if and only if
\[
g^*(k_3) = 0 \in \co^{2m}(X;\Z)/\mathrm{Indet}_\Z \oplus \co^{2m}(X;\Z/2)/\mathrm{Indet}_{\Z/2},
\]
where \(g\colon X \to E[2]\) is any lift of \((\xi,\ell_1,\ell_2)\) to \(E[2]\), and \(\mathrm{Indet}_R\) denotes the indeterminacy of such lifts with respect to the coefficient ring \(R \in \{\Z,\Z/2\}.\) Towards a proof of Theorem \ref{thm:tertiaryob_cpspan2_even}, we compute these indeterminacy subgroups.
Consider the usual homotopy commutative diagram:
\begin{equation}\label{eq:diage[2]}
\begin{tikzcd}
	{K(\Z/2,2m-2)} && {K(\Z/2,2m-2)} \\
	{K(\Z/2,2m-2)\times B(m-2,1^2)} & {K(\Z/2,2m-2)\times E[2]} & {E[2]} \\
	{B(m-2,1^2)} && {E[1]}
	\arrow[Rightarrow, no head, from=1-1, to=1-3]
	\arrow[from=1-1, to=2-1]
	\arrow[from=1-3, to=2-3]
	\arrow["{1 \times q_2}", from=2-1, to=2-2]
	\arrow["\nu"{description}, curve={height=-30pt}, from=2-1, to=2-3]
	\arrow["{{\mathrm{proj}}_2}", from=2-1, to=3-1]
	\arrow["\mu", from=2-2, to=2-3]
	\arrow["{p_2}", from=2-3, to=3-3]
	\arrow["{\bar{s}}", curve={height=-18pt}, from=3-1, to=2-1]
	\arrow["{q_2}", from=3-1, to=3-3]
\end{tikzcd}\end{equation}
In this section, we henceforth write \(K \coloneqq K(\Z/2,2m-2)\) and \(B \coloneqq B(m-2,1^2).\) We have an exact sequence,
\begin{equation}\label{eq:tau_1_a}
\dots \longrightarrow \mathrm{ker}(q_2^*) \cap\co^{2m}(E[2]) \overset{\nu^*}\longrightarrow \mathrm{ker}(\bar{s}^*) \cap\co^{2m}(K\times B) \overset{\tau_1}\longrightarrow \co^{2m+1}(E[1]) \longrightarrow \dots ,
\end{equation}
where all cohomology groups have \(\Z \oplus \Z/2\) coefficients. By standard arguments, \[\nu^*(k) \in {\mathrm{ker}}(\bar{s}^*) \cap \mathrm{ker}(\tau_1)\cap \co^{2m}(K\times B;\Z\oplus \Z/2).\] Thereupon, we wish to compute this intersection. As a first step, using the cohomological K\"{u}nneth formula together with Proposition \ref{prop:cohomK(Z/p,q)} and Serre's classical computation of \(\co^*(K(\Z/2,2m-2);\Z/2)\) \cite[Theorem 3, p.\ 90]{mosher2008cohomology}, we obtain the following proposition.
\begin{prop}\label{prop:kunneth}
The cohomology of \(K\times B\) satisfies the following.
\begin{enumerate}[\normalfont(i)]
    \item The group
\(\co^{2m}(K\times B;\Z)\) is isomorphic to \(\co^0(K;\Z) \otimes \co^{2m}(B;\Z).\)
\item The group
\(\co^{2m}(K\times B;\Z/2)\) is isomorphic to \[\co^{0}(K;\Z/2) \otimes \co^{2m}(B;\Z/2) \oplus \co^{2m-2}(K;\Z/2) \otimes \co^2(B;\Z/2) \oplus \co^{2m}(K;\Z/2) \otimes \co^0(B;\Z/2).\]
\end{enumerate}
\end{prop}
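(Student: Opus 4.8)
The plan is to apply the Künneth theorem to the product $K \times B$ once for integral and once for mod-$2$ coefficients, and then to read off which summands survive using the low-degree (co)homology of $K = K(\Z/2,2m-2)$ together with the vanishing of $\co^{\mathrm{odd}}(B;R)$ supplied by Proposition \ref{prop:cohomology_rings}(iii). Recall that $B = B(m-2,1^2)$ has $\co^*(B;\Z)$ a polynomial algebra, hence free abelian in each degree, and $\co^i(B;R)=0$ for odd $i$ and any finitely generated coefficient group $R$.

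For part (i), I would invoke the integral Künneth theorem
\[
\co^{2m}(K \times B;\Z) \cong \bigoplus_{i+j=2m}\co^i(K;\Z)\otimes\co^j(B;\Z)\ \oplus\ \bigoplus_{i+j=2m+1}\Tor\bigl(\co^i(K;\Z),\co^j(B;\Z)\bigr).
\]
Since each $\co^j(B;\Z)$ is free abelian, every $\Tor$ term vanishes. By Proposition \ref{prop:cohomK(Z/p,q)} (with $q=2m-2$) the group $K$ satisfies $\co^0(K;\Z)\cong\Z$, $\co^{2m-1}(K;\Z)\cong\Z/2\langle\delta_2\kappa_{2m-2}\rangle$, and $\co^i(K;\Z)=0$ for $i$ in the range $(0,2m]\setminus\{2m-1\}$ by connectivity. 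The summand at $i=2m-1$ pairs with $\co^1(B;\Z)=0$ and therefore drops out, leaving only the $i=0$ contribution $\co^0(K;\Z)\otimes\co^{2m}(B;\Z)$, as claimed.

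For part (ii), since $\Z/2$ is a field the Künneth theorem has no correction term and gives
\[
\co^{2m}(K\times B;\Z/2)\cong\bigoplus_{i+j=2m}\co^i(K;\Z/2)\otimes_{\Z/2}\co^j(B;\Z/2).
\]
By Serre's computation \cite{mosher2008cohomology}, $\co^*(K(\Z/2,2m-2);\Z/2)$ is the polynomial algebra on the admissible classes $Sq^I\kappa_{2m-2}$ of excess strictly less than $2m-2$; in degrees $\le 2m$ the only such classes are $\kappa_{2m-2}$, $Sq^1\kappa_{2m-2}$, and $Sq^2\kappa_{2m-2}$, so that $\co^{2m-2}(K;\Z/2)=\Z/2\langle\kappa_{2m-2}\rangle$, $\co^{2m-1}(K;\Z/2)=\Z/2\langle Sq^1\kappa_{2m-2}\rangle$, and $\co^{2m}(K;\Z/2)=\Z/2\langle Sq^2\kappa_{2m-2}\rangle$, together with $\co^0(K;\Z/2)=\Z/2$ and vanishing elsewhere in this range. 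Pairing each of these against $\co^j(B;\Z/2)$ with $i+j=2m$, and discarding the $i=2m-1$ contribution (which meets $\co^1(B;\Z/2)=0$), yields precisely the three claimed summands.

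The only genuine point to verify — and the step I expect to demand the most care — is the completeness of the generator list for $\co^*(K;\Z/2)$ through degree $2m$: one must rule out any decomposable (product) generator such as $\kappa_{2m-2}^2$ entering in this range. This holds because any product of two Serre generators lies in degree $\ge 2(2m-2)=4m-4$, and $4m-4>2m$ exactly when $m>2$, which is satisfied for our $m>3$. Once this degree bound is in place, the remainder is the purely bookkeeping task of matching the surviving bidegrees against the odd-degree vanishing of $B$.
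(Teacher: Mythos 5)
Your proposal is correct and follows essentially the same route as the paper, which likewise derives the statement from the cohomological Künneth formula combined with Proposition \ref{prop:cohomK(Z/p,q)} (for the integral case) and Serre's computation of $\co^*(K(\Z/2,2m-2);\Z/2)$ (for the mod $2$ case), discarding the terms that pair with the vanishing odd-degree cohomology of $B(m-2,1^2)$. Your added checks — freeness of $\co^*(B;\Z)$ killing the Tor terms, and the excess/degree bound ruling out decomposable generators of $\co^*(K;\Z/2)$ below degree $2m$ — are exactly the points the paper leaves implicit.
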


Combining the above, we procure the following lemma. Below, $\kappa_{2m-2} \in \co^{2m-2}(K(\Z/2, 2m-2);\Z/2)$ denotes the mod 2 fundamental class (see Notation \ref{notate:cohomologyops}(vii)).
\begin{lem}\label{lem:kernelsbar}
Let \(\overline{s}\) be the canonical section of the projection \(\mathrm{proj}_2 \colon K \times B \to  B\) as defined in \emph{(\ref{eq:diage[2]})}. Further let \(\tau_1\) be the relative transgression of \emph{\eqref{eq:tau_1_a}}. Then  
\[
    \mathrm{ker}(\bar{s})^*\cap \mathrm{ker}(\tau_1) \cap \co^{2m}(K\times B;\Z\oplus \Z/2)
\]
is generated by \((0, Sq^2 \kappa_{2m-2} \otimes 1)\).
\end{lem}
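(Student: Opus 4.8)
The plan is to mirror the indeterminacy computation carried out one stage lower for the secondary obstruction in Lemma~\ref{lem:secondaryob_cpspan2_even}, now for the fibration $p_2 \colon E[2] \to E[1]$ in the tower \eqref{eq:mp_diag_cpspan2_even}. Since every map in \eqref{eq:diage[2]} is induced by a map of spaces, both $\bar{s}^*$ and the relative transgression $\tau_1$ of \eqref{eq:tau_1_a} respect the splitting $\co^{2m}(K\times B;\Z\oplus\Z/2)\cong \co^{2m}(K\times B;\Z)\oplus\co^{2m}(K\times B;\Z/2)$, so I would treat the two summands independently and assemble the answer at the end.

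First I would pin down $\mathrm{ker}(\bar{s}^*)$ via Proposition~\ref{prop:kunneth}. For the integral summand, Proposition~\ref{prop:kunneth}(i) gives $\co^{2m}(K\times B;\Z)\cong \co^0(K;\Z)\otimes\co^{2m}(B;\Z)$, on which $\bar{s}^*$ (restriction to $\{*\}\times B$) is an isomorphism; hence $\mathrm{ker}(\bar{s}^*)$ vanishes integrally and the integral component of every kernel element is $0$. For the mod $2$ summand, Proposition~\ref{prop:kunneth}(ii) together with Serre's computation of $\co^*(K(\Z/2,2m-2);\Z/2)$---recalling $\co^{2m-2}(K;\Z/2)=\Z/2\langle\kappa_{2m-2}\rangle$ and $\co^{2m}(K;\Z/2)=\Z/2\langle Sq^2\kappa_{2m-2}\rangle$, with $\kappa_{2m-2}^2$ out of range for $m>2$---identifies $\mathrm{ker}(\bar{s}^*)\cap\co^{2m}(K\times B;\Z/2)$ as the span of the classes $\kappa_{2m-2}\otimes u$, for $u\in\co^2(B;\Z/2)$, together with $Sq^2\kappa_{2m-2}\otimes 1$.

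Next I would intersect with $\mathrm{ker}(\tau_1)$ by evaluating $\tau_1$ on these generators, using the mod $2$ Leray--Serre spectral sequence of $p_2$, whose fiber is $K$: the fundamental class $\kappa_{2m-2}$ transgresses to the $k$-invariant $k_2=Sq^2\iota_{2m-3}^2\in\co^{2m-1}(E[1];\Z/2)$ computed in \eqref{eq:k_2_cpspan2_even}. The properties of the relative transgression \cite[Property 1 \& 2, p.\ 14]{thomas2006seminar} then give $\tau_1(Sq^2\kappa_{2m-2}\otimes 1)=Sq^2 k_2$ and $\tau_1(\kappa_{2m-2}\otimes u)=k_2\cdot\bar{u}$, where $\bar{u}\in\co^2(E[1];\Z/2)$ corresponds to $u$ under the isomorphism $q_2^*\colon\co^2(E[1];\Z/2)\xrightarrow{\cong}\co^2(B;\Z/2)$ ($q_2$ being a high-connectivity equivalence). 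The first vanishes: by the Adem relation $Sq^2Sq^2=Sq^3Sq^1$ and the vanishing of $Sq^1$ on the mod $2$ reduction $\iota_{2m-3}^2$ of an integral class, one gets $Sq^2 k_2=Sq^3Sq^1\iota_{2m-3}^2=0$, so $Sq^2\kappa_{2m-2}\otimes 1\in\mathrm{ker}(\tau_1)$.

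The crux, and the step I expect to be the main obstacle, is to show that $u\mapsto k_2\cdot\bar{u}$ is injective on $\co^2(B;\Z/2)$, so that no nonzero combination of the classes $\kappa_{2m-2}\otimes u$ lies in $\mathrm{ker}(\tau_1)$. For this I would return to the mod $2$ Leray--Serre spectral sequence of $p_1\colon E[1]\to B(m,1^2)$ from the proof of Lemma~\ref{lem:secondaryob_cpspan2_even}. There $k_2=Sq^2\iota_{2m-3}^2$ is a permanent cycle detected in $E_\infty^{0,2m-1}$ because $d_{2m}(Sq^2\iota_{2m-3}^2)=0$ by \eqref{eq:ss_d_(2m)_triv}; since $\co^{2m-1}(K(\Z,2m-3);\Z/2)=\Z/2\langle Sq^2\iota_{2m-3}^2\rangle$ (Table~\ref{table:modpcohomEMS}), the classes $u\otimes Sq^2\iota_{2m-3}^2$ span $E_2^{2,2m-1}\cong\co^2(B(m,1^2);\Z/2)$. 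Using that each $u$ is a permanent cycle pulled back from the base and that the differential is a derivation, every such class is a permanent cycle admitting no nonzero differential into it, whence $E_\infty^{2,2m-1}$ has dimension $\mathrm{dim}_{\Z/2}\co^2(B(m,1^2);\Z/2)$ and the products $k_2\cdot\bar{u}$ are linearly independent, hence nonzero, in $\co^{2m+1}(E[1];\Z/2)$. Assembling the two coefficient computations, the combined kernel $\mathrm{ker}(\bar{s}^*)\cap\mathrm{ker}(\tau_1)$ is exactly $\Z/2\langle(0,Sq^2\kappa_{2m-2}\otimes 1)\rangle$, as claimed.
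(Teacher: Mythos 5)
Your proposal is correct and follows essentially the same route as the paper: identify $\mathrm{ker}(\bar s^*)$ from the K\"unneth decomposition, evaluate $\tau_1$ on the generators via the mod $2$ Leray--Serre spectral sequence of $p_2$ (transgression $\kappa_{2m-2}\mapsto k_2=Sq^2\iota_{2m-3}^2$), kill $Sq^2\kappa_{2m-2}\otimes 1$ by Kudo plus the Adem relation $Sq^2Sq^2=Sq^3Sq^1$, and rule out the classes $\kappa_{2m-2}\otimes u$ because $k_2\smile \bar u\neq 0$. Your only additions are a more explicit justification (via $E_\infty^{2,2m-1}$ of the spectral sequence of $p_1$) of the nonvanishing of $k_2\smile\bar u$, which the paper only asserts with reference to Figure \ref{fig:ss_p1-and-p2-_mod_2_even}(A), and a harmless notational slip in writing $q_2^*$ for the restriction $\co^2(E[1];\Z/2)\to\co^2(B;\Z/2)$.
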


\begin{proof}
It is straightforward to see that \(\mathrm{ker}(\bar{s}^*)\cap \co^{2m}(K \times B;\Z\oplus \Z/2)\) is generated by the elements 
\begin{itemize}
    \item \((0, Sq^2 \kappa_{2m-2} \otimes 1)\), and 
    \item \((0, \kappa_{2m-2} \otimes u)\), for all \(u \in \co^{2}(B(m-2,1^2);\Z/2).\)
\end{itemize}
To compute the action of the relative transgression on these elements, consider the mod \(2\) Leray--Serre spectral sequence \((E_*^{*,*},d_*)\) of the homotopy fibration 
\begin{equation}\label{eq:p_2_even2}
    K(\Z/2,2m-2) \longrightarrow E[2] \overset{p_2}\longrightarrow E[1].
\end{equation}
Using \cite[Property 1 \& 2]{thomas2006seminar}, we compute the following, which is illustrated in Figure \ref{fig:ss_p1-and-p2-_mod_2_even}(B).
Namely, by construction, we have \[d_{2m-1} \colon~ E_{2m-1}^{0,2m-2} \longrightarrow E_{2m-1}^{2m-1,0},~\kappa_{2m-2} \longmapsto k_2.\] From \eqref{eq:k_2_cpspan2_even} and the properties of that spectral sequence it follows that ${d_{2m-1} \colon~} E_{2m-1}^{2,2m-2} \to E_{2m-1}^{2m+1,0}$ satisfies
\[
        d_{2m-1}(\kappa_{2m-2} \otimes u) = k_2  u = (Sq^2 \iota_{2m-3}^2) u \neq 0 \in \co^{2m+1}(E[1];\Z/2),
\]
    for all \(u \in \co^2(B(m-2,1^2);\Z/2)\). The element $(Sq^2 \iota_{2m-3}^2) u$ is indeed non-zero due to \eqref{eq:H2m+1(E1)} (see also Figure \ref{fig:ss_p1-and-p2-_mod_2_even}(A)).
Finally, by Kudo's transgression formula, we see that for $d_{2m+1} \colon~ E_{2m+1}^{0,2m} \to E_{2m+1}^{2m+1,0}$ we get
    \[d_{2m+1}(Sq^2 \kappa_{2m-2}) = Sq^2Sq^2 \iota_{2m-2}^2 =0 \in E_{2m+1}^{*,0},\] since \(Sq^2 Sq^2 \iota_{2m-2}^2 = Sq^3 Sq^1 \iota_{2m-2}^2 = 0\). Thus \(\tau_1(Sq^2 \kappa_{2m-2}\otimes 1) = 0.\)
The result follows. 
\end{proof}

Using Lemma \ref{lem:kernelsbar}, we compute the image of \(k_3\) under the principal action map. 
\begin{cor}\label{cor:3} Let \(\mu \colon K(\Z/2,2m-2)\times E[2]\to E[2]\) be the principal action map associated to the principal fiber space \(E[2]\). Let \(k_3 \in \co^{2m}(E[2];\Z \oplus \Z/2)\) as in \emph{(\ref{eq:mp_diag_cpspan2_even})}. Then \[\mu^*(k_3) = 1 \otimes k_3 + (0, Sq^2 \kappa_{2m-2}  \otimes 1).\]
\end{cor}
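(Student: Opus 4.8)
The plan is to repeat, now with $\Z\oplus\Z/2$ coefficients, the computation carried out for the odd rank in Lemma~\ref{lem:mu*k2} and Corollary~\ref{cor:indet_cpspan2_odd}, feeding in the intersection already computed in Lemma~\ref{lem:kernelsbar}. As in the odd case, the standard properties of the principal action map $\mu$ attached to the principal fibration $p_2\colon E[2]\to E[1]$ (with fibre $K=K(\Z/2,2m-2)$) furnish a decomposition
\[
\mu^*(k_3)=1\otimes k_3+\nu^*(k_3),
\]
where $\nu=\mu\circ(1\times q_2)\colon K\times B\to E[2]$ is the composite appearing in \eqref{eq:diage[2]}. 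The correction term is determined by its restriction $\nu^*(k_3)\in\co^{2m}(K\times B;\Z\oplus\Z/2)$, so the entire content of the corollary reduces to the identification $\nu^*(k_3)=(0,Sq^2\kappa_{2m-2}\otimes 1)$.

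First I would locate the subgroup in which $\nu^*(k_3)$ lives. Because $\nu\circ\bar s\simeq q_2$, we have $\bar s^*\nu^*(k_3)=q_2^*(k_3)$, and $q_2^*(k_3)=0$ since $q_2$ lifts through $E[3]=\mathrm{hofib}(k_3)$ in the tower \eqref{eq:mp_diag_cpspan2_even}; hence $\nu^*(k_3)\in\mathrm{ker}(\bar s^*)$. On the other hand, exactness of the relative transgression sequence \eqref{eq:tau_1_a} identifies the image of $\nu^*$ with $\mathrm{ker}(\tau_1)$, so $\nu^*(k_3)\in\mathrm{ker}(\tau_1)$ as well. Lemma~\ref{lem:kernelsbar} then shows
\[
\mathrm{ker}(\bar s^*)\cap\mathrm{ker}(\tau_1)\cap\co^{2m}(K\times B;\Z\oplus\Z/2)\cong\Z/2\langle(0,Sq^2\kappa_{2m-2}\otimes 1)\rangle,
\]
so $\nu^*(k_3)$ is either $0$ or the asserted generator.

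The main obstacle is the final step: ruling out $\nu^*(k_3)=0$. I would handle it exactly as in the odd case. Given the generator $y$ of the target $\Z/2$, exactness of \eqref{eq:tau_1_a} supplies a class $x\in\co^{2m}(E[2])$ with $\nu^*(x)=y$, and then $q_2^*(x)=\bar s^*\nu^*(x)=\bar s^*(y)=0$, so $x\in\mathrm{ker}(q_2^*)$; thus $\nu^*$ restricted to $\mathrm{ker}(q_2^*)\cap\co^{2m}(E[2])$ surjects onto $\Z/2$. Since $k_3$ is the characteristic class of $q_2$, that is, the transgression of the fundamental class of its fibre, it generates the relevant cyclic summand of $\mathrm{ker}(q_2^*)\cap\co^{2m}(E[2])$—the even-coefficient analogue of the fact that $\mathrm{ker}(q_1^*)\cap\co^{2m}(E[1])$ is cyclic on $k_2$ used in Lemma~\ref{lem:mu*k2}. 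As $\nu^*$ is a homomorphism onto $\Z/2$, it therefore cannot vanish on $k_3$, giving $\nu^*(k_3)=(0,Sq^2\kappa_{2m-2}\otimes 1)$, as claimed.
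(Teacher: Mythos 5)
Your strategy coincides with the paper's (implicit) one: decompose $\mu^*(k_3)=1\otimes k_3+\nu^*(k_3)$, locate $\nu^*(k_3)$ inside $\ker(\bar s^*)\cap\ker(\tau_1)\cong\Z/2\langle(0,Sq^2\kappa_{2m-2}\otimes 1)\rangle$ via Lemma \ref{lem:kernelsbar}, and rule out $\nu^*(k_3)=0$ using exactness of \eqref{eq:tau_1_a}. The first two steps are fine, but the last inference --- ``$\nu^*$ restricted to $\ker(q_2^*)\cap\co^{2m}(E[2])$ surjects onto $\Z/2$, hence cannot vanish on $k_3$'' --- has a genuine gap. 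It requires that group to be cyclic on $k_3$, and the ``even-coefficient analogue'' you appeal to fails: for $m$ even, $\pi_{2m-1}(W(m,2))\cong\Z\oplus\Z/2$ (Table \ref{table:pi(W(m,2))}), so $F[2]$ is $(2m-2)$-connected with $\co^{2m-1}(F[2];\Z\oplus\Z/2)\cong\Hom(\Z\oplus\Z/2,\Z\oplus\Z/2)\cong\Z\oplus\Z/2\oplus\Z/2$, which is not generated by the fundamental class. The transgression image $\ker(q_2^*)\cap\co^{2m}(E[2];\Z\oplus\Z/2)$ therefore has generators besides $k_3$, and a surjection onto $\Z/2$ could a priori be carried entirely by one of those, vanishing on $k_3$. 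This is exactly the point where the odd case (Lemma \ref{lem:mu*k2}) was genuinely easier: there $\co^{2m-1}(F[1];\Z)\cong\Z$ is cyclic, so surjectivity onto $\Z/2$ immediately forces $\nu^*(k_2)\neq 0$.

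The gap is repairable with one additional observation. Writing $k_3=(k_3^{\Z},k_3^{\Z/2})$, the extra mod~$2$ generator of $\ker(q_2^*)\cap\co^{2m}(E[2];\Z/2)$ beyond $k_3^{\Z/2}$ is $\rho_2(k_3^{\Z})$ (transgression commutes with coefficient homomorphisms, and the two mod~$2$ classes in $\co^{2m-1}(F[2];\Z/2)\cong\Hom(\Z\oplus\Z/2,\Z/2)$ are the reduction of the integral fundamental class and the $\Z/2$ fundamental class). But $\nu^*(k_3^{\Z})\in\ker(\bar s^*)\cap\co^{2m}(K\times B;\Z)=0$ by Proposition \ref{prop:kunneth}(i), so $\nu^*(\rho_2 k_3^{\Z})=\rho_2\nu^*(k_3^{\Z})=0$. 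Since $\nu^*$ surjects onto $\Z/2\langle Sq^2\kappa_{2m-2}\otimes 1\rangle$ and kills the other generator, it must send $k_3^{\Z/2}$ to $Sq^2\kappa_{2m-2}\otimes 1$, which is the corollary. You should add this step; as written, your justification does not go through.
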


Finally, Theorem \ref{thm:tertiaryob_cpspan2_even} follows immediately from the proceeding lemma. 

\begin{lem}\label{lem:indet_cpspan2_even}
Let all maps and spaces be as defined in \emph{(\ref{eq:mp_diag_cpspan2_even})}. Suppose that the classifying map \((\xi, \ell_1, \ell_2) \colon X \to B(m-2,1^2)\) satisfies \(c_{m-1}(\xi-\ell_1 \oplus \ell_2)=0\). Then, as \(g\) runs over lifts of \((\xi,\ell_1,\ell_2)\) to \(E[2]\), the integral part of all the pullbacks \(g^*(k_3)\) are the same element in \(\co^{2m}(X;\Z)\) with zero indeterminacy. Moreover, the mod 2 part of the pullbacks \(g^*(k_3)\) is contained in a single coset in \(\co^{2m}(X;\Z/2)\) of the subgroup
\begin{equation*}\label{eq:indetoddmod2}
{\mathrm{Indet}}_{\Z/2} = Sq^2\co^{2m-2}(X;\Z/2).
\end{equation*}
\end{lem}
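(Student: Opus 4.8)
The plan is to mirror the indeterminacy computation carried out for the secondary obstruction in Corollary \ref{cor:indet_cpspan2_odd}, now applied to the top stage of the tower \eqref{eq:mp_diag_cpspan2_even} via the principal action $\mu \colon K(\Z/2,2m-2) \times E[2] \to E[2]$, using the formula for $\mu^*(k_3)$ recorded in Corollary \ref{cor:3}. The relevant structural fact is that $p_2 \colon E[2] \to E[1]$ is a principal fibration with fiber $K(\Z/2,2m-2) = \Omega K(\Z/2,2m-1)$, so that the lifts to $E[2]$ of a \emph{fixed} map $X \to E[1]$ form a torsor under $[X, K(\Z/2,2m-2)] \cong \co^{2m-2}(X;\Z/2)$.

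First I would take two lifts $g_1,g_2 \colon X \to E[2]$ with the same projection to $E[1]$. By the torsor property there is a map $\alpha \colon X \to K(\Z/2,2m-2)$, classifying $a \coloneqq \alpha^*(\kappa_{2m-2}) \in \co^{2m-2}(X;\Z/2)$, with $g_2 \simeq \mu \circ (\alpha \times g_1) \circ \Delta$. Pulling back the identity $\mu^*(k_3) = 1 \otimes k_3 + (0, Sq^2\kappa_{2m-2} \otimes 1)$ of Corollary \ref{cor:3} and invoking naturality of the Steenrod square, so that $\alpha^*(Sq^2\kappa_{2m-2}) = Sq^2 a$, I obtain
\[
g_2^*(k_3) - g_1^*(k_3) = (0,\, Sq^2 a) \in \co^{2m}(X;\Z) \oplus \co^{2m}(X;\Z/2).
\]
As $\alpha$ ranges over $[X, K(\Z/2,2m-2)]$ the class $a$ ranges over all of $\co^{2m-2}(X;\Z/2)$. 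Hence, among lifts sharing an $E[1]$-projection, the integral component of $g^*(k_3)$ is constant while the mod $2$ component sweeps out exactly the subgroup $Sq^2 \co^{2m-2}(X;\Z/2)$, which is the assertion of the lemma in this restricted setting.

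The main obstacle is that the generalized tertiary obstruction ranges over \emph{all} lifts $g \colon X \to E[2]$, so I must also rule out extra indeterminacy coming from lifts whose projections $h = p_2 \circ g \colon X \to E[1]$ differ. I would handle the two components separately. For the integral part, I expect it to be identified—by a naturality-of-transgression argument parallel to Lemma \ref{lem:k_2_even} and Lemma \ref{lem:diffsofq1}—with the genuine characteristic class $c_m(\xi-\ell_1\oplus\ell_2)$, which is independent of every choice of lift, giving zero integral indeterminacy automatically. For the mod $2$ part, two admissible lower lifts $h,h'$ (both with $h^*(k_2) = h'^*(k_2)=0$, so that lifts to $E[2]$ exist, using that the secondary obstruction of Lemma \ref{lem:secondaryob_cpspan2_even} contains $0$ under the running hypotheses) differ by a principal action of $K(\Z,2m-3)$ classified by some $\beta \in \co^{2m-3}(X;\Z)$ with $Sq^2\rho_2\beta = 0$; the expected final step is to show the resulting change in the mod $2$ part of $g^*(k_3)$ again lies in $Sq^2\co^{2m-2}(X;\Z/2)$, so no new indeterminacy is introduced. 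Assembling these two points shows that the integral part of $g^*(k_3)$ is a single well-defined class and the mod $2$ part lies in a single coset of $Sq^2\co^{2m-2}(X;\Z/2)$, as required.
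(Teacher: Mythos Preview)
Your first two paragraphs reproduce the paper's proof in spirit and in substance: invoke Corollary~\ref{cor:3} to get $\mu^*(k_3) = 1\otimes k_3 + (0,\, Sq^2\kappa_{2m-2}\otimes 1)$, pull back along $\mu\circ(\alpha\times g_1)\circ\Delta$, and read off $g_2^*(k_3)-g_1^*(k_3)=(0,Sq^2 a)$. The paper's argument is exactly this, compressed to three lines.

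Your third paragraph goes beyond what the paper writes. You correctly observe that the principal action of $K(\Z/2,2m-2)$ only relates lifts to $E[2]$ sharing the \emph{same} $E[1]$-projection, whereas the lemma as stated (and the generalized tertiary obstruction) ranges over all lifts. The paper's brief proof does not address this distinction; it treats arbitrary $g_1,g_2$ as related by that principal action without comment. Your proposed patch---pin down the integral component by a Lemma~\ref{lem:k_2_even}-style transgression argument (which is precisely what the paper does next, in Lemma~\ref{lem:k_3_odd}), and check that varying the $E[1]$-stage only shifts the mod~$2$ component within $Sq^2\co^{2m-2}(X;\Z/2)$---is the right shape, though you leave the mod~$2$ verification as an expectation rather than a computation. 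For the downstream application (Theorem~\ref{thm:tertiaryob_cpspan2_even}) the issue is harmless anyway: the standing hypothesis $\co^{2m}(X;\Z/2)=Sq^2\co^{2m-2}(X;\Z/2)$ forces the mod~$2$ indeterminacy to exhaust $\co^{2m}(X;\Z/2)$, and the integral constancy is secured independently by Lemma~\ref{lem:k_3_odd}.
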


\begin{proof}
Let \(g_1, g_2 \colon X \to E[2]\) be two lifts of \((\xi,\ell_1,\ell_2)\) to \(E[2]\). Then there exists an element \(\alpha \in \co^{2m-2}(X;\Z/2)\) such that the difference 
\[
g_2^*(k_3) - g_1^*(k_3) \in \co^{2m}(X;\Z \oplus \Z_2) \cong \co^{2m}(X;\Z) \oplus \co^{2m}(X;\Z/2)
\]
is \((0,Sq^2 \alpha).\) The result follows. 
\end{proof}

Since \(\mathrm{Indet}_{\Z} =0\), there is a unique integral tertiary obstruction to lifting \((\xi, \ell_1, \ell_2)\) along \(q_{m,2}\) which is identified in the proceeding lemma. To state the result, we use Notation \ref{notate:1^k}(v).

\begin{lem}\label{lem:k_3_odd}
Let \(k_3 \in \co^{2m}(E[2];\Z\oplus \Z/2)\) be the characteristic class in the fibration \[q_2 \colon B(m-2,1^2) \to E[2]\] as defined in \emph{(\ref{eq:mp_diag_cpspan2_even})}. Suppose that \(\co^{2m}(X;\Z)\) has no \(2\)-torsion and that \(\co^{2m}(X;\Z/2) \cong Sq^2\co^{2m-2}(X;\Z/2)\). Then, given any lift \(g \colon X \to E[2]\) of \((\xi,\ell_1,\ell_2)\) in \emph{(\ref{eq:mp_diag_cpspan2_even})}, \(g^*(k_3)\) vanishes if and only if \(c_m(\xi-\ell_1 \oplus \ell_2)\) vanishes.
\begin{comment}
\begin{enumerate}[\normalfont(i)]
    \item The fiber \(F[2]\) of \(q_2\) is \((2m-2)\)--connected and 
\(
\co^{2m-1}(F[1];\Z)\) is isomorphic to \(\Z\) as the nontrivial extension of \(\Z/2\langle \delta Sq^2 \iota_{2m-4}^2\rangle\) by \(\Z \langle e_{2m-1}\rangle\), where \(e_{2m-1} \in \co^*(W(m,2);\Z)\) is as defined in \emph{(\ref{prop:cohomology_rings})}.
    \item First, the group 
\(
\co^{2m}(E[2];\Z)\) is an extension of \(\co^{2m}(K(\Z,2m-3);\Z)\) by \[\left(\co^*(B(m,1^2);\Z)/\mathcal{I})\right)^{(2m)},\] where \(\mathcal{I}\) is the ideal generated by \(a_{m-1}.\) Second, the intersection \(\mathrm{ker}(q_2^*)\cap \co^{2m}(E[2];\Z)\) is isomorphic to \(\Z\) as the nontrivial extension of \(\Z/2\langle \delta Sq^2 \iota_{2m-3}^2\rangle\) by \(\Z\langle a_{m}\rangle.\)
\item Suppose that \(\co^{2m}(X;\Z)\) has no \(2\)-torsion. Then, given any lift \(g \colon X \to E[2]\) of \((\xi,\ell_1,\ell_2)\) in \emph{(\ref{eq:mp_diag_cpspan2_even})}, the integral part of \(g^*(k_3)\) vanishes if and only if \(c_m(\xi-\ell_1 \oplus \ell_2)\).
\end{enumerate}
\end{comment}
\end{lem}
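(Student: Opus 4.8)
The plan is to reduce the vanishing of the full obstruction $g^*(k_3) \in \co^{2m}(X;\Z \oplus \Z/2)$ to the vanishing of its integral component, and then to identify that integral component with half of $c_m(\xi-\ell_1\oplus\ell_2)$ by a transgression argument modeled on the odd-rank case (Lemma \ref{lem:k_2_even} and Lemma \ref{lem:F[1]-odd2}). First I would dispose of the mod $2$ component: by Lemma \ref{lem:indet_cpspan2_even} the mod $2$ part of $g^*(k_3)$ ranges over a coset of $\mathrm{Indet}_{\Z/2} = Sq^2\co^{2m-2}(X;\Z/2)$, which by hypothesis is all of $\co^{2m}(X;\Z/2)$; hence the mod $2$ part of the tertiary obstruction is null modulo indeterminacy. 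The same lemma, using that $\co^{2m}(X;\Z)$ has no $2$-torsion, shows the integral component has zero indeterminacy and is thus independent of $g$, so it suffices to identify it for a single lift.

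Next I would set up the comparison of fibrations. The identity on $B(m-2,1^2)$ together with $p_1 \circ p_2 \colon E[2] \to B(m,1^2)$ maps the fibration $F[2] \to B(m-2,1^2) \xrightarrow{q_2} E[2]$ to the fibration $W(m,2) \to B(m-2,1^2) \xrightarrow{q_{m,2}} B(m,1^2)$, inducing a fibre map $s_2 \colon F[2] \to W(m,2)$ and a morphism of Leray--Serre spectral sequences. Comparing transgressions produces a commutative square in which $\tau_{q_{m,2}}(e_{2m-1}) = a_m$ (Proposition \ref{prop:primaryobs}, Lemma \ref{lem:diffsofq1}), the map $\tau_{q_2}$ carries the generator of $\co^{2m-1}(F[2];\Z)$ to the integral part of $k_3$, and the horizontal arrows are $s_2^*$ and $(p_1 p_2)^*$.

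The crux, and the main obstacle, is the fibre computation, namely the even-rank analogue of Lemma \ref{lem:F[1]-odd2}: that $F[2]$ is $(2m-2)$-connected, $\co^{2m-1}(F[2];\Z) \cong \Z$, and $s_2^*(e_{2m-1})$ is twice a generator. To prove this I would form the Thomas diagram (Lemma \ref{lem:thomas_diagram}) of the comparison square; its left-hand column identifies $\mathrm{hofib}(s_2)$ with $\Omega P$, where $P = \mathrm{hofib}(p_1 p_2)$ sits in a fibration $K(\Z/2,2m-2) \to P \to K(\Z,2m-3)$ and so is $(2m-4)$-connected, whence $\Omega P$ is $(2m-5)$-connected with $\co^{2m-4}(\Omega P;\Z) \cong \Z\langle x\rangle$. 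In the Leray--Serre spectral sequence of $\Omega P \to F[2] \xrightarrow{s_2} W(m,2)$, the class $x$ transgresses isomorphically onto $e_{2m-3}$, forcing $F[2]$ to be $(2m-2)$-connected, while the torsion class $\delta Sq^2 x \in \co^{2m-1}(\Omega P;\Z) \cong \Z/2$, the exact analogue of $\delta Sq^2\iota^2_{2m-4}$, survives on the fibre edge and assembles with $e_{2m-1}$ into a non-trivial extension $0 \to \Z\langle e_{2m-1}\rangle \to \co^{2m-1}(F[2];\Z) \to \Z/2 \to 0$. This gives $s_2^*(e_{2m-1}) = 2\cdot(\text{generator})$; conceptually it reflects the classical fact that the Hurewicz homomorphism $\pi_{2m-1}(W(m,2)) \to \co_{2m-1}(W(m,2);\Z) \cong \Z$ is multiplication by $2$ on its free summand. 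The delicate bookkeeping of $\co^*(\Omega P;\Z)$ through the two-stage fibration defining $P$ is where the genuine work lies.

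Finally I would chain these together. Commutativity of the transgression square with $s_2^*(e_{2m-1}) = 2\cdot(\text{generator})$ yields $(p_1 p_2)^*(a_m) = 2\cdot(\text{integral part of } k_3)$, so for any lift $g$ one obtains $2\cdot(\text{integral part of } g^*(k_3)) = (p_1 \circ p_2 \circ g)^*(a_m) = f^*(a_m) = c_m(\xi-\ell_1\oplus\ell_2)$. Since $\co^{2m}(X;\Z)$ has no $2$-torsion, the integral part of $g^*(k_3)$ vanishes if and only if $c_m(\xi-\ell_1\oplus\ell_2)$ does; combined with the vanishing of the mod $2$ component modulo indeterminacy established in the first step, this completes the proof and, via Theorem \ref{thm:tertiaryob_cpspan2_even}, identifies the tertiary obstruction with $\{(c_m(\xi-\ell_1\oplus\ell_2),0)\}$.
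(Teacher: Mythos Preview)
Your proposal is correct and follows essentially the same route the paper intends. The paper omits the details entirely, saying only that ``the proof of Lemma~\ref{lem:k_3_odd} is similar to the proof of Lemma~\ref{lem:k_2_even}''; your outline is precisely such an adaptation: handle the $\Z/2$ component via the indeterminacy hypothesis (Lemma~\ref{lem:indet_cpspan2_even}), then compare the fibrations $q_2$ and $q_{m,2}$ and identify the integral transgression with $a_m$ up to a factor of~$2$ via a fibre computation of $\co^{2m-1}(F[2];\Z)$, exactly as in Lemmas~\ref{lem:k_2_even} and~\ref{lem:F[1]-odd2}. The commented-out items (i)--(iii) in the statement confirm that the paper's intended proof has the same structure and the same factor of~$2$.

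The only organizational difference is that you collapse the two stages $p_1,p_2$ into the composite $p_1\circ p_2$ with fibre $P$ and work with the single fibration $\Omega P \to F[2]\to W(m,2)$, whereas the paper (by analogy with Section~\ref{sec:thm_pspan3_even}) would likely run the Thomas diagrams in two steps, computing $\co^{2m-1}(F[1];\Z)$ first and then $\co^{2m-1}(F[2];\Z)$. Both yield the non-trivial $\Z/2$ extension over $\Z\langle e_{2m-1}\rangle$; your one-shot version is slightly more efficient but requires organizing the cohomology of the two-stage space $\Omega P$, which, as you correctly flag, is the point requiring the most care.
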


The proof of Lemma \ref{lem:k_3_odd} is similar to the proof of Lemma \ref{lem:k_2_even}, hence we omit the details for brevity. This completes the proof of Theorem \ref{thm:tertiaryob_cpspan2_even}, which in turn completess the proof of Theorem \ref{thm:cpspan2_even}.
%--------------------------------------
\section{Three complex line bundles: $m$ even}\label{sec:thm_pspan3_even}

In this section we prove Theorem \ref{thm:cpspan3_even}.\\

Let \(X\) be a \(2m\)-dimensional CW complex with \(m > 5\) even. Given complex vector bundles \[\xi \colon X \to BU(m) \text{ and } \ell_1,\ell_2,\ell_3 \colon X \to BU(1),\] we wish to lift the classifying map \((\xi,\ell_1,\ell_2,\ell_3) \colon X \to B(m,1^3)\) of (\ref{eq:xilinesclassifyingmap}) along the fibration \[W(m,3)\longrightarrow B(m-3,1^3)\overset{q_{m,3}}\longrightarrow B(m,1^3)\] defined in (\ref{eq:fibq_1}). Using Proposition \ref{prop:piW(m,r)}, we construct the following Moore--Postnikov factorization of \(q_{m,3},\)

\begin{equation}\label{eq:mp_diag_cpspan3_even}
\begin{tikzcd}[ampersand replacement=\&]
	\&\& {E[4]} \\
	\&\& {E[3]} \& {K(\Z,2m)} \\
	\&\& {E[2]} \& {K(\pi_{2m-2},2m-1)} \\
	\&\& {E[1]} \& {K(\Z,2m-2)} \\
	{W(m,3)} \& {B(m-3,1^3)} \& {B(m,1^3)} \& {K(\Z,2m-4)} \\
	\\
	\&\& X
	\arrow["{p_4}", from=1-3, to=2-3]
	\arrow["{k_4}", from=2-3, to=2-4]
	\arrow["{p_3}", from=2-3, to=3-3]
	\arrow["{k_3}", from=3-3, to=3-4]
	\arrow["{p_2}", from=3-3, to=4-3]
	\arrow["{k_2}", from=4-3, to=4-4]
	\arrow["{p_1}", from=4-3, to=5-3]
	\arrow[from=5-1, to=5-2]
	\arrow["{q_3}"{description}, from=5-2, to=2-3]
	\arrow["{q_2}"{description}, from=5-2, to=3-3]
	\arrow["{q_1}"{description}, from=5-2, to=4-3]
	\arrow["{q_{m,3}}", from=5-2, to=5-3]
	\arrow["{k_1}", from=5-3, to=5-4]
	\arrow["{(\xi,\ell_1,\ell_2,\ell_3)}"', from=7-3, to=5-3]
\end{tikzcd}
\end{equation}
where each \(k_i\) is the characteristic class in the fibration \(q_{i-1}\) (for \(q_0 = q_{m,3}\)); and each \(E[i]\) is the homotopy fiber of \(k_i\). Lastly, recall that \(\pi_{2m-2} = \pi_{2m-2}(W(m,3))\) satisfies Table \ref{table:pi2m-2W(m,3)}.

\subsection{The Primary Obstruction}\label{subsec:primobs_cpspan3_even}

Applying Proposition \ref{prop:primaryobs}, we procure the following result. 
\begin{lem}\label{lem:primarybos_cpspan3_even}
Let all spaces and maps be as in \emph{(\ref{eq:mp_diag_cpspan3_even})}. Then the primary obstruction 
\[
\lobs^{2m-4}((\xi,\ell_1,\ell_2,\ell_3),q_{m,3},k_1) \subseteq \co^{2m-4}(X;\Z)
\]
to lifting \((\xi,\ell_1,\ell_2,\ell_3)\) along \(q_{m,3}\) is the singleton set \(\{c_{m-2}(\xi - \ell_1 \oplus \ell_2 \oplus \ell_3)\}.\)
\end{lem}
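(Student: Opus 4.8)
The plan is to deduce this lemma as the special case $r = 3$ of the general primary obstruction computation in Proposition~\ref{prop:primaryobs}, so that essentially nothing new has to be proved. First I would observe that the data appearing in \eqref{eq:mp_diag_cpspan3_even} is precisely the data to which Proposition~\ref{prop:primaryobs} applies: the map $f = (\xi, \ell_1, \ell_2, \ell_3) \colon X \to B(m,1^3)$ is the classifying map of \eqref{eq:xilinesclassifyingmap} for $r = 3$, the relevant fibration is $q_{m,3}$ with fiber $W(m,3)$ by Proposition~\ref{prop:fiberqmk}, and $k_1$ is by construction the characteristic class (first Moore--Postnikov invariant) of $q_{m,3}$. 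Since $m > 5 \ge 1$, all hypotheses of Proposition~\ref{prop:primaryobs} are met with $r = 3$.

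Next I would record the degree bookkeeping confirming the stated cohomological dimension. As $W(m,3)$ is $2(m-3)$-connected with first nonzero homotopy group $\pi_{2(m-3)+1}(W(m,3)) \cong \Z$ (Proposition~\ref{prop:piW(m,r)}, cf.\ \eqref{eq:moore-postnikov-tower-beginning}), the primary obstruction lives in $\co^{2(m-3)+2}(X;\Z) = \co^{2m-4}(X;\Z)$, matching $\co^{2(m-r+1)}(X;\Z)$ at $r = 3$. Evaluating the formula of Proposition~\ref{prop:primaryobs} at $r = 3$ then yields $c_{m-r+1} = c_{m-2}$, giving
\[
    \lobs^{2m-4}\big((\xi,\ell_1,\ell_2,\ell_3), q_{m,3}, k_1\big) = \{c_{m-2}(\xi - \ell_1 \oplus \ell_2 \oplus \ell_3)\} \subseteq \co^{2m-4}(X;\Z),
\]
which is exactly the assertion.

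There is no genuine obstacle here. The substance of the computation---identifying the transgression of the lowest fundamental class $e_{2m-5} \in \co^{2m-5}(W(m,3);\Z)$ with the virtual Chern class $a_{m-2}$ via naturality of the Leray--Serre transgression against the stabilization morphism \eqref{eq:morphism-q-q_m,r}---is already carried out in Lemma~\ref{lem:diffsofq1} (taking $i = r = 3$, so that $d(e_{2m-5}) = a_{m-2}$) and packaged into Proposition~\ref{prop:primaryobs}. The only point worth emphasizing is that the primary obstruction is genuinely a singleton, i.e.\ has trivial indeterminacy; this is automatic for the first obstruction, since by definition it equals $\{f^* k_1\}$. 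Hence the entire proof reduces to restating Proposition~\ref{prop:primaryobs} in the present context, exactly as was done for the analogous Lemmata~\ref{lem:primaryob_cpspan2_odd} and~\ref{lem:primaryob_cpspan2_even}.
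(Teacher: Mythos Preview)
Your proposal is correct and takes exactly the same approach as the paper: the paper's proof consists of the single sentence ``Applying Proposition~\ref{prop:primaryobs}, we procure the following result,'' and you have simply unpacked this citation with the explicit degree bookkeeping at $r=3$. There is nothing to add.
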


\subsection{The Generalized Secondary Obstruction}\label{subsec:secondaryobs_cpspan3_even}

By similar arguments to the proof of Theorem \ref{thm:cpspan2_odd}, \textit{mutatis mutandis},  we achieve the following result. 

\begin{lem}\label{lem:secondaryobs_cpspan3_even}
Let all spaces and maps be as in \emph{(\ref{eq:mp_diag_cpspan3_even})}. Suppose that the classifying map \[(\xi,\ell_1,\ell_2,\ell_3) \colon X \longrightarrow B(m,1^3)\] satisfies \(c_{m-2}(\xi - \ell_1\oplus \ell_2\oplus \ell_3)=0\).
\begin{enumerate}[\normalfont(i)]
    \item Further suppose that \(\co^{2m-2}(X;\Z) = \delta Sq^2\rho_2 \co^{2m-5}(X;\Z).\) Then zero is an element of the generalized secondary obstruction
    \[
\lobs^{2m-2}((\xi,\ell_1,\ell_2,\ell_3), q_{m,3},k_2)\subseteq \co^{2m-2}(X;\Z)
    \]
to lifting \((\xi,\ell_1,\ell_2,\ell_3)\) along \(q_{m,3}\).
    \item Further suppose that \(\co^{2m-2}(X;\Z)\) has no \(2\)-torsion. Then the generalized secondary obstruction 
\[
\lobs^{2m-2}((\xi,\ell_1,\ell_2,\ell_3), q_{m,3},k_2) \subseteq \co^{2m-2}(X;\Z)\]
to lifting \((\xi,\ell_1,\ell_2,\ell_3)\) along \(q_{m,3}\) is the singleton set \(\{c_{m-1}(\xi-\ell_1\oplus \ell_2\oplus \ell_3)\}.\)
\end{enumerate}
\end{lem}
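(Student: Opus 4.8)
The plan is to run the secondary-obstruction analysis exactly as in the proof of Theorem \ref{thm:secondaryob_cpspan2_odd}, with every cohomological degree lowered by two and with $B(m-2,1^2)$ replaced throughout by $B(m-3,1^3)$. The reason this transfers almost verbatim is structural: by Proposition \ref{prop:piW(m,r)}, for $W(m,3)$ with $m$ even the first two nonzero homotopy groups are $\pi_{2m-5}\cong\Z$ and $\pi_{2m-3}\cong\Z$ with $\pi_{2m-4}=0$ in between, so the gap between $n_1=2m-5$ and $n_2=2m-3$ equals $2$, exactly as in the odd rank-two situation. Consequently the fiber of $p_1\colon E[1]\to B(m,1^3)$ is $K(\Z,2m-5)$, the secondary $k$-invariant $k_2$ lives in $\co^{2m-2}(E[1];\Z)$, and the primary obstruction $c_{m-2}(\xi-\ell_1\oplus\ell_2\oplus\ell_3)$ vanishes by hypothesis, so a lift $g\colon X\to E[1]$ exists.

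First I would compute the indeterminacy. By Table \ref{table:cohomEMS} (the $q+3$ row with $q=2m-5$, which is valid since $m>5$) we have $\co^{2m-2}(K(\Z,2m-5);\Z)\cong \Z/2\langle \delta Sq^2\iota_{2m-5}^2\rangle$. Feeding this into the principal-action computation of Lemma \ref{lem:mu*k2} gives $\mu^*(k_2)=1\otimes k_2+\delta Sq^2\rho_2\iota_{2m-5}^2\otimes 1$, and the argument of Corollary \ref{cor:indet_cpspan2_odd} then yields $\mathrm{Indet}=\delta Sq^2\rho_2\co^{2m-5}(X;\Z)$. For part (i), the hypothesis $\co^{2m-2}(X;\Z)\cong \delta Sq^2\rho_2\co^{2m-5}(X;\Z)$ says precisely that this indeterminacy is the whole group; since the generalized secondary obstruction is a coset of $\mathrm{Indet}$, that coset is then all of $\co^{2m-2}(X;\Z)$ and hence contains $0$, which is the claim.

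For part (ii), the absence of $2$-torsion in $\co^{2m-2}(X;\Z)$ forces $\mathrm{Indet}=\delta Sq^2\rho_2\co^{2m-5}(X;\Z)=0$ (the image of the integral Bockstein consists of $2$-torsion classes), so the obstruction reduces to a single class and it remains to identify it. Here I would establish the rank-three analogue of Lemma \ref{lem:F[1]-odd2}: via the Thomas diagram, the fiber $F[1]$ of $q_1$ sits in a fibration $K(\Z,2m-6)\to F[1]\to W(m,3)$. The bottom fiber class $\iota_{2m-6}$ transgresses isomorphically onto the bottom exterior generator $e_{2m-5}$ of $W(m,3)$, so $F[1]$ becomes $(2m-4)$-connected with $\pi_{2m-3}(F[1])\cong\pi_{2m-3}(W(m,3))\cong\Z$; on the next diagonal the extension of $\Z/2\langle \delta Sq^2\iota_{2m-6}^2\rangle$ by $\Z\langle e_{2m-3}\rangle$ is forced to be nonsplit (by the Hurewicz computation $\co^{2m-3}(F[1])\cong\Z$), so $s_1^*\colon \co^{2m-3}(W(m,3))\to\co^{2m-3}(F[1])$ is the inclusion $\Z\langle e_{2m-3}\rangle\hookrightarrow\Z\langle e_{2m-3}/2\rangle$. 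Finally, imitating Lemma \ref{lem:k_2_even}, the induced morphism of Leray--Serre spectral sequences of $q_1$ and $q_{m,3}$ together with Lemma \ref{lem:diffsofq1} (the case $i=2$, giving $e_{2m-3}\mapsto a_{m-1}$) yields $p_1^*(a_{m-1})=2k_2$, hence $2g^*(k_2)=f^*(a_{m-1})=c_{m-1}(\xi-\ell_1\oplus\ell_2\oplus\ell_3)$; as $\co^{2m-2}(X;\Z)$ has no $2$-torsion, $g^*(k_2)=0$ if and only if $c_{m-1}(\xi-\ell_1\oplus\ell_2\oplus\ell_3)=0$.

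The main obstacle is the rank-three version of Lemma \ref{lem:F[1]-odd2}: verifying that the transgression $\iota_{2m-6}\mapsto e_{2m-5}$ is an isomorphism (so that $F[1]$ attains the stated connectivity and $\pi_{2m-3}(F[1])\cong\Z$), and that the resulting degree-$(2m-3)$ extension is nonsplit so that $s_1^*$ genuinely doubles the generator. The favorable point is that the hypothesis $m>5$ gives $2m-6>2$, which removes the low-dimensional $\C P^\infty$ subtlety that required separate treatment in the rank-two case; the entire spectral sequence computation therefore stays in the stable range where Table \ref{table:cohomEMS} applies directly.
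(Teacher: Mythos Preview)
Your proposal is correct and follows exactly the approach the paper intends: the paper's own proof is simply the one-line remark ``by similar arguments to the proof of Theorem \ref{thm:cpspan2_odd}, \textit{mutatis mutandis}'', and you have carried out precisely those mutations (degree shift by two, $B(m-2,1^2)\rightsquigarrow B(m-3,1^3)$, $W(m,2)\rightsquigarrow W(m,3)$). Your identification of the main obstacle---the rank-three analogue of Lemma \ref{lem:F[1]-odd2}---and your observation that $m\ge 6$ keeps the $K(\Z,2m-6)$ fiber in the stable range, thereby avoiding the separate $\C P^\infty$ analysis, are both accurate.
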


\subsection{The Generalized Tertiary Obstruction}\label{subsec:tertiaryobs_cpspan3_even}

Next, assuming that zero is an element of both \[\lobs^{2m-4}((\xi,\ell_1,\ell_2,\ell_3), q_{m,3},k_1) \text{ and } \lobs^{2m-2}((\xi,\ell_1,\ell_2,\ell_3), q_{m,3},k_2),\] we will determine the generalized tertiary obstruction to lifting.

\begin{lem}\label{lem:2m-1_obs_is_0}
Let \(m > 5\) be an even integer and let all spaces and maps be as in \emph{(\ref{eq:mp_diag_cpspan3_even})}. Suppose that \(c_{m-2}(\xi- \ell_1 \oplus \ell_2 \oplus \ell_3) = 0\) and that condition \emph{Lemma \ref{lem:secondaryobs_cpspan3_even}(i)} or \emph{Lemma \ref{lem:secondaryobs_cpspan3_even}(ii)} is satisfied. If any one of the following conditions \emph{(i)--(iv)} hold, then zero is an element of the generalized tertiary obstruction 
\[
\lobs^{2m-1}((\xi,\ell_1,\ell_2,\ell_3),q_{m,3},k_3)\subseteq \co^{2m-1}(X;\pi_{2m-2}).
\] 
\begin{enumerate}[\normalfont(i)]
\item \(m \equiv 0~\mathrm{mod}~8\) and \(\co^{2m-1}(X;\Z/4) = 0\);
\item \(m \equiv 2~\mathrm{mod}~8\);
\item \(m \equiv 4~\mathrm{mod}~8\) and \(\co^{2m-1}(X;\Z/2) = 0\); or
\item \(m \equiv 6~\mathrm{mod}~8\).
\end{enumerate}
\end{lem}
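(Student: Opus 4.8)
The plan is to split the obstruction according to the primary decomposition of the coefficient group and to dispose of each piece separately. Recall that under the standing hypotheses (vanishing of the primary and secondary obstructions, guaranteed by Lemma \ref{lem:primarybos_cpspan3_even} and the relevant case of Lemma \ref{lem:secondaryobs_cpspan3_even}) a lift $g\colon X\to E[2]$ of $f=(\xi,\ell_1,\ell_2,\ell_3)$ exists, so that $\lobs^{2m-1}((\xi,\ell_1,\ell_2,\ell_3),q_{m,3},k_3)$ is a nonempty coset of the total indeterminacy inside $\co^{2m-1}(X;\pi_{2m-2})$. Since $m$ is even, Table \ref{table:pi2m-2W(m,3)} gives a splitting $\pi_{2m-2}\cong A_2\oplus A_3$, where the $2$-primary summand $A_2\in\{0,\Z/2,\Z/4\}$ is governed by $m\bmod 8$ and the $3$-primary summand is $A_3=\Z/3$ when $3\mid m$ and $A_3=0$ otherwise. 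This induces a splitting $\co^{2m-1}(X;\pi_{2m-2})\cong\co^{2m-1}(X;A_2)\oplus\co^{2m-1}(X;A_3)$ under which the obstruction and its indeterminacy split, so it suffices to put $0$ in each component.

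For the $2$-primary component the four cases are exactly the hypotheses of the lemma. In cases (ii) and (iv), namely $m\equiv 2,6\bmod 8$, Table \ref{table:pi2m-2W(m,3)} gives $A_2=0$, whence $\co^{2m-1}(X;A_2)=0$ and there is nothing to prove. In cases (i) and (iii), the hypotheses $\co^{2m-1}(X;\Z/4)\cong 0$ (with $A_2=\Z/4$) and $\co^{2m-1}(X;\Z/2)\cong 0$ (with $A_2=\Z/2$) force $\co^{2m-1}(X;A_2)=0$ directly, so the $2$-primary component of the obstruction is $\{0\}$.

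For the $3$-primary component there is deliberately no hypothesis on $X$, so the triviality of this coset must be established intrinsically. I would compute the $\Z/3$-reduction $k_3^{(3)}\in\co^{2m-1}(E[2];\Z/3)$ of the $k$-invariant by running the Serre spectral sequences of $p_1\colon E[1]\to B(m,1^3)$ and $p_2\colon E[2]\to E[1]$ with $\Z/3$ coefficients. Here the base $\co^{*}(B(m,1^3);\Z/3)$ is concentrated in even degrees (Proposition \ref{prop:cohomology_rings}(iii)); the fibers $K(\Z,2m-5)$ and $K(\Z,2m-3)$ contribute only the classes in the $k=3$ column of Table \ref{table:modpcohomEMS}, whose fundamental classes transgress to $\rho_3 a_{m-2}$ and $\rho_3 a_{m-1}$ respectively by Lemmas \ref{lem:primarybos_cpspan3_even} and \ref{lem:secondaryobs_cpspan3_even}. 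Propagating these by Kudo's transgression theorem \cite{kudo1956transgression}, the only $\Z/3$-generator in total degree $2m-1$ that can survive is (the image of) $P^1_3\iota_{2m-5}$, which reflects precisely the $\alpha_1$-type structure responsible for the summand $\Z/3\subseteq\pi_{2m-2}$ recorded in Table \ref{table:pi2m-2W(m,3)}. Thus $k_3^{(3)}$ is, up to the spectral-sequence identifications, carried by a $P^1_3$-operation.

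The hard part will be this $3$-primary bookkeeping, and it has two ingredients. First, one must evaluate the transgression of $P^1_3\iota_{2m-5}$, i.e.\ compute $P^1_3\rho_3 a_{m-2}$; this needs a mod-$3$ Wu-type formula for $P^1_3$ acting on the virtual Chern classes $a_i$, the $p=3$ analogue of Proposition \ref{prop:cohomology_rings}(vi). Second, one must match the surviving $P^1_3$-class against the indeterminacy: two choices of the bottom lift differ by a class in $\co^{2m-5}(X;\Z)$, and the resulting change in $g^*(k_3)$ is governed by $P^1_3\rho_3\colon\co^{2m-5}(X;\Z)\to\co^{2m-1}(X;\Z/3)$, the unique nontrivial degree-$4$ operation $\Z\to\Z/3$ in the mod-$3$ Steenrod algebra. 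I expect to show that the realized $3$-primary obstruction always lies in $P^1_3\rho_3\,\co^{2m-5}(X;\Z)=\mathrm{Indet}_{(3)}$ (equivalently, that $k_3^{(3)}$ itself vanishes after the identifications), so the $3$-primary coset is the trivial one and contains $0$ with no assumption on $X$. Combining the $2$- and $3$-primary conclusions yields $0\in\lobs^{2m-1}((\xi,\ell_1,\ell_2,\ell_3),q_{m,3},k_3)$, as required.
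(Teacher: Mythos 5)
Your overall strategy coincides with the paper's: split $\pi_{2m-2}\cong A_2\oplus A_3$ by primes, dispose of the $2$-primary component using the hypotheses (i)--(iv) (the target group $\co^{2m-1}(X;A_2)$ vanishes in cases (i),(iii), and $A_2=0$ in cases (ii),(iv)), and handle the $3$-primary component with no hypothesis on $X$ by a mod~$3$ spectral sequence computation. That much is correct and is exactly how the paper argues.

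The gap is in the $3$-primary component, which you yourself flag as ``the hard part'' and then leave as an expectation rather than a proof. Moreover, your description of what happens there is backwards. You assert that the class $P_3^1\iota_{2m-5}$ ``can survive'' in total degree $2m-1$ and is responsible for the $\Z/3$ summand of $\pi_{2m-2}$, and you propose to absorb the resulting obstruction into the indeterminacy $P_3^1\rho_3\,\co^{2m-5}(X;\Z)$. What actually happens (and what the paper proves in Lemma \ref{lem:2m-1-coh-of-E[1]}, Case III) is that $P_3^1\iota_{2m-5}^3$ does \emph{not} survive: by Kudo's transgression theorem and Borel's mod-$3$ Wu-type formula one has
\[
d_{2m}\bigl(P_3^1(\iota_{2m-5}^3)\bigr)=P_3^1(b_{m-2})=b_1^2b_{m-2}-2b_2b_{m-2}-b_1b_{m-1}+mb_m,
\]
and since $3\mid m$ the last term dies while $b_1b_{m-1}$ survives modulo the ideal $(b_{m-2})$, so this differential is nonzero. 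Consequently $\co^{2m-1}(E[1];\Z/3)=0$, hence $\co^{2m-1}(E[2];\Z/3)=0$ and the mod~$3$ part of $k_3$ is identically zero; the $3$-primary obstruction vanishes for \emph{every} lift, with no appeal to indeterminacy. Your alternative route --- showing only that the realized class lies in $P_3^1\rho_3\,\co^{2m-5}(X;\Z)$ --- would not suffice without further hypotheses on $X$, since there is no a priori reason the pullback of a nonzero $k_3^{(3)}$ lands in that image. So the missing step is precisely the evaluation of $P_3^1(b_{m-2})$ in $\co^*(B(m,1^3);\Z/3)/(b_{m-2})$ and the conclusion $k_3^{(3)}=0$; until that is carried out, the $3$-primary case (which is needed whenever $3\mid m$, in all four congruence classes) is not closed.
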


The rest of this section is dedicated to proving Lemma \ref{lem:2m-1_obs_is_0}.\\

\paragraph{\textbf{Determining the Moore--Postnikov invariant.}} Towards a proof of Lemma \ref{lem:2m-1_obs_is_0}, we compute the degree \((2m-1)\) Moore--Postnikov invariant \(k_3 \in \co^{2m-1}(E[2];\pi_{2m-2})\) as follows.

\begin{prop}\label{prop:2m-1-mpinvariant}
Let \(k_3 \in \co^{2m-1}(E[2];\pi_{2m-2})\) be as in \emph{(\ref{eq:mp_diag_cpspan3_even})}. Then the mod \(p\) part of \(k_3\) is equal to
\[
    \begin{cases}
        Sq^4 \iota_{2m-5}^2  & p=2 \textrm{ and } m = 4~\mathrm{mod}~8\\
        \theta_2^2 Sq^4 \iota_{2m-5}^2 & p=4 \textrm{ and } m = 0 ~\mathrm{mod}~8\\
        b^{(3)}_{m-1}\otimes \iota_{2m-3}^3 & p=3 \textrm{ and } m = 0~\mathrm{mod}~3.
    \end{cases}
\] 
\end{prop}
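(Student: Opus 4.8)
The plan is to compute $k_3$ by building up $\co^{2m-1}(E[2];R)$ for $R \in \{\Z/2,\Z/4,\Z/3\}$ through the two Leray--Serre spectral sequences of $p_1 \colon E[1]\to B(m,1^3)$ (fibre $K(\Z,2m-5)$) and $p_2 \colon E[2]\to E[1]$ (fibre $K(\Z,2m-3)$), and then to single out $k_3$ among the surviving generators. Two structural facts guide the whole argument. First, by Proposition \ref{prop:cohomology_rings}(iii) the base $B(m,1^3)$ has vanishing odd cohomology, so every class in the odd group $\co^{2m-1}(E[2];R)$ is carried by the fibre directions; this is exactly what permits $k_3$ to be expressed in terms of the fibre fundamental class $\iota_{2m-5}$ and Steenrod operations. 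Second, $k_3$ is by definition the characteristic class of $q_2$, i.e.\ the transgression of the fundamental class of the fibre of $q_2 \colon B(m-3,1^3)\to E[2]$, whose bottom homotopy group is $\pi_{2m-2}$; hence once the groups are computed it remains only to pick out the transgressive generator, which I will do by naturality against the spectral sequences of $q_{m,3}$, $q_1$, $q_2$ and the primary and secondary obstructions already found in Lemmata \ref{lem:primarybos_cpspan3_even} and \ref{lem:secondaryobs_cpspan3_even}. Since $\pi_{2m-2}$ is finite with only $2$- and $3$-primary torsion (Table \ref{table:pi2m-2W(m,3)}), I will treat the three coefficient groups separately.

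For the $2$-primary part the engine is Kudo's transgression theorem \cite{kudo1956transgression} combined with the Wu formula of Proposition \ref{prop:cohomology_rings}(vi). By construction of the tower (Proposition \ref{prop:primaryobs}) the fibre class $\iota^2_{2m-5}$ transgresses in the mod $2$ spectral sequence of $p_1$ to $b_{m-2}=\rho_2 a_{m-2}$, so $Sq^4\iota^2_{2m-5}$ is transgressive with $d_{2m}(Sq^4\iota^2_{2m-5}) = Sq^4 b_{m-2}$. The Wu formula gives $Sq^4 b_{m-2} = b_2 b_{m-2} + (m-4)\,b_1 b_{m-1} + \binom{m-3}{2} b_m$. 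I will then note that $(m-4)\equiv 0 \bmod 2$ for even $m$, and that $b_2 b_{m-2}$ is already a boundary on an earlier page, being hit by $d_{2m-4}(\iota^2_{2m-5}\otimes b_2)=b_{m-2}b_2$ via the Leibniz rule. Consequently $Sq^4\iota^2_{2m-5}$ survives to $\co^{2m-1}(E[1];\Z/2)$ exactly when $\binom{m-3}{2}\equiv 0 \bmod 2$, i.e.\ when $m\equiv 0,4\bmod 8$, and dies onto the fresh generator $b_m$ when $m\equiv 2,6\bmod 8$; this dichotomy matches precisely the vanishing pattern of the $2$-primary part of $\pi_{2m-2}$ in Table \ref{table:pi2m-2W(m,3)}. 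A short check in the spectral sequence of $p_2$, whose fibre $K(\Z,2m-3)$ is $(2m-4)$-connected so that no differential can hit this base class, shows the class persists to $\co^{2m-1}(E[2];\Z/2)$, and the identification with $k_3$ then follows from the characterisation of $k_3$ as a transgression together with the surjectivity of the transgression in the spectral sequence of $q_2$, just as in Lemmata \ref{lem:secondaryob_cpspan2_even} and \ref{lem:secondaryobs_cpspan3_even}. To distinguish the group $\Z/4$ (for $m\equiv 0$) from $\Z/2$ (for $m\equiv 4$), I will match coefficients against Proposition \ref{prop:piW(m,r)} and rerun the survival computation with $\Z/4$-coefficients, tracking the operations $\theta_2^2$ and $\delta_2^2$ of Table \ref{table:modpcohomEMS}; the lift $\theta_2^2 Sq^4\iota^2_{2m-5}$ survives for $m\equiv 0\bmod 8$, giving the stated order-two value, while for $m\equiv 4\bmod 8$ only $Sq^4\iota^2_{2m-5}$ remains.

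For the $3$-primary part I must show the mod $3$ component vanishes when $3\mid m$. The only degree-four mod $3$ operation on the bottom class is $P^1_3$, and since $\iota^3_{2m-5}$ transgresses to $b^{(3)}_{m-2}$, Kudo's theorem gives $d(P^1_3\iota^3_{2m-5}) = P^1_3 b^{(3)}_{m-2}$; the plan is to verify, via the mod $3$ analogue of the Wu formula, that after deleting the part already killed on earlier pages the residual transgression is nonzero, so that no primary class $P^1_3\iota^3_{2m-5}$ survives. The remaining contributions to $\co^{2m-1}(E[2];\Z/3)$ are the products $\iota^3_{2m-3}\cdot u$ with $u\in\co^2(B(m,1^3);\Z/3)$ (here $\iota^3_{2m-3}$ survives because $k_2$ is $2$-primary), and these restrict to zero on the $(2m-6)$-connected fibre of $E[2]\to B(m,1^3)$; being non-transgressive they cannot form part of the transgressive class $k_3$. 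Hence $k_3^{(3)}=0$. The conceptual content is that the $\Z/3$ summand of $\pi_{2m-2}$ is the stable class $\alpha_1\in(\pi_3^S)_{(3)}$ issuing from the bottom cell $S^{2m-5}$, whose detection is genuinely secondary, so its primary Postnikov invariant is zero.

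The main obstacle will be the two passages from computing a cohomology group to pinning down $k_3$ itself: first, confirming that the surviving generator is the transgression of the fibre class of $q_2$ rather than another class of the same degree, which demands careful naturality of the transgression across the spectral sequences of $q_{m,3}$, $q_1$, $q_2$ and the Postnikov tower of $W(m,3)$; and second, the finer $\Z/4$-coefficient bookkeeping that separates the groups $\Z/4$ and $\Z/2$, where the Bocksteins $\delta_2^\ell$ and reductions $\theta_2^\ell$ of Notation \ref{notate:cohomologyops} interact and one must ensure the secondary indeterminacy does not absorb the class. By contrast I expect the mod $3$ vanishing to be the shortest step once the fibre-determined principle and the secondary nature of $\alpha_1$ are in place.
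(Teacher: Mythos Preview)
Your approach is essentially the paper's: compute $\co^{2m-1}(E[i];\Z/p)$ for $i=1,2$ via the spectral sequences of $p_1$ and $p_2$, then identify $k_3$ as the generator using surjectivity of the transgression in $q_2$ (which holds because $\co^{2m-1}(B(m-3,1^3);\Z/p)=0$). The Wu-formula calculations of $Sq^4 b_{m-2}$ and $P^1_3 b^{(3)}_{m-2}$ are exactly what the paper carries out in Lemma~\ref{lem:2m-1-coh-of-E[1]}.

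There is, however, a genuine error in your mod $3$ argument. You write that ``$\iota^3_{2m-3}$ survives because $k_2$ is $2$-primary''. This is false: in the tower \eqref{eq:mp_diag_cpspan3_even} the class $k_2$ lives in $\co^{2m-2}(E[1];\Z)$ (the target is $K(\Z,2m-2)$, not a mod $2$ Eilenberg--MacLane space), and by Lemma~\ref{lem:secondaryobs_cpspan3_even} its pullback is $c_{m-1}$ up to a factor of $2$. Its mod $3$ reduction is therefore a nonzero unit multiple of $b^{(3)}_{m-1}\in\co^{2m-2}(E[1];\Z/3)$. Hence in the mod $3$ spectral sequence of $p_2$ the transgression $d_{2m-2}(\iota^3_{2m-3})\neq 0$, so $\iota^3_{2m-3}$ and the products $\iota^3_{2m-3}\cdot u$ do \emph{not} survive. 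Combined with your (correct) argument that $P^1_3\iota^3_{2m-5}$ transgresses nontrivially, this yields $\co^{2m-1}(E[2];\Z/3)=0$ outright, whence $k_3^{(3)}=0$. Your fallback claim that the products, ``being non-transgressive'', cannot be $k_3$ is not a valid substitute: $k_3$ is the transgression in the fibration $q_2$, not in $E[2]\to B(m,1^3)$, and there is no a priori reason it must restrict nontrivially to the fibre of the latter.

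A smaller gap: your ``short check'' in the spectral sequence of $p_2$ for $p=2$ establishes only that the base class $Sq^4\iota^2_{2m-5}\in E^{2m-1,0}$ persists. You must also kill the other terms on the $(2m-1)$-diagonal, namely $E^{2,2m-3}$ and $E^{0,2m-1}$. The paper does this by computing $d_{2m-2}(\iota^2_{2m-3})=Sq^3\iota^2_{2m-5}$ (see \eqref{eq:diff_iota_{2m-3}}) and $d_{2m}(Sq^2\iota^2_{2m-3})=Sq^2Sq^3\iota^2_{2m-5}=Sq^5\iota^2_{2m-5}$, both nonzero in $E^{*,0}$. Only once $\co^{2m-1}(E[2];\Z/2)\cong\Z/2$ is established does surjectivity of the transgression in $q_2$ pin down $k_3$ uniquely; if extra classes survived, surjectivity alone would not suffice.
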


\begin{proof}
    We study the Leray--Serre spectral sequence $(E_*^{*,*},d_*)$ with $\Z/p$ coefficients of the homotopy fibration
    \begin{equation*}
        F[2] \longrightarrow B(m-3,1^3) \xrightarrow{~q_2~} E[2].
    \end{equation*}
    Recall that the fiber $F[2]$ is $(2m-3)$-connected. The differential
    \[
        d_{2m-1} \colon \co^{2m-2}(F[2];\Z/p) \cong E_{2m-1}^{0,2m-2} \longrightarrow E_{2m-1}^{2m-1, 0} \cong \co^{2m-1}(E[2];\Z/p)
    \]
    on the $(2m-1)$-th page of $E^*_{*,*}$ has to be surjective, since $\co^{2m-1}(B(m-3,1^3);\Z/p) = 0$. 
    Thus, the mod $p$ part of $k_3$ is the generator of $\co^{2m-1}(E[2];\Z/p)$ and the claim follows from Lemma \ref{lem:2m-1-coh-of-E[2]}.
\end{proof}

\begin{lem}\label{lem:2m-1-coh-of-E[2]}
Let \(E[2]\) be as in \emph{(\ref{eq:mp_diag_cpspan3_even})}. Then
\[
    \co^{2m-1}(E[2];\Z/p) = \begin{cases}
        \Z/2 \langle Sq^4 \iota_{2m-5}^2 \rangle  & p=2 \textrm{ and } m = 4~\mathrm{mod}~8\\
        \Z/2\langle \theta_2^2 Sq^4 \iota^2_{2m-5}\rangle & p=4 \textrm{ and } m = 0 ~\mathrm{mod}~8\\
        \Z/3 \langle b^{(3)}_{1}\otimes \iota_{2m-3}^3\rangle. & p=3 \textrm{ and } m = 0~\mathrm{mod}~3,
    \end{cases}
\] 
where \(\theta_2^2\) is the coefficient homomorphism induced by the inclusion \(\Z/2 \xhookrightarrow{}\Z/4\) as in \emph{Notation \ref{notate:cohomologyops}}.
\end{lem}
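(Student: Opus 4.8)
The plan is to compute $\co^{2m-1}(E[2];\Z/p)$ by running the Leray--Serre spectral sequence twice, once for each of the two bottom fibrations in the tower \eqref{eq:mp_diag_cpspan3_even}: first $p_1\colon E[1]\to B(m,1^3)$ with fiber $K(\Z,2m-5)$, and then $p_2\colon E[2]\to E[1]$ with fiber $K(\Z,2m-3)$. The two structural inputs are that $B(m,1^3)$ has cohomology concentrated in even degrees (Proposition \ref{prop:cohomology_rings}(iii)), and that the transgression of the fundamental class in the first fibration is the reduced virtual Chern class $\iota_{2m-5}\mapsto b_{m-2}$, by the primary obstruction computation (Proposition \ref{prop:primaryobs}). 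Throughout I read the fiber cohomology in the range $[2m-5,2m-1]$ off Table \ref{table:modpcohomEMS}; for $p=2$ the relevant fiber classes are $\iota_{2m-5}^2$, $Sq^2\iota_{2m-5}^2$, $Sq^3\iota_{2m-5}^2$, $Sq^4\iota_{2m-5}^2$, and the target class $Sq^4\iota_{2m-5}^2$ sits in total degree $2m-1$.

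First I would compute $\co^{2m-1}(E[1];\Z/p)$ from the spectral sequence of $p_1$. The class $\iota_{2m-5}^p$ transgresses onto the non-zero-divisor $b_{m-2}$ (its leading term is $c_{m-2}$, a polynomial generator), so multiplication by $b_{m-2}$ kills the entire $\iota$-row and replaces the base row by $\co^*(B(m,1^3);\Z/p)/(b_{m-2})$. By Kudo's transgression theorem \cite{kudo1956transgression} the higher fiber classes transgress to the corresponding Steenrod operations applied to $b_{m-2}$, which I evaluate using the Wu formula of Proposition \ref{prop:cohomology_rings}(vi) modulo the ideal $(b_{m-2})$. Concretely $Sq^2 b_{m-2}\equiv b_{m-1}$ for $m$ even (so $Sq^2\iota_{2m-5}^2$ is killed), whereas $Sq^4 b_{m-2}\equiv b_2b_{m-2}+(m-4)b_1b_{m-1}+\binom{m-3}{2}b_m\equiv 0\pmod{b_{m-2}}$ precisely when $m\equiv 0,4\pmod 8$, since then both $m-4$ and $\binom{m-3}{2}$ are even. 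Hence $Sq^4\iota_{2m-5}^2$ survives the first spectral sequence, and after checking that the remaining boxes on the $(2m-1)$-diagonal ($E_2^{2,2m-3}$ and $E_2^{4,2m-5}$) are annihilated by the same non-zero-divisor transgressions, I obtain $\co^{2m-1}(E[1];\Z/2)\cong\Z/2\langle Sq^4\iota_{2m-5}^2\rangle$ for $m\equiv 4\pmod 8$, with the mod $4$ analogue $\theta_2^2 Sq^4\iota_{2m-5}^2$ for $m\equiv 0\pmod 8$.

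Next I would feed this into the spectral sequence of $p_2$, whose fiber is $K(\Z,2m-3)$. The surviving class sits on the base axis $E^{2m-1,0}$, hence carries no outgoing differential, so I only rule out incoming ones: the fiber cohomology of $K(\Z,2m-3)$ vanishes in degrees $2m-2$ and $2m-4$ (Table \ref{table:modpcohomEMS}) and $\co^1(E[1];\Z/p)=0$, so no differential can hit $E^{2m-1,0}$, while the only other degree-$(2m-1)$ box $E^{2,2m-3}$ (coming from $\iota_{2m-3}^p\otimes\co^2$) is transgressed away by $\iota_{2m-3}\mapsto k_2$, again a non-zero-divisor. This yields the claimed groups for $p=2,4$. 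For $p=3$ with $m\equiv 0\pmod 3$ the argument is parallel but gives $0$: the only relevant fiber classes of $p_1$, namely $\iota_{2m-5}^3$ and $P^1_3\iota_{2m-5}^3$, both transgress nontrivially, the latter because $P^1_3 b_{m-2}$ has a nonzero multiple of $b_m$ as leading term modulo $(b_{m-2})$ by the mod $3$ Wu formula, so nothing survives into degree $2m-1$.

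The main obstacle will be the bookkeeping that separates the cases rather than any single deep step: evaluating $\binom{m-3}{2}$ and $(m-4)$ modulo $2$, and supplying the finer information needed to name the surviving generator as $\theta_2^2 Sq^4\iota_{2m-5}^2$, an order-two element inside the $\Z/4$-coefficient group when $m\equiv 0\pmod 8$, rather than an order-four class. This requires carefully tracking naturality of the coefficient homomorphisms $\rho_p$, $\theta_2^2$, and $\delta_2^2$ through both spectral sequences. I would also need to justify the mod $3$ power-operation computation, since a Wu-type formula for $P^1_3$ acting on the $b_i$ is not recorded in Section \ref{sec:prelims} and must be provided separately.
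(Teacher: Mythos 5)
Your overall strategy---running the Leray--Serre spectral sequence first for $p_1\colon E[1]\to B(m,1^3)$ and then for $p_2\colon E[2]\to E[1]$, using the transgression $\iota_{2m-5}\mapsto b_{m-2}$, Kudo's theorem, and the Wu formula---is exactly the paper's, and your first stage reproduces Lemma \ref{lem:2m-1-coh-of-E[1]} correctly, including the parity analysis of $(m-4)$ and $\binom{m-3}{2}$. The second stage, however, has a genuine gap. You claim the only boxes on the $(2m-1)$-diagonal of the spectral sequence of $p_2$ are $E^{2m-1,0}$ and $E^{2,2m-3}$, but $E_2^{0,2m-1}\cong\co^{2m-1}(K(\Z,2m-3);\Z/2)\cong\Z/2\langle Sq^2\iota_{2m-3}^2\rangle$ is nonzero for $p=2$ (and likewise $\Z/2\langle\theta_2^2Sq^2\iota_{2m-3}^2\rangle$ for $p=4$): the fiber cohomology of $K(\Z,2m-3)$ vanishes in degrees $2m-2$ and $2m-4$, but not in degree $2m-1=q+2$ (Table \ref{table:modpcohomEMS}). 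If this class survived, $\co^{2m-1}(E[2];\Z/2)$ would be an extension of $\Z/2$ by $\Z/2$ and the lemma would be false, so it must be killed; the paper does this by computing $d_{2m}(Sq^2\iota_{2m-3}^2)=Sq^2Sq^3\iota_{2m-5}^2=Sq^5\iota_{2m-5}^2\neq 0$ via the Adem relation $Sq^2Sq^3=Sq^5+Sq^4Sq^1$. That computation rests on the identification $\rho_2k_2=Sq^3\iota_{2m-5}^2$ from \eqref{eq:diff_iota_{2m-3}}, which is also the missing content behind your unproved ``$k_2$ is a non-zero-divisor'' step: to see that $d_{2m-2}(\iota_{2m-3}\otimes u)=\rho_pk_2\smile u\neq 0$ for $0 \neq u\in\co^2(E[1];\Z/p)$ one must actually know what $\rho_pk_2$ is, and the paper spends an auxiliary pair of spectral sequences (over $W(m,3)$ and over $E[1]$) establishing this.

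A smaller but real error: in your mod $3$ case the stated reason for $P^1_3b_{m-2}\not\equiv 0 \bmod (b_{m-2})$ is wrong. The coefficient of $b_m$ in $P^1_3b_{m-2}$ is $m$, which vanishes mod $3$ precisely in the case under consideration, so there is no ``nonzero multiple of $b_m$ as leading term''; the nonvanishing instead comes from the term $-b_1b_{m-1}$, which survives modulo $(b_{m-2})$ by algebraic independence of the $b_i$. The conclusion you want is still true, but the justification you offer would not survive the computation you yourself flag as outstanding.
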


The first in the proof of Lemma \ref{lem:2m-1-coh-of-E[2]} are the following technical claims regarding the structure of cohomology of $E[1]$. We recall the fibration
\begin{equation} \label{eq:p1}
    K(\Z,2m-5) \overset{h}\longrightarrow E[1] \overset{p_1}\longrightarrow B(m,1^3),
\end{equation}
as defined in \eqref{eq:mp_diag_cpspan3_even}.

\begin{lem}\label{lem:2m-1-coh-of-E[1]}
Let \(E[1]\) be as in \emph{(\ref{eq:mp_diag_cpspan3_even})}. Its cohomology satisfies the following.
\begin{enumerate}[\normalfont(i)]
    \item Let $p=2$ and $m = 4~\mathrm{mod}~8$. We have
    \begin{equation*}
        \co^{2m-2}(E[1];\Z/2) \cong \Z/2 \langle Sq^3 \iota_{2m-5}^2 \rangle \oplus \left(\co^*(B(m,1^3);\Z/2)/(b_{m-2},b_{m-1})\right)^{(2m-2)}
    \end{equation*}
    and the inclusion $h$ of the fiber in \eqref{eq:p1} induces the projection on the direct summand
    \begin{equation} \label{eq:h*}
        h^* \colon \co^{2m-2}(E[1];\Z/2) \longrightarrow \co^{2m-2}(K(\Z,2m-5); \Z/2) \cong \Z/2 \langle Sq^3 \iota^2_{2m-5} \rangle.
    \end{equation}
    Moreover, $\co^{2m-1}(E[1];\Z/2) \cong \Z/2 \langle Sq^4 \iota_{2m-5}^2 \rangle$ and $(Sq^3 \iota_{2m-5}^2 \oplus 0) x \neq 0 \in \co^{2m}(E[1];\Z/2)$, for any $x \in \co^2(E[1];\Z/2) \cong \co^{2}(B(m,1^3);\Z/2)$.
    \item Let $p=4$ and $m = 0 ~\mathrm{mod}~8$. Then, analogously to part \emph{(i)}, we have that the element \[\delta_2^2 Sq^2\iota^2_{2m-5} \neq 0 \in \co^{2m-2}(E[1];\Z/4)\] as well as its multiple by any element from $\co^2(E[1];\Z/4)$ is nonzero in $\co^{2m}(E[1];\Z/4)$. Moreover, we have $\co^{2m-1}(E[1];\Z/4) = \Z/2\langle \theta_2^2 Sq^4 \iota_{2m-5}^2\rangle$.
    \item Let $p=3$ and $m = 0 ~\mathrm{mod}~3$. Then $\co^{2m-1}(E[1];\Z/3) = 0$. Moreover, $b_{m-1}^{(3)} \neq 0 \in \co^{2m-2}(E[1];\Z/3)$, but $b_1^{(3)}b_{m-1}^{(3)} = 0 \in \co^{2m}(E[1];\Z/3)$. Here $b_i^{(3)}$ is the mod $3$ Chern class \[a_i = c_i(\gamma_m \times 1^r - 1 \times \gamma_1^r) \in \co^{2i}(B(m,1^3);\Z)\] (see \emph{Notation \ref{notate:1^k}}).
\end{enumerate}
\end{lem}
%Here $Sq^4\iota_{2m-5}^2$ (resp.\ \(\theta_2^2 Sq^4 \iota_{2m-5}^2\)) is a class which restricts to the fourth Steenrod square of the fundamental class along the fiber inclusion $K(\Z,2m-5) \xhookrightarrow{} E[1]$ from \eqref{eq:mp_diag_cpspan3_even} with \(\Z/2\) (resp.\ \(\Z/4\)) coefficients.

\begin{proof}
Let us split the proof with respect to $p$. In each case, $m$ will be assumed to be of the corresponding divisibility. In all of the cases, we will consider the fibration \eqref{eq:p1}. Using Corollary \ref{cor:cohomEMS}, we study the Leray--Serre spectral sequence $(E_*^{*,*},d_*)$ of \(p_1\) with mod \(p\) coefficients. Specifically, we will look at the $(2m-1)$-th diagonal of the $E_{\infty}$-page. 
\begin{enumerate}[\normalfont(i)]
    \item Let first $p=2$. Then the differential on the $(2m-4)$-th page satisfies 
\[
    d_{2m-4}(\iota_{2m-5}^2) = b_{m-2} \in E_{2m-4}^{2m-4,0} \cong \co^{2m-4}(B(m,1^3);\Z/2).
\]
Since $d_{2m-4}$ is a morphism of $\co^*(B(m,1^3);\Z/2)$-modules, we obtain
\begin{equation*}
    d_{2m-4}(\iota_{2m-5}^2 \otimes x) = b_{m-2}x \neq 0 \in E_{2m-4}^{2m,0} \cong \co^{2m}(B(m,1^3);\Z/2),
\end{equation*}
for any nonzero $x \in \co^{4}(B(m,1^3);\Z/2)$. Therefore, \(E_{\infty}^{4,2m-5} = E_{2m-3}^{4,2m-5} = 0\) and
\[
    E_{2m-3}^{*,0} \cong E_{2m-2}^{*,0} \cong \co^*(B(m,1^3);\Z/2)/(b_{m-2}).
\]
\begin{comment}In particular, this implies
\[
    d_4 \colon~ E_4^{0, 2m-2} \longrightarrow E_4^{4, 2m-5}
\]
is trivial.
\end{comment}
Additionally, by Kudo's transgression theorem, the differential on the $(2m-2)$-th page, satisfies
\begin{equation}\label{eq:diffSq^2}
    d_{2m-2}(Sq^2\iota_{2m-5}^2) = Sq^2b_{m-2} = \binom{m-4}{0}b_1b_{m-2} + \binom{m-3}{1} b_0b_{m-1} = b_1b_{m-2}+b_{m-1},
\end{equation}
which is non-zero in $E_{2m-2}^{*,0}$ because $b_1, \dots, b_m$ are algebraically independent. It also follows that \(E_{\infty}^{2,2m-3} = E_{2m-1}^{2,2m-3} = 0\) and
\[
    E_{2m-1}^{*,0} \cong \co^*(B(m,1^3);\Z/2)/(b_{m-2}, b_{m-1}).
\]
Now the differential on the $(2m-1)$-th page is trivial, because, similarly as before, we have $d_{2m-1}(Sq^3 \iota_{2m-5}^2) = Sq^3b_{m-2} = 0$. As for the differential on $(2m)$-th page, we use Wu's formula and the fact that \(m\) is divisible by \(4\) to compute
\[
    d_{2m}(Sq^4\iota_{2m-5}^2) = Sq^4b_{m-2} = \binom{m-5}{0} b_2b_{m-2} + \binom{m-4}{1} b_1b_{m-1} + \binom{m-3}{2} b_0b_m = b_2b_{m-2}.
\]
However, this is zero in $E_{2m}^{*,0}$, so $$E_{2m}^{0,2m-1} = E_{\infty}^{0,2m-1}= \Z/2 \langle Sq^4 \iota_{2m-4}^2\rangle.$$ Finally, since the only non-trivial term on the $(2m-1)$-th diagonal is $E_{\infty}^{0,2m-1}$, we glean the conclusion of the lemma in this case, see Figure \ref{fig:ss_p1_mod_2-3}(A) for an illustration.

\begin{comment}and
\begin{equation} \label{eq:kernel-in-E[1]}
    \ker\Big(\co^*(B(m,1^3);\Z/2) \longrightarrow \co^*(E[1];\Z/2)\Big) = (b_{m-2}, b_{m-1}, \dots),
\end{equation}
where the dots represent generators of degree larger than $2m$.\end{comment}

\begin{figure}
\centering
\begin{subfigure}{.5\textwidth}
  \centering
  \includegraphics[width=.9\linewidth]{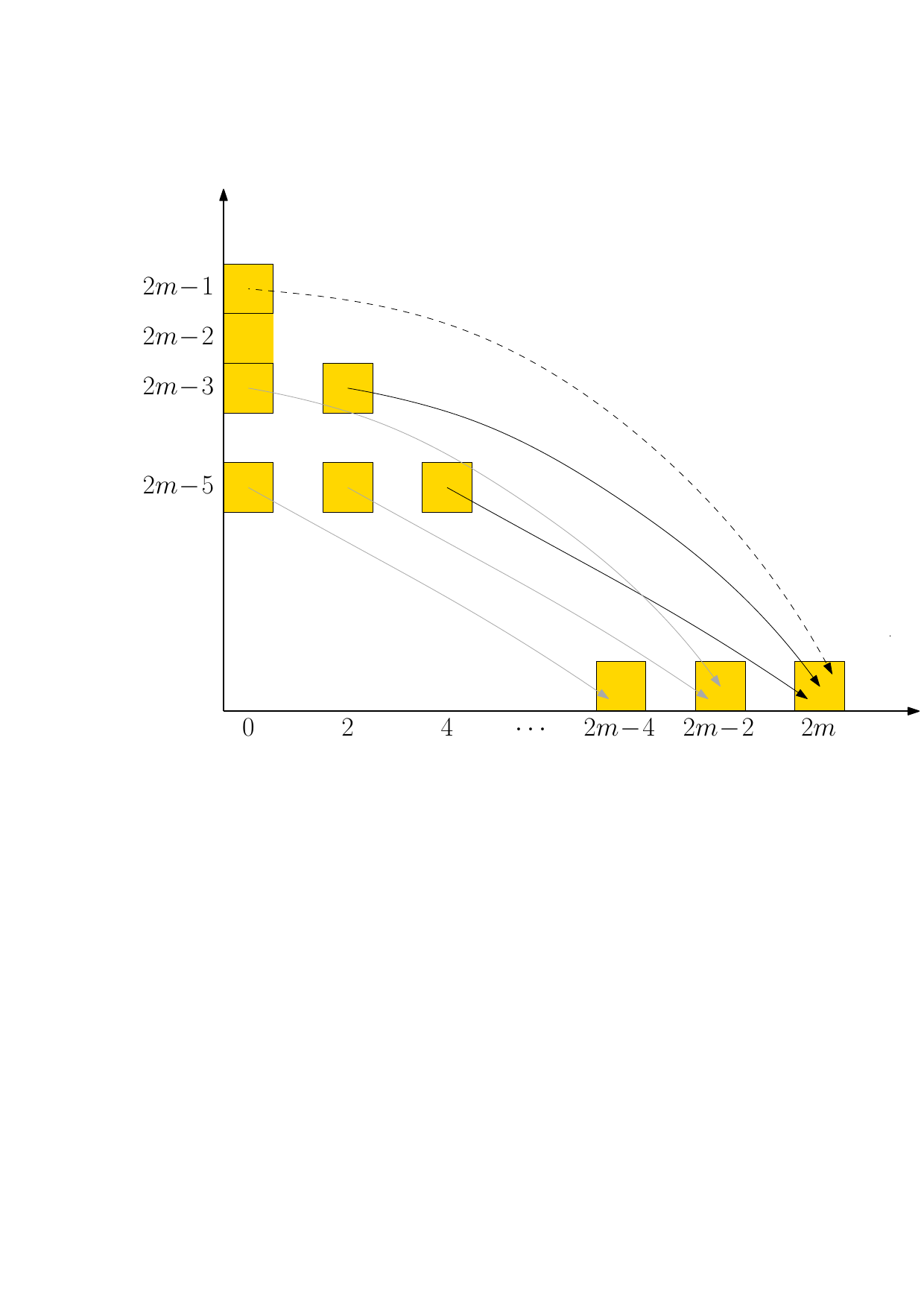}
  \caption{}
  \label{fig:sub1}
\end{subfigure}%
\begin{subfigure}{.5\textwidth}
  \centering
  \includegraphics[width=.9\linewidth]{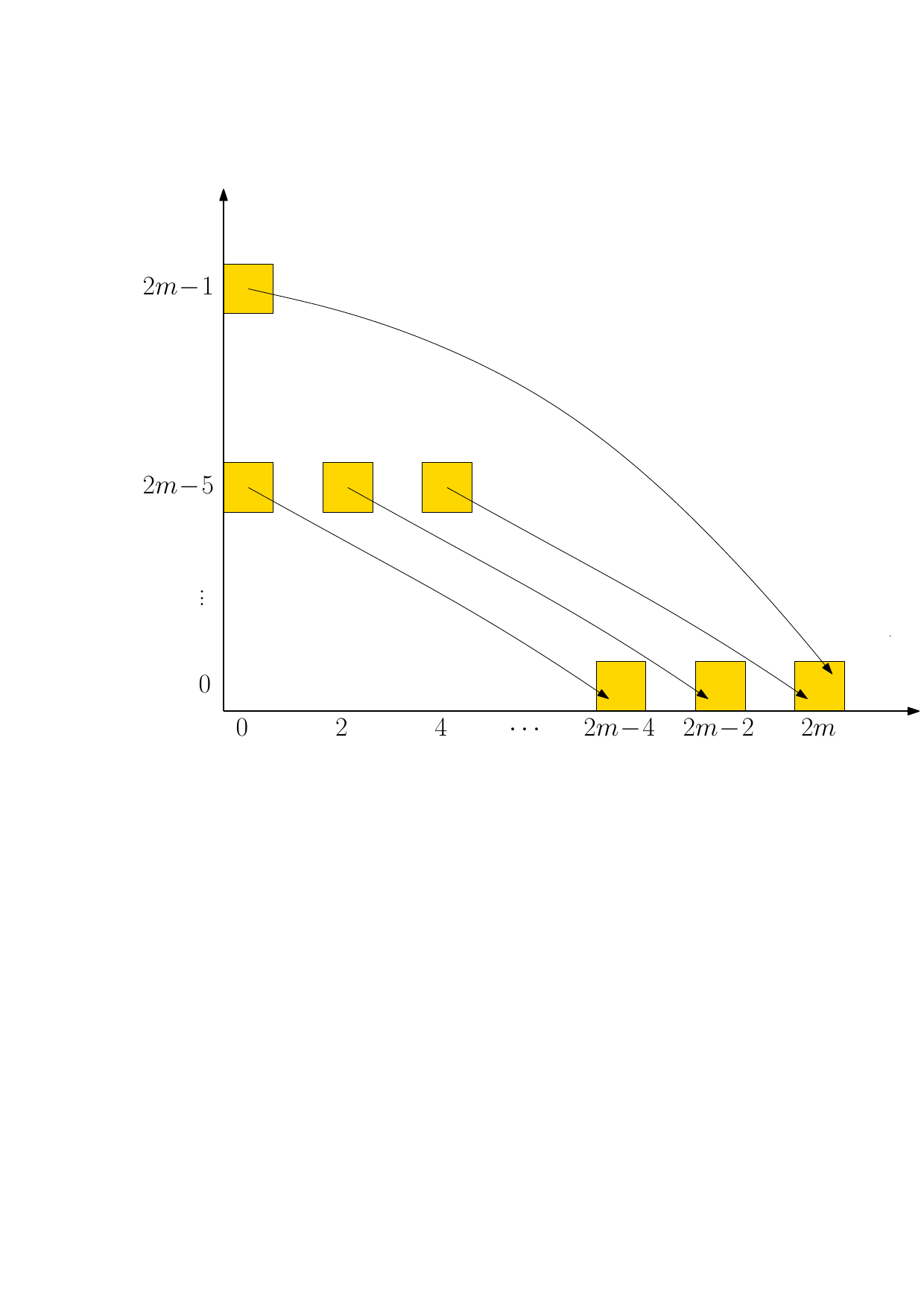}
  \caption{ }
  \label{fig:sub2}
\end{subfigure}
\caption{A portion of the Leray--Serre spectral sequence of homotopy fibration $p_1$ from \eqref{eq:p1} with (A) $\Z/2$, and (B) $\Z/3$ coefficients.}
\label{fig:ss_p1_mod_2-3}
\end{figure}

    \item  Observe that \(\theta \colon \co^*(-;\Z/2) \to \co^*(-;\Z/4)\) satisfies the following formula: 
\[
\text{For any } x \in \co^*(-;\Z/2) \text{ and } y \in \co^*(-;\Z), \text{ we have that }\theta(x \smile \rho_2y) = \theta(x) \smile \rho_4(y).
\]
Using this formula, the case that \(p = 4\) follows from similar arguments to the case that \(p=2\). We omit the details for brevity.

 \item Let $p=3$. Analogous to the above, we study the mod 3 spectral sequence of the same fibration, see Table \ref{table:modpcohomEMS}. Namely, the differential
\[
    d_{2m-4} \colon \co^{2m-5}(K(\Z,2m-5);\Z/3) = E_{2m-5}^{0, 2m-5} \longrightarrow E_{2m-4}^{2m-4, 0} = \co^{2m-4}(B(m,1^3);\Z/3)
\]
is defined by $d(\iota_{2m-5}^3) = b_{m-2}$. Therefore, $E^{2m-5,4}_{\infty} = E^{2m-5,4}_{2m-3} = 0$ and
\[
    E^{*,0}_{2m-3} = \dots = E^{*,0}_{2m} \cong \co^*(B(m,1^3);\Z/3)/(b_{m-2}).
\]
By \cite[Thm.~11.3~\&~Ex.~on~page~429]{borel1953groupes} and Kudo's transgression formula for Steenrod powers, we get
\[
    d_{2m}(P^1_3(\iota_{2m-5}^3)) = P^1_3(b_{m-2}) = b_1^2b_{m-2}-2b_2b_{m-2}-b_1b_{m-1}+mb_m \neq 0 \in E^{*,0}_{2m},
\]
because $b_1b_{m-1}+mb_m = b_1b_{m-1}\notin (b_{m-2})$ due to algebraic independence of $b_i$'s (and the fact that $m$ is divisible by 3).
Therefore, since all of the slots on the $(2m-1)$-th diagonal on the $E_{\infty}$-page are zero, we procure the claim in this case. See Figure \ref{fig:ss_p1_mod_2-3}(B) for an illustration.
\end{enumerate}\end{proof}

Using the above lemma, we may now prove the claimed result:

\begin{proof}[Proof of Lemma \ref{lem:2m-1-coh-of-E[2]}]
Let us look at the mod $p$ Leray--Serre spectral sequence $(E_*^{*,*},d_*)$ of the fibration
\begin{equation} \label{eq:p2}
    K(\Z, 2m-3) \overset{j}\longrightarrow E[2] \xrightarrow{~p_2~} E[1].
\end{equation}
Since $\co^{2m-2}(K(\Z, 2m-3);\Z/p) = 0$ (see Table \ref{table:modpcohomEMS}), we observe that the slot
\[
    E_*^{2m-1,0} \cong \co^{2m-1}(E[1];\Z/p)
\]
is never hit by a differential---see Figure \ref{fig:ss_p2-and-s1-_mod_2_even}(A) for illustration.

Next we consider the ideal in $\co^*(E[1];\Z/p)$ generated by the image of the differential on the $(2m-2)$-th page. Namely, we claim that
\[
    d_{2m-2} \colon \co^{2m-3}(K(\Z,2m-3);\Z/p) \cong E_{2m-2}^{0,2m-3} \longrightarrow E_{2m-2}^{2m-2, 0} \cong \co^{2m-2}(E[1];\Z/p)
\]
satisfies the following rule: 
\begin{equation}\label{eq:diff_iota_{2m-3}}
d_{2m-2}(\iota_{2m-3}) = \begin{cases}
    Sq^3\iota_{2m-5}^2, & \text{ for } p = 2, \\
    \delta_2^2 Sq^2 \iota_{2m-5}^2, & \text{ for } p =4, \\ b_{m-1}^{(3)}, & \text{ for } p =3.
\end{cases}
\end{equation}
Let us complete the proof of the lemma assuming \eqref{eq:diff_iota_{2m-3}} holds.
By Lemma \ref{lem:2m-1-coh-of-E[1]}(i)-(iii) these values are all nonzero. In the case that \(p=3\), by Lemma \ref{lem:2m-1-coh-of-E[1]}(iii), we obtain that \[E_\infty^{2,2m-3} \cong \Z/3 \langle b^{(3)}_{1}\otimes \iota_{2m-3}^3\rangle,\] which is the only non-trivial term on the $(2m-1)$-th diagonal. This completes the proof of the lemma for $p=3$. On the other hand, for $p=2,4$ we have that $d_{2m-2}(x \otimes \iota_{2m-3}) \neq 0$, for any $x \in \co^2(E[1];\Z/p)$, which implies that \(E_\infty^{2,2m-3} \cong 0\). Thus, for $p=2,4$, the only terms on the $(2m-1)$-th diagonal are $(2m-1, 0)$ and $(0,2m-1)$.

First, let $p=2$. By Kudo's transgression theorem, we have that the differential $d_{2m} \colon E_{2m}^{0,2m-1} \to E_{2m}^{2m, 0}$ maps
    \begin{equation} \label{eq:diff-Sq}
        d_{2m} \colon \co^{2m-1}(K(\Z,2m-3);\Z/2) \longrightarrow \co^{2m}(E[1];\Z/2),~Sq^2 \iota^2_{2m-3} \longmapsto Sq^2(Sq^3 \iota^2_{2m-5} \oplus 0).
    \end{equation}
The Adem relation \(Sq^2Sq^3 = Sq^5 + Sq^4Sq^1\) and Lemma \ref{lem:2m-1-coh-of-E[1]}(i) imply
\begin{equation*}
    h^*(Sq^2(Sq^3 \iota^2_{2m-5} \oplus 0)) = Sq^2h^*(Sq^3 \iota^2_{2m-5} \oplus 0) = Sq^5 \iota^2_{2m-5} \neq 0 \in \co^{2m-1}(K(\Z,2m-3);\Z/2),
\end{equation*}
so the differential \eqref{eq:diff-Sq} is non-zero, implying that $E_{\infty}^{2m-1,0} \cong \co^{2m-1}(E[1];\Z/2)$ is the only term on the $(2m-1)$-th diagonal. Thus, the lemma holds for $p=2$.

Similarly, for $p=4$ we have \(d_{2m}(\theta_2^2Sq^2 \iota_{2m-3}^2) = \theta_2^2 Sq^5 \iota_{2m-5}^2 \neq 0\), for \(p =4\), so the lemma holds in this case as well.

\begin{comment}
\begin{lem} \label{lem:E2-iso-E1}
    \textcolor{red}{Let \(E[2]\) be as in \emph{(\ref{eq:mp_diag_cpspan3_even})} and $p=2,3$ or $4$ as in the statement of Lemma \ref{lem:2m-1-coh-of-E[2]}. Then,
    \begin{equation*}
        \co^{2m-1}(E[2];\Z/p) \cong \co^{2m-1}(E[1];\Z/p).
    \end{equation*}}
\end{lem}
Let $(E_*^{*,*},d_*)$ be the mod $p$ Leray--Serre spectral sequence of the fibration
\begin{equation} \label{eq:p2}
    K(\Z, 2m-3) \longrightarrow E[2] \xrightarrow{~p_2~} E[1].
\end{equation}
Since $\co^{2m-2}(K(\Z, 2m-3);\Z/p) = 0$ (see Table \ref{table:modpcohomEMS}), we observe that the slot
\[
    E_*^{2m-1,0} \cong \co^{2m-1}(E[1];\Z/p)
\]
is never hit by a differential---see Figure \ref{fig:ss_p2-and-s1-_mod_2_even}(A) for illustration.

Next we consider the ideal in $\co^*(E[1];\Z/p)$ generated by the image of the differential on the $(2m-2)$-th page. Namely, we claim that
\[
    d_{2m-2} \colon \co^{2m-3}(K(\Z,2m-3);\Z/p) \cong E_{2m-2}^{0,2m-3} \longrightarrow E_{2m-2}^{2m-2, 0} \cong \co^{2m-2}(E[1];\Z/p)
\]
satisfies the following rule: 
\begin{equation}\label{eq:diff_iota_{2m-3}}
d_{2m-2}(\iota_{2m-3}) = \begin{cases}
    Sq^3\iota_{2m-5}^2, & \text{ for } p = 2, \\
    \theta_2^2 Sq^3 \iota_{2m-5}^2, & \text{ for } p =4, \\ b_{m-1}^{(3)}, & \text{ for } p =3.
\end{cases}
\end{equation}
This implies \(E_\infty^{2,2m-3} \cong 0.\)
\end{comment}

\begin{figure}
\centering
\begin{subfigure}{.5\textwidth}
  \centering
  \includegraphics[width=.9\linewidth]{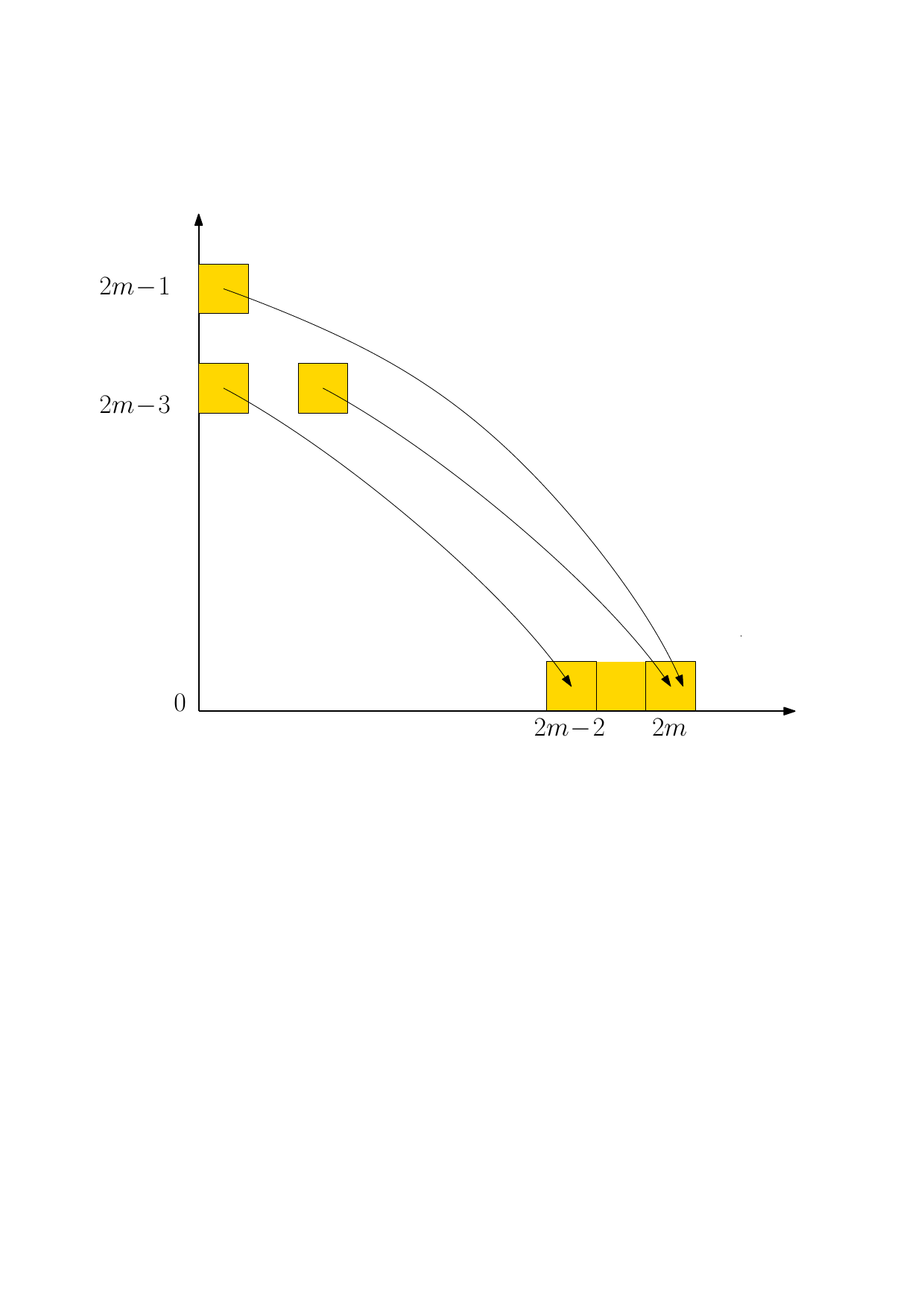}
  \caption{}
  \label{fig:sub1}
\end{subfigure}%
\begin{subfigure}{.5\textwidth}
  \centering
  \includegraphics[width=.9\linewidth]{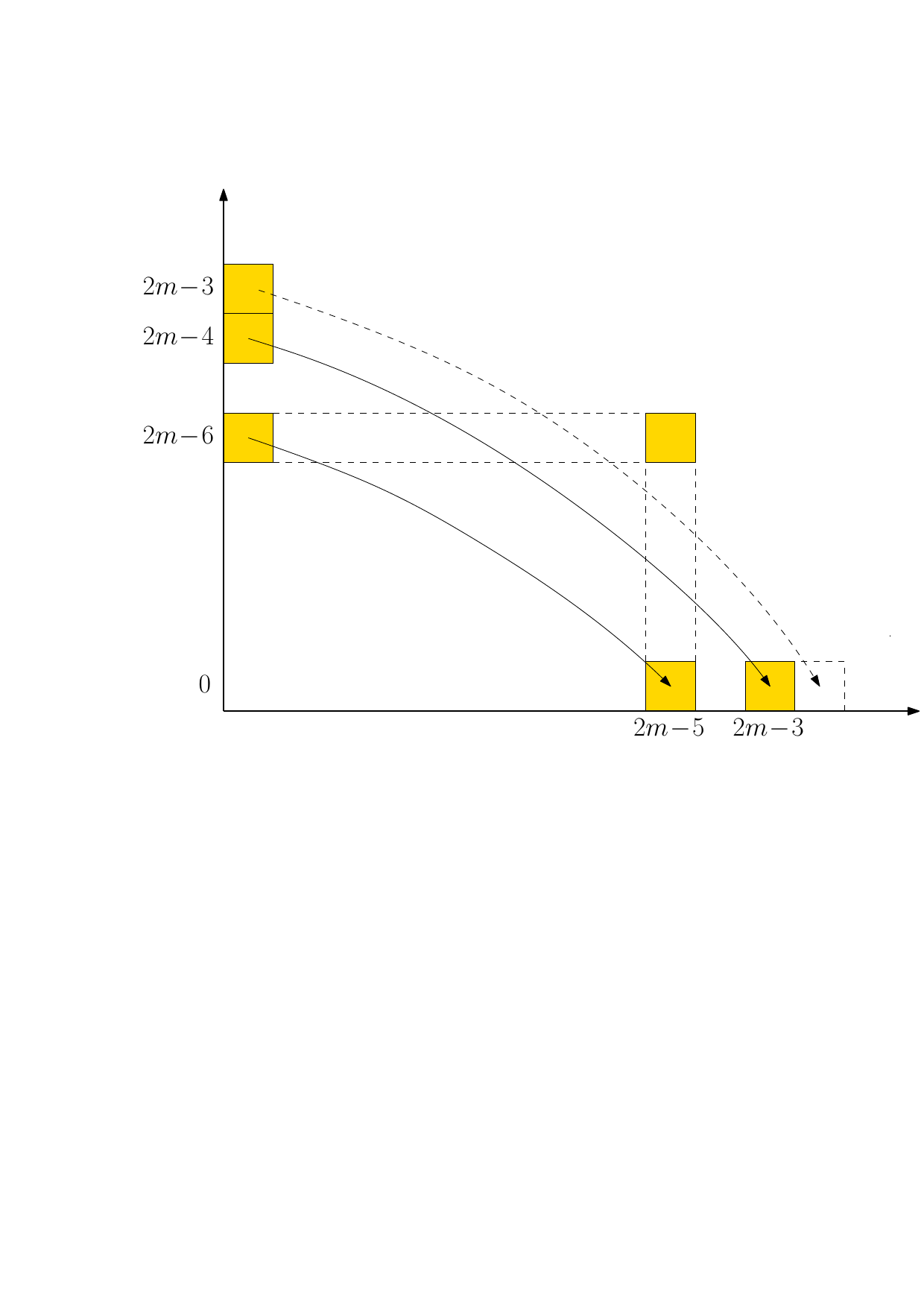}
  \caption{ }
  \label{fig:sub2}
\end{subfigure}
\caption{A portion of the mod 2 Leray--Serre spectral sequence of (A) $p_2$ from \eqref{eq:p2}, and (B) $s_1$ from \eqref{eq:F[1]}, where dashed arrows represent zero differentials.}
\label{fig:ss_p2-and-s1-_mod_2_even}
\end{figure}

\noindent

We now prove the claim (\ref{eq:diff_iota_{2m-3}}). Let \(p = 2\) and consider the mod 2 Leray--Serre spectral sequence \((F_*^{*,*},d_*)\) of the fibration 
\begin{equation}\label{eq:F[1]}
K(\Z,2m-6) \longrightarrow F[1] \overset{s_1}\longrightarrow W(m,3),
\end{equation}
see Figure \ref{fig:ss_p2-and-s1-_mod_2_even}(B) for an illustration. It is straightforward to see that 
\[
d_{2m-5} \colon~ \Z/2 \langle \iota_{2m-6}^2 \rangle \cong F_{2m-5}^{0,2m-6} \longrightarrow F_{2m-5}^{2m-5} \cong \Z/2 \langle f_{2m-5} \rangle,~\iota_{2m-6}^2 \longmapsto f_{2m-5}.
\]
Thus, by Kudo's transgression theorem and the action of the Steenrod squares detailed in Proposition \ref{prop:cohomology_rings}(v), we deduce that $d_{2m-3} \colon~ F_{2m-3}^{0,2m-4} \to F_{2m-3}^{2m-3,0}$ satisfies
\[
d_{2m-3}(Sq^2\iota_{2m-6}^2) = Sq^2 f_{2m-5} = f_{2m-3} \in F_{2m-3}^{2m-3,0} \cong \Z/2 \langle f_{2m-3} \rangle,
\]
thus \(F_\infty^{2m-3,0} = 0\). Passing to the next page, since $m>4$, the only non-zero term on the $(2m-3)$-rd diagonal is \(F_{2m-2}^{0, 2m-3} \cong F_{2}^{0,2m-3}\) However, all the differentials emanating from it land in trivial groups, so this slot remains until the $F_{\infty}$-page. Hence
\[
\co^{2m-3}(F[1];\Z/2) \cong \Z/2\langle Sq^3 \iota_{2m-6}^2\rangle.
\]
Lastly, it follows from the spectral sequence associated to 
\[
F[1] \longrightarrow B(m-3,1^3) \overset{q_1}\longrightarrow E[1],
\]
that the generator \(Sq^3 \iota_{2m-6}^2 \in \co^{2m-3}(F[1];\Z/2)\) transgresses to the nontrivial element \[Sq^3 \iota_{2m-5} \oplus 0 \in \co^{2m-2}(E[1];\Z/2).\] This is because $Sq^3 \iota_{2m-5} \oplus 0 \in \ker(q_1^*) \cap \co^{2m-2}(E[1];\Z/2)$ and \(\co^{2m-3}(B(m-3,1^3);\Z/2) = 0\).

Combining the above yields the claim when \(p = 2.\) Furthermore, the inclusion \(\theta_2^2 \colon \Z/2 \to \Z/4\) induces a map of spectral sequences by which the claim follows for \(p = 4\). Similar arguments yield the claim when \(p =3\), which we omit. This finishes the proof of the claim \eqref{eq:diff_iota_{2m-3}}.
\end{proof}

\paragraph{\textbf{Computing the indeterminacy of lifts.}}
Let \(\pi_{2m-2}\) be as in Proposition \ref{prop:piW(m,r)}. Further let the assumptions of Lemma \ref{lem:2m-1_obs_is_0} be satisfied. Then there is a lift of \((\xi,\ell_1,\ell_2,\ell_3)\) to \(E[3]\) if and only if \(g^*(k_3) = 0\) in \(\co^{2m-1}(X;\pi_{2m-2})/\mathrm{Indet}\) where \(g \colon X \to E[2]\) is any lift of \((\xi,\ell_1,\ell_2,\ell_3)\) to \(E[2]\) and \[\mathrm{Indet} \subseteq \co^{2m-1}(X;\pi_{2m-2})\] denotes the indeterminacy of such lifts. In this section, we calculate this indeterminacy. Note that \(\pi_{2m-2} = \Z/p(m) \oplus \Z/q(m)\) for some numbers \(p(m),q(m)\) that depend on the value of \(m\) mod \(24.\) We write \(\mathrm{Indet}_{\Z/k}\) for the \(\Z/k\) part of the indeterminacy subgroup. 

\begin{lem}
Let all spaces and maps be as in \emph{(\ref{eq:mp_diag_cpspan3_even})}. The indeterminacy \(\mathrm{Indet}\) of lifts of \((\xi,\ell_1,\ell_2,\ell_3)\) to \(E[2]\) satisfies the following. 
\begin{enumerate}[\normalfont(i)]
    \item Let \(m \not\equiv 0~\mathrm{mod}~3\) and \(m \equiv 2,6 ~\mathrm{mod}~8.\) Then \(\mathrm{Indet} =0.\)
    \item Let \(m\equiv 0 ~\mathrm{mod}~3\). Then \(\mathrm{Indet}_{\Z/3} =0.\)
    \item Let \(m\equiv 4 ~\mathrm{mod}~8\). Then \(\mathrm{Indet}_{\Z/2} =0.\)
    \item Let \(m \equiv 0 ~\mathrm{mod}~8\). Then \(\mathrm{Indet}_{\Z/4} = 0.\)
\end{enumerate}
\end{lem}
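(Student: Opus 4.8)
The plan is to compute \(\mathrm{Indet}\) as the subgroup of differences \(g_2^*(k_3) - g_1^*(k_3)\), where \(g_1, g_2 \colon X \to E[2]\) range over all lifts of \(f = (\xi,\ell_1,\ell_2,\ell_3)\), following the principal-action method already used in Corollary \ref{cor:indet_cpspan2_odd} and Lemma \ref{lem:indet_cpspan2_even}. The decisive first step is to note that, by Lemma \ref{lem:2m-1-coh-of-E[2]}, the map \(p_2^*\) is an isomorphism in degree \(2m-1\) for each relevant coefficient group, so \(k_3 = p_2^*\bar{k}_3\) for a unique class \(\bar{k}_3 \in \co^{2m-1}(E[1];\pi_{2m-2})\). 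Consequently \(g^*(k_3) = (p_2 \circ g)^*\bar{k}_3\) depends only on the induced lift \(\bar{g} = p_2 \circ g \colon X \to E[1]\); in particular, varying the lift over the fiber \(K(\Z,2m-3)\) of \(p_2\) leaves \(g^*(k_3)\) unchanged, so this fiber contributes nothing and the entire indeterminacy is governed by the variation of the \(E[1]\)-lift \(\bar{g}\).

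Next I would analyze that variation through the principal action \(\mu_1 \colon K(\Z,2m-5) \times E[1] \to E[1]\) of the fiber of \(p_1\). Two lifts \(\bar{g}_1, \bar{g}_2\) of \(f\) to \(E[1]\) differ by a class \(\beta \in \co^{2m-5}(X;\Z)\), and both admit further lifts to \(E[2]\) exactly when \(\bar{g}_i^*(k_2) = 0\); since the fiber restriction of \(k_2\) along \(K(\Z,2m-5) \hookrightarrow E[1]\) is \(\delta Sq^2 \iota^2_{2m-5}\) (consistent with the secondary obstruction of Lemma \ref{lem:secondaryobs_cpspan3_even}), the admissible \(\beta\) are precisely those with \(\delta Sq^2 \rho_2 \beta = 0\). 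The difference \(\bar{g}_2^*\bar{k}_3 - \bar{g}_1^*\bar{k}_3\) is then the image of \(\beta\) under the cross-term of \(\mu_1^*\bar{k}_3\), namely the fiber restriction of \(\bar{k}_3\) already computed in Lemma \ref{lem:2m-1-coh-of-E[1]}.

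With this set-up the cases split cleanly. Case (i) is vacuous because \(\pi_{2m-2} = 0\) by Table \ref{table:pi2m-2W(m,3)}, so the coefficient group is trivial. For case (ii), Lemma \ref{lem:2m-1-coh-of-E[1]} gives \(\co^{2m-1}(E[1];\Z/3) = 0\) when \(3 \mid m\), so the mod-\(3\) component of \(\bar{k}_3\) vanishes identically; hence every pullback \(\bar{g}^*\bar{k}_3\) is zero mod \(3\) and \(\mathrm{Indet}_{\Z/3} = 0\). In cases (iii) and (iv) the fiber restriction of \(\bar{k}_3\) is \(Sq^4 \iota^2_{2m-5}\) (resp.\ \(\theta_2^2 Sq^4 \iota^2_{2m-5}\)), so the problem reduces to showing that \(Sq^4 \rho_2 \beta\) (resp.\ \(\theta_2^2 Sq^4 \rho_2 \beta\)) vanishes for every admissible \(\beta\), i.e.\ for every \(\beta\) with \(\delta Sq^2 \rho_2 \beta = 0\).

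I expect this final step to be the main obstacle. Since \(\beta\) is integral we automatically have \(Sq^1 \rho_2 \beta = 0\), and the constraint \(\delta Sq^2 \rho_2 \beta = 0\) forces \(Sq^3 \rho_2 \beta = Sq^1 Sq^2 \rho_2 \beta = 0\); one must then deduce \(Sq^4 \rho_2 \beta = 0\) (after applying \(\theta_2^2\) in case (iv)). I would attack this via the secondary cohomology operation attached to the Adem relation controlling \(Sq^4\) on classes annihilated by \(Sq^1\) and \(Sq^3\), using the divisibility \(m \equiv 4, 0 \bmod 8\) to pin down the binomial coefficients exactly as in the Wu-formula computation \(d_{2m}(Sq^4 \iota^2_{2m-5}) = Sq^4 b_{m-2}\) of Lemma \ref{lem:2m-1-coh-of-E[1]}; alternatively, under the hypotheses of Theorem \ref{thm:cpspan3_even} the ambient groups \(\co^{2m-1}(X;\Z/2)\) and \(\co^{2m-1}(X;\Z/4)\) vanish, whence the conclusion is immediate. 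Controlling the residual mixed cross-terms of \(\mu_1^*\bar{k}_3\) and the secondary-operation indeterminacy, so that neither reintroduces a nonzero class, is where the bookkeeping will be most delicate.
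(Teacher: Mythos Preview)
Your approach is more thorough than the paper's, and you have correctly located a real difficulty. For (iii) and (iv) the paper argues only via the principal action of the fiber $K(\Z,2m-3)$ of $p_2$ on $E[2]$: it computes directly that $\ker(\bar{s}^*)\cap\ker(\tau_1)$ vanishes in degree $2m-1$, so varying the $E[2]$-lift over a \emph{fixed} $E[1]$-lift contributes nothing. Your observation that $p_2^*$ is an isomorphism in degree $2m-1$ (so that $k_3=p_2^*\bar{k}_3$) recovers this same conclusion instantly and more conceptually.

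Where you go beyond the paper is in tracking the variation of the $E[1]$-lift itself, via the $K(\Z,2m-5)$-action $\mu_1$; the paper does not address this at all. Your reduction of that contribution to showing $Sq^4\rho_2\beta=0$ whenever $\delta Sq^2\rho_2\beta=0$ is correct, but this implication \emph{fails in general}: $Sq^4$ does not lie in the left ideal of the mod~$2$ Steenrod algebra generated by $Sq^1$ and $Sq^3$, and for example on $X=\Sigma^{2m-9}\mathbb{H}P^2$ the generator $\beta\in\co^{2m-5}(X;\Z)$ satisfies $Sq^2\rho_2\beta=0$ (so $\beta$ is admissible) while $Sq^4\rho_2\beta\neq0$. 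No secondary-operation argument will therefore establish (iii) or (iv) unconditionally.

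Your fallback is the correct resolution: under the hypotheses actually invoked in Lemma~\ref{lem:2m-1_obs_is_0} and Theorem~\ref{thm:cpspan3_even} one has $\co^{2m-1}(X;\Z/2)=0$ (resp.\ $\co^{2m-1}(X;\Z/4)=0$), which kills this extra contribution --- and indeed the whole tertiary obstruction --- outright. In effect the paper's lemma is computing only the last-stage indeterminacy, and the residual piece you have isolated is absorbed by the ambient vanishing hypotheses of the main theorems rather than by any intrinsic relation among Steenrod operations.
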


\begin{proof}
Point (i) follows immediately from Proposition \ref{prop:piW(m,r)} since \(\pi_{2m-2}=0\) in this case. Point (ii) follows immediately from Lemma \ref{lem:2m-1-coh-of-E[2]}. It remains to check points (iii) and (iv). To this end, we have the following homotopy commutative diagram 
\[\begin{tikzcd}
	{K(\Z,2m-3)} && {K(\Z,2m-3)} \\
	{K(\Z,2m-3)\times B(m-3,1^3)} & {K(\Z,2m-3)\times E[2]} & {E[2]} \\
	{B(m-1,1^3)} && {E[1]}
	\arrow[Rightarrow, no head, from=1-1, to=1-3]
	\arrow[from=1-1, to=2-1]
	\arrow[from=1-3, to=2-3]
	\arrow["{1\times q_2}", from=2-1, to=2-2]
	\arrow["\nu", curve={height=-22pt}, from=2-1, to=2-3]
	\arrow["\mathrm{proj}_1", from=2-1, to=3-1]
	\arrow["\mu", from=2-2, to=2-3]
	\arrow["{p_1}", from=2-3, to=3-3]
	\arrow["{\bar{s}}", curve={height=-12pt}, from=3-1, to=2-1]
	\arrow["{q_1}", from=3-1, to=3-3]
\end{tikzcd}\]
and an exact sequence 
\[
\dots \longrightarrow \ \mathrm{ker}(q_2^*)\cap\co^{2m-1}(E[2]) \overset{\nu^*}\longrightarrow \mathrm{ker}(\bar{s}^*)\cap\co^{2m-1}(K(\Z,2m-3)\times B(m-3,1^3)) \overset{\tau_1}\longrightarrow \co^{2m}(E[1]) \longrightarrow \dots,
\]
where all cohomology groups have \(\pi_{2m-2}\) coefficients. By abuse of notation (as previously explained), \(\mu^*(k_3) = 1 \otimes k_3 + \nu^*(k_3)\), where 
\[
\nu^*(k_3) \in \mathrm{ker}(\bar{s}) \cap \mathrm{ker}(\tau_1) \cap \co^{2m-1}(K(\Z,2m-3)\times B(m-3,1^3)).
\]
\noindent{(iii)} Let \(m \equiv 4~ \mathrm{mod}~8\). By direct calculation, one sees that \(\mathrm{ker}(\bar{s})\cap \co^{2m-1}(K(\Z,2m-3)\times B(m-3,1^3;\Z/2)\) is generated by elements of the form
\begin{itemize}
    \item \(\iota_{2m-3}^2 \otimes u\), for all \(u\in \co^{2}(B(m-3,1^3);\Z/2)\); and 
    \item \(Sq^2 \iota_{2m-3}^2 \otimes 1.\)
\end{itemize}
To calculate the action of the relative transgression on these elements, consider the mod \(2\) Leray--Serre spectral sequence \((E_*^{*,*},d_*)\) associated to the homotopy fibration
\[
    K(\Z,2m-3) \longrightarrow E[2] \overset{p_2}\longrightarrow E[1].
\]
Then by \eqref{eq:diff_iota_{2m-3}} we have
\[
d_{2m-2}^{(0,2m-3), (2m-2,0)}(\iota_{2m-3}^2) =Sq^3\iota_{2m-5}^2.
\]
Using the fact that the differentials are derivations, we see that 
\[\mathrm{ker}(\tau_1) \cap \co^{2m}(K(\Z,2m-3)\times B(m-3,1^3))\] contains no linear combination of elements \(\iota_{2m-3}^2 \otimes u\), for \(u \in \co^{2}(B(m-3,1^3);\Z/2)\). 
Moreover, by Kudo's transgression theorem and Wu's formula, we compute 
\[d_{2m}^{(0,2m-1),(2m,0)}(Sq^2 \iota_{2m-3}) = Sq^2Sq^3\iota_{2m-5}^2 = Sq^5\iota_{2m-5}^2 \neq 0 \in E_{2m}^{2m,0}.\]
Hence, \(Sq^2 \iota_{2m-3}^2 \otimes 1\) is not an element of \(\mathrm{ker}(\tau_1) \cap \co^{2m}(K(\Z,2m-3)\times B(m-3,1^3))\). Subsequently, \(\mu^*(k_3) = 1 \otimes k_3\), from which \(\mathrm{Indet}_{\Z/2} = 0\). \\

\noindent{(iv)} Let \(m \equiv 0~\mathrm{mod}~8.\) Similar arguments yield that \(\mathrm{Indet}_{\Z/4} = 0\).

\noindent{(ii)} Finally, let \(m \equiv 0~\mathrm{mod}~3\). Straightforward calculations similar to the above yield that \(\mathrm{Indet}_{\Z/3} = 0.\) We omit these for readability. 
\end{proof}

\subsection{The Generalized Quaternary Obstruction}\label{subsec:quaternaryobs_cpspan3_even}
Finally, the proof of Theorem \ref{thm:cpspan3_even} follows immediately from the proceeding theorem.

\begin{thm}\label{thm:quaternaryobs_even}
Let \(m > 5\) be an even integer and let all spaces and maps be as in \emph{(\ref{eq:mp_diag_cpspan3_even})}. Suppose that the conditions of \emph{Lemma \ref{lem:2m-1_obs_is_0}} are satisfied. Furthermore, if \(\co^{2m}(X;\Z)\) has no $n$-torsion, for $n = 12/\lvert\pi_{2m-2}\rvert \in \Z$ \emph{(see Table \ref{table:pi2m-2W(m,3)})},
    then the generalized quaternary obstruction \[\lobs^{2m}((\xi,\ell_1,\ell_2,\ell_3),q_{m,3},k_4) \subseteq \co^{2m}(X;\Z)\]
    is the singleton set \(\{c_m(\xi- \ell_1 \oplus \ell_2 \oplus \ell_3)\}.\)
\end{thm}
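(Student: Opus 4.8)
The plan is to mimic the strategy used for the lower obstructions (Lemmas \ref{lem:k_2_even} and \ref{lem:F[1]-odd2}), reducing the computation of $k_4$ to a transgression in the fibre $W(m,3)$ via naturality. Under the hypotheses we may invoke Lemma \ref{lem:2m-1_obs_is_0} to produce a lift $g\colon X \to E[3]$ of $f = (\xi,\ell_1,\ell_2,\ell_3)$ along $P \coloneqq p_1\circ p_2\circ p_3 \colon E[3]\to B(m,1^3)$, so that the generalized quaternary obstruction is defined and equals the set of all $g^*(k_4)$ as $g$ ranges over such lifts. First I would form the map of fibrations comparing $q_3$ with $q_{m,3}$, exactly as in \eqref{eq:morphism-q-q_m,r} and the Thomas diagram of Lemma \ref{lem:F[1]-odd2}:
\[
\begin{tikzcd}
F[3] \arrow[r] \arrow[d, "s_3"] & B(m-3,1^3) \arrow[r, "q_3"] \arrow[d, equals] & E[3] \arrow[d, "P"] \\
W(m,3) \arrow[r] & B(m-3,1^3) \arrow[r, "q_{m,3}"] & B(m,1^3),
\end{tikzcd}
\]
where $F[3]$ is the fibre of $q_3$ and $s_3$ is the induced map of fibres. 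Since $k_4$ is the transgression $\tau_{q_3}$ of a generator of $\co^{2m-1}(F[3];\Z)$ and, by Lemma \ref{lem:diffsofq1}, $\tau_{q_{m,3}}(e_{2m-1}) = a_m$, naturality of the transgression yields $P^*(a_m) = \tau_{q_3}\bigl(s_3^*(e_{2m-1})\bigr)$. Everything therefore reduces to computing the map $s_3^*$ in degree $2m-1$.

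The key step, and the main obstacle, is the following analogue of Lemma \ref{lem:F[1]-odd2}: the space $F[3]$ is $(2m-2)$-connected, $\co^{2m-1}(F[3];\Z)\cong\Z$, and $s_3^*$ sends the generator $e_{2m-1}\in\co^{2m-1}(W(m,3);\Z)$ to $n$ times a generator, where $n = 12/\lvert\pi_{2m-2}\rvert$ (a positive integer for every even $m$, by Table \ref{table:pi2m-2W(m,3)}). To prove this I would use the Thomas diagram (Lemma \ref{lem:thomas_diagram}) to identify the fibre of $s_3$ with $\Omega G$, where $G$ is the fibre of $P$ --- that is, the $(2m-2)$-st Postnikov section of $W(m,3)$, with homotopy $\Z,\Z,\pi_{2m-2}$ in degrees $2m-5,2m-3,2m-2$ --- and then run the Leray--Serre spectral sequence of $\Omega G \to F[3] \xrightarrow{~s_3~} W(m,3)$, tracking the $(2m-1)$-diagonal. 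The free class $e_{2m-1}$ survives to $E_\infty^{2m-1,0}$, while the generator of $\co^{2m-1}(F[3])$ arises from a non-trivial extension against the torsion classes of $\co^*(\Omega G)$, so that $e_{2m-1}$ becomes $n$ times that generator. Unlike the cases $r\le 2$, the fibre $\Omega G$ is a genuine three-stage Postnikov system rather than a single Eilenberg--MacLane space, so its cohomology on the critical diagonal mixes Steenrod-type torsion classes (the $2$-primary $Sq^4$- and Bockstein classes of Corollary \ref{cor:cohomEMS} and the $3$-primary $P^1_3$-classes) with the torsion group $\pi_{2m-2}$. Disentangling these, and in particular verifying that the $\pi_{2m-2}$-summand accounts precisely for the part of $12$ not realized by the surviving Steenrod class, requires a case analysis according to $m\bmod 8$ and $m\bmod 3$ (matching the cases of Lemma \ref{lem:2m-1_obs_is_0} and Table \ref{table:pi2m-2W(m,3)}); this is the computational heart of the argument and closely parallels the spectral-sequence bookkeeping in Lemmas \ref{lem:2m-1-coh-of-E[1]} and \ref{lem:2m-1-coh-of-E[2]}.

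Granting the key step, the theorem follows quickly. From $P^*(a_m) = \tau_{q_3}(s_3^*(e_{2m-1})) = n\,\tau_{q_3}(\mathrm{gen}) = n\,k_4$ and $P\circ g \simeq f$ we obtain, for every lift $g$,
\[
n\,g^*(k_4) = g^*P^*(a_m) = f^*(a_m) = c_m(\xi - \ell_1\oplus\ell_2\oplus\ell_3) \in \co^{2m}(X;\Z).
\]
By hypothesis $\co^{2m}(X;\Z)$ has no $n$-torsion, so multiplication by $n$ is injective on $\co^{2m}(X;\Z)$. This has two consequences at once: first, $g^*(k_4)$ is the unique class whose $n$-fold multiple is $c_m(\xi-\ell_1\oplus\ell_2\oplus\ell_3)$, hence is independent of the chosen lift $g$, so that the generalized quaternary obstruction is a single element; second, $g^*(k_4) = 0$ if and only if $c_m(\xi-\ell_1\oplus\ell_2\oplus\ell_3) = 0$. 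Exactly as in Theorem \ref{thm:secondaryob_cpspan2_odd}(ii) and Lemma \ref{lem:k_2_even}, we record this singleton as $\{c_m(\xi-\ell_1\oplus\ell_2\oplus\ell_3)\}$, which completes the proof.
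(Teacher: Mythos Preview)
Your proposal is correct and follows the same overall strategy as the paper: compare $q_3$ with $q_{m,3}$ via naturality of the transgression, reduce to computing the map $s^*\colon \co^{2m-1}(W(m,3);\Z)\to\co^{2m-1}(F[3];\Z)$, and conclude from the no-$n$-torsion hypothesis. Two execution differences are worth noting. First, the paper handles the indeterminacy directly, showing $\mathrm{Indet}=0$ from the vanishing of $\co^{2m-2}(K(\pi_{2m-2},2m-2);\Z)$ and $\co^{2m}(K(\pi_{2m-2},2m-2);\Z)$ (Proposition~\ref{prop:cohomK(Z/p,q)}); your shortcut via injectivity of multiplication by $n$ is valid and arguably cleaner. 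Second, rather than analyzing the composite fibre $\Omega G$ all at once (which would indeed force a case analysis modulo $8$ and $3$), the paper factors $s=s_1\circ s_2\circ s_3$ and runs the Leray--Serre spectral sequence of each $s_i$ separately (Lemma~\ref{lem:(f1f2f3)^*-is-mono}): the first two steps produce $\co^{2m-1}(F[2])\cong\Z\langle e/n\rangle\oplus\pi$ with $n\cdot|\pi|=12$ abstractly, and then the fact that $\co^{2m-1}(F[3])\cong\Z$ forces $\pi\cong\pi_{2m-2}$, giving $n=12/|\pi_{2m-2}|$ without any case analysis. This stepwise argument is both easier to carry out and more uniform than the one-shot approach you sketch.
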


The rest of the section is dedicated to proving Theorem \ref{thm:quaternaryobs_even}.

\vspace{2mm}

\paragraph{\textbf{Computing the indeterminacy of lifts.}}\label{par:indet_cpspan3_even}
Let the assumptions of Theorem \ref{thm:quaternaryobs_even} be satisfied. 
By Lemma \ref{lem:2m-1_obs_is_0}, we know that there is a lift of \((\xi,\ell_1,\ell_2,\ell_3)\) along \(q_{m,3}\) if and only if \(g^*(k_4) = 0\) in \(\co^{2m}(X;\Z)/\mathrm{Indet}\), where \(g \colon X \to E[3]\) is any lift of \((\xi,\ell_1,\ell_2,\ell_3)\) to \(E[3]\) and \(\mathrm{Indet}\) denotes the indeterminacy of such lifts. We first demonstrate that \(\mathrm{Indet} = 0.\)

Write \(K \coloneqq K(\pi_{2m-2}, 2m-2)\), where the group \(\pi_{2m-2} \coloneqq \pi_{2m-2}(W(m,3))\) satisfies Proposition \ref{prop:piW(m,r)}. Then we have the following homotopy commutative diagram

\[\begin{tikzcd}
	K && K \\
	{K\times B(m-3,1^3)} & {K\times E[3]} & {E[3]} \\
	{B(m-3,1^3)} && {E[2]}
	\arrow[Rightarrow, no head, from=1-1, to=1-3]
	\arrow[from=1-1, to=2-1]
	\arrow[from=1-3, to=2-3]
	\arrow["{1\times q_3}", from=2-1, to=2-2]
	\arrow["\nu", curve={height=-18pt}, from=2-1, to=2-3]
	\arrow["\pi", from=2-1, to=3-1]
	\arrow["\mu", from=2-2, to=2-3]
	\arrow["{p_3}", from=2-3, to=3-3]
	\arrow["{\bar{s}}", curve={height=-12pt}, from=3-1, to=2-1]
	\arrow["{q_2}", from=3-1, to=3-3]
\end{tikzcd}\]
and an exact sequence
\[
\dots \longrightarrow \mathrm{ker}(q_3^*)\cap\co^{2m}(E[3])\overset{\nu^*}\longrightarrow \mathrm{ker}(\bar{s}^*)\cap\co^{2m}(K\times B(m-3,1^3))\overset{\tau_1}\longrightarrow \co^{2m+1}(E[2])\longrightarrow \dots,
\]
where all cohomology groups have integer coefficients. By standard arguments, \(\mu^*(k_4) = 1 \otimes k_4 + \nu^*(k_4)\) (by abuse of notation), where
\[
\nu^*(k_4) \in \mathrm{ker}(\bar{s}) \cap \mathrm{ker}(\tau_1) \cap \co^{2m}(K\times E[3]). 
\]
But it follows from Proposition \ref{prop:cohomK(Z/p,q)} that 
\(\co^{2m-2}(K;\Z) = \co^{2m}(K;\Z) = 0\), whence 
\[
    \mathrm{ker}(\bar{s}) \cap \co^{2m}(K\times E[3]) \cong 0.
\]
Subsequently, \(\nu^*(k_4) = 0\)
and 
\begin{equation}\label{eq:mu}
\mu^*(k_4) = 1 \otimes k_4.
\end{equation}

Now let \(g_1,g_2 \colon X \to E[3]\) be two lifts of \((\xi,\ell_1,\ell_2,\ell_3) \colon X \to B(m,1^3)\) in (\ref{eq:mp_diag_cpspan3_even}). Then there exists a cohomology class \(\alpha \colon X\to K\) such that the composition 
\[
X \overset{\Delta}\longrightarrow X \times X \overset{\alpha \times g_1}\longrightarrow K \times E[3]\overset{\mu}\longrightarrow E[3]
\]
is homotopic to \(g_2\). From (\ref{eq:mu}) we see that \(g_2^*(k_4) - g_1^*(k_4) = 0\), whence \(\mathrm{Indet} = 0\), as required.\\

Accordingly, there is a unique quaternary obstruction to lifting \((\xi,\ell_1,\ell_2,\ell_3)\) along \(q_{m,3}\). It remains to identify this unique obstruction. To this end, recall that $k_4 \in \co^{2m}(E[3];\Z)$ is the characteristic element in the fibration
\begin{equation*}
    F[3] \longrightarrow B(m-3, 1^3) \xrightarrow{~q_3~} E[3].
\end{equation*}
Further recall that the fiber $F[3]$ is $(2m-2)$-connected and $\pi_{2m-1}(F[3]) \cong \pi_{2m-1}(W(m,3)) \cong \Z$. Thus, by universal coefficient theorem, we have 
\[
    \co^{2m-1}(F[3];\Z) \cong \Z.
\]
For each $i=1,2,3,$ let us denote by $s_i \colon F[i] \longrightarrow F[i-1]$ the induced map between homotopy fibers of maps $p_i \colon E[i] \longrightarrow E[i-1]$, where $F[0] = W(m,3)$, $E[0] = B(m,1^3)$ and $q_0 = q_{m,3}$, c.f., Lemma \ref{lem:thomas_diagram}. Set \(s \coloneqq s_1 \circ s_2 \circ s_3\) and \(p \coloneqq p_1 \circ p_2 \circ p_3\). 

We also recall
\[
    \co^*(W(m,3);\Z) \cong \Lambda_\Z[e_{2m-5}, e_{2m-3}, e_{2m-1}],
\]
with $\lvert e_i \rvert = i$, see Proposition \ref{prop:cohomology_rings}.

\begin{lem} \label{lem:(f1f2f3)^*-is-mono}
    Let $e \coloneqq e_{2m-1}$ be the generator of $\co^{2m-1}(W(m,3))$. Then, the map $s \colon F[3] \longrightarrow W(m,3)$ induces an inclusion
    \begin{equation*}
        s^* \colon \co^{2m-1}(W(m,3);\Z) \cong \Z \left\langle e \right\rangle \longrightarrow \Z \left\langle e/n\right\rangle \cong \co^{2m-1}(F[3];\Z),~ e \longmapsto e,
    \end{equation*}
    for an integer $n$ which satisfies $\lvert n \rvert = 12/ \lvert \pi_{2m-2} \rvert$.
\end{lem}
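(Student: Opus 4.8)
The plan is to reduce the statement to an intrinsic invariant of $W(m,3)$ and then to extract the integer $n$ from the Moore--Postnikov tower of fibers by a sequence of spectral-sequence arguments. Recall that we have already recorded that $F[3]$ is $(2m-2)$-connected with $\pi_{2m-1}(F[3]) \cong \pi_{2m-1}(W(m,3)) \cong \Z$, so that $\co_{2m-1}(F[3];\Z) \cong \Z$ by Hurewicz and, since $\co_{2m-2}(F[3];\Z) = 0$, the universal coefficient theorem gives $\co^{2m-1}(F[3];\Z) \cong \Z$. Because $s$ induces an isomorphism on $\pi_{2m-1}$ and, by the minimal model $\Lambda_{\mathbb{Q}}(e_{2m-5},e_{2m-3},e_{2m-1})$ used in the proof of Proposition~\ref{prop:piW(m,r)}, one has $W(m,3) \simeq_{\mathbb{Q}} S^{2m-5}\times S^{2m-3}\times S^{2m-1}$ rationally, the map $s^*$ is a rational isomorphism in degree $2m-1$; hence $s^*$ is injective with finite-index image, and only the value of that index is at stake. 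By naturality of the Hurewicz homomorphism together with the connectivity of $F[3]$, dualizing shows this index equals the index of the Hurewicz image $h(\pi_{2m-1}(W(m,3)))$ inside $\co_{2m-1}(W(m,3);\Z) \cong \Z$; I call this integer $n$. Writing the generator of $\co^{2m-1}(F[3];\Z)$ as $e/n$ then records exactly $s^*(e) = n\cdot(e/n)$, in the notation preceding Lemma~\ref{lem:F[1]-odd2}.

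To compute $n$, I would run the Leray--Serre spectral sequences of the three fibrations
\[
F[3] \xrightarrow{~s_3~} F[2] \xrightarrow{~s_2~} F[1] \xrightarrow{~s_1~} W(m,3),
\]
whose fibers, by the Thomas diagram of Lemma~\ref{lem:thomas_diagram} applied to the squares defining $E[i]$, are $K(\pi_{2m-2},2m-3)$, $K(\Z,2m-4)$ and $K(\Z,2m-6)$ respectively. At each stage the base class $e_{2m-1}$ is a permanent cocycle in the bottom row $E_*^{2m-1,0}$ and is hit by no differential for dimensional reasons; thus the question is entirely one of extensions, namely how divisible $e_{2m-1}$ becomes in $\co^{2m-1}(F[i];\Z)$ once it is glued to the torsion classes surviving on the $(2m-1)$-diagonal. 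This is the mechanism already seen in Lemma~\ref{lem:F[1]-odd2}, where a single $\Z/2$ produced the nontrivial extension $\Z\langle e/2\rangle$.

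The potential divisibility is bounded by the fiber torsion. By Table~\ref{table:cohomEMS}, $K(\Z,2m-6)$ contributes $\co^{2m-1}\cong \Z/2\langle \delta Sq^4\iota^2\rangle \oplus \Z/3\langle \delta P_3^1\iota^3\rangle$ and $K(\Z,2m-4)$ contributes $\co^{2m-1}\cong \Z/2\langle\delta Sq^2\iota^2\rangle$, of total order $12$; the third fiber $K(\pi_{2m-2},2m-3)$ contributes nothing in degree $2m-1$ but carries $\co^{2m-2}\cong\pi_{2m-2}$, which is what modulates the final answer. Whether each torsion class survives (and hence forces $e_{2m-1}$ to be divisible) is decided by the transgression differentials, computed via Kudo's transgression theorem, the Adem relation $Sq^2Sq^3 = Sq^5 + Sq^4Sq^1$, the Wu formula and the Steenrod action of Proposition~\ref{prop:cohomology_rings}(v)--(vi), together with the binomial coefficients depending on $m \bmod 8$ and $m \bmod 3$. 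These are precisely the computations carried out in Lemmata~\ref{lem:2m-1-coh-of-E[1]} and \ref{lem:2m-1-coh-of-E[2]}, which I would reuse essentially verbatim; assembling them over the three stages yields $n = 12/\lvert\pi_{2m-2}\rvert$, consistent with Table~\ref{table:pi2m-2W(m,3)} (indeed $\lvert\pi_{2m-2}\rvert\cdot n = 12$ in every case occurring for even $m$).

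The hard part will be the bookkeeping of extensions across the three spectral sequences simultaneously: unlike the single-stage situation of Lemma~\ref{lem:F[1]-odd2}, here the surviving $\Z/2$'s from the first two fibers and the $2$- and $3$-primary parts of $\pi_{2m-2}$ interact, so one must verify case by case (on $m \bmod 8$ and $m \bmod 3$) that the nontrivial extensions compose to give exactly the index $12/\lvert\pi_{2m-2}\rvert$, rather than a proper divisor or multiple. I expect the cleanest way to detect the nontrivial extensions is to pass to $\Z/p$ coefficients and read off the relevant Bocksteins using the mod-$p$ computations of Proposition~\ref{prop:modpcohomEMS}, so that the prime $2$ and the prime $3$ are treated independently and their respective indices multiply to $n$.
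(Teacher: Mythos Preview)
Your reduction of the index $n$ to the Hurewicz index of $\pi_{2m-1}(W(m,3))$ in $\co_{2m-1}(W(m,3);\Z)$ is correct and is a nice intrinsic reformulation that the paper does not make explicit. The overall spectral-sequence strategy along the tower $F[3]\to F[2]\to F[1]\to W(m,3)$ is also the same as the paper's. However, your proposed execution diverges from the paper's in a way that makes it substantially harder, and there is a gap in how you plan to resolve the extensions.

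First, your emphasis on computing transgressions via Kudo's theorem, Wu's formula, and the Steenrod action is misplaced for the fibrations $s_1$ and $s_2$. In the paper's argument, the $(2m-4)$-, $(2m-2)$-, and $2m$-th diagonals of both spectral sequences are \emph{zero} for dimensional reasons (since $m>4$ is even, no sum of a base degree of $W(m,3)$ or $F[1]$ and a fiber degree of $K(\Z,2m-6)$ or $K(\Z,2m-4)$ lands on those diagonals). So all the torsion in $E_2^{0,2m-1}$ survives to $E_\infty$ automatically; there are no differentials to compute, and Lemmata~\ref{lem:2m-1-coh-of-E[1]} and~\ref{lem:2m-1-coh-of-E[2]} (which concern $E[i]$, not $F[i]$) are not needed. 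Consequently your sentence ``whether each torsion class survives \dots\ is decided by the transgression differentials'' does not reflect what actually happens.

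Second, and more importantly, survival to $E_\infty$ does not by itself force divisibility: a surviving $\Z/2$ on the $(2m-1)$-diagonal could equally well produce a summand $\Z\oplus\Z/2$ rather than the nontrivial extension $\Z\langle e/2\rangle$. You acknowledge this and propose to resolve the extensions prime by prime via Bocksteins and a case split on $m\bmod 8$ and $m\bmod 3$, but you do not say how; this is exactly the hard step. The paper sidesteps it entirely with a backward argument: after enumerating all possible extensions of $\Z/6$ by $\Z$ at stage~1 and of $\Z/2$ by the result at stage~2, it observes that the stage-3 differential $d_{2m-1}\colon \pi_{2m-2}\hookrightarrow \co^{2m-1}(F[2])$ must be injective with cokernel $\co^{2m-1}(F[3])\cong\Z$. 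Since $\Ext^1_\Z(\Z,-)=0$, this forces $\co^{2m-1}(F[2])\cong \Z\oplus\pi_{2m-2}$, which, compared against the enumerated form $\Z\langle e/n\rangle\oplus\pi$ with $|\pi|\cdot n=12$, pins down $\pi=\pi_{2m-2}$ and $n=12/|\pi_{2m-2}|$ without any case analysis or Bockstein computation.
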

\begin{proof}
All cohomology groups throughout the proof have integer coefficients. By Lemma \ref{lem:thomas_diagram}, we have the following homotopy commutative Thomas diagram associated to (\(\ddagger\)).   
    \begin{equation*} \label{eq:thomas-F[1]-k=3}
\begin{tikzcd}
	{K(\Z,2m-6)} & {*} & {K(\Z,2m-5)} \\
	{F[1]} & {B(m -3,1^3)} & {E[1]} \\
	{W(m,3)} & {B(m-3,1^3)} & {B(m,1^3)}
	\arrow[from=1-1, to=1-2]
	\arrow[from=1-1, to=2-1]
	\arrow["{\mathrm{ev}_1}", from=1-2, to=1-3]
	\arrow[from=1-2, to=2-2]
	\arrow[from=1-3, to=2-3]
	\arrow[from=2-1, to=2-2]
	\arrow["{s_1}"', from=2-1, to=3-1]
	\arrow["{q_1}", from=2-2, to=2-3]
	\arrow["{\mathrm{id}~}"', Rightarrow, no head, from=2-2, to=3-2]
	\arrow["{(\ddagger)}"{description}, draw=none, from=2-2, to=3-3]
	\arrow["{p_1}", from=2-3, to=3-3]
	\arrow[from=3-1, to=3-2]
	\arrow["{q_{m,3}}", from=3-2, to=3-3]
\end{tikzcd}
\end{equation*}
    Let us consider the Leray--Serre spectral sequence for the homotopy fibration $s_1$ converging to $\co^*(F[1])$. Since $m > 4$ even, there are no nontrivial terms on the $(2m-4)$-th, $(2m-2)$-th and $2m$-th diagonals. Therefore, $\co^{2m}(F[1]) = 0$, and the only two non-trivial terms on the $(2m-1)$-th diagonal fit into the short exact sequence
    \begin{equation*}
        0 \longrightarrow \Z\langle e \rangle \xrightarrow{~s_1^*~} \co^{2m-1}(F[1]) \longrightarrow \Z/6 \longrightarrow 0.
    \end{equation*}
    For a prime number $p$, we have that $\Z\langle e \rangle \oplus \Z/p$ and
    \begin{equation*}
        0 \longrightarrow \Z \langle e \rangle \longrightarrow \Z \langle e/p \rangle \xrightarrow{e/p \mapsto i} \Z/p \longrightarrow 0, \hspace{3mm} \text{for}~ i=1, \dots, p-1,
    \end{equation*}
    are all possible extensions of $\Z/p$ by $\Z$. Therefore, since Ext-functor is additive on the first entry, $\text{Ext}^1_{\Z}(\Z/6, \Z) \cong \text{Ext}^1_{\Z}(\Z/2, \Z)\! \times \! \text{Ext}^1_{\Z}(\Z/3, \Z)$, we obtain
    \begin{equation} \label{eq:2m-1_co_F[1]}
        \co^{2m-1}(F[1]) \in \left\{\Z\left\langle  e/6 \right\rangle, \Z\left\langle  e/3 \right\rangle \oplus \Z/2, \Z\left\langle e/2 \right\rangle \oplus \Z/3, \Z\langle e \rangle \oplus \Z/6 \right\}.
    \end{equation}
Moreover, in each case, $s_1^*$ is the standard inclusion from $\Z\langle e \rangle$.
    
Again by Lemma \ref{lem:thomas_diagram}, we have the Thomas diagram associated to (\(\ddagger\ddagger\)).
    \begin{equation*} \label{eq:thomas-F[2]-k=3}
\begin{tikzcd}
	{K(\Z,2m-4)} & {*} & {K(\Z,2m-3)} \\
	{F[2]} & {B(m -3,1^3)} & {E[2]} \\
	{F[1]} & {B(m-3,1^3)} & {E[1]}
	\arrow[from=1-1, to=1-2]
	\arrow[from=1-1, to=2-1]
	\arrow["{\mathrm{ev}_1}", from=1-2, to=1-3]
	\arrow[from=1-2, to=2-2]
	\arrow[from=1-3, to=2-3]
	\arrow[from=2-1, to=2-2]
	\arrow["{s_2}"', from=2-1, to=3-1]
	\arrow["{q_2}", from=2-2, to=2-3]
	\arrow["{~\mathrm{id}}"', Rightarrow, no head, from=2-2, to=3-2]
	\arrow["{(\ddagger\ddagger)}"{description}, draw=none, from=2-2, to=3-3]
	\arrow["{p_2}", from=2-3, to=3-3]
	\arrow[from=3-1, to=3-2]
	\arrow["{q_1}"', from=3-2, to=3-3]
\end{tikzcd}\
\end{equation*}
We may now consider the Leray--Serre spectral sequence of $s_2$. Here also, the $(2m-2)$-th and $2m$-th diagonals are trivial, so $\co^{2m}(F[2]) = 0$. Additionally, the only two non-zero terms on the $(2m-1)$-th diagonal fit into a short exact sequence 
    \begin{equation*}
        0 \longrightarrow \co^{2m-1}(F[1]) \xrightarrow{~s_2^*~} \co^{2m-1}(F[2]) \longrightarrow \Z/2 \longrightarrow 0.
    \end{equation*}
    Checking all $\co^{2m-1}(F[1])$ as enumerated in \eqref{eq:2m-1_co_F[1]} and using the fact that functor $\text{Ext}^1_\Z(\Z/2, \cdot)$ respects products, we have the following possibilities:
    \begin{itemize}
        \item if $\co^{2m-1}(F[1])=\Z\left\langle e/6 \right\rangle$, then 
        \[
            \co^{2m-1}(F[2])\in\left\{\Z\left\langle e/6 \right\rangle \oplus \Z/2, \Z\left\langle e/12 \right\rangle\right\};
        \]
        \item if $\co^{2m-1}(F[1])= \Z\left\langle e/3 \right\rangle \oplus \Z/2$, then 
        \[
            \co^{2m-1}(F[2])\in \left\{\Z\left\langle e/3 \right\rangle \oplus \Z/2^{\oplus 2}, \Z\left\langle e/6 \right\rangle \oplus \Z/2, \Z\left\langle e/3 \right\rangle \oplus \Z/4\right\};
        \]
        \item if $\co^{2m-1}(F[1])=\Z\left\langle e/2 \right\rangle \oplus \Z/3$, then 
        \[
            \co^{2m-1}(F[2])\in  \left\{\Z\left\langle e/2 \right\rangle \oplus \Z/6, \Z\left\langle e/4 \right\rangle \oplus \Z/3\right\};
        \]
        \item if $\co^{2m-1}(F[1])=\Z\langle e \rangle \oplus \Z/6$, then 
        \[
            \co^{2m-1}(F[2])\in \left\{\Z\left\langle e/2 \right\rangle \oplus \Z/6, \Z\langle e \rangle \oplus \Z/12\right\}.
        \]
    \end{itemize}
    We may abstract the following general form for $\co^{2m-1}(F[2])$. Namely, we have
    \begin{equation} \label{eq:2m-1_co_F[2]}
        \co^{2m-1}(F[2]) \cong \Z \left\langle  e/n \right\rangle \oplus \pi,
    \end{equation}
    where $\pi \in \{0, \Z/2, \Z/2^{\oplus 2}, \Z/3, \Z/4, \Z/6, \Z/12\}$ and $n \coloneqq 12/\lvert\pi\rvert$. We note that the free part of $\co^{2m-1}(F[1])$ includes in the standard way via $s_2^*$ into $\Z \left\langle e/n \right\rangle$.

Next consider the Thomas diagram associated to (\(\ddagger \ddagger \ddagger\)).
    \begin{equation*} \label{eq:thomas-F[3]-k=3}
\begin{tikzcd}
	{K(\pi_{2m-2},2m-3)} & {*} & {K(\pi_{2m-2},2m-2)} \\
	{F[3]} & {B(m -3,1^3)} & {E[3]} \\
	{F[2]} & {B(m-3,1^3)} & {E[2]}
	\arrow[from=1-1, to=1-2]
	\arrow[from=1-1, to=2-1]
	\arrow["{\mathrm{ev}_1}", from=1-2, to=1-3]
	\arrow[from=1-2, to=2-2]
	\arrow[from=1-3, to=2-3]
	\arrow[from=2-1, to=2-2]
	\arrow["{s_3}"', from=2-1, to=3-1]
	\arrow["{q_3}", from=2-2, to=2-3]
	\arrow["{~\mathrm{id}}"', Rightarrow, no head, from=2-2, to=3-2]
	\arrow["{(\ddagger\ddagger\ddagger)}"{description}, draw=none, from=2-2, to=3-3]
	\arrow["{p_3}", from=2-3, to=3-3]
	\arrow[from=3-1, to=3-2]
	\arrow["{q_2}"', from=3-2, to=3-3]
\end{tikzcd}
\end{equation*}
    We may consider the Leray--Serre spectral sequence $(E_*^{*,*},d_*)$ of $s_3$. Recall that, due to Table \ref{table:cohomK(G,q)}, we have 
    \begin{equation*}
        E_{2m-1}^{0,2m-2} \cong \co^{2m-2}(K(\pi_{2m-2}, 2m-3)) \cong \text{Ext}^1_\Z(\pi_{2m-2},\Z) \cong \pi_{2m-2},
    \end{equation*}
    since $\pi_{2m-2}$ is a finite torsion group described in Proposition \ref{prop:piW(m,r)}. The only differentials mapping into the $(2m-1)$-th diagonal are
    \begin{equation*}
        d_{2m-1} \colon~ \pi_{2m-2} \cong E_{2m-1}^{0,2m-2} \longrightarrow E_{2m-1}^{0,2m-1} \cong \co^{2m-1}(F[2])
    \end{equation*}
    on $(2m-1)$-th page, while there are no non-trivial differentials mapping out from the $(2m-1)$-th diagonal. Recall that $F[3]$ is $(2m-2)$-connected and 
    \[
        \pi_{2m-1}(F[3]) \cong \pi_{2m-1}(W(m,3)) \cong \Z,
    \]
    so the universal coefficients theorem gives that $\co^{2m-1}(F[3]) \cong \Z$.
    Therefore, $d_{2m-1}$ is a monomorphism and since there is no extension problem on the $(2m-1)$-th diagonal, we conclude that
    \[
        s_3^* \colon~ \co^{2m-1}(F[2]) \longrightarrow \co^{2m-1}(F[3]) \cong \co^{2m-1}(F[2])/\pi_{2m-2}
    \]
    is the quotient map.
    However, $\Ext_{\Z}^1(\Z,\cdot)=0$ implies that $\co^{2m-1}(F[2]) \cong \Z \oplus \pi_{2m-2}$. Finally, it follows from (\ref{eq:2m-1_co_F[2]}) that
    \begin{equation*}
        \co^{2m-1}(F[3]) \cong \Z \left\langle e/n \right\rangle,
    \end{equation*}
    where again $n = 12/\lvert\pi_{2m-2}\rvert$. This finishes the proof.
\end{proof}

Next consider the map between homotopy fibrations
\begin{equation*}
    \begin{tikzcd}
        F[3] \arrow[r] \arrow[d, "s"]\ & B(m-3, 1^3) \arrow[d, equal] \arrow[r, "q_3"] & E[3] \arrow[d, "p"]\\
        W(m,3) \arrow[r] & B(m-3, 1^3) \arrow[r, "q_{m,3}"] & B(m,1^3)
    \end{tikzcd}
\end{equation*}
and the induced map between Leray--Serre spectral sequences. Expressly, by Lemma~\ref{lem:diffsofq1} (see also the notation therein), we have the following commutative diagram of the corresponding transgressions,
\begin{equation*}
        \begin{tikzcd}
            \co^{2m-1}(W(m,3)) \arrow[r, "s^*"] \arrow[d, "\tau_{q_{m,3}}"] & \co^{2m-1}(F[3]) \arrow[d, "\tau_{q_3}"] & e_{2m-1} \arrow[r, mapsto] \arrow[d, mapsto] & e_{2m-1} = n \cdot \frac{1}{n} e_{2m-1} \arrow[d, mapsto]\\
            \co^{2m}(B(m,1^3)) \arrow[r, "p^*"] & \co^{2m}(E[3]) & a_{m} \arrow[r, mapsto] & p^*(a_m)=nk_3.
        \end{tikzcd}
    \end{equation*}
    Therefore, every homotopy lift
    \begin{equation*}
        \begin{tikzcd}
            {} & & E[3] \arrow[d, "p"]\\
            X \arrow[urr, dashed, "g"] \arrow[rr, "f"] & & B(m,1^3),
        \end{tikzcd}
    \end{equation*}
     of the classifying map $f = (\xi, \ell_1, \ell_2, \ell_3)$ over $p$ satisfies 
    \[
        ng^*(k_3) = (p \circ g)^*(a_m)= f^*(a_m) = c_m(\xi-\ell_1\oplus\ell_2\oplus\ell_3) \in \co^{2m}(X).
    \]
    This completes the proof of Theorem \ref{thm:quaternaryobs_even}.

\section{Three complex line bundles: $m$ odd}\label{sec:thmpspan3_odd}

In this section we prove Theorem \ref{thm:cpspan3_odd}.\\

Let \(X\) be a \(2m\)-dimensional CW complex with \(m \geq 5\) odd. Fix complex vector bundles \(\xi \colon X\to BU(m)\) and \(\ell_1,\ell_2,\ell_3 \colon X \to BU(1)\). We wish to lift the classifying map \((\xi,\ell_1,\ell_2,\ell_3) \colon X \to B(m,1^3)\) of (\ref{eq:fibq_1}) along the fibration \(W(m,3)\longrightarrow B(m-3,1^3)\overset{q_{m,3}}\longrightarrow B(m,1^3)\) defined in (\ref{eq:xilinesclassifyingmap}). We use Proposition \ref{prop:piW(m,r)} to construct the following Moore--Postnikov factorization of \(q_{m,3}\).
\begin{equation}\label{eq:mp_diag_cpspan3_odd}
\begin{tikzcd}[ampersand replacement=\&]
	\&\& {E[5]} \\
	\&\& {E[4]} \& {K(\pi_{2m-1},2m)} \\
	\&\& {E[3]} \& {K(\pi_{2m-2},2m-1)} \\
	\&\& {E[2]} \& {K(\Z \oplus \Z/2,2m-2)} \\
	\&\& {E[1]} \& {K(\Z/2,2m-3)} \\
	{W(m,3)} \& {B(m-3,1^3)} \& {B(m,1^3)} \& {K(\Z,2m-4)} \\
	\\
	\&\& X
	\arrow["{p_5}", from=1-3, to=2-3]
	\arrow["{k_5}", from=2-3, to=2-4]
	\arrow["{p_4}", from=2-3, to=3-3]
	\arrow["{k_4}", from=3-3, to=3-4]
	\arrow["{p_3}", from=3-3, to=4-3]
	\arrow["{k_3}", from=4-3, to=4-4]
	\arrow["{p_2}", from=4-3, to=5-3]
	\arrow["{k_2}", from=5-3, to=5-4]
	\arrow["{p_1}", from=5-3, to=6-3]
	\arrow[from=6-1, to=6-2]
	\arrow["{q_4}"{description}, from=6-2, to=2-3]
	\arrow["{q_3}"{description}, from=6-2, to=3-3]
	\arrow["{q_2}"{description}, from=6-2, to=4-3]
	\arrow["{q_1}"{description}, from=6-2, to=5-3]
	\arrow["{q_{m,3}}", from=6-2, to=6-3]
	\arrow["{k_1}", from=6-3, to=6-4]
	\arrow["{(\xi,\ell_1,\ell_2,\ell_3)}"', from=8-3, to=6-3]
\end{tikzcd}
\end{equation}

Here each \(k_i\) is the characteristic class in the fibration \(q_{i-1}\) (with \(q_0 = q_{m,3}\)); and each \(E[i]\) is the homotopy fiber of \(k_i\). Lastly, recall that \(\pi_{2m-2} = \pi_{2m-2}(W(m,3))\) satisfies Table \ref{table:pi2m-2W(m,3)} and 
\begin{equation*}
    \pi_{2m-1} = \pi_{2m-1}(W(m,3)) \cong \begin{cases}
        \Z \oplus \Z/2 & m=5,\\
        \Z & \text{otherwise.}
    \end{cases}
\end{equation*}

\begin{rem}
In spectral sequence calculations throughout this section, we will use that \(m >5.\) Minor modifications to the arguments yield the same statements when \(m = 5\), but we omit these details for readability.
\end{rem}

\subsection{The Primary Obstruction}\label{subsec:primaryobs_cpspan3_odd}

By Proposition \ref{prop:primaryobs}, we obtain the following result. 
\begin{lem}\label{lem:primaryob_cpspan3_odd}
Let all spaces and maps be as in \emph{(\ref{eq:mp_diag_cpspan3_odd})}. Then the primary obstruction 
\[
\lobs^{2m-4}((\xi,\ell_1,\ell_2,\ell_3),q_{m,3},k_1) \subseteq \co^{2m-4}(X;\Z)
\]
to lifting \((\xi,\ell_1,\ell_2,\ell_3)\) along \(q_{m,3}\) is the singleton set \(\{c_{m-2}(\xi - \ell_1 \oplus \ell_2 \oplus \ell_3)\}.\)
\end{lem}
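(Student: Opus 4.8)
The plan is to observe that this lemma is precisely the specialization of Proposition \ref{prop:primaryobs} to the case $r = 3$, so the proof amounts to checking that the setup of (\ref{eq:mp_diag_cpspan3_odd}) matches the general lifting problem and then invoking that proposition. First I would recall that, by Proposition \ref{prop:fiberqmk}, the map $q_{m,3}$ is homotopy equivalent to a fibration with fiber the complex Stiefel manifold $W(m,3)$, and that $W(m,3)$ is $2(m-3)$-connected with first nonzero homotopy group $\pi_{2(m-3)+1}(W(m,3)) \cong \Z$ by Proposition \ref{prop:piW(m,r)}. This is exactly why the bottom stage of the Moore--Postnikov tower (\ref{eq:mp_diag_cpspan3_odd}) has target $K(\Z, 2m-4)$, and it guarantees that the characteristic class $k_1$ of the fibration $q_{m,3}$ lives in $\co^{2m-4}(B(m,1^3);\Z)$, matching the degree $2(m-r+1) = 2m-4$ appearing in Proposition \ref{prop:primaryobs}.

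Next I would simply quote Proposition \ref{prop:primaryobs}: with $f = (\xi, \ell_1, \ell_2, \ell_3)$ the classifying map of (\ref{eq:xilinesclassifyingmap}) and $r = 3$, the primary obstruction to the lifting problem (\ref{eq:lifting-problem-q_m,r}) is the singleton
\[
\lobs^{2(m-2)}(f, q_{m,3}, k_1) = \{ c_{m-2}(\xi - \ell_1 \oplus \ell_2 \oplus \ell_3) \} \subseteq \co^{2m-4}(X;\Z),
\]
since $m - r + 1 = m - 2$. This is verbatim the assertion of the lemma.

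There is essentially no obstacle here: all of the substantive work was carried out in Section \ref{sec:primaryobs}, where Lemma \ref{lem:diffsofq1} identifies the relevant transgression in the Leray--Serre spectral sequence of $q_{m,3}$ as the virtual Chern class $a_{m-2} = c_{m-2}(\gamma_m \times 1^{\times 3} - 1 \times \gamma_1^{\times 3})$, and naturality of the transgression under $f$ pulls this back to $c_{m-2}(\xi - \ell_1 \oplus \ell_2 \oplus \ell_3)$. The only remaining point is the indexing bookkeeping---that $r = 3$ produces the cohomological degree $2m-4$ and the Chern index $m-2$---which is immediate. Thus the lemma follows at once, exactly as Lemmata \ref{lem:primaryob_cpspan2_odd}, \ref{lem:primaryob_cpspan2_even}, and \ref{lem:primarybos_cpspan3_even} do in the parallel cases.
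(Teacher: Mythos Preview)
Your proposal is correct and takes exactly the same approach as the paper: the paper simply states ``By Proposition \ref{prop:primaryobs}, we obtain the following result'' and nothing more. Your elaboration of the indexing bookkeeping ($r=3$ gives degree $2m-4$ and Chern index $m-2$) is accurate and matches the parallel cases you cite.
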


\subsection{The Generalized Secondary Obstruction}\label{subsec:secondaryobs_cpspan3_odd}

By similar arguments to the proof of Lemma \ref{lem:secondaryob_cpspan2_even}, one demonstrates the following.

\begin{lem}\label{lem:secondaryobs_cpspan3_odd}
Let all spaces and maps be as in \emph{(\ref{eq:mp_diag_cpspan3_odd})}. Suppose that \(\co^{2m-3}(X;\Z/2) = Sq^2 \rho_2 \co^{2m-5}(X;\Z)\). Then zero is an element of the generalized secondary obstruction 
\[
\lobs^{2m-3}((\xi,\ell_1,\ell_2,\ell_3),q_{m,3},k_2) \subseteq \co^{2m-3}(X;\Z/2)
\]
to lifting \((\xi,\ell_1,\ell_2,\ell_3)\) along \(q_{m,3}\).
\end{lem}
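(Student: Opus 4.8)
The plan is to follow the proof of Lemma \ref{lem:secondaryob_cpspan2_even} \emph{mutatis mutandis}, performing the degree shifts $2m-1 \mapsto 2m-3$ and $2m-3 \mapsto 2m-5$ and replacing the Chern class $b_{m-1}$ by $b_{m-2}$; the role played there by the parity of $m$ (even) will here be played by the parity of $m$ (odd). Since the primary obstruction $c_{m-2}(\xi - \ell_1 \oplus \ell_2 \oplus \ell_3)$ is assumed to vanish by Lemma \ref{lem:primaryob_cpspan3_odd}, a lift $g \colon X \to E[1]$ exists, and it suffices to identify the class $k_2 \in \co^{2m-3}(E[1];\Z/2)$ and to compute the indeterminacy of the pullbacks $g^*(k_2)$.

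First I would compute $\co^{2m-3}(E[1];\Z/2)$ via the mod $2$ Leray--Serre spectral sequence $(E_*^{*,*},d_*)$ of the fibration $K(\Z,2m-5) \to E[1] \xrightarrow{p_1} B(m,1^3)$ appearing in \eqref{eq:mp_diag_cpspan3_odd}, whose fibre is $\Omega K(\Z,2m-4) \simeq K(\Z,2m-5)$. By construction of the tower the transgression satisfies $d_{2m-4}(\iota_{2m-5}^2) = b_{m-2}$, and since the differential is a derivation, $d_{2m-4}(\iota_{2m-5}^2 \otimes u) = b_{m-2}u \neq 0$ for every nonzero $u \in \co^2(B(m,1^3);\Z/2)$, killing the slot $E_\infty^{2,2m-5}$. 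By Kudo's transgression theorem and the Wu formula of Proposition \ref{prop:cohomology_rings}(vi),
\[
d_{2m-2}(Sq^2 \iota_{2m-5}^2) = Sq^2 b_{m-2} = b_1 b_{m-2} + (m-3)b_{m-1},
\]
which vanishes in $E_{2m-2}^{2m-2,0} \cong \co^{2m-2}(B(m,1^3);\Z/2)/(b_{m-2})$ precisely because $m$ is odd, so $m-3$ is even and $b_1 b_{m-2} \in (b_{m-2})$. Consulting Table \ref{table:modpcohomEMS} for the fibre cohomology, the only surviving term on the $(2m-3)$-rd diagonal is then $E_\infty^{0,2m-3} \cong \Z/2\langle Sq^2\iota_{2m-5}^2\rangle$, whence $\co^{2m-3}(E[1];\Z/2) \cong \Z/2 \langle Sq^2\iota_{2m-5}^2\rangle$. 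Since $\co^{2m-3}(B(m-3,1^3);\Z/2) = 0$ by Proposition \ref{prop:cohomology_rings}(iii) ($2m-3$ being odd), the transgression in the fibration $F[1] \to B(m-3,1^3) \xrightarrow{q_1} E[1]$ is surjective, and therefore $k_2 = Sq^2 \iota_{2m-5}^2$.

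Next I would compute the indeterminacy exactly as in Corollary \ref{cor:indet_cpspan2_odd} and Lemma \ref{lem:secondaryob_cpspan2_even}, using the principal action map $\mu \colon K(\Z,2m-5) \times E[1] \to E[1]$ and the exact sequence involving the relative transgression $\tau_1$. A direct check shows that $\mathrm{ker}(\bar{s}^*) \cap \co^{2m-3}(K(\Z,2m-5)\times B(m-3,1^3);\Z/2)$ is generated by the classes $\iota_{2m-5}^2 \otimes u$ (for $u \in \co^2(B(m-3,1^3);\Z/2)$) together with $Sq^2\iota_{2m-5}^2 \otimes 1$. The former transgress nontrivially to $b_{m-2}u \neq 0$ by the $d_{2m-4}$ computation above, so they do not lie in $\mathrm{ker}(\tau_1)$; the latter does lie in $\mathrm{ker}(\tau_1)$ by the vanishing of $Sq^2 b_{m-2}$. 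Hence $\mu^*(k_2) = 1 \otimes k_2 + Sq^2 \iota_{2m-5}^2 \otimes 1$, and comparing two lifts $g_1,g_2$ differing by a class $\alpha \in \co^{2m-5}(X;\Z)$ gives $g_2^*(k_2) - g_1^*(k_2) = Sq^2\rho_2\alpha$, so that $\mathrm{Indet} = Sq^2\rho_2 \co^{2m-5}(X;\Z)$.

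Finally, the secondary obstruction set $\lobs^{2m-3}((\xi,\ell_1,\ell_2,\ell_3),q_{m,3},k_2)$ is a single coset of $\mathrm{Indet}$ in $\co^{2m-3}(X;\Z/2)$; under the hypothesis $\co^{2m-3}(X;\Z/2) \cong Sq^2\rho_2 \co^{2m-5}(X;\Z) = \mathrm{Indet}$ this coset is the whole group, so in particular it contains $0$, as required. The only genuinely new ingredient beyond transcribing the even two-line argument is the parity bookkeeping, and I expect the step demanding the most care to be verifying that the Wu-formula term $(m-3)b_{m-1}$ vanishes mod $2$ and that $b_1 b_{m-2}$ dies in the quotient: this is exactly where the hypothesis that $m$ is odd enters, in perfect analogy with the use of $m$ even in \eqref{eq:ss_d_(2m)_triv}.
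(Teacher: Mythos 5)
Your proposal is correct and is precisely the argument the paper intends: its proof of this lemma consists of the single remark that it follows ``by similar arguments to the proof of Lemma \ref{lem:secondaryob_cpspan2_even},'' and you have carried out exactly that adaptation, including the key parity point that $Sq^2 b_{m-2} = b_1 b_{m-2} + (m-3)b_{m-1}$ dies in the quotient by $(b_{m-2})$ because $m$ is odd (the mirror image of \eqref{eq:ss_d_(2m)_triv}, and consistent with the complementary computation in Lemma \ref{lem:2m-1-coh-of-E[1]}, Case I). Your identification $k_2 = Sq^2\iota_{2m-5}^2$ also agrees with item (e) in the paper's later proof of Lemma \ref{lem:tau_q_1}.
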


\subsection{The Generalized Tertiary Obstruction}\label{subsec:tertiaryobs_cpspan3_odd}

 Appropriate modifications to the proof of Theorem \ref{thm:tertiaryob_cpspan2_even} yield the following result. 

\begin{lem}\label{lem:tertiaryobs_cpspan3_odd}
Let all spaces and maps be as in \emph{(\ref{eq:mp_diag_cpspan3_odd})}. Suppose that the classifying map \[(\xi,\ell_1,\ell_2,\ell_3) \colon X \longrightarrow B(m,1^3)\] satisfies \(c_{m-2}(\xi - \ell_1\oplus \ell_2\oplus\ell_3)=0\). Further suppose that  
\begin{itemize}
    \item \(\co^{2m-3}(X;\Z/2) = Sq^2 \rho_2 \co^{2m-5}(X;\Z)\); 
    \item \(\co^{2m-2}(X;\Z/2) = Sq^2\co^{2m-2}(X;\Z/2)\), and
    \item \(\co^{2m-2}(X;\Z)\) has no \(2\)-torsion. 
\end{itemize}
Then the generalized tertiary obstruction 
\[
\lobs^{2m-2}((\xi,\ell_1,\ell_2,\ell_3), q_{m,3},k_3) \subseteq \co^{2m-2}(X;\Z\oplus \Z/2)\]
to lifting \((\xi,\ell_1,\ell_2,\ell_3)\) along \(q_{m,3}\) vanishes if and only if \(c_{m-1}(\xi - \ell_1 \oplus \ell_2 \oplus \ell_3) = 0\). 
\end{lem}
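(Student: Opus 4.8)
The plan is to transcribe the proof of Theorem~\ref{thm:tertiaryob_cpspan2_even}, lowering every cohomological degree by two and interchanging the roles of the two parities of $m$. This is justified because the bottom three stages of the Moore--Postnikov tower \eqref{eq:mp_diag_cpspan3_odd} for $q_{m,3}$---with fibers $K(\Z,2m-4)$, $K(\Z/2,2m-3)$ and $K(\Z\oplus\Z/2,2m-2)$---agree, after a shift by two, with the entire tower \eqref{eq:mp_diag_cpspan2_even} for $q_{m,2}$, whose fibers are $K(\Z,2m-2)$, $K(\Z/2,2m-1)$ and $K(\Z\oplus\Z/2,2m)$. First I would invoke Lemma~\ref{lem:primaryob_cpspan3_odd} and Lemma~\ref{lem:secondaryobs_cpspan3_odd}: the hypotheses $c_{m-2}(\xi-\ell_1\oplus\ell_2\oplus\ell_3)=0$ and $\co^{2m-3}(X;\Z/2)\cong Sq^2\rho_2\co^{2m-5}(X;\Z)$ produce a lift $g\colon X\to E[2]$ of $(\xi,\ell_1,\ell_2,\ell_3)$. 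The generalized tertiary obstruction is then the coset of $g^*(k_3)$ in $\co^{2m-2}(X;\Z\oplus\Z/2)$ modulo the indeterminacy of such lifts, which splits as $\mathrm{Indet}_{\Z}\oplus\mathrm{Indet}_{\Z/2}$.

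Next I would compute $k_3\in\co^{2m-2}(E[2];\Z\oplus\Z/2)$ and its image under the principal action map $\mu$, following the chain of arguments running from Lemma~\ref{lem:secondaryob_cpspan2_even} through Corollary~\ref{cor:3}. Running the mod $2$ Leray--Serre spectral sequence of $K(\Z,2m-5)\to E[1]\to B(m,1^3)$ in place of \eqref{eq:p_1_even2} identifies the secondary invariant as $k_2=Sq^2\iota_{2m-5}^2\in\co^{2m-3}(E[1];\Z/2)$, the decisive point being that Wu's formula (Proposition~\ref{prop:cohomology_rings}(vi)) gives $Sq^2 b_{m-2}=b_1 b_{m-2}+(m-3)\,b_{m-1}$, which vanishes modulo the ideal $(b_{m-2})$ precisely because $m$ is \emph{odd}; this is the odd-rank analogue of \eqref{eq:ss_d_(2m)_triv}. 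Feeding $k_2$ into the analogue of Lemma~\ref{lem:kernelsbar} shows that $\mathrm{ker}(\overline{s}^*)\cap\mathrm{ker}(\tau_1)$ is generated by $(0,\,Sq^2\kappa_{2m-4}\otimes 1)$, whence $\mu^*(k_3)=1\otimes k_3+(0,\,Sq^2\kappa_{2m-4}\otimes 1)$. Transcribing Lemma~\ref{lem:indet_cpspan2_even}, this yields $\mathrm{Indet}_{\Z}=0$ (as $\co^{2m-2}(X;\Z)$ has no $2$-torsion) and $\mathrm{Indet}_{\Z/2}=Sq^2\co^{2m-4}(X;\Z/2)$; the hypothesis $\co^{2m-2}(X;\Z/2)\cong Sq^2\co^{2m-4}(X;\Z/2)$ then forces the mod $2$ component of $g^*(k_3)$ to vanish for every choice of lift.

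It remains to identify the integral component of the now-unique obstruction, for which I would repeat the transgression argument of Lemma~\ref{lem:k_3_odd} (and hence of Lemma~\ref{lem:k_2_even}). Using the Thomas diagrams for the successive fiber inclusions $s_1\colon F[1]\to W(m,3)$ and $s_2\colon F[2]\to F[1]$, I would compute $\co^{2m-3}(F[2])$ and show that $s^*=(s_1\circ s_2)^*\colon\co^{2m-3}(W(m,3))\cong\Z\langle e_{2m-3}\rangle\to\co^{2m-3}(F[2])\cong\Z\langle e_{2m-3}/2\rangle$ is the standard inclusion, the factor of $2$ arising from the extension contributed by the killed stage $\pi_{2m-4}(W(m,3))\cong\Z/2$, exactly as in Lemma~\ref{lem:F[1]-odd2}. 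Combined with naturality of transgression and Lemma~\ref{lem:diffsofq1}, which gives $\tau_{q_{m,3}}(e_{2m-3})=a_{m-1}$, this produces $p^*(a_{m-1})=2\cdot(\text{integral part of }k_3)$, where $p=p_1\circ p_2$. Consequently $2\,g^*(k_3)_{\Z}=f^*(a_{m-1})=c_{m-1}(\xi-\ell_1\oplus\ell_2\oplus\ell_3)$, and since $\co^{2m-2}(X;\Z)$ has no $2$-torsion, the integral part of $g^*(k_3)$ vanishes if and only if $c_{m-1}(\xi-\ell_1\oplus\ell_2\oplus\ell_3)=0$. Together with the vanishing of the indeterminacy this proves the lemma.

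The step I expect to be the main obstacle is the bookkeeping in the second paragraph: one must verify that the whole cascade of spectral-sequence differentials from the even two-line case survives the degree shift, and in particular that the crucial parity cancellation now rests on $m$ being odd rather than even, with $b_{m-2}$ (not $b_{m-1}$) playing the role of the primary image and the binomial coefficient $(m-3)$ controlling the relevant differential. A misplaced index here would corrupt both the indeterminacy computation and the final identification of the obstruction. The extension calculation giving the factor of $2$ in $\co^{2m-3}(F[2])$ is the other delicate point, though it is a routine translation of Lemma~\ref{lem:F[1]-odd2}.
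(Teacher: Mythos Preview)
Your proposal is correct and follows precisely the approach the paper intends: the paper's own proof of this lemma consists entirely of the sentence ``Appropriate modifications to the proof of Theorem~\ref{thm:tertiaryob_cpspan2_even} yield the following result,'' and you have spelled out exactly those modifications (the degree shift by two, the parity swap so that $(m-3)b_{m-1}\equiv 0$ replaces $(m-2)b_m\equiv 0$, and the corresponding substitutions $b_{m-2}$ for $b_{m-1}$, $\kappa_{2m-4}$ for $\kappa_{2m-2}$, etc.). One small remark: $\mathrm{Indet}_{\Z}=0$ holds unconditionally by the analogue of Lemma~\ref{lem:indet_cpspan2_even}, not because of the no-2-torsion hypothesis---that hypothesis is used only in the final step to deduce vanishing of $g^*(k_3)_{\Z}$ from $2g^*(k_3)_{\Z}=c_{m-1}(\xi-\ell_1\oplus\ell_2\oplus\ell_3)$.
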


\subsection{The Generalized Quaternary Obstruction}\label{subsec:quaternaryobs_cpspan3_odd}

\begin{thm}\label{thm:quaternaryobs_cpspan3_odd}
Let \(m\geq 5\) be an odd integer and let all spaces and maps be as in \emph{(\ref{eq:mp_diag_cpspan3_odd})} Suppose that the conditions of \emph{Lemma \ref{lem:tertiaryobs_cpspan3_odd}} are satisfied and that \(c_{m-1}(\xi - \ell_1 \oplus \ell_2 \oplus \ell_3) =0.\) Further suppose that 
\begin{itemize}
    \item \(\co^{2m-1}(X;\Z)\) is finite abelian with no \(2\)-torsion; and 
    \item \(\co^{2m}(X;\Z)\) is torsion free.
\end{itemize}
Then the generalized quaternary obstruction
\[
\lobs^{2m-1}((\xi,\ell_1,\ell_2,\ell_3),q_{m,3},k_4) \subseteq \co^{2m-1}(X;\pi_{2m-2})
\]
 is the singleton set \(\{0\}\), where \(\pi_{2m-2} = \pi_{2m-2}(W(m,3))\) satisfies \emph{Proposition \ref{prop:piW(m,r)}}.
\end{thm}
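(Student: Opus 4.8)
The plan is to show that $g^{*}(k_4)$ vanishes for \emph{every} lift $g \colon X \to E[3]$ of $(\xi,\ell_1,\ell_2,\ell_3)$; together with the existence of at least one such lift this forces the obstruction set to equal $\{0\}$. First I would record that a lift to $E[3]$ exists: by Lemma \ref{lem:primaryob_cpspan3_odd} the primary obstruction is $c_{m-2}(\xi-\ell_1\oplus\ell_2\oplus\ell_3)$, which vanishes by the hypotheses of Lemma \ref{lem:tertiaryobs_cpspan3_odd}; the secondary obstruction contains $0$ by Lemma \ref{lem:secondaryobs_cpspan3_odd}; and by Lemma \ref{lem:tertiaryobs_cpspan3_odd} the tertiary obstruction vanishes precisely because $c_{m-1}(\xi-\ell_1\oplus\ell_2\oplus\ell_3)=0$ is assumed. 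Thus $\lobs^{2m-1}((\xi,\ell_1,\ell_2,\ell_3),q_{m,3},k_4)=\bigcup_g g^{*}(k_4)$ is nonempty and lies in $\co^{2m-1}(X;\pi_{2m-2})$. By Proposition \ref{prop:piW(m,r)}(ii) the group $\pi_{2m-2}$ is finite torsion, and by Table \ref{table:pi2m-2W(m,3)} it splits as $\pi_{2m-2}\cong(\pi_{2m-2})_{(2)}\oplus(\pi_{2m-2})_{(3)}$ with $(\pi_{2m-2})_{(2)}\in\{0,\Z/2,\Z/4,\Z/8\}$ and $(\pi_{2m-2})_{(3)}\in\{0,\Z/3\}$. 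Since cohomology commutes with finite sums of coefficients, this splits both $k_4$ and the obstruction, and I would treat each prime separately.

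For the $2$-primary part, the strategy is to show the obstruction lands in the zero group. In the Bockstein long exact sequence for $0\to\Z\xrightarrow{2^{a}}\Z\to\Z/2^{a}\to 0$, the hypothesis that $\co^{2m}(X;\Z)$ is torsion free makes multiplication by $2^{a}$ injective in degree $2m$, so the connecting map out of $\co^{2m-1}(X;\Z/2^{a})$ is zero; and since $\co^{2m-1}(X;\Z)$ is finite with no $2$-torsion, multiplication by $2^{a}$ is an automorphism there, whence $\co^{2m-1}(X;\Z/2^{a})\cong\co^{2m-1}(X;\Z)\otimes\Z/2^{a}=0$ for every $a\ge 1$. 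Consequently $g^{*}(k_4^{(2)})\in\co^{2m-1}(X;(\pi_{2m-2})_{(2)})=0$ for all lifts $g$, irrespective of the value of $k_4^{(2)}$.

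For the $3$-primary part I would instead show that the invariant vanishes at the source, i.e.\ $k_4^{(3)}=0\in\co^{2m-1}(E[3];\Z/3)$, so that $g^{*}(k_4^{(3)})=0$ automatically. If $m\not\equiv 0~\mathrm{mod}~3$ then $(\pi_{2m-2})_{(3)}=0$ and there is nothing to prove, so assume $m\equiv 0~\mathrm{mod}~3$. The key observation is that the $\Z/2$-stages of the tower (\ref{eq:mp_diag_cpspan3_odd}) are invisible to mod $3$ cohomology: the fiber $K(\Z/2,2m-4)$ of $p_2$ and the $\Z/2$-summand $K(\Z/2,2m-3)$ of the fiber of $p_3$ have trivial reduced mod $3$ cohomology, so their Leray--Serre spectral sequences collapse with $\Z/3$ coefficients. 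Hence $\co^{*}(E[2];\Z/3)\cong\co^{*}(E[1];\Z/3)$, and the mod $3$ spectral sequence of the residual fibration $K(\Z,2m-3)\to E[3]\to E[2]$ agrees, in total degree $\le 2m-1$, with that of the even-case fibration $K(\Z,2m-3)\to E[2]\to E[1]$ from Lemma \ref{lem:2m-1-coh-of-E[2]}. Since $E[1]$ here is the same homotopy fiber of $k_1=c_{m-2}$ as in the even tower, and the relevant transgressions $d_{2m-4}(\iota_{2m-5}^{3})=b_{m-2}^{(3)}$, $d_{2m}(P_3^1\iota_{2m-5}^{3})=P_3^1 b_{m-2}^{(3)}\neq 0$ (via Kudo's theorem and Proposition \ref{prop:cohomology_rings}(vi)), together with $d_{2m-2}(\iota_{2m-3}^{3})=\rho_3 c_{m-1}=b_{m-1}^{(3)}$, depend only on $m\equiv 0~\mathrm{mod}~3$ and not on the parity of $m$, the $p=3$ computation (Case III) in the proof of Lemma \ref{lem:2m-1-coh-of-E[1]} carries over verbatim to yield $\co^{2m-1}(E[3];\Z/3)=0$. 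Thus $k_4^{(3)}=0$.

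Combining the two primes gives $g^{*}(k_4)=0$ for every lift $g$, so the quaternary obstruction set equals $\{0\}$. The main obstacle I anticipate is making the mod $3$ reduction to the even case fully rigorous: one must verify that inserting the mod-$3$-acyclic $\Z/2$-stages genuinely leaves the mod $3$ spectral sequence unchanged (in particular that the integral $k$-invariant transgression $\iota_{2m-3}\mapsto\rho_3 c_{m-1}$ matches across the two towers), and one must lean on Proposition \ref{prop:piW(m,r)}(ii) to exclude a free summand in $\pi_{2m-2}$, which would otherwise place part of the obstruction in the possibly-nonzero group $\co^{2m-1}(X;\Z)$ and break the $2$-primary vanishing argument.
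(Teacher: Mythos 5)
Your proposal is correct and follows essentially the same route as the paper: the $2$-primary part of the obstruction dies because the hypotheses on $\co^{2m-1}(X;\Z)$ and $\co^{2m}(X;\Z)$ force $\co^{2m-1}(X;\Z/2^a)=0$ (the paper invokes the universal coefficient theorem where you run the Bockstein sequence, which is the same computation), and the $3$-primary part dies because $\co^{2m-1}(E[3];\Z/3)=0$, established by a mod $3$ Leray--Serre argument in which the $K(\Z/2,\cdot)$-stages are invisible and everything reduces to the transgressions $\iota_{2m-5}^3\mapsto b_{m-2}$, $\iota_{2m-3}^3\mapsto b_{m-1}$ and $P^1_3\iota_{2m-5}^3\mapsto P^1_3 b_{m-2}\neq 0$, exactly as in Case III of Lemma \ref{lem:2m-1-coh-of-E[1]}. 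The only substantive difference is an economy on your side: since the $2$-primary coefficient group already vanishes, you can skip both the explicit determination of $k_4$ mod $2$, $4$ and $8$ (the paper's Lemmas \ref{lem:tau_q_1}--\ref{lem:tau_q_3}) and the indeterminacy computation, both of which the paper carries out but neither of which is needed for the vanishing statement itself.
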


In the rest of the section, we provide the details to the proof of Theorem \ref{thm:quaternaryobs_cpspan3_odd}.

\vspace{2mm}

\paragraph{\textbf{Determining the Moore--Postnikov invariants.}}

We compute the degree (\(2m-1\)) Moore--Postnikov invariant \(k_4 \in \co^{2m-1}(E[3];\pi_{2m-2})\) as follows. 

\begin{lem}\label{lem:k_4_cpspan3_odd}
Let \(k_4 \in \co^{2m-1}(E[3];\pi_{2m-2})\) be as in \emph{(\ref{eq:mp_diag_cpspan3_odd})}. Then the mod \(p\) part of \(k_4\) is equal to 
\begin{equation*}
\begin{cases}
Sq^2\iota^2_{2m-3} & p = 2 \text{ and } m \equiv 1,5~\mathrm{mod}~8; \\
\theta_2^2Sq^2\iota^2_{2m-3} & p = 4 \text{ and } m \equiv 7~\mathrm{mod}~8;\\
\theta_2^3Sq^2\iota^2_{2m-3} & p = 8 \text{ and } m \equiv 3~\mathrm{mod}~8; \\
0 & p = 3 \text{ and } m \equiv 0 ~\mathrm{mod}~ 3;
\end{cases}
\end{equation*}
where \(\iota^2_{2m-3} \in \co^{2m-3}(K(\Z, 2m-3);\Z/2)\) is the fundamental class and \(\theta_2^k\) is the homomorphism induced by the inclusion \(\Z/2 \xhookrightarrow{} \Z/2^k\), see \emph{Notation \ref{notate:cohomologyops}}.
\end{lem}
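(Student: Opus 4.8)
The proof will follow the template of the even-rank analogue (Proposition \ref{prop:2m-1-mpinvariant}, together with Lemmata \ref{lem:2m-1-coh-of-E[2]} and \ref{lem:2m-1-coh-of-E[1]}), suitably modified for the additional stage in the tower \eqref{eq:mp_diag_cpspan3_odd}. By definition $k_4$ is the characteristic class of the fibration
\[
    F[3] \longrightarrow B(m-3,1^3) \xrightarrow{~q_3~} E[3],
\]
that is, the transgression of the fundamental class of the fiber. Since $q_3$ is a $(2m-2)$-equivalence, $F[3]$ is $(2m-3)$-connected with $\pi_{2m-2}(F[3]) \cong \pi_{2m-2}(W(m,3)) = \pi_{2m-2}$, so the Hurewicz and universal coefficient theorems give $\co^{2m-2}(F[3];\Z/p) \cong \Hom(\pi_{2m-2},\Z/p)$. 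In the mod $p$ Leray--Serre spectral sequence of $q_3$ the transgression
\[
    d_{2m-1} \colon \co^{2m-2}(F[3];\Z/p) \cong E_{2m-1}^{0,2m-2} \longrightarrow E_{2m-1}^{2m-1,0} \cong \co^{2m-1}(E[3];\Z/p)
\]
is surjective, because $\co^{2m-1}(B(m-3,1^3);\Z/p)=0$ by Proposition \ref{prop:cohomology_rings}(iii). Hence the mod $p$ part of $k_4$ is a generator of $\co^{2m-1}(E[3];\Z/p)$, and the lemma reduces to computing this group and identifying its generator, for $p\in\{2,3,4,8\}$.

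To compute $\co^{2m-1}(E[3];\Z/p)$ I would climb the tower through the three principal fibrations
\[
    K(\Z,2m-5)\to E[1]\xrightarrow{~p_1~}B(m,1^3),\quad K(\Z/2,2m-4)\to E[2]\xrightarrow{~p_2~}E[1],\quad K(\Z\oplus\Z/2,2m-3)\to E[3]\xrightarrow{~p_3~}E[2],
\]
running the mod $p$ Leray--Serre spectral sequence of each. The inputs are the cohomology of the Eilenberg--MacLane fibers (Tables \ref{table:modpcohomEMS} and \ref{table:cohomK(Z/p,q)}), Kudo's transgression theorem, the Wu formula for the classes $b_i$ (Proposition \ref{prop:cohomology_rings}(vi)), and the previously computed invariants $k_1,k_2,k_3$ of Lemmata \ref{lem:primaryob_cpspan3_odd}, \ref{lem:secondaryobs_cpspan3_odd} and \ref{lem:tertiaryobs_cpspan3_odd}, which prescribe the bottom transgressions. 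The generator is produced at the top stage: in the spectral sequence of $p_3$ the $\Z/2$-fundamental class $\kappa_{2m-3}$ of the fiber transgresses to the $\Z/2$-part of $k_3$, so by Kudo's theorem $Sq^2\kappa_{2m-3}$ transgresses to $Sq^2$ of that class; the Adem relation $Sq^2Sq^2 = Sq^3Sq^1$ forces this to vanish—the inner class being a mod $2$ reduction and hence annihilated by $Sq^1$—so $Sq^2\kappa_{2m-3}\in E_2^{0,2m-1}$ survives to $E_\infty$. It then remains to verify that every other contribution to the $(2m-1)$-th diagonal dies on $E_\infty$; this amounts to the computation of $\co^{2m-1}(E[2];\Z/p)$ and of the intermediate slots, carried out via the spectral sequences of $p_1$ and $p_2$ exactly as in Lemmata \ref{lem:2m-1-coh-of-E[1]} and \ref{lem:2m-1-coh-of-E[2]}. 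For $m$ odd these Wu-formula computations differ from the even case through the parities of the binomial coefficients, and this is precisely what leaves $\co^{2m-1}(E[3];\Z/p)$ one-dimensional, generated by $Sq^2\kappa_{2m-3}$.

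The dependence on $m\bmod 8$ enters through the order of the $2$-primary part of $\pi_{2m-2}$ (Table \ref{table:pi2m-2W(m,3)}) together with the coefficient bookkeeping. For $m\equiv1,5\bmod8$ the $2$-part of $\pi_{2m-2}$ is $\Z/2$ and the generator is $Sq^2\kappa_{2m-3}$ with $\Z/2$ coefficients. For $m\equiv7\bmod8$ (resp.\ $m\equiv3\bmod8$) the $2$-part is $\Z/4$ (resp.\ $\Z/8$), and one must track the image of $Sq^2\kappa_{2m-3}$ under the coefficient homomorphism $\theta_2^2$ (resp.\ $\theta_2^3$) into $\Z/4$- (resp.\ $\Z/8$-)coefficients; here the compatibility of $\theta_2^k$ with cup products and the Steenrod action, used as in Case II of Lemma \ref{lem:2m-1-coh-of-E[1]}, allows one to repeat the $p=2$ computation verbatim. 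The case $p=3$ with $3\mid m$ is handled as in Case III of Lemma \ref{lem:2m-1-coh-of-E[1]}: the relevant $P^1_3$-differential is nonzero modulo the appropriate ideal, forcing $\co^{2m-1}(E[3];\Z/3)=0$.

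I expect the main obstacle to be the coefficient bookkeeping across the three stacked fibrations, and in particular the genuinely new case $p=8$, which has no counterpart in the even-rank argument: one must carry the $\theta_2^3$-image of $Sq^2\kappa_{2m-3}$ correctly through all the transgressions and resolve the extension problems distinguishing $\Z/2$, $\Z/4$ and $\Z/8$, since a mistaken extension would alter the order of the coefficient group and hence the stated value of $k_4$. Matching these extensions against the order of $\pi_{2m-2}$ dictated by $m\bmod8$ is the technical heart of the argument.
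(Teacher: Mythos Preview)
Your overall strategy is sound and closely parallels the paper's even-rank argument (Proposition~\ref{prop:2m-1-mpinvariant}): reduce to computing $\co^{2m-1}(E[3];\Z/p)$ via surjectivity of the transgression in $q_3$, and then climb the tower through $p_1,p_2,p_3$. The paper's odd-rank proof is organised somewhat differently---it runs a \emph{parallel} computation on the fiber side, tracking $\co^*(F[i];\Z/2)$ via the fibrations $K(\pi_i,n_i-1)\to F[i]\xrightarrow{s_i}F[i-1]$, and then matches the two sides using the Thomas diagrams (Lemma~\ref{lem:thomas_diagram}) and naturality of transgression. The payoff of the fiber-side bookkeeping is that it supplies exactly the survival statement you need at the top stage.

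There is, however, a genuine gap in your argument for the survival of $Sq^2\kappa_{2m-3}$ in the spectral sequence of $p_3$. You assert that $Sq^2$ of the $\Z/2$-part of $k_3$ vanishes via $Sq^2Sq^2=Sq^3Sq^1$ because ``the inner class [is] a mod~$2$ reduction and hence annihilated by $Sq^1$''. This is false: the $\Z/2$-component of $k_3$ is $Sq^2\kappa_{2m-4}$, where $\kappa_{2m-4}\in\co^{2m-4}(K(\Z/2,2m-4);\Z/2)$ is the fundamental class of a $K(\Z/2,\cdot)$, \emph{not} a mod~$2$ reduction of an integral class. Consequently $Sq^1\kappa_{2m-4}\neq 0$, and indeed
\[
Sq^2\big(Sq^2\kappa_{2m-4}\big)=Sq^3Sq^1\kappa_{2m-4}
\]
is a nonzero element of $\co^{2m}(E[2];\Z/2)$ (it appears among the generators in the paper's description of that group). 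So the transgression $d_{2m}(Sq^2\kappa_{2m-3})$ does not vanish for the reason you give; showing that it vanishes in the relevant quotient $E_{2m}^{2m,0}$ requires an honest computation of the images of the earlier differentials $d_{2m-2}$ and $d_{2m-1}$ in the $p_3$ spectral sequence. The paper sidesteps this by carrying out the analogous calculation on the fiber side (for $F[3]\to F[2]$), where the base cohomology is much smaller, and then transporting the conclusion to the $E$-side via the Thomas diagram. Your proposal would go through once this step is repaired, but as written it rests on an incorrect identity.
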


We divide the proof with respect to \(p\), where we assume that \(m\) has the corresponding value mod \(8\). In fact, we will only prove Lemma \ref{lem:k_4_cpspan3_odd} for \(p =2\) and \(p=3\). We omit the details of the proof in the case that \(p = 4,8\) and leave the proofs to the reader. Moreover, for a spectral sequence $(E_*^{*,*}, d_*^{*,*})$, we will denote by
\begin{equation*}
    d^{(p,q),(p+r, q-r+1)}_r \colon~ E^{p,q}_{r} \longrightarrow E^{p+r, q-r+1}_{r}
\end{equation*}
the differential in order to emphasize both its domain and the codomain. \\

\paragraph{\textbf{Proof of Lemma \ref{lem:k_4_cpspan3_odd} for \(p=2\).}}

Let all maps and spaces be as in (\ref{eq:mp_diag_cpspan3_odd}). Further let \(p = 2\) and \(m \equiv 1~ \mathrm{mod}~4\). In this section all cohomology groups have mod 2 coefficients. 

\begin{lem}\label{lem:tau_q_1}
Let \(m\geq 5\) be an odd integer, and let the fibration
\begin{equation*}
    F[1] \longrightarrow B(m-3,1^3) \overset{q_1}{\longrightarrow} E[1]
\end{equation*}
be as defined in \emph{(\ref{eq:mp_diag_cpspan3_odd})}. In the mod 2 Leray--Serre spectral sequence of \(q_1\), the images of the first four last-chance differentials satisfy the following:
\begin{enumerate}[\normalfont(i)]
    \item \(\mathrm{Im}(d_{2m-3}^{(0,2m-4),(2m-3,0)}) = \Z/2 \langle Sq^2 \iota_{2m-5}^2\rangle\); and 
    \item \(\mathrm{Im}(d_{2m-2}^{(0,2m-3),(2m-2,0)}) = \Z/2 \langle b_{m-1}\rangle \oplus \Z/2 \langle Sq^3 \iota_{2m-6}^2\rangle\); 
    \item \(\mathrm{Im}(d_{2m-1}^{(0,2m-2),(2m-1,0)}) = 0\); and 
    \item \(\mathrm{Im}(d_{2m}^{(0,2m-1),(2m,0)}) = \Z/2 \langle Sq^5 \iota_{2m-5}^2\rangle\).
\end{enumerate}
\end{lem}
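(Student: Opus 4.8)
The plan is to extract all four images from the transgression homomorphisms $\tau_{q_1}$ of the spectral sequence of $q_1$ by running two auxiliary spectral sequences and then transporting information between them by naturality; throughout I work with $\mathbb{Z}/2$ coefficients and use the standing hypothesis $m\equiv 1 \bmod 4$ fixed in this subsection. Recall that the fibre $F[1]$ of $q_1$ is the total space of the fibration obtained by killing $\pi_{2m-5}(W(m,3))\cong\mathbb{Z}$, namely $K(\mathbb{Z},2m-6)\to F[1]\overset{s_1}{\longrightarrow}W(m,3)$ of \eqref{eq:F[1]}, while the base $E[1]$ is the total space of $K(\mathbb{Z},2m-5)\to E[1]\overset{p_1}{\longrightarrow}B(m,1^3)$ of \eqref{eq:p1}. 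First I would compute $\co^*(F[1])$ in the band $2m-4\le\ast\le 2m-1$ from the $s_1$ spectral sequence. Its transgression sends $\iota_{2m-6}^2\mapsto f_{2m-5}$, so by Kudo's theorem \cite{kudo1956transgression} and the Steenrod action of Proposition \ref{prop:cohomology_rings}(v), the fate of $Sq^{2i}\iota_{2m-6}^2$ is governed by $Sq^{2i}f_{2m-5}=\binom{m-3}{i}f_{2m-5+2i}$. The parity hypothesis enters decisively here: $\binom{m-3}{1}\equiv 0$ gives $Sq^2 f_{2m-5}=0$, so $Sq^2\iota_{2m-6}^2$ and $f_{2m-3}$ both survive, whereas $\binom{m-3}{2}\equiv 1$ gives $Sq^4 f_{2m-5}=f_{2m-1}$, so $Sq^4\iota_{2m-6}^2$ and $f_{2m-1}$ both die. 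Reading off diagonals yields $\co^{2m-4}(F[1])=\mathbb{Z}/2\langle Sq^2\iota_{2m-6}^2\rangle$, $\co^{2m-3}(F[1])=\mathbb{Z}/2\langle s_1^*f_{2m-3}\rangle\oplus\mathbb{Z}/2\langle Sq^3\iota_{2m-6}^2\rangle$, $\co^{2m-2}(F[1])=0$, and $\co^{2m-1}(F[1])=\mathbb{Z}/2\langle Sq^5\iota_{2m-6}^2\rangle$. In particular the vanishing $\co^{2m-2}(F[1])=0$ already gives statement (iii), since the domain of $d_{2m-1}^{(0,2m-2),(2m-1,0)}$ is then zero.

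Next I would verify that the four prospective target classes survive to nonzero elements of $\co^*(E[1])$, using the $p_1$ spectral sequence. There the transgression is $\iota_{2m-5}^2\mapsto b_{m-2}$, and the relevant survival/death decisions come from the Wu formula of Proposition \ref{prop:cohomology_rings}(vi): one computes, modulo the ideal $(b_{m-2})$, that $Sq^2 b_{m-2}=b_1b_{m-2}\equiv 0$, $Sq^3 b_{m-2}=0$ and $Sq^5 b_{m-2}=0$, so that $Sq^2\iota_{2m-5}^2$, $Sq^3\iota_{2m-5}^2$ and $Sq^5\iota_{2m-5}^2$ are permanent cocycles defining nonzero classes in $\co^{2m-3}(E[1])$, $\co^{2m-2}(E[1])$ and $\co^{2m}(E[1])$ respectively, while $b_{m-1}\notin(b_{m-2})$ survives to a nonzero class of $\co^{2m-2}(E[1])$ in strictly higher filtration than $Sq^3\iota_{2m-5}^2$. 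I note that the class written $Sq^3\iota_{2m-6}^2$ in part (ii) is this class $Sq^3\iota_{2m-5}^2$, since the fundamental class available on the base $E[1]$ is that of the fibre $K(\mathbb{Z},2m-5)$ of $p_1$.

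Finally I would identify the transgressions $\tau_{q_1}$ by naturality, exactly mirroring the even-case argument for \eqref{eq:diff_iota_{2m-3}}. Applying Lemma \ref{lem:thomas_diagram} to the square relating $q_1$ and $q_{m,3}$ produces a Thomas diagram whose top row is the path–loop fibration $K(\mathbb{Z},2m-6)\to\ast\to K(\mathbb{Z},2m-5)$ (transgression $\iota_{2m-6}\mapsto\iota_{2m-5}$), whose left column is $s_1$ and whose right column is $p_1$. Restricting along the fibre inclusion $i_1\colon K(\mathbb{Z},2m-5)\hookrightarrow E[1]$ and combining with Kudo gives $\tau_{q_1}(Sq^j\iota_{2m-6}^2)=Sq^j\iota_{2m-5}^2$ for $j=2,3,5$, the targets being detected as nonzero by the previous step; vertical naturality $\tau_{q_1}(s_1^*x)=p_1^*\,\tau_{q_{m,3}}(x)$ together with Lemma \ref{lem:diffsofq1} ($f_{2m-3}\mapsto b_{m-1}$) disposes of the remaining generator. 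Matched against the domains of the first step, this yields $\mathrm{Im}(d_{2m-3})=\mathbb{Z}/2\langle Sq^2\iota_{2m-5}^2\rangle$ for (i), $\mathrm{Im}(d_{2m-2})=\mathbb{Z}/2\langle b_{m-1}\rangle\oplus\mathbb{Z}/2\langle Sq^3\iota_{2m-5}^2\rangle$ for (ii), and $\mathrm{Im}(d_{2m})=\mathbb{Z}/2\langle Sq^5\iota_{2m-5}^2\rangle$ for (iv).

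The main obstacle I anticipate is not conceptual but organizational: keeping the two auxiliary fundamental classes $\iota_{2m-6}$ and $\iota_{2m-5}$ (and the induced classes on $F[1]$ versus $E[1]$) rigorously apart while tracking which monomials survive. The genuinely delicate points are all the parity-sensitive binomial coefficients produced by the Steenrod and Wu formulae — this is precisely where the odd case diverges from Section \ref{sec:thm_pspan3_even}, most strikingly through $Sq^2 f_{2m-5}=0$ here versus $Sq^2 f_{2m-5}=f_{2m-3}$ there, and through $\binom{m-3}{2}\equiv1$ which forces $\co^{2m-2}(F[1])=0$ and thereby makes (iii) and (iv) come out as stated. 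A secondary care-point is confirming that each named target class is a genuine permanent cocycle rather than a boundary, so that the surjectivity of the transgression onto $\ker(q_1^*)$ forced by $\co^{\mathrm{odd}}(B(m-3,1^3))=0$ really lands on the asserted generators.
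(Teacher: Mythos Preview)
Your proposal is correct and follows essentially the same approach as the paper: compute $\co^*(F[1];\Z/2)$ via the $s_1$-spectral sequence, compute $\co^*(E[1];\Z/2)$ via the $p_1$-spectral sequence, and transport the transgressions via the Thomas diagram naturality. You also correctly flag that the class in part~(ii) should read $Sq^3\iota_{2m-5}^2$ rather than $Sq^3\iota_{2m-6}^2$, and you are right to work under the subsection's standing hypothesis $m\equiv 1\bmod 4$ (which the paper's own proof uses explicitly, despite the lemma being stated for $m$ odd).
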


\begin{figure}[h]
  \centering
  \includegraphics[width=0.5\textwidth]{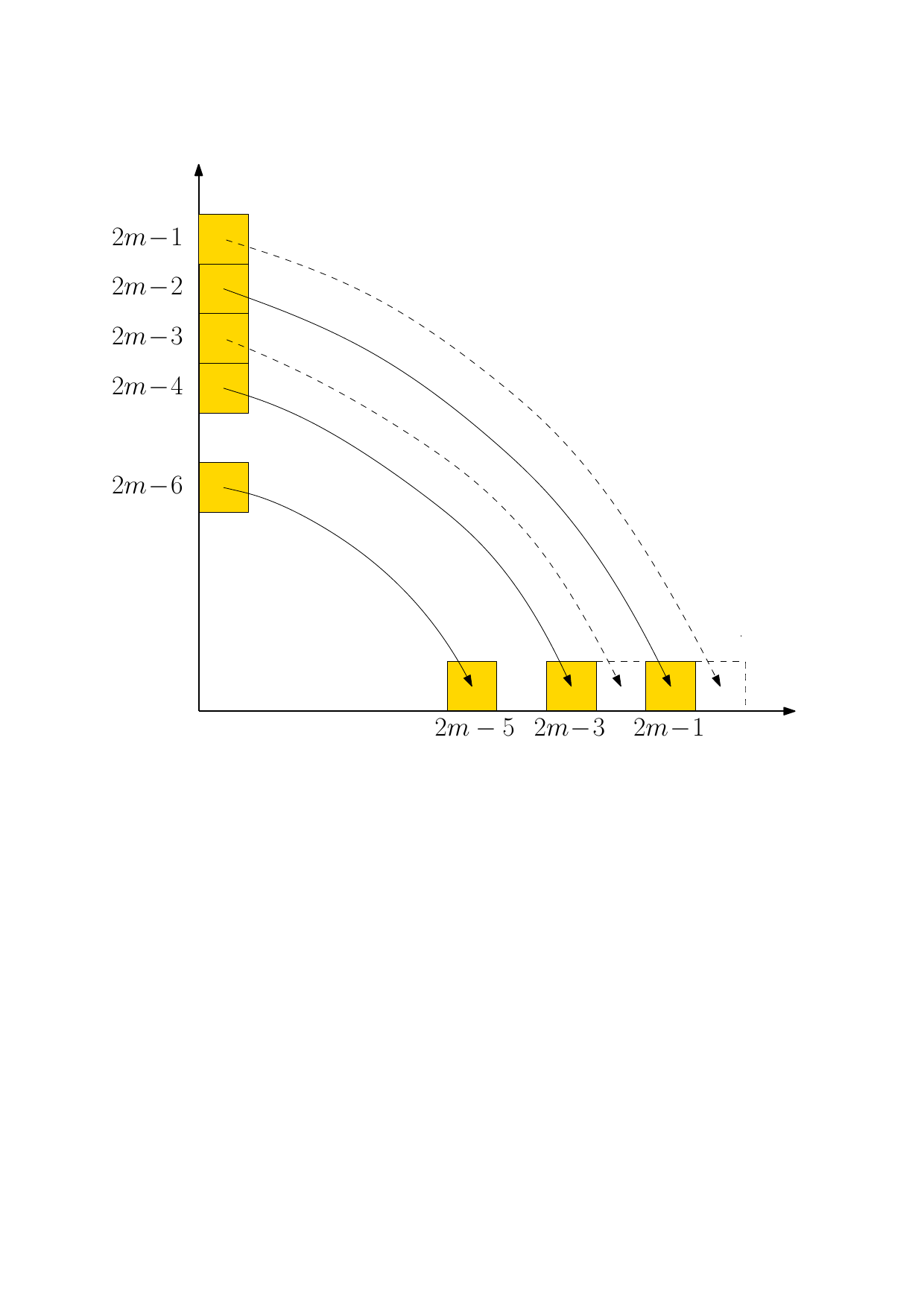}
  \caption{A portion of the Leray--Serre spectral sequence of the fibration \(K(\Z,2m-6)\to F[1]\to W(m,3)\) with mod 2 coefficients.}
  \label{fig:cohomology_F[1]_mod2}
\end{figure}

\begin{proof}
Throughout, all cohomology groups have mod 2 coefficients. To begin, we compute the relevant portion of \(\co^*(F[1];\Z/2).\) To this end, consider the mod 2 Leray--Serre spectral sequence \((E_*^{*,*},d_*)\) associated to the fibration 
\begin{equation}\label{eq:fib_F[1]}
K(\Z, 2m-6) \longrightarrow F[1] \overset{s_1}{\longrightarrow} W(m,3).
\end{equation}
By construction, the differential 
\[
d_{2m-5}^{(0,2m-6),(2m-5,0)} \colon \co^{2m-6}(K(\Z,2m-6)) \cong \Z/2 \langle \iota_{2m-6}^2\rangle \to \Z/2 \langle f_{2m-5}\rangle \cong \co^{2m-5}(W(m,3))
\]
is an isomorphism defined by \(d_{2m-5}(\iota_{2m-6}^2) = f_{2m-5}.\) Moreover, using Kudo's transgression theorem and Proposition \ref{prop:cohomology_rings}(v), we compute
\begin{itemize}
    \item \(d_{2m-3}^{(0,2m-4),(2m-3,0)}(Sq^2 \iota_{2m-6}^2) = Sq^2 f_{2m-5} = 0\) and 
    \item \(d_{2m-1}^{(0,2m-2),(2m-1,0)}(Sq^4 \iota_{2m-6}^2) = Sq^4 f_{2m-5} = f_{2m-1},\) since \(m \equiv 1 ~\mathrm{mod}~ 4.\)
\end{itemize}
See Figure \ref{fig:cohomology_F[1]_mod2} for illustration. 
Consequently, since $m > 5$, we compute the following groups. 
\begin{enumerate}[(a)]
    \item \(\co^{2m-4}(F[1]) \cong \Z/2 \langle Sq^2 \iota_{2m-6}^2\rangle\); 
    \item \(\co^{2m-3}(F[1]) \cong \Z/2 \langle f_{2m-3} \rangle \oplus \Z/2 \langle Sq^3 \iota_{2m-6}^2\rangle\);
    \item \(\co^{2m-2}(F[1]) \cong 0\);
    \item \(\co^{2m-1}(F[1]) \cong \Z/2 \langle Sq^5 \iota_{2m-6}^2\rangle.\)
\end{enumerate}
By similar arguments to the ones in the proof of Lemma \ref{lem:2m-1-coh-of-E[1]}, we compute the following. 

\begin{enumerate}[(a)]\setcounter{enumi}{4}
    \item \(\co^{2m-3}(E[1]) \cong \Z/2 \langle Sq^2 \iota_{2m-5}^2\rangle\);
    \item \(\co^{2m-2}(E[1]) \cong \Z/2 \langle Sq^3 \iota_{2m-5}^2\rangle \oplus (\co^{*}(B(m,1^3))/(b_{m-2}))^{(2m-2)}\); 
    \item \(\co^{2m-1}(E[1]) \cong \Z/2 \langle Sq^2 \iota_{2m-5}^2 \rangle \otimes \co^{2}(B(m,1^3))\);
    \item $\co^{2m}(E[1]) \cong \Z/2 \langle Sq^5 \iota_{2m-5}^2\rangle \oplus (\co^{*}(B(m,1^3))/(b_{m-2}, b_{m-1}b_1 + b_m))^{(2m)}$ 
\end{enumerate}
By considering the Thomas diagram associated to \((\star)\),
\[\begin{tikzcd}
	{K(\Z,2m-6)} & {*} & {K(\Z,2m-5)} \\
	{F[1]} & {B(m-3,1^3)} & {E[1]} \\
	{W(m,3)} & {B(m-3,1^3)} & {B(m,1^3)}
	\arrow[from=1-1, to=1-2]
	\arrow[from=1-1, to=2-1]
	\arrow[from=1-2, to=1-3]
	\arrow[from=1-2, to=2-2]
	\arrow[from=1-3, to=2-3]
	\arrow[from=2-1, to=2-2]
	\arrow[from=2-1, to=3-1]
	\arrow["{q_1}", from=2-2, to=2-3]
	\arrow[Rightarrow, no head, from=2-2, to=3-2]
	\arrow["{(\star)}"{description}, draw=none, from=2-2, to=3-3]
	\arrow["{p_1}", from=2-3, to=3-3]
	\arrow[from=3-1, to=3-2]
	\arrow["{q_{m,3}}", from=3-2, to=3-3]
\end{tikzcd}\]
the claim follows by naturality of spectral sequences. 
\end{proof}

As before, $\kappa_{2m-4} \in \co^{2m-4}(K(\Z/2, 2m-4);\Z/2)$ denotes the mod 2 fundamental class (see also Notation~\ref{notate:cohomologyops}(vii)).

\begin{lem}\label{lem:tau_q_2}
Let \(m\geq 5\) be an odd integer, and let the fibration
\begin{equation*}
    F[2] \longrightarrow B(m-3,1^3) \overset{q_2}{\longrightarrow} E[2]
\end{equation*}
be as defined in \emph{(\ref{eq:mp_diag_cpspan3_odd})}. Then the images of the first three last-chance differentials satisfy the following.
\begin{enumerate}[\normalfont(i)]
    \item \(\mathrm{Im}(d_{2m-2}^{(0,2m-3),(2m-2,0)}) = \Z/2 \langle b_{m-1}\rangle \oplus \Z/2 \langle Sq^2 \kappa_{2m-4}\rangle\);
    \item \(\mathrm{Im}(d_{2m-1}^{(0,2m-2),(2m-1,0)}) = \Z/2 \langle Sq^3 \kappa_{2m-4}\rangle\);
    \item \(\mathrm{Im}(d_{2m}^{(0,2m-1),(2m,0)})=  \Z/2\langle Sq^4\kappa_{2m-4}, Sq^3Sq^1 \kappa_{2m-4} \rangle.\)
\end{enumerate}
\end{lem}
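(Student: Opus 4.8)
The plan is to imitate the proof of \lemref{lem:tau_q_1} one stage higher in the tower, bootstrapping from its explicit description of \(\co^*(F[1];\Z/2)\) and \(\co^*(E[1];\Z/2)\). Throughout I would work with \(\Z/2\)-coefficients and with the mod~\(2\) Leray--Serre spectral sequences. The first move is to extract, from the Thomas diagram (\lemref{lem:thomas_diagram}) associated to the square relating \(q_2\) and \(q_1\) through \(p_2\), the two auxiliary fibrations
\[
K(\Z/2,2m-5) \longrightarrow F[2] \overset{s_2}{\longrightarrow} F[1] \qquad\text{and}\qquad K(\Z/2,2m-4) \longrightarrow E[2] \overset{p_2}{\longrightarrow} E[1],
\]
the fiber of \(s_2\) being the loop space of the fiber of \(p_2\). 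The point of the lemma is then to read off the three transgressions \(\tau_{q_2}\colon \co^{2m-3+i}(F[2]) \to \co^{2m-2+i}(E[2])\), \(i=0,1,2\), in the spectral sequence of \(q_2\).

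For Step~1 I would compute \(\co^*(F[2])\) in degrees \(2m-3,2m-2,2m-1\) from the spectral sequence of \(s_2\). Since \(q_2\) is a \((2m-3)\)-equivalence, \(F[2]\) is \((2m-4)\)-connected, which forces the characteristic class to be the nonzero element \(\tau_{s_2}(\kappa_{2m-5}) = Sq^2\iota_{2m-6}^2\) of \(\co^{2m-4}(F[1])\) (computed in \lemref{lem:tau_q_1}). Kudo's transgression theorem then propagates this as \(\tau_{s_2}(Sq^J\kappa_{2m-5}) = Sq^J Sq^2\iota_{2m-6}^2\), and Adem relations together with the vanishing \(Sq^1\iota_{2m-6}^2=0\) decide which operations survive; combining these survivors with the \(\co^*(F[1])\)-groups of \lemref{lem:tau_q_1} gives \(\co^*(F[2])\). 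For Step~2 I would compute \(\co^*(E[2])\) in degrees \(2m-2,2m-1,2m\) from the spectral sequence of \(p_2\), exactly as in \lemref{lem:2m-1-coh-of-E[1]}: here \(\tau_{p_2}(\kappa_{2m-4})=k_2=Sq^2\iota_{2m-5}^2\), so by Kudo the fiber classes \(Sq^J\kappa_{2m-4}\) with \(Sq^JSq^2\iota_{2m-5}^2=0\) survive, producing \(Sq^2\kappa_{2m-4},Sq^3\kappa_{2m-4},Sq^4\kappa_{2m-4},Sq^3Sq^1\kappa_{2m-4}\) in \(\co^*(E[2])\), while the base contributions (in particular \(b_{m-1}\)) descend from \(\co^*(E[1])\).

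Step~3 identifies the transgressions themselves. By naturality of the transgression across the Thomas diagram, combined once more with Kudo, \(\tau_{q_2}\) carries a surviving fiber class \(Sq^J\kappa_{2m-5}\) of \(F[2]\) to the corresponding class \(Sq^J\kappa_{2m-4}\) of \(E[2]\), whereas the classes inherited from \(F[1]\) transgress precisely as recorded in \lemref{lem:tau_q_1} (so \(f_{2m-3}\mapsto b_{m-1}\) via the relation \(\tau_{q_2}\circ s_2^* = p_2^*\circ\tau_{q_1}\)). Reading off the images degree by degree then yields (i), (ii) and (iii); the odd-degree vanishing \(\co^{\mathrm{odd}}(B(m-3,1^3))=0\) of \propref{prop:cohomology_rings}(iii) guarantees that these last-chance differentials are genuinely responsible for the whole kernel of \(q_2^*\) in each degree.

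The hard part will be case (iii), the degree-\(2m\) image. It demands the most delicate bookkeeping: one must pin down exactly which weight-\(5\) and weight-\((4,1)\) operations on \(\kappa\) survive in \emph{both} spectral sequences, using the Wu-type action on \(\co^*(W(m,3))\) from \propref{prop:cohomology_rings}(v) and the congruence \(m\equiv 1,5 \bmod 8\) to evaluate binomial coefficients such as \(\binom{m-3}{j}\bmod 2\). In particular one has to track carefully the interplay between classes inherited from \(F[1]\) and genuine \(s_2\)-fiber classes, since some base classes of \(E[2]\) are killed in the \(p_2\)-spectral sequence and the transgression must be followed onto the surviving fiber class \(Sq^4\kappa_{2m-4}\); this is the analog of, but more intricate than, the top differential computed in \lemref{lem:tau_q_1}, and I would present it with the aid of the spectral-sequence chart (cf.\ \figref{fig:cohomology_F[1]_mod2}).
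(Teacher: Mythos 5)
Your proposal follows essentially the same route as the paper's proof: compute the relevant range of \(\co^*(F[2];\Z/2)\) from the spectral sequence of \(K(\Z/2,2m-5)\to F[2]\to F[1]\) using the transgression \(\kappa_{2m-5}\mapsto Sq^2\iota_{2m-6}^2\), Kudo's theorem and the Adem relations; compute \(\co^*(E[2];\Z/2)\) analogously from \(p_2\); and then identify the three differentials by naturality across the Thomas diagram, with the base class \(f_{2m-3}\mapsto b_{m-1}\) inherited from Lemma~\ref{lem:tau_q_1}. The only cosmetic difference is that the paper resolves part (iii) purely through the Adem relations \(Sq^4Sq^2=0\) and \(Sq^3Sq^1Sq^2=Sq^5Sq^1\) (the congruence on \(m\) having already been consumed in Lemma~\ref{lem:tau_q_1}), rather than through fresh binomial-coefficient bookkeeping.
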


\begin{proof}
First, we compute the relevant portion of \(\co^*(F[2]).\) Consider the mod \(2\) spectral sequence \((E_*^{*,*},d_*)\) associated to the fibration 
\begin{equation}\label{eq:F[2]}
K(\Z/2,2m-5) \longrightarrow F[2] \longrightarrow F[1].
\end{equation}
By construction, the differential 
\[
d_{2m-4}^{(0,2m-5),(2m-4,0)} \colon \co^{2m-5}(K(\Z/2,2m-5)) \longrightarrow \co^{2m-4}(F[1])
\]
is an isomorphism defined by \(d_{2m-4}(\kappa_{2m-5}) = Sq^2 \iota_{2m-6}^2\), whence \[E_{2m-3}^{0,2m-5} = E_{\infty}^{0,2m-5} = 0.\]
Moreover, \(Sq^2(Sq^2\iota_{2m-6}^2)\) necessarily vanishes since \(\co^{2m-2}(F[1]) = 0\). So Kudo's transgression theorem yields that \[ \Z/2 \langle Sq^2 \kappa_{2m-5}\rangle = E_{2m-2}^{0,2m-3} = E_{\infty}^{0,2m-3}.\]
Furthermore,
\[
d_{2m-3}^{(0,2m-4),(2m-3,0)}(Sq^1 \kappa_{2m-5}) = Sq^1 Sq^2 \iota_{2m-6}^2 = Sq^3 \iota_{2m-6}^2,
\]
so that \(E_{2m-2}^{0,2m-5} = E_{\infty}^{0,2m-5} = 0\).

Noting the Adem relations
\begin{itemize}
    \item \(Sq^3 Sq^2 = 0\); and 
    \item \(Sq^2Sq^1Sq^2 = Sq^5 + Sq^4 Sq^1,\)
\end{itemize}
we compute 
\begin{itemize}
    \item \(d_{2m-1}^{(0,2m-2),(2m-1,0)}(Sq^3 \kappa_{2m-5}) = 0,\) and 
    \item \(d_{2m-1}^{(0,2m-2),(2m-1,0)}(Sq^2Sq^1 \kappa_{2m-5}) = Sq^5 \iota_{2m-6}^2.\)
\end{itemize}
Lastly, since \(Sq^4Sq^2\) is admissible and \(Sq^3Sq^1Sq^2 = Sq^3Sq^3 = Sq^5Sq^1\), we compute the differential
\[
d_{2m}^{(0,2m-1),(2m,0)} \colon \co^{2m-1}(K(\Z/2,2m-5)) \longrightarrow \co^{2m}(F[1])
\]
as follows:
\begin{itemize}
    \item \(Sq^4\kappa_{2m-5} \mapsto Sq^4Sq^2\iota_{2m-6}^2\); and 
    \item \(Sq^3Sq^1\kappa_{2m-5} \mapsto 0.\)
\end{itemize}

Thusly, we obtain the following. 
\begin{enumerate}
    \item[(a)] \(\co^{2m-3}(F[2]) \cong \Z/2 \langle f_{2m-3}\rangle \oplus \Z/2 \langle Sq^2 \kappa_{2m-5} \rangle\); 
    \item[(b)] \(\co^{2m-2}(F[2]) \cong \Z/2\langle Sq^3 \kappa_{2m-5}\rangle\); and 
    \item[(c)] \(\co^{2m-1}(F[2]) \cong \Z/2 \langle  Sq^3Sq^1 \kappa_{2m-5}\rangle.\)
\end{enumerate}
Second, by similar arguments applied to the Leray--Serre spectral sequence of the fibration \(p_2 \colon E[2] \to E[1],\) one calculates that 
\begin{enumerate}
    \item[(d)] \(\co^{2m-2}(E[2]) \cong \co^{2m-2}(B(m,1^3))/(b_{m-2}) \oplus \Z/2 \langle Sq^2 \kappa_{2m-4}\rangle\); 
    \item[(e)] \(\co^{2m-1}(E[2]) \cong \Z/2 \langle Sq^3 \kappa_{2m-4}\rangle\); and 
    \item[(f)] \(\co^{2m}(E[2]) \cong \co^{2m}(E[1])/(Sq^2 \iota_{2m-5}^2) \oplus \Z/2 \langle Sq^3Sq^1\kappa_{2m-4} \rangle.\)
\end{enumerate}
Finally, using Lemma \ref{lem:tau_q_1}, we study the map of spectral sequences induced by the Thomas diagram associated to \((\star \star)\). 
\[\begin{tikzcd}
	{K(\Z/2,2m-5)} & {*} & {K(\Z/2,2m-4)} \\
	{F[2]} & {B(m-3,1^3)} & {E[2]} \\
	{F[1]} & {B(m-3,1^3)} & {E[1]}
	\arrow[from=1-1, to=1-2]
	\arrow[from=1-1, to=2-1]
	\arrow[from=1-2, to=1-3]
	\arrow[from=1-2, to=2-2]
	\arrow[from=1-3, to=2-3]
	\arrow[from=2-1, to=2-2]
	\arrow[from=2-1, to=3-1]
	\arrow["{q_2}", from=2-2, to=2-3]
	\arrow[Rightarrow, no head, from=2-2, to=3-2]
	\arrow["{(\star \star)}"{description}, draw=none, from=2-2, to=3-3]
	\arrow["{p_2}", from=2-3, to=3-3]
	\arrow[from=3-1, to=3-2]
	\arrow["{q_1}", from=3-2, to=3-3]
\end{tikzcd}\]
The claim follows by the naturality of spectral sequences.
\end{proof}

\begin{lem}\label{lem:tau_q_3}
Let \(m\geq 5\) be an odd integer, and let the fibration \(q_3 \colon B(m-3,1^3) \to E[3]\) with fiber \(F[3]\) be as defined in \emph{(\ref{eq:mp_diag_cpspan3_odd})}. Then \(Sq^2 \kappa_{2m-3}\) is the characteristic class of \(q_3\). 
\end{lem}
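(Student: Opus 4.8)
The plan is to identify $k_4$, the characteristic class of $q_3$, as the transgression of the mod $2$ fundamental class of the fiber $F[3]$ in the Leray--Serre spectral sequence of $q_3$, mirroring the strategy used for Lemmas \ref{lem:tau_q_1} and \ref{lem:tau_q_2}. Since $\co^{2m-1}(B(m-3,1^3);\Z/2)=0$ by Proposition \ref{prop:cohomology_rings}(iii), the transgression in this spectral sequence is surjective onto $\co^{2m-1}(E[3];\Z/2)$; thus it suffices to compute that group and to determine which class the fundamental class of $F[3]$ hits. Recall that $F[3]$ is $(2m-3)$-connected with $\pi_{2m-2}(F[3])\cong\pi_{2m-2}$, so its mod $2$ fundamental class sits in degree $2m-2$.

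First I would compute the relevant part of $\co^*(F[3];\Z/2)$ using the Leray--Serre spectral sequence of the fibration $K(\Z\oplus\Z/2,2m-4)\to F[3]\xrightarrow{s_3}F[2]$, which occurs as the left-hand column of the Thomas diagram associated to the square relating $q_3$ and $q_2$ (compare the diagrams $(\star)$ and $(\star\star)$ from the proofs of Lemmas \ref{lem:tau_q_1} and \ref{lem:tau_q_2}). Feeding in the description of $\co^*(F[2];\Z/2)$ from Lemma \ref{lem:tau_q_2}, the cohomology of the Eilenberg--MacLane fiber from Proposition \ref{prop:modpcohomEMS}, and Kudo's transgression theorem together with the Adem relations and the Wu formula in Proposition \ref{prop:cohomology_rings}(v)--(vi), I would pin down the fundamental class of $F[3]$. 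In parallel I would compute $\co^{2m-1}(E[3];\Z/2)$ from the spectral sequence of $p_3\colon E[3]\to E[2]$, whose fiber is $K(\Z\oplus\Z/2,2m-3)$, once again using Lemma \ref{lem:tau_q_2} for $\co^*(E[2];\Z/2)$ and Kudo for the differentials. Here the governing transgressions are those of the two fundamental classes $\iota_{2m-3}^2$ and $\kappa_{2m-3}$ of the fiber, namely the mod $2$ reductions of the two components of the tertiary invariant $k_3$ recorded in Lemma \ref{lem:tau_q_2}(i); the computation then reduces to deciding which of $Sq^1\kappa_{2m-3}$, $Sq^2\kappa_{2m-3}$, and $Sq^2\iota_{2m-3}^2$ are permanent cycles.

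Finally, invoking naturality of the Leray--Serre spectral sequence with respect to the map of fibrations $q_3\to q_2$ supplied by the Thomas diagram $(\star\star\star)$, and comparing against the last-chance differentials already tabulated in Lemma \ref{lem:tau_q_2}, I would match the transgression of the fundamental class of $F[3]$ with $Sq^2\kappa_{2m-3}$, establishing the lemma. This is exactly the odd-rank analogue of the even-rank identification carried out in Lemma \ref{lem:2m-1-coh-of-E[2]}.

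The hard part will be the Steenrod-algebra bookkeeping in the middle step. Unlike the secondary and tertiary stages—where the relevant fiber class transgresses to a class built from the \emph{integral} reduction $\iota^2$, so that the Adem relation $Sq^2Sq^2=Sq^3Sq^1$ collapses because $Sq^1\iota^2=0$—at this stage the transgression of $\kappa_{2m-3}$ is a genuinely mod $2$ class, and $Sq^1$ of it need not vanish. Consequently the survival of the operated fundamental classes is no longer automatic, and one must argue carefully, using the explicit relations in $\co^*(E[2];\Z/2)$ from Lemma \ref{lem:tau_q_2} (in particular the identifications governing $Sq^3Sq^1\kappa_{2m-4}$) together with the connectivity and multiplicative filtration of the spectral sequences, to determine precisely which classes reach the $E_\infty$-page. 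Getting this survival analysis right—and hence the correct identification of the generator and of $k_4$—is the crux of the argument.
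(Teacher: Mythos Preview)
Your proposal is correct and follows essentially the same approach as the paper: compute $\co^{2m-2}(F[3];\Z/2)$ via the spectral sequence of $s_3\colon F[3]\to F[2]$, compute the relevant part of $\co^{2m-1}(E[3];\Z/2)$ via the spectral sequence of $p_3$, and then match them by naturality through the Thomas diagram $(\star\star\star)$. Your anticipation of the crux is also on target---the paper uses precisely the Adem relations $Sq^1Sq^2=Sq^3$ and $Sq^2Sq^2=Sq^3Sq^1$ (applied to the transgressed class $Sq^2\kappa_{2m-5}$, where $Sq^1$ does not kill it) to show that $E_\infty^{0,2m-2}=\Z/2\langle Sq^2\kappa_{2m-4}\rangle$ is the only surviving term.
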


\begin{proof}
To begin, consider the mod \(2\) Leray--Serre spectral sequence \((E_*^{*,*},d_*)\) of the fibration
\[
K(\Z \oplus \Z/2, 2m-4) \longrightarrow F[3] \longrightarrow F[2].
\]
Then the differential
\[
d_{2m-3}^{(0,2m-4),(2m-3,0)} \colon \co^{2m-4}(K(\Z \oplus \Z/2, 2m-4)) \cong \Z/2\langle \iota_{2m-4}^2\rangle \oplus \Z/2 \langle \kappa_{2m-4}\rangle \longrightarrow \co^{2m-3}(F[2]),
\]
is an isomorphism defined by 
\[
d_{2m-3}(\iota_{2m-4}^2, \kappa_{2m-4}) = (f_{2m-3}, Sq^2 \kappa_{2m-5}).
\]
Hence \(E_{2m-2}^{0,2m-4} = E_{\infty}^{0,2m-4} = 0.\)

Moreover, it follows from Kudo's transgression theorem and the Adem relations
\begin{itemize}
    \item \(Sq^1Sq^2 = Sq^3\); and 
    \item \(Sq^2Sq^2 = Sq^3Sq^1\); 
\end{itemize}
that the differential
\(
d_{2m-2}^{(0,2m-3),(2m-2,0)}
\)
is nontrivial and 
\[
d_{2m-1}^{(0,2m-2),(2m-1,0)}(Sq^2 \iota_{2m-4}^2, Sq^2 \kappa_{2m-4}) = (0, Sq^3Sq^1\kappa_{2m-5}).
\]
Hence \(E_{\infty}^{0,2m-3} = 0\) and \(E_{\infty}^{0,2m-2} = \Z/2 \langle Sq^2 \iota^2_{2m-4}\rangle.\) Moreover, since \(E_{\infty}^{0,2m-2}\) is the only nontrivial term on the \((2m-2)\)-th diagonal on the \(E_\infty\)-page, it follows that \[\co^{2m-2}(F[3]) \cong \Z/2 \langle Sq^2 \iota^2_{2m-4}\rangle.\]

Similar arguments applied to the fibration \[K(\Z \oplus \Z/2,2m-3) \longrightarrow E[3]\longrightarrow E[2]\] demonstrate that 
\(
\Z/2 \langle Sq^2 \iota^2_{2m-3}\rangle \subseteq \co^{2m-1}(E[3]). 
\)
To conclude, we study the map of spectral sequences induced by the Thomas diagram associated to \((\star\star \star)\)
\[\begin{tikzcd}
	{K(\Z \oplus\Z/2,2m-4)} & {*} & {K(\Z\oplus \Z/2,2m-3)} \\
	{F[3]} & {B(m-3,1^3)} & {E[3]} \\
	{F[2]} & {B(m-3,1^3)} & {E[2]}
	\arrow[from=1-1, to=1-2]
	\arrow[from=1-1, to=2-1]
	\arrow[from=1-2, to=1-3]
	\arrow[from=1-2, to=2-2]
	\arrow[from=1-3, to=2-3]
	\arrow[from=2-1, to=2-2]
	\arrow[from=2-1, to=3-1]
	\arrow["{q_3}", from=2-2, to=2-3]
	\arrow[Rightarrow, no head, from=2-2, to=3-2]
	\arrow["{(\star \star\star)}"{description}, draw=none, from=2-2, to=3-3]
	\arrow["{p_3}", from=2-3, to=3-3]
	\arrow[from=3-1, to=3-2]
	\arrow["{q_2}", from=3-2, to=3-3]
\end{tikzcd}\]
from which the result follows.
\end{proof}

\paragraph{\textbf{Proof of Lemma \ref{lem:k_4_cpspan3_odd} for \(p=3\).}}
Let all maps and spaces be as in (\ref{eq:mp_diag_cpspan3_odd}). Further let \(p = 3\) and \(m \equiv 0~ \mathrm{mod}~ 3\). In this section, all cohomology groups have mod 3 coefficients. By similar arguments to Case III of Lemma \ref{lem:2m-1-coh-of-E[1]}, we see that \(\co^{2m-1}(E[1]) \cong 0.\) Moreover, it follows from the mod 3 Leray--Serre spectral sequence of the homotopy fibration \(K(\Z/2,2m-4) \to E[2]\to E[1]\) that \(\co^{2m-1}(E[2]) \cong \co^{2m-2}(E[1])\). Indeed \(\co^i(K(\Z/2,2m-3)) \cong 0\), for \(i = 2m-4,2m-3,2m-1\), so there are no non-trivial terms on the \((2m-1)\)-th diagonal on the \(E_{\infty}\)-page. 

Finally, consider the mod 3 Leray--Serre spectral sequence of the homotopy fibration \[K(\Z \oplus \Z/2,2m-3) \to E[3] \to E[2].\]  Now \(\co^{2m-3}(K(\Z \oplus \Z/2,2m-3)) \cong \Z/3\langle \iota_{2m-3}\rangle\) and it is straightforward to see that \(\iota_{2m-3}\) transgresses to \[b_{m-1} \in \co^{2m-2}(E[2]) \cong \co^{2m-2}(B(m,1^3))/(b_{m-2}).\] Furthermore \(\co^{2m-1}(K(\Z \oplus \Z/2,2m-3) \cong 0\), whence there are no nontrivial terms on the \((2m-1)\)-th diagonal on the \(E_\infty\)-page. In conclusion, \(\co^{2m-1}(E[3]) \cong 0\) and the result follows.  \\

\paragraph{\textbf{Computing the indeterminancy of lifts.}}\label{subsubsec:indet_cpspan3_odd}
Recall that the group \(\pi_{2m-2} \coloneqq \pi_{2m-2}(W(m,3))\) satisfies Proposition \ref{prop:piW(m,r)}. Let all the assumptions of Theorem \ref{thm:quaternaryobs_cpspan3_odd} be satisfied. Then there is a lift of \((\xi,\ell_1,\ell_2,\ell_3)\) to \(E[4]\) if and only if \(g^*(k_4) = 0\) in \(\co^{2m-1}(X;\pi_{2m-2})/\mathrm{Indet}\), where \(g \colon X \to E[3]\) is any lift of \((\xi,\ell_1,\ell_2,\ell_3)\) to \(E[3]\) and \(\mathrm{Indet} \subseteq \co^{2m-1}(X;\pi_{2m-2})\) denotes the indeterminacy of such lifts. We will briefly sketch the reason that \(\mathrm{Indet} = 0.\)

Write \(K \coloneqq K(\Z \oplus \Z/2, 2m-3)\). We have the following homotopy commutative diagram
\[\begin{tikzcd}
	K && K \\
	{K\times B(m-3,1^3)} & {K\times E[3]} & {E[3]} \\
	{B(m-3,1^3)} && {E[2]}
	\arrow[Rightarrow, no head, from=1-1, to=1-3]
	\arrow[from=1-1, to=2-1]
	\arrow[from=1-3, to=2-3]
	\arrow["{1\times q_3}", from=2-1, to=2-2]
	\arrow["\nu", curve={height=-18pt}, from=2-1, to=2-3]
	\arrow["\pi", from=2-1, to=3-1]
	\arrow["\mu", from=2-2, to=2-3]
	\arrow["{p_3}", from=2-3, to=3-3]
	\arrow["{\bar{s}}", curve={height=-12pt}, from=3-1, to=2-1]
	\arrow["{q_2}", from=3-1, to=3-3]
\end{tikzcd}\]
and an exact sequence
\[
\dots \longrightarrow \mathrm{ker}(q_3^*)\cap\co^{2m-1}(E[3])\overset{\nu^*}\longrightarrow \mathrm{ker}(\bar{s}^*)\cap\co^{2m-1}(K\times B(m-3,1^3))\overset{\tau_1}\longrightarrow \co^{2m}(E[2]) \longrightarrow \dots,
\]
where all cohomology groups have \(\pi_{2m-2}\) coefficients. As usual, we write \(\mu^*(k_4) = 1 \otimes k_4 + \nu^*(k_4)\), where
\[
\nu^*(k_4) \in \mathrm{ker}(\bar{s}) \cap \mathrm{ker}(\tau_1) \cap \co^{2m-1}(K\times B(m-3,1^3)). 
\]
But by direct calculation using the proof of Lemma \ref{lem:tau_q_3}, one sees that the group \[\mathrm{ker}(\bar{s}) \cap \mathrm{ker}(\tau_1) \cap \co^{2m-1}(K\times B(m-3,1^3))\] is trivial because the generator of $\co^{2m-3}(K)$ transgresses to \(b_{m-1}\). Whence 
\(\nu^*(k_4) = 0\). Thus \(\mu^*(k_4) = 1 \otimes k_4\), from which it follows that \(\mathrm{Indet} = 0\), as required.\\

\paragraph{\textbf{Conditions for the vanishing of the generalized quaternary obstruction.}}
Finally, we finish the proof of Theorem \ref{thm:quaternaryobs_cpspan3_odd}.

Supposing that the conditions of Lemma \ref{lem:tertiaryobs_cpspan3_odd} are satisfied, we can consider a lift \(g \colon X \to E[3]\) of \((\xi, \ell_1,\ell_2,\ell_3)\). The triviality of the indeterminacy subgroup computed in Section \ref{subsubsec:indet_cpspan3_odd} yields that the obstruction set \[\lobs^{2m-1}((\xi,\ell_1,\ell_2,\ell_3),q_{m,3},k_4)\subseteq \co^{2m-1}(X;\pi_{2m-2})\] is the singleton \(\{g^*k_4\}\). Moreover, by Lemma \ref{lem:k_4_cpspan3_odd}, any \(\Z/3\) part of \(k_4\) is zero. Thus, we require  \(g^*k_4 \in \co^{2m-1}(X;\Z/p)\) to be trivial, for \(p = 2,4,8\). From the description of \(k_4\) in Lemma \ref{lem:k_4_cpspan3_odd}, there is no simple way to relate \(g^*k_4\) with characteristic classes of the bundle \((\xi, \ell_1,\ell_2,\ell_3)\). Instead, we impose that \(\co^{2m-1}(X;\Z/p) \cong 0\), for \(p=2,4,8.\) Theorem \ref{thm:quaternaryobs_cpspan3_odd} now follows from the universal coefficient theorem for cohomology \cite[Theorem 10, p.\ 246]{spanier1989algebraic}.

\subsection{The Generalized Quinary Obstruction}\label{subsec:quinaryobs_cpspan3_odd}

\begin{thm}\label{thm:quinaryobs_cpspan3_odd}
Let \(m\geq 5\) be an odd integer and let all spaces and maps be as in \emph{(\ref{eq:mp_diag_cpspan3_odd})}. Suppose that the conditions of \emph{Theorem \ref{thm:quaternaryobs_cpspan3_odd}} are satisfied. Then the generalized quinary obstruction \[\lobs^{2m}((\xi,\ell_1,\ell_2,\ell_3),q_{m,3},k_5) \subseteq \co^{2m}(X;\Z)\] is the singleton set consisting of a nonzero rational multiple of \(c_{m}(\xi-\ell_1 \oplus \ell_2\oplus \ell_3)\). Hence \(\lobs^{2m}((\xi,\ell_1,\ell_2,\ell_3),q_{m,3},k_5) = \{0\}\) if and only if $c_{m}(\xi-\ell_1 \oplus \ell_2\oplus \ell_3)=0$.
\end{thm}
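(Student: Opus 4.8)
The plan is to reproduce the architecture of Theorem~\ref{thm:quaternaryobs_even} on the one-stage-longer tower \eqref{eq:mp_diag_cpspan3_odd}. The obstruction $k_5$ lives in $\co^{2m}(E[4];\Z)$ and is the characteristic class of the top fibration $F[4]\to B(m-3,1^3)\xrightarrow{q_4}E[4]$, whose fiber carries the free summand $\pi_{2m-1}(W(m,3))\cong\Z$. The argument splits into two parts: first I would show that the indeterminacy of lifts to $E[4]$ vanishes, so that $\lobs^{2m}((\xi,\ell_1,\ell_2,\ell_3),q_{m,3},k_5)=\{g^*k_5\}$ for any single lift $g\colon X\to E[4]$ (such a lift exists since Theorem~\ref{thm:quaternaryobs_cpspan3_odd} guarantees the lower obstructions vanish); and second I would pin down $g^*k_5$ via naturality of the transgression.

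For the indeterminacy I would run the principal-action computation verbatim from the even case. The fiber of $p_4\colon E[4]\to E[3]$ is $K\coloneqq K(\pi_{2m-2},2m-2)$, and forming the usual homotopy-commutative square relating the action $\mu\colon K\times E[4]\to E[4]$, the section $\bar s$, and the relative transgression $\tau_1$, one has $\mu^*(k_5)=1\otimes k_5+\nu^*(k_5)$ with $\nu^*(k_5)\in\ker(\bar s^*)\cap\ker(\tau_1)\cap\co^{2m}(K\times B(m-3,1^3))$. By the K\"unneth theorem together with Table~\ref{table:cohomK(Z/p,q)} (which gives $\co^{2m-2}(K;\Z)=\co^{2m}(K;\Z)=0$), the torsion-freeness of $\co^*(B(m-3,1^3);\Z)$, and the vanishing of its odd part from Proposition~\ref{prop:cohomology_rings}(iii), this intersection is trivial. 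Hence $\nu^*(k_5)=0$, $\mu^*(k_5)=1\otimes k_5$, and $\mathrm{Indet}=0$.

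To identify the unique obstruction, I would compare the tower with the Stiefel fibration through the map of fibrations induced by $p\coloneqq p_1\circ\cdots\circ p_4$ and $s\coloneqq s_1\circ\cdots\circ s_4\colon F[4]\to W(m,3)$, exactly as in Lemma~\ref{lem:(f1f2f3)^*-is-mono}. Since $F[4]$ is $(2m-2)$-connected with $\pi_{2m-1}(F[4])\cong\Z$, the universal coefficient theorem gives $\co^{2m-1}(F[4];\Z)\cong\Z$, so $s^*\colon\co^{2m-1}(W(m,3);\Z)\to\co^{2m-1}(F[4];\Z)$ is multiplication by some integer $n$. The generator of $\co^{2m-1}(F[4];\Z)$ transgresses to $k_5$ in $q_4$, while $e_{2m-1}$ transgresses to $a_m$ in $q_{m,3}$ by Lemma~\ref{lem:diffsofq1}; naturality of the transgression then yields $p^*(a_m)=n\,k_5$, and therefore $n\,g^*(k_5)=(p\circ g)^*(a_m)=c_m(\xi-\ell_1\oplus\ell_2\oplus\ell_3)$ for any lift $g$.

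The crux is showing $n\neq 0$, and this is where rational homotopy theory enters. From the long exact sequences of the relevant Thomas diagrams, $\mathrm{fib}(s)$ has homotopy concentrated in degrees $\le 2m-3$, so $s$ is an isomorphism on $\pi_j$ for $j\ge 2m-1$; that is, $F[4]$ is the $(2m-2)$-connected cover of $W(m,3)$ in the relevant range. By Proposition~\ref{prop:piW(m,r)} the minimal model of $W(m,3)$ is the exterior algebra $\Lambda_{\mathbb{Q}}(e_{2m-5},e_{2m-3},e_{2m-1})$ with zero differential; passing to the connected cover discards the two sub-$(2m-1)$ generators, whence $\co^{2m-1}(F[4];\mathbb{Q})\cong\mathbb{Q}\langle e_{2m-1}\rangle$ and $s^*$ is a rational isomorphism in degree $2m-1$, i.e.\ $n\neq 0$. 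I expect this to be the main obstacle: unlike the even case, where Lemma~\ref{lem:(f1f2f3)^*-is-mono} determines $n=12/\lvert\pi_{2m-2}\rvert$ by tracking extensions stage by stage, here the extra stage $F[4]$ and the additional free summand in $\pi_{2m-3}$ make the exact value unwieldy, so I would settle for $n\neq 0$. Finally, since $\co^{2m}(X;\Z)$ is torsion-free by hypothesis, the identity $n\,g^*(k_5)=c_m(\xi-\ell_1\oplus\ell_2\oplus\ell_3)$ forces $g^*(k_5)=0$ if and only if $c_m(\xi-\ell_1\oplus\ell_2\oplus\ell_3)=0$, which is exactly the assertion.
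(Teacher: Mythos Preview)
Your proposal is correct and follows essentially the same approach as the paper. Both arguments kill the indeterminacy via the principal action on $E[4]$ using $\co^{2m-2}(K(\pi_{2m-2},2m-2);\Z)=\co^{2m}(K(\pi_{2m-2},2m-2);\Z)=0$, then identify the unique obstruction by comparing transgressions along the map $s\colon F[4]\to W(m,3)$, and finally invoke rational homotopy theory (the minimal model $\Lambda_{\mathbb Q}(e_{2m-5},e_{2m-3},e_{2m-1})$ of $W(m,3)$) to see that $s^*$ is a rational isomorphism in degree $2m-1$, so that the integer $n$ with $n\,g^*(k_5)=c_m(\xi-\ell_1\oplus\ell_2\oplus\ell_3)$ is nonzero. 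The only cosmetic difference is that the paper rationalizes the entire tower up front and states the rational isomorphism as a separate lemma, whereas you work integrally and rationalize only at the last step to establish $n\neq 0$; the content is the same.
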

The rest of the section is dedicated to the proof of Theorem \ref{thm:quinaryobs_cpspan3_odd}.\\

Let all maps and spaces be as defined in (\ref{eq:mp_diag_cpspan3_odd}). Further let the assumptions of Theorem \ref{thm:quaternaryobs_cpspan3_odd} be satisfied. Then there is a lift of \((\xi,\ell_1,\ell_2,\ell_3)\) along \(q_{m,3}\) if and only if there exists a lift \(g \colon X \to E[4]\) of \((\xi,\ell_1,\ell_2,\ell_3)\) such that \(g^*k_4 = 0\) in \(\co^{2m}(X;\Z)/\mathrm{Indet}.\) By virtually identical arguments to the proof of Theorem \ref{thm:quaternaryobs_even}, one checks that \(\mathrm{Indet} = 0\). We omit the details for brevity. 

Now since \(\mathrm{Indet} = 0\), there is a unique quinary obstruction to lifting \((\xi,\ell_1,\ell_2,\ell_3)\). It is possible that methods analogous to those in Section \ref{subsec:quaternaryobs_cpspan3_even} may apply here; however, since \(\co^{2m}(X;\Z)\) is torsion free by assumption, we may ignore torsion and use basic tools of rational homotopy theory to determine this obstruction. Henceforth, all cohomology groups have rational coefficients unless otherwise indicated.

Recall that since the rationalization functor \((-)_0\) is a \(\mathbb{Q}\)-localization, it is exact \cite[Proposition 3.3]{atiyah2018introduction} and preserves pullbacks \cite[Exercise 4.10]{cornea2003lusternik}. Applying the functor \((-)_0\) to the Moore--Postnikov tower (\ref{eq:mp_diag_cpspan3_odd}) for the fibration \(q_{m,3} \colon B(m-3,1^3) \to B(m,1^3)\) yields the following Moore--Postnikov tower of the rationalization of \(q_{m,3}\):

\begin{equation}\label{eq:mp_diag_rational}
\begin{tikzcd}[ampersand replacement=\&]
	\&\& {E[5]_0} \\
	\&\& {E[4]_0} \& {K(\mathbb{Q},2m)} \\
	\&\& {E[3]_0} \& {0} \\
	\&\& {E[2]_0} \& {K(\mathbb{Q},2m-2)} \\
	\&\& {E[1]_0} \& {0} \\
	{W(m,3)_0} \& {B(m-3,1^3)_0} \& {B(m,1^3)_0} \& {K(\mathbb{Q},2m-4)} \\
	\\
	\&\& X_0
	\arrow["{(p_5)_0}", from=1-3, to=2-3]
	\arrow["{k_5\otimes \mathbb{Q}}", from=2-3, to=2-4]
	\arrow["{(p_4)_0}", from=2-3, to=3-3]
	\arrow["{k_4\otimes \mathbb{Q}}", from=3-3, to=3-4]
	\arrow["{(p_3)_0}", from=3-3, to=4-3]
	\arrow["{k_3\otimes \mathbb{Q}}", from=4-3, to=4-4]
	\arrow["{(p_2)_0}", from=4-3, to=5-3]
	\arrow["{k_2\otimes \mathbb{Q}}", from=5-3, to=5-4]
	\arrow["{(p_1)_0}", from=5-3, to=6-3]
	\arrow[from=6-1, to=6-2]
	\arrow["{(q_4)_0}"{description}, from=6-2, to=2-3]
	\arrow["{(q_3)_0}"{description}, from=6-2, to=3-3]
	\arrow["{(q_2)_0}"{description}, from=6-2, to=4-3]
	\arrow["{(q_1)_0}"{description}, from=6-2, to=5-3]
	\arrow["{(q_{m,3})_0}", from=6-2, to=6-3]
	\arrow["{k_1\otimes \mathbb{Q}}", from=6-3, to=6-4]
	\arrow["{(\xi,\ell_1,\ell_2,\ell_3)_0}"', from=8-3, to=6-3]
\end{tikzcd}
\end{equation}
\noindent Now \(k_5 \in \co^{2m}(E[4];\Z)\) is the characteristic element in the fibration 
\[
F[4] \longrightarrow B(m-3,1^3) \overset{q_4}\longrightarrow E[4].
\]
Moreover, \(F[4]\) is \((2m-2)\)-connected with \(\co^{2m-1}(F[4]) \cong \mathbb{Q}.\)
As in Section \ref{subsec:quaternaryobs_cpspan3_even}, for each \(i = 1,2,3,4\), let us denote by \(s_i \colon F[i] \to F[i-1]\) the induced maps between homotopy fibers of the maps \(p_i \colon E[i] \to E[i-1]\), where \(F[0] = W(m,3),\) \(E[0] = B(m,1^3)\), and \(q_0 = q_{m,3}\), c.f., Lemma \ref{lem:thomas_diagram}. Set \(s\coloneqq s_1 \circ s_2 \circ s_3 \circ s_4\) and \(p \coloneqq p_1 \circ p_2 \circ p_3 \circ p_4\).
We also recall that 
\[
\co^*(W(m,3)) \cong \Lambda_{\mathbb{Q}}[e_{2m-5},e_{2m-3},e_{2m-1}], 
\]
with \(\lvert e_i \rvert = i\), and 
\[
\co^*(BU(n)) \cong \mathbb{Q}[c_1,\dots, c_n],
\]
with \(\lvert c_j \rvert = 2j\). 

Then it is straightforward to establish the following lemma. 

\begin{lem}\label{lem:F[4]_iso}
The map \(s_0 \colon F[4]_0\longrightarrow W(m,3)_0\) induces an isomorphism 
\[
s_0^* \colon \co^{2m-1}(W(m,3)) \overset{\cong}\longrightarrow \co^{2m-1}(F[4]),
\]
in rational cohomology.
\end{lem}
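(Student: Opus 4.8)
The plan is to pass to rational coefficients throughout and exploit the fact that, rationally, only the free summands of the homotopy groups $\pi_{n_i}(W(m,3))$ contribute. By Lemma \ref{lem:thomas_diagram}, each map $s_i \colon F[i] \to F[i-1]$ is homotopy equivalent to a fibration with fiber $K(\pi_i, n_i-1)$, where $\pi_i \coloneqq \pi_{n_i}(W(m,3))$ and the degrees $n_i$ are read off Table \ref{table:pi(W(m,3))}; explicitly, the fibers of $s_1,s_2,s_3,s_4$ are $K(\Z,2m-6)$, $K(\Z/2,2m-5)$, $K(\Z\oplus\Z/2,2m-4)$, and $K(\pi_{2m-2},2m-3)$. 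Since $\Z/2$ and $\pi_{2m-2}$ are finite, the rationalizations $(s_2)_0$ and $(s_4)_0$ have contractible fibers and hence are rational homotopy equivalences---this is already visible in the rationalized tower \eqref{eq:mp_diag_rational}, where $k_2\otimes\mathbb{Q}$ and $k_4\otimes\mathbb{Q}$ are null. Thus $s_0^* = (s_4)_0^*\,(s_3)_0^*\,(s_2)_0^*\,(s_1)_0^*$ reduces to analyzing $(s_1)_0$ and $(s_3)_0$, each a principal fibration with an Eilenberg--MacLane fiber over $\mathbb{Q}$.

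Next I would run the rational Serre spectral sequence for $s_1$ and then for $s_3$, tracking the generator $e_{2m-1}$. Recalling from Proposition \ref{prop:cohomology_rings} that $\co^*(W(m,3);\mathbb{Q}) \cong \Lambda_{\mathbb{Q}}[e_{2m-5},e_{2m-3},e_{2m-1}]$, in total degree $\le 2m-1$ the only classes are $1, e_{2m-5}, e_{2m-3}, e_{2m-1}$, since all products lie in higher degree. For $s_1$, by construction of the tower the fiber fundamental class $\iota_{2m-6}$ is transgressive with $\tau(\iota_{2m-6}) = e_{2m-5}$ (compare the transgression computation of Lemma \ref{lem:diffsofq1}), so both $\iota_{2m-6}$ and $e_{2m-5}$ are cancelled; using $m>5$ to push $\iota_{2m-6}^2$ (degree $4m-12$) out of range, one reads off $\co^{2m-1}(F[1];\mathbb{Q}) \cong \mathbb{Q}\langle e_{2m-1}\rangle$ with $(s_1)_0^*(e_{2m-1}) = e_{2m-1}$. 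Transporting along the rational equivalence $(s_2)_0$ and repeating for $s_3$---whose fiber class $\iota_{2m-4}$ transgresses to $e_{2m-3}$, with $\iota_{2m-4}^2$ (degree $4m-8$) again out of range---cancels $e_{2m-3}$ and leaves $\co^{2m-1}(F[3];\mathbb{Q}) \cong \mathbb{Q}\langle e_{2m-1}\rangle$. Finally the rational equivalence $(s_4)_0$ yields $\co^{2m-1}(F[4];\mathbb{Q}) \cong \mathbb{Q}\langle e_{2m-1}\rangle$, and since every factor sends $e_{2m-1}\mapsto e_{2m-1}$ and the source $\co^{2m-1}(W(m,3);\mathbb{Q})$ is one-dimensional, the composite $s_0^*$ is an isomorphism in degree $2m-1$.

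This argument is the rational shadow of Lemma \ref{lem:(f1f2f3)^*-is-mono}, but considerably simpler: where the integral computation had to track torsion and produced the factor $\Z\langle e/n\rangle$ with $n = 12/|\pi_{2m-2}|$, here the two torsion fibers become contractible and the two surviving Eilenberg--MacLane fibrations each merely cancel one exterior generator ($e_{2m-5}$ and $e_{2m-3}$). The only genuine point requiring care---and thus the main obstacle---is the bookkeeping that nothing other than $e_{2m-1}$ lands in degree $2m-1$ after the two transgressions; this rests entirely on the degree estimate that the squares of the fiber fundamental classes sit above degree $2m-1$, which is exactly where the hypothesis $m>5$ enters (the case $m=5$ being absorbed into the standing remark of this section). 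Identifying the two relevant transgressions as $\iota_{2m-6}\mapsto e_{2m-5}$ and $\iota_{2m-4}\mapsto e_{2m-3}$ follows from the construction of the Moore--Postnikov tower together with naturality against the stable fibration of Lemma \ref{lem:diffsofq1}.
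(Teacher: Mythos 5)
Your proof is correct and is precisely the argument the paper leaves implicit (the paper only asserts the lemma is ``straightforward'' after rationalizing the tower \eqref{eq:mp_diag_rational}): the torsion stages $(s_2)_0$ and $(s_4)_0$ become rational equivalences, and the two remaining principal fibrations with fibers $K(\mathbb{Q},2m-6)$ and $K(\mathbb{Q},2m-4)$ leave $e_{2m-1}$ untouched in degree $2m-1$, exactly as in the rational shadow of Lemma \ref{lem:(f1f2f3)^*-is-mono}. Your degree bookkeeping, including where $m>5$ enters and the deferral of $m=5$ to the standing remark, matches the paper's conventions.
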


Finally, consider the map between homotopy fibrations
\begin{equation*}
    \begin{tikzcd}
        F[4]_0 \arrow[r] \arrow[d, "s_0"]\ & B(m-3, 1^3)_0 \arrow[d, equal] \arrow[r, "(q_4)_0"] & E[4]_0 \arrow[d, "p_0"]\\
        W(m,3)_0 \arrow[r] & B(m-3, 1^3)_0 \arrow[r, "(q_{m,3})_0"] & B(m,1^3)_0
    \end{tikzcd}
\end{equation*}
and the induced map between rational Leray--Serre spectral sequences. The image of the transgression in the bottom homotopy fibration on the $2m$-th page
\begin{equation*}
    d_{2m}\colon \mathbb{Q} \langle e_{2m-1} \rangle \cong E_{2m}^{0, 2m-1} \longrightarrow E_{2m}^{2m, 0} \cong \co^{2m}(B(m,1^3))
\end{equation*}
is generated by $e_{2m-1} \longmapsto a_{m}\coloneqq c_m(\gamma_m \times 1^{\times 3} - 1 \times \gamma_1^{\times 3})$.
By the naturality of spectral sequences and Lemma \ref{lem:F[4]_iso}, we conclude that on the $2m$-th page of the spectral sequence of $q_4$ the element
\begin{equation*}
    s_0^*(e_{2m-1}) \in \co^{2m-1}(F[4])
\end{equation*}
transgresses to
\begin{equation*}
    p_0^*(a_m) = k_4 \otimes \mathbb{Q} \in \co^{2m}(E[4]).
\end{equation*}
Finally, due to the homotopy commutativity of lifts 
\[\begin{tikzcd}
	& & {E[4]} \\
	X & & {B(m,1^3)}
	\arrow["p", from=1-3, to=2-3]
	\arrow["g", from=2-1, to=1-3]
	\arrow["{(\xi,\ell_1,\ell_2,\ell_3)}"', from=2-1, to=2-3]
\end{tikzcd}\]
it follows that
\[
c_m(\xi-\ell_1\oplus \ell_2 \oplus \ell_3) = (p\circ g)^*(a_m) =  g^*(k_4\otimes \mathbb{Q}) \in \co^{2m}(X;\mathbb{Q}).
\]

In conclusion, for any lift \(g \colon X \to E[4]\) of \((\xi, \ell_1,\ell_2,\ell_3)\), the virtual Chern class \(c_m(\xi - \ell_1 \oplus \ell_2 \oplus \ell_3)\) is an integer multiple of the quinary obstruction \(g^*(k_4) \in \co^{2m}(X;\Z)\). But since \(\co^{2m}(X;\Z)\) is torsion free, \(g^*k_4=0\) if and only if \(c_m(\xi - \ell_1 \oplus \ell_2 \oplus \ell_3)=0.\) This completes the proof of Theorem \ref{thm:quinaryobs_cpspan3_odd}.

\begin{rem}
    We thank the anonymous referee for pointing out that the rational methods used in this section can be pushed further. That is, one can ask about rational line bundle decompositions of complex vector bundles. More specifically, consider a complex \(m\)-bundle \(\xi\) and \(r\) complex line bundles \(\ell_1,\dots,\ell_r\) over over a \(2m\)-dimensional CW complex \(X\). Towards constructing a rational Moore—Postnikov tower of the fibration \(q_{m,r}\), one computes the rational homotopy groups of the complex Stiefel manifold \(W(m,r)\) as follows: 
    \begin{equation}
        \pi_i(W(m,r))\otimes \mathbb Q = 
            \begin{cases}
            0 & 1 \leq i \leq 2(m-r) \text{ and } i \geq 2m\\
            \mathbb Q & 2(m-r)+1 \leq i \leq 2m-1
            \end{cases}
    \end{equation}
    Ultimately, since all the Moore—Postnikov truncations in the rational tower are products of rational Eilenberg—Maclane spaces, one deduces that the vanishing of the rational virtual Chern classes
    \[
        c_i(\xi - \ell_1 \oplus \cdots \oplus \ell_r) \otimes \mathbb Q \in \co^{2i}(X;\mathbb Q),
    \]
    for \(i> m-r\), is a necessary and sufficient condition for the existence of a map \(g\colon X \to B(m-r,1^r)\) making the following diagram commute
    \[
        \begin{tikzcd}[ampersand replacement=\&] \&\& {\mathrm{H}^*(BU(m-r,1^r);\mathbb{Q})} \\ \\ {\mathrm{H}^*(X;\mathbb{Q})} \&\& {\mathrm{H}^*(BU(m,1^r);\mathbb{Q})} \arrow["{g^*\otimes \mathbb{Q}}"', dashed, from=1-3, to=3-1] \arrow["{q_{m,r}^*\otimes \mathbb{Q}}"', from=3-3, to=1-3] \arrow["{(\xi,\ell_1,\dots,\ell_r)^*\otimes \mathbb{Q}}", from=3-3, to=3-1] \end{tikzcd}
    \]
    in rational cohomology. 
\end{rem}

\section{Acknowledgments}\label{subsec:acknowledgements}
The authors thank the anonymous referee for a careful reading of the manuscript and several helpful suggestions that improved the exposition.
The second author earnestly thanks Mark Grant for countless discussions about the contents of this paper, the invaluable suggestion to utilize rational homotopy theory in the proofs of Proposition \ref{prop:piW(m,r)} and Theorem \ref{thm:quinaryobs_cpspan3_odd}, and for a careful reading of the paper. The second author also thanks the following people for useful discussions: Markus Upmeier, Jarek K\k{e}dra, Cihan Bahran, Ran Levi, and John Oprea.  Lastly, both authors extend their gratitude to Pavle Blagojevi\'{c}.

\bibliographystyle{plain}
\bibliography{references.bib}
\end{document}